\documentclass[11pt,letterpaper]{article}
\usepackage[letterpaper,margin=1in]{geometry}

\usepackage[T1]{fontenc}
\usepackage[utf8]{inputenc}
\usepackage{amsmath,amsthm}   
\usepackage{newpxtext}
\usepackage{newpxmath}        
\usepackage{bm}               
\usepackage{dsfont}
\usepackage{inconsolata}      
\usepackage{microtype}

\usepackage{subcaption}
\usepackage{algorithm}
\usepackage{algorithmic}
\usepackage{pgfplots}\pgfplotsset{compat=1.18}

\usepackage{natbib}

\usepackage[dvipsnames]{xcolor}
\usepackage{graphicx,multirow,booktabs,array,nicefrac}
\usepackage{comment,cancel}
\usepackage[shortlabels]{enumitem}
\usepackage{mdframed}
\usepackage{framed}
\usepackage{authblk}
\usepackage[colorlinks=true,linkcolor=blue,citecolor=blue,urlcolor=blue]{hyperref}

\setlist{itemsep=2pt,topsep=4pt,parsep=0pt}
\allowdisplaybreaks

\providecommand{\citet}[1]{\cite{#1}}
\providecommand{\citep}[1]{\cite{#1}}

\newtheorem{theorem}{Theorem}[section]
\newtheorem{proposition}[theorem]{Proposition}
\newtheorem{assumption}[theorem]{Assumption}
\newtheorem{lemma}[theorem]{Lemma}
\newtheorem{corollary}[theorem]{Corollary}
\newtheorem{remark}[theorem]{Remark}
\theoremstyle{definition}

\newtheorem{definition}[theorem]{Definition}

\definecolor{ReviewRed}{RGB}{185,20,25}
\definecolor{ReviewGreen}{RGB}{0,112,55}
\definecolor{ReviewBlue}{RGB}{0,0,0}

\title{{{Improved Gradient Complexity for Quasi-Newton Proximal Extragradient Methods via Certified Curvature
Reuse}}
\vspace{4mm}}

\renewcommand*{\Affilfont}{\normalsize}

\makeatletter
\renewcommand\AB@affilsepx{, \protect\Affilfont}
\renewcommand\AB@affilsep{\protect\Affilfont}
\makeatother

\author[$\dagger$,$\ddagger$]{Aryan Mokhtari}
\author[$\ddagger$]{Francisco Patitucci}

\affil[$\dagger$]{Google Research}

\affil[$\ddagger$]{UT Austin}
 
\date{}
\begin{document}
\maketitle

\begin{abstract}
We study how far curvature estimated from gradient differences can accelerate
proximal-extragradient-based quasi-Newton methods, whose strongest global
guarantees---QNPE \citep{jiang2023online}, A-QNPE \citep{jiang2023accelerated}, and
OL-OQN \citep{jiang2025improved}---learn a Hessian approximation online and pay for
the dimension through a regret term. We introduce \emph{certified curvature reuse}: one finite-difference probe per iteration refreshes a group of Hessian columns, the error of the stale columns is certified before each step from the iterates' path length, and the certificate sets the step. Each iteration uses at most three gradients and one regularized linear solve (or trust-region step), with no gradient line search and no inner loop that queries the oracle: in the convex and strongly convex settings, monotone scalar surrogates with controlled polynomial growth select a near-maximal admissible stepsize by a bisection on stored numbers, while in the nonconvex setting a uniform certificate over bounded increments fixes a constant learning rate in closed form. With problem constants fixed, our
results are as follows. In the convex setting, a single run attains the rate
$O(\min\{k^{-2},dk^{-3}\})$ with no logarithmic factor or additive dimension term,
matching accelerated gradient descent (AGD) for $k\lesssim d$ and 
improving on
A-QNPE's $O(\min\{k^{-2},\sqrt{d\log k}\,k^{-5/2}\})$ for
$k\gtrsim d$. In
the strongly convex setting with condition number $\kappa$, a single run matches
AGD's $(1-\Theta(\kappa^{-1/2}))^{k}$ rate for $k\lesssim d\log\kappa$ and afterwards has a superlinear upper envelope, eventually scaling as
$(1+k^{2}/B^{2})^{-k}$ with $B=\tilde O(d\kappa)$ and improving on QNPE in both
phases. In the nonconvex setting, our method finds an $\varepsilon$-stationary
point in $O(\min\{d^{1/3}\varepsilon^{-3/2},\varepsilon^{-7/4}\})$ gradient
queries, improving on OL-OQN's $O(d^{1/4}\varepsilon^{-13/8})$. For sparse Hessians, $d$ is replaced by an effective dimension $\chi\le d$. 
We also compare
our bounds with lazy-Hessian, double-loop, and cutting-plane methods across
accuracy and dimension regimes.
\end{abstract}

\section{Introduction}\label{sec:intro}

We consider the unconstrained minimization problem
\[
\min_{x\in\mathbb R^d} f(x),
\]
where $f$ is twice continuously differentiable with $L_1$-Lipschitz
gradient and $L_2$-Lipschitz Hessian, and the only access to $f$ is a
first-order oracle $x\mapsto(f(x),\nabla f(x))$. Neither Hessians nor
Hessian-vector products are available as oracle operations. We study three settings.
When $f$ is convex and attains its minimum $f^\star$ at some $x^\star$, with
$R\ge\|x_0-x^\star\|$ a bound on the distance from the initial point, the
goal is an $\varepsilon$-accurate point, that is, a point $\hat x$ with
$f(\hat x)-f^\star\le\varepsilon$. When $f$ is in addition
$\mu$-strongly convex, we write $\kappa=L_1/\mu$ for its condition number.
When $f$ is nonconvex and bounded below, with $\Phi_0\ge f(x_0)-\inf_x f(x)$ a bound on the initial objective gap, the goal is an
$\varepsilon$-stationary point, $\|\nabla f(\hat x)\|\le\varepsilon$. In all
three settings the resource is the number of gradient queries.

Accelerated gradient descent (AGD) needs $O(\sqrt{L_1R^2/\varepsilon})$ gradient queries in the convex case and $O(\sqrt{\kappa}\log(L_1R^2/\varepsilon))$ in the strongly convex case \citep{nesterov2018lectures}, and accelerated first-order methods find an $\varepsilon$-stationary point of a nonconvex function with $\tilde O(\varepsilon^{-7/4})$ queries \citep{carmon2017convex}, with the logarithmic factor removed in $O(\varepsilon^{-7/4})$ by \citet{li2022restarted}. The convex bounds are optimal among first-order methods, while in the nonconvex case there is a gap compared to the deterministic first-order lower bound $\Omega(\varepsilon^{-12/7})$ of \citet{carmon2021lower}. That said, these lower bounds hold only in high dimension: the constructions behind them have dimension exceeding the number of queries \citep{nemirovski1983problem,nesterov2018lectures} or growing polynomially in $1/\varepsilon$ \citep{carmon2021lower}. Once the query budget exceeds the dimension, curvature can be estimated from gradient differences and put to use, and for methods that use a constant number of gradients and one linear solve (or trust-region step) per iteration without inner gradient-querying loops, the
question becomes how the achievable complexity depends on both $d$ and
$\varepsilon$.

In the smooth strongly convex setting, quasi-Newton methods have traditionally been the answer to this question. They maintain a matrix that satisfies secant equations along the trajectory, inferring curvature from gradient differences at no additional oracle cost, and the BFGS and DFP updates are among the most used algorithms in smooth optimization \citep{nocedal2006numerical}. Their classical theory is asymptotic, giving local superlinear convergence under the Dennis--Mor\'e condition \citep{dennis1974characterization}. Explicit local superlinear rates for greedy, randomized and classical updates are recent \citep{rodomanov2021greedy,rodomanov2022rates,jin2023nonasymptotic,lin2022explicit}; they require a starting point near the solution and a suitable initial Hessian approximation, and, being local, give no global complexity bound. Global non-asymptotic rates for BFGS with exact and inexact line searches are more recent still \citep{jin2024global,jin2024armijo}. For merely convex functions, none of these classical methods is known to be faster than AGD. Beyond classical secant methods, regularized quasi-Newton variants that approximate cubic regularized Newton steps achieve global accelerated rates for convex functions, but their worst-case $O(k^{-2})$ guarantees still do not improve upon AGD \citep{scieur2024adaptive,kamzolov2026accelerated}.

A recent line of work obtains stronger global guarantees by updating the Hessian approximation through online learning instead of secant equations. In particular, for smooth strongly convex functions, the quasi-Newton proximal extragradient (QNPE) method of \citet{jiang2023online} attains the minimum of a linear and a superlinear rate, $O(\min\{(1-\Theta(\kappa^{-1}))^{k},(1+\sqrt{k/X})^{-k}\})$ with $X=O(d\kappa^2+\kappa L_2^2R^2/\mu^2)$, using at most three gradients per iteration on average; the superlinear rate takes over after only $O(d+L_2^2R^2/(\mu L_1))$ iterations. Its linear rate, however, has $\kappa$ in place of $\sqrt{\kappa}$, so in the first phase QNPE matches gradient descent rather than AGD. In follow-up work for the convex setting, \citet{jiang2023accelerated} proposed A-QNPE, a quasi-Newton variant of the accelerated hybrid proximal extragradient framework of \citet{monteiro2013accelerated}, and proved the rate $O(\min\{k^{-2},\sqrt{d\log(k+1)}\,k^{-5/2}\})$, that is, a gradient complexity of $\tilde O(\min\{\varepsilon^{-1/2},d^{1/5}\varepsilon^{-2/5}\})$. Ignoring logarithmic factors, this matches AGD in the first $d$ iterations and is faster afterward; in terms of accuracy, it matches AGD when $\varepsilon\ge d^{-2}$ and improves on it when $\varepsilon<d^{-2}$ (with the logarithmic factor retained, the crossover is at $k\asymp d\log d$ and $\varepsilon\asymp(d\log d)^{-2}$). It was the first quasi-Newton method provably faster than AGD on convex problems. Finally, for smooth nonconvex $f$, \citet{jiang2025improved} proposed an online-learning-guided optimistic quasi-Newton method (which we call OL-OQN). It combines the online-to-nonconvex conversion of \citet{cutkosky2023optimal}, which turns the regret of an online learner fed with gradients into a stationarity guarantee, with a second online learner that fits the Hessian approximation used as the hint of the optimistic update. OL-OQN needs $O(d^{1/4}\varepsilon^{-13/8})$ gradient queries, below the first-order bound $\varepsilon^{-7/4}$ whenever $d\lesssim\varepsilon^{-1/2}$. 
More recently, \citet{marumo2025parameter} attained the same $O(d^{1/4}\varepsilon^{-13/8})$ complexity with a parameter-free method that combines momentum, quartic regularization, and a scaled Powell--symmetric--Broyden update, requiring no prior knowledge of the Lipschitz constants, the target accuracy, or the iteration horizon.

QNPE, A-QNPE, and OL-OQN provide strong global guarantees with a gradient oracle in the strongly convex, convex, and nonconvex settings, respectively, and serve as our benchmarks. All three learn the Hessian approximation online, so the dimension enters their bounds through a regret term, and nothing indicates that the resulting exponents are the right ones. The goal of this paper is to improve on these bounds in all three settings through a common principle: certifying the curvature model instead of learning it, by probing the Hessian at chosen points and controlling the error of reused probes. Our bounds also exploit known sparsity of the Hessian, with $d$ replaced throughout by an effective dimension $\chi\le d$ (Definition~\ref{def:chi}).

\vspace{2mm}

\noindent\textbf{Contributions.} Our methods for the convex and strongly convex cases build on the accelerated hybrid proximal extragradient (A-HPE) framework, which reduces acceleration to computing approximate proximal steps with small relative error. We compute each step from a positive semidefinite curvature model by one regularized linear solve, yielding a method with no inner gradient-querying loop. The residual of such a step is controlled by the operator-norm error of the model and the length of the step, so a bound on the model error known before the step, combined with an a priori bound on the step length, yields a stepsize that is guaranteed to be admissible: no line search is needed and no gradient is spent on rejected trials. To keep such a bound while reusing the model, we refresh one group of columns per iteration and certify the reused columns by the distance the iterates have traveled since they were computed. The method for the nonconvex case uses the online-to-nonconvex conversion and a trust-region subproblem in place of A-HPE, with the same curvature model and the same certificate. Our contributions are as follows.

\begin{itemize}
\item \textbf{Convex case.} Our method for the convex setting, rolling-probe QNPE (RP-QNPE), uses at most three gradients per iteration. It converges at the rate $O(\min\{k^{-2},\chi k^{-3}\})$ and finds an $\varepsilon$-accurate point with $O(\min\{\varepsilon^{-1/2},\chi^{1/3}\varepsilon^{-1/3}\})$ gradient queries (Theorem~\ref{thm:iterationrate} and Corollary~\ref{thm:complexity}). Even in the dense case $\chi=d$, the least favorable for our bound since $\chi\le d$, this improves on A-QNPE throughout the regime $\varepsilon<d^{-2}$ in which A-QNPE improves on AGD, and matches AGD otherwise (Section~\ref{sec:aqcompare}); for $\chi<d$ the gain is larger (Section~\ref{sec:structurecompare}). The analysis charges the error of stale columns to the path length of the accelerated iterates with no dimension factor, so the model costs $O(\chi)$ in inverse stepsizes each time the accelerated mass doubles; summing over these doublings gives the rate without logarithmic factors.

\item \textbf{Strongly convex case.} Keeping the rolling model throughout the run and using $\mu$ in the accelerated recursion gives a method (SC-RP-QNPE, Algorithm~2) whose objective-error upper bound improves on the accelerated linear guarantee $(1-\Theta(\kappa^{-1/2}))^k$ of AGD after $O(B_\sharp/\kappa)$ iterations and has a superlinear upper envelope afterward, eventually scaling as $(1+k^2/B_\sharp^2)^{-k}$, where $B_\sharp=O(\chi\kappa+\chi\kappa_2\log(2+\kappa))$ and $\kappa_2=L_2R/\mu$ (Theorem~\ref{thm:scmain}). In normalized units the switch occurs after $O(\chi\log\kappa)$ iterations, whereas QNPE has an upper bound running at the slower rate of gradient descent for $O(d+\kappa)$ iterations and then superlinearly with envelope $(1+\sqrt{k/X})^{-k}$, where $X\asymp d\kappa^2+\kappa^3\gg B_\sharp$. At moderate accuracy our method needs $O((B_\sharp\log^2(1/\varepsilon))^{1/3})$ gradients (Corollary~\ref{cor:scqueries}), below the AGD bound once $\kappa\gtrsim(\chi\log\kappa)^2$.

\item \textbf{Nonconvex case.} The model and its certificate do not use convexity. Placing the rolling model in the hint of an optimistic online learner inside the online-to-nonconvex conversion of \citet{cutkosky2023optimal}, using the extrapolated-gradient learner of \citet{patitucci2025double,jiang2025improved}, gives rolling-probe optimistic quasi-Newton (RP-OQN, Algorithm~3), which finds an $\varepsilon$-stationary point with $O(\min\{\chi^{1/3}\varepsilon^{-3/2},\varepsilon^{-7/4}\})$ gradient queries, with explicit constants and no logarithmic factors (Corollary~\ref{cor:ncenvelope}). Here the certificate is needed at every step rather than in sum; it is of order $\chi L_2D$ for increments of length $D$, and the negative term of the optimistic learner absorbs it, leaving only an upper limit on the learning rate. In the dense case, RP-OQN improves on OL-OQN throughout the regime in which OL-OQN improves on first-order methods, and it extends that regime from $d\lesssim\varepsilon^{-1/2}$ to $d\lesssim\varepsilon^{-3/4}$ without a matrix learner, while matching the polynomial exponents of lazy-Hessian methods using gradients alone (Section~\ref{sec:nccompare}).

\end{itemize}

\subsection{Second-order, lazy-Hessian and cutting-plane methods}\label{sec:related}

\noindent\textbf{Second-order and Hessian--vector product methods.} Methods with
access to an exact second-order oracle achieve substantially lower iteration
complexities: $O((L_2R^3/\varepsilon)^{2/7})$ in the convex case
\citep{monteiro2013accelerated,carmon2022optimal} and $O(\varepsilon^{-3/2})$ in the
nonconvex case \citep{nesterov2006cubic}, both matching oracle lower bounds
\citep{arjevani2019oracle,carmon2020lower} and benchmarking the best
$\varepsilon$-dependence achievable by curvature-based methods.  Matrix-free implementations use Krylov methods with Hessian--vector products \citep{agarwal2017finding,carmon2018accelerated,carmon2022optimal}; approximating these products by gradient differences spends a fresh gradient on every inner matrix--vector product and requires control of the approximation error inside the subproblem solver. In the gradient-oracle model the nonconvex rate $\tilde O(\varepsilon^{-7/4})$ is attained by \citet{carmon2017convex}, and rebuilding a finite-difference Hessian at each of the $O(\varepsilon^{-2/7})$ accelerated proximal steps would cost $d$ gradients per step, giving $\tilde O(d\varepsilon^{-2/7})$ if the analysis of \citet{carmon2022optimal} tolerates the finite-difference error (we are not aware of such an analysis). By instead updating a persistent model $B_k$ with one probe per step and solving the subproblem on $B_k$ without further oracle calls, our methods attain $O(\min\{\chi^{1/3}\varepsilon^{-3/2},\varepsilon^{-7/4}\})$ in the nonconvex case and $O(\min\{\varepsilon^{-1/2},\chi^{1/3}\varepsilon^{-1/3}\})$ in the convex case, the latter below $d\varepsilon^{-2/7}$ for all $\varepsilon>d^{-14}$.

\vspace{2mm}

\noindent\textbf{Lazy-Hessian algorithms.} When only gradients are available, a
full Hessian can be approximated from $d$ gradient differences (or $\bar d\le d$
when structure reduces the cost of a Hessian evaluation \citep{chen2025faster});
finite-difference implementations of cubic regularization go back to
\citet{cartis2012oracle}, who obtain $O(d\varepsilon^{-3/2})$ gradient evaluations
in the nonconvex case, and \citet{doikov2023lazy} proposed measuring the complexity
of second-order methods in such equivalent gradient queries, counting one Hessian as
$\bar d$ gradients. Using lazy Hessian updates that recompute a full Hessian once
every $m\asymp\bar d$ steps, cubic regularized Newton achieves
$O(\bar d+\bar d^{1/2}\varepsilon^{-3/2})$ equivalent gradient queries in the
nonconvex case \citep{doikov2023lazy}, with first- and zeroth-order
finite-difference implementations analyzed in \citet{doikov2025first}, and
$O(\bar d+\bar d^{1/2}\varepsilon^{-1/2})$ in the convex case \citep{chayti2024lazy},
as restated by \citet{chen2025faster}. \citet{chen2025faster} subsequently
accelerated this approach: their nonconvex lazy extra-Newton method (NALEN) attains
$O(\bar d+\bar d^{1/3}\varepsilon^{-3/2})$ equivalent gradient queries, and their
convex method (CALEN), which runs the outer Monteiro--Svaiter acceleration of
\citet{carmon2022optimal} and solves each cubic-regularized proximal subproblem via
a restarted inner loop of NALEN, attains
$O((\bar d+\bar d^{13/21}(L_2R^3/\varepsilon)^{2/7})\log\bar d)$ (see also
\citet{chen2026optimal} for inexact second-order oracles). In the nonconvex gradient
oracle model, \citet{adil2025balancing} balanced gradient and periodic
finite-difference Hessian queries to obtain
$\tilde O(d+d^{1/3}\Phi_0\sqrt{L_2}\,\varepsilon^{-3/2})$, matching the polynomial
exponents of NALEN up to polylogarithmic factors.

Our approach differs from lazy-Hessian methods in three respects. First, in the convex and strongly convex settings our methods never pause to build a full Hessian: they refresh one group of columns per iteration in a rolling window and certify the stale columns via the iterates' path length, and during the initial $\chi$ iterations ($k<\chi$), while the first window fills, the spectral cap $\bar E_k=L_1$ keeps every step admissible at the smoothness stepsize, so the accelerated first-order rate holds from the first iteration and the bounds carry no additive $\chi$; lazy-Hessian methods pay $\bar d$ queries before their first step and again at every rebuild. Second, in the convex and strongly convex settings a \emph{single trajectory} attains the minimum of the curvature-accelerated rate and the optimal first-order bound ($O(\varepsilon^{-1/2})$ and $O(\sqrt{\kappa}\log(1/\varepsilon))$) without knowing the target accuracy or switching algorithms in advance; in the nonconvex setting, both our method and lazy-Hessian methods can be paired with a first-order method by an advance parameter switch, and we claim no advantage there. Third, in the convex and strongly convex settings RP-QNPE and SC-RP-QNPE have no gradient line search and no inner loop that queries the oracle: each iteration takes one proximal step with one regularized linear solve, and the stepsize search of Subroutines~1.1 and~2.1 is a bisection on stored scalars (iterative linear solvers, if used, are discussed in Section~\ref{sec:implementation}), whereas accelerated lazy-Hessian methods such as CALEN invoke an inner nonconvex solver at every outer proximal step. For completeness, Sections~\ref{sec:compare}, \ref{sec:sccompare}, and~\ref{sec:nccompare} compare our bounds with these methods under the convention that one Hessian counts as $d$ (or $\bar d$) gradients.

\vspace{2mm}

\noindent\textbf{Cutting-plane methods.} In terms of first-order oracle calls alone,
cutting-plane methods \citep{nemirovski1983problem,lee2015faster,jiang2020improved}
minimize a convex function in $\tilde O(d)$ gradient queries with logarithmic
dependence on $1/\varepsilon$. They represent a fundamentally different algorithmic
paradigm: they maintain a localization set from gradient cuts and solve a
nontrivial auxiliary problem at every step, they do not exploit smoothness or
second-order curvature, and their nonconvex extension requires Lipschitz third
derivatives \citep{hinder2018cutting}. In the dense convex case, none of the
curvature-based gradient bounds of Table~\ref{tab:convexcompare}, ours included,
improves on the better of AGD and cutting planes by more than logarithmic factors:
their curvature branches improve on AGD only for $\varepsilon<d^{-2}$, where they
are at least $d$ (Section~\ref{sec:structurecompare}). Our goal in the dense convex
setting is not to compete with localization methods, but to determine how far
gradient-based curvature can accelerate single-loop proximal extragradient methods
using $O(1)$ gradients and one linear solve per iteration. Crucially, however,
cutting-plane bounds count separation queries in $\mathbb R^d$ and are not known to
improve when the Hessian is sparse, whereas our bounds depend on the dimension
solely through the effective dimension $\chi\le d$. Consequently, for sparse
Hessians with $\chi<d$, our methods fall strictly below the better of AGD and
cutting-plane methods across a broad range of accuracies in the convex case
(Section~\ref{sec:structurecompare}) and condition numbers in the strongly convex
case (Section~\ref{sec:sccompare}). Figures~\ref{fig:convexmaps} and~\ref{fig:scmaps} map these regimes, including the structured instantiation of the lazy-Hessian baselines.

\vspace{2mm}

\noindent\textbf{Concurrent work.} While this paper was being finalized, \citet{zhang2026near} posted a preprint on convex optimization with lazy \emph{exact} second-order oracles, obtaining $\tilde O(\bar d+\bar d^{1/7}(L_2R^3/\varepsilon)^{2/7})$ equivalent gradient queries from an accelerated proximal envelope with an inner lazy-Hessian solver, together with a lower bound for that oracle model. Ignoring logarithmic factors, under the equivalent-cost convention with $\bar d=\chi$, their bound is at most the curvature branch $\chi^{1/3}\varepsilon^{-1/3}$ of our convex rate on the range $\varepsilon<\chi^{-2}$ where that branch is active, and polynomially smaller as $\varepsilon\to0$; ours is proved in the gradient-oracle model, follows a single trajectory with one linear solve per step, and carries no additive dimension term or logarithmic factor. They do not study the nonconvex setting or superlinear convergence in the strongly convex setting.

\section{Preliminaries}\label{sec:prelim}
We first state our notation and assumptions, then define the effective
dimension, and finally recall the accelerated proximal framework and the
properties of it that our methods use.

\subsection{Notation and assumptions}
Throughout, $\|\cdot\|$ is the Euclidean norm, and $\|\cdot\|_{\rm op}$
and $\|\cdot\|_F$ are the operator and Frobenius norms. The Hessian is
$H(x)=\nabla^2f(x)$, the vectors $e_1,\ldots,e_d$ are the coordinate
vectors of $\mathbb R^d$, and $[d]=\{1,\ldots,d\}$.

\begin{assumption}\label{as:main}
The function $f:\mathbb R^d\to\mathbb R$ is convex and twice continuously
differentiable, with known positive upper bounds $L_1,L_2$ such that
\[
0\preceq H(x)\preceq L_1I,\qquad
\|H(x)-H(y)\|_{\rm op}\le L_2\|x-y\|.
\]
A minimizer $x^\star$ exists, and a known $R>0$ satisfies
$\|x_0-x^\star\|\le R$. The oracle returns exact values and gradients at
arbitrary query points. A pattern $\mathcal S$ with
$\operatorname{supp}H(x)\subseteq\mathcal S$ for every $x\in\mathbb R^d$,
and a partition $g_1,\ldots,g_\chi$ of $[d]$ that is structurally
orthogonal for $\mathcal S$ (Definition~\ref{def:chi}), are supplied. When no structure is known,
$\mathcal S=[d]\times[d]$ and the partition into singletons are used, so
that $\chi=d$.
\end{assumption}

\noindent We write $f^\star=f(x^\star)$ and
$\mathcal Z=\{B\in\mathbb S^d:0\preceq B\preceq L_1I\}$, the set of
Hessians allowed by Assumption~\ref{as:main}; the map $\Pi_{\mathcal Z}$
clips the eigenvalues of a symmetric matrix to $[0,L_1]$. The curvature models of Algorithms~1 and~2 are projected onto $\mathcal Z$ so that they satisfy the same spectral bounds. These algorithms use exact spectral projections and linear
solves; Section~\ref{sec:implementation} relaxes both while
keeping gradient queries exact. All $O_\sigma(\cdot)$ constants may depend on a
fixed $\sigma\in(0,1)$; normalized comparisons fix $\sigma$ away from one
and fix $L_1,L_2,R$. The notation $\tilde O$ suppresses logarithmic
factors, not polynomial dependence on those parameters.

\paragraph{Hessian structure and the effective dimension.}
Our guarantees depend on the dimension only through an effective dimension
$\chi\le d$, which equals $d$ for a dense Hessian and can be much smaller
when the sparsity pattern of the Hessian is known. The following
definition makes it precise.

\begin{definition}[Effective dimension]\label{def:chi}
A \emph{pattern} is a symmetric set $\mathcal S\subseteq[d]\times[d]$, and
$\mathcal S_j=\{i:(i,j)\in\mathcal S\}$ denotes the support of column $j$.
A partition $g_1,\ldots,g_\chi$ of $[d]$ is \emph{structurally orthogonal}
for $\mathcal S$ if no two columns in the same group share a row, that is,
$\mathcal S_i\cap\mathcal S_j=\emptyset$ for all distinct $i,j$ in the same
group \citep{curtis1974estimation,coleman1983estimation}. The number $\chi$
of groups is the \emph{effective dimension}. For a group $g$ we write $|g|$
for its size and $v_g=\sum_{j\in g}e_j$ for its indicator vector, and $P_j$
denotes the coordinate projection onto $\mathcal S_j$, that is,
$(P_jw)_i=w_i$ for $i\in\mathcal S_j$ and $(P_jw)_i=0$ otherwise.
\end{definition}

In the dense case the partition into singletons gives $\chi=d$,
$v_{\{j\}}=e_j$ and $P_j=I$, and every statement below reduces to its
dense form; the reader interested only in that case may read $\chi$ as $d$
throughout. When the pattern is known, a partition with the fewest groups
is a minimum coloring of the graph in which two columns are adjacent when
they share a row of $\mathcal S$ \citep{coleman1983estimation}; any proper
coloring may be used, and the bounds are in terms of the supplied $\chi$.
For a Hessian of half-bandwidth $w$, that is, $H_{ij}(x)=0$ whenever
$|i-j|>w$, the residue classes modulo $2w+1$ form a structurally orthogonal
partition, so $\chi\le\min\{d,2w+1\}$; a tridiagonal pattern admits $\chi\le3$ (with three groups necessary and sufficient for the full
tridiagonal pattern when $d\ge3$). The quantity $\chi$ plays, for finite
differences, the role of the Hessian cost parameter $\bar d$ of
\citet{chen2025faster}: a matrix with pattern $\mathcal S$ is formed from
$\chi+1$ gradients by the following lemma, whose proof is in
Appendix~\ref{app:backbone}. The dense case $|g|=1$ is the classical
finite-difference estimate, Lemma~\ref{lem:fd}.

\begin{lemma}\label{lem:group}
Let Assumption~\ref{as:main} hold, let $g$ be a group of the partition,
$x\in\mathbb R^d$ and $\tau>0$, and let
$w=(\nabla f(x+\tau v_g)-\nabla f(x))/\tau$. Then $
(\sum_{j\in g}\|P_jw-H(x)e_j\|^2)^{1/2}\ \le\ \frac{L_2\tau|g|}{2}$.
\end{lemma}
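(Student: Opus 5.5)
We write the gradient difference as an integral of the Hessian along the segment, then isolate column $j$ using structural orthogonality. By the fundamental theorem of calculus,
\[
w=\frac1\tau\int_0^1 H(x+s\tau v_g)\,\tau v_g\,ds=\int_0^1 H(x+s\tau v_g)\,v_g\,ds .
\]
Split this as $w=H(x)v_g+r$, where
\[
r=\int_0^1\bigl(H(x+s\tau v_g)-H(x)\bigr)v_g\,ds .
\]

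\textbf{Step 1: the exact part.} Since $\operatorname{supp}H(x)\subseteq\mathcal S$, the column $H(x)e_i$ is supported on $\mathcal S_i$. Hence
\[
H(x)v_g=\sum_{i\in g}H(x)e_i
\]
is a sum of vectors with pairwise disjoint supports, by structural orthogonality. Applying $P_j$ for $j\in g$ keeps column $j$ and kills every other column, because $\mathcal S_i\cap\mathcal S_j=\emptyset$ for $i\ne j$. So $P_jH(x)v_g=H(x)e_j$, and therefore $P_jw-H(x)e_j=P_jr$.

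\textbf{Step 2: the remainder.} The sets $\mathcal S_j$, $j\in g$, are pairwise disjoint, so the projections $P_j$ act on disjoint coordinate blocks. This gives
\[
\sum_{j\in g}\|P_jr\|^2\le\|r\|^2 .
\]
This orthogonality step is the only structural point, and it is where the disjointness is used a second time. Now bound $\|r\|$. By the Lipschitz Hessian assumption,
\[
\|r\|\le\int_0^1 L_2\,s\tau\|v_g\|\cdot\|v_g\|\,ds=\frac{L_2\tau\|v_g\|^2}{2}=\frac{L_2\tau|g|}{2},
\]
using $\|v_g\|^2=|g|$. Taking square roots gives the claimed bound $L_2\tau|g|/2$.

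\textbf{Main obstacle.} The only delicate point is Step 1. We must check that projecting the \emph{perturbed} Hessians onto $\mathcal S_j$ is not needed. It is not, because we only bound the full remainder $r$ in norm, and the $P_j$ are contractions on disjoint blocks.

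The resulting bound has the factor $|g|$, not $\sqrt{|g|}$, since $\|v_g\|^2=|g|$ enters twice through the displacement length and the vector norm. This matches the statement exactly, so no sharper estimate is required.
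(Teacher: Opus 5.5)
Your proof is correct and follows the paper's route. The paper also writes $w=H(x)v_g+\epsilon$ with the integral remainder $\epsilon=\int_0^1(H(x+s\tau v_g)-H(x))v_g\,ds$ and uses structural orthogonality to get $P_jw-H(x)e_j=P_j\epsilon$. It then concludes from the disjoint supports of the $P_j$ together with $\|\epsilon\|\le L_2\tau\|v_g\|^2/2=L_2\tau|g|/2$.
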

\noindent Thus the columns $H(x)e_j$, $j\in g$, are recovered from one
gradient difference with total Frobenius error $L_2\tau|g|/2$. In the
algorithms, the probe radius of a group is divided by $|g|$, so that every
group carries the same error, and the groups are probed in a fixed cyclic
order.

\subsection{The hybrid proximal extragradient framework}\label{sec:hpe}
Let $f$ be convex and differentiable with a minimizer. A proximal step from
a center $y$ balances the objective against a quadratic penalty for moving
away from $y$. For a parameter $\lambda>0$, the exact proximal point $u$
and its optimality condition are
\[
u=\arg\min_v\left\{f(v)+\frac{1}{2\lambda}\|v-y\|^2\right\}\qquad  \iff 
\qquad u-y+\lambda\nabla f(u)=0.
\]
Computing this
point exactly generally requires solving another nonlinear optimization
problem. The hybrid proximal extragradient (HPE) framework \citep{solodov1999hybrid,monteiro2010complexity} replaces exact
optimality by a relative residual condition, which we state below in the
smooth setting needed for this paper. This is the zero-enlargement
specialization of HPE; we do not need its more general monotone-inclusion
formulation.
\begin{definition}[Relative proximal pair]\label{def:hpepair}
Fix $\sigma\in(0,1)$. Given a center $y$, a pair $(u,\lambda)$ with
$\lambda>0$ is HPE-admissible for $f$ if
\[
\|u-y+\lambda\nabla f(u)\|\le\sigma\|u-y\|.
\]
\end{definition}
The residual on the left vanishes for an exact proximal step, and the
right-hand side measures the distance moved. The required accuracy
therefore adapts to the step itself, rather than being a fixed absolute
tolerance for every subproblem, and the error is measured in the same units
as the progress of the outer algorithm. The extragradient part of HPE then
uses the gradient at this approximate proximal point to correct the
auxiliary iterate, as we describe next. Our task will be to construct such
points using gradients while keeping $\lambda$ large.

\subsubsection{Accelerated HPE: the recursion and its guarantees}
The accelerated hybrid proximal extragradient (A-HPE) method of
\citet{monteiro2013accelerated} couples this relative proximal condition to
an extrapolation and an extragradient correction. The accelerated state consists of a primal point $x_k$, an auxiliary point
$z_k$, and a weight $A_k$. A convex combination produces the next center
$y_k$; an approximate proximal step gives $x_{k+1}$; and its gradient corrects
$z_k$. The identity $a_k^2=\lambda_k A_{k+1}$ makes the potential calculation
close. This gives a useful division of the analysis: the accelerated
recursion converts large admissible steps into a small objective gap,
while the curvature construction determines how large those steps can be.

Initialize $A_0=0$ and $x_0=z_0$. Given a positive stepsize
$\lambda_k$, first choose the additional weight $a_k$ and the accumulated
weight $A_{k+1}$ according to
\begin{equation}\label{eq:aident}
  a_k=\frac{\lambda_k+\sqrt{\lambda_k^2+4\lambda_kA_k}}2,
\qquad A_{k+1}=A_k+a_k,
\end{equation}
which implies $a_k^2=\lambda_kA_{k+1}$. This choice enforces the relation between the weight and the stepsize
needed by the accelerated potential. The same weights determine the
center at which we request the next proximal step:
\[
y_k=\frac{A_kx_k+a_kz_k}{A_{k+1}}.
\]
Thus $y_k$ lies between the current primal iterate and the auxiliary
iterate. Starting from this center, compute $x_{k+1}$ by any procedure
that satisfies the relative proximal condition
\begin{equation}\label{eq:ms}
\|x_{k+1}-y_k+\lambda_k\nabla f(x_{k+1})\|
\le\sigma\|x_{k+1}-y_k\|,\qquad 0<\sigma<1.
\end{equation}
Finally, use the gradient at the accepted point to update the auxiliary
iterate:
\[
z_{k+1}=z_k-a_k\nabla f(x_{k+1}).
\]
This is the extragradient correction: its direction is evaluated at the
new proximal point rather than at the extrapolation center. Together,
these three operations define the undamped A-HPE recursion used by
RP-QNPE.
The A-HPE recursion requires only that each step satisfy \eqref{eq:ms}; it
does not specify how such a step is computed, and its guarantees need no
smoothness assumption. The next proposition collects these guarantees: a
bound on the objective error, a lower bound on the growth of the weights
$A_k$ in terms of the stepsizes, and bounds on the movement of the
iterates. The curvature model and the proximal solver enter only through
the stepsizes $\lambda_k$ for which \eqref{eq:ms} can be certified;
Lemma~\ref{lem:wellposed} establishes this condition for RP-QNPE.
The potential and stepsize-growth estimates below are standard
consequences of the A-HPE analysis of
\citet{monteiro2013accelerated}. We include their proofs,
together with the inverse-moment and path-length consequences
used here, for completeness.

\begin{proposition}\label{prop:hpeestimates}
Consider the preceding recursion with $A_0=0$, $x_0=z_0$, and
$\|x_0-x^\star\|\le R$. Suppose every step satisfies \eqref{eq:ms}.
Let
\[
\Psi_k=A_k(f(x_k)-f^\star)+\tfrac12\|z_k-x^\star\|^2,
\quad C_x=1+\frac{\sigma}{\sqrt{1-\sigma^2}},
\quad \Lambda_N=1+\log(A_N/A_1).
\]
With $r_k=x_{k+1}-y_k$, the following hold for every $N\ge1$:
\begin{enumerate}[label=(\roman*),leftmargin=2em]
\item \emph{Potential and movement energy.}
\[
\Psi_{k+1}\le\Psi_k-\frac{1-\sigma^2}{2}
\frac{A_{k+1}}{\lambda_k}\|r_k\|^2,
\qquad f(x_N)-f^\star\le\frac{R^2}{2A_N},\qquad 
\sum_{k<N}\frac{A_{k+1}}{\lambda_k}\|r_k\|^2
\le\frac{R^2}{1-\sigma^2}.
\]
\item \emph{Growth from stepsize moments.} For every $q>0$,
\[
\sqrt{A_N}\ge\frac12 \sum_{k<N}\sqrt{\lambda_k},
\qquad
A_N\ge\frac{N^{2+1/q}}{4(\sum_{k<N}\lambda_k^{-q})^{1/q}}.
\]
\item \emph{Bounded iterates and path length.}
\[
\|z_k-x^\star\|\le R,\qquad \|x_k-x^\star\|\le C_xR,
\]
\[
Q_N:=\sum_{k<N}(\|y_k-x_k\|+\|x_{k+1}-y_k\|)
\le(1+C_x)R\Lambda_N+\frac{R\sqrt{\Lambda_N}}{\sqrt{1-\sigma^2}}.
\]
\end{enumerate}
\end{proposition}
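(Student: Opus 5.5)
I will follow the Monteiro--Svaiter potential argument, then derive (ii) and (iii) as consequences of (i).

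\emph{Part (i).} Write $v_k=\nabla f(x_{k+1})$. From $z_{k+1}=z_k-a_kv_k$,
\[
\tfrac12\|z_{k+1}-x^\star\|^2=\tfrac12\|z_k-x^\star\|^2-a_k\langle v_k,z_k-x^\star\rangle+\tfrac{a_k^2}{2}\|v_k\|^2 .
\]
Convexity at $x_{k+1}$ gives $A_k(f(x_k)-f(x_{k+1}))\ge A_k\langle v_k,x_k-x_{k+1}\rangle$ and $a_k(f^\star-f(x_{k+1}))\ge a_k\langle v_k,x^\star-x_{k+1}\rangle$. Adding these and using $A_kx_k+a_kz_k=A_{k+1}y_k$ yields
\[
\Psi_{k+1}-\Psi_k\le A_{k+1}\langle v_k,x_{k+1}-y_k\rangle+\tfrac{a_k^2}{2}\|v_k\|^2 .
\]
Substituting $a_k^2=\lambda_kA_{k+1}$, the right side equals
\[
\frac{A_{k+1}}{2\lambda_k}\bigl(\|r_k+\lambda_kv_k\|^2-\|r_k\|^2\bigr)\le-\frac{(1-\sigma^2)A_{k+1}}{2\lambda_k}\|r_k\|^2
\]
by \eqref{eq:ms}. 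Telescoping from $\Psi_0=\tfrac12\|x_0-x^\star\|^2\le R^2/2$ gives both the objective bound and the energy sum.

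\emph{Part (ii).} Since $a_k\ge\lambda_k/2+\sqrt{\lambda_kA_k}$, we have $A_{k+1}\ge(\sqrt{A_k}+\tfrac12\sqrt{\lambda_k})^2$, which gives the first inequality. For the second, apply Hölder with exponents $p=1+2q$ and $p'=(1+2q)/(2q)$ to $1=\lambda_k^{1/2\cdot 2q/(1+2q)}\lambda_k^{-q/(1+2q)}$. This gives
\[
N\le\bigl(\textstyle\sum\sqrt{\lambda_k}\bigr)^{2q/(1+2q)}\bigl(\textstyle\sum\lambda_k^{-q}\bigr)^{1/(1+2q)}.
\]
Raising to the power $(1+2q)/q$ gives $(\sum\sqrt{\lambda_k})^2\ge N^{2+1/q}/(\sum\lambda_k^{-q})^{1/q}$, and then $A_N\ge\frac14(\sum\sqrt{\lambda_k})^2$.

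\emph{Part (iii), bounded iterates.} The bound $\|z_k-x^\star\|\le R$ follows from $\Psi_k\le\Psi_0$. For $x_{k+1}$, the triangle inequality with \eqref{eq:ms} gives $\|r_k\|\le\|r_k+\lambda_kv_k\|+\lambda_k\|v_k\|$. Monotonicity of $\nabla f$ against $x^\star$ and the relative error condition show that $x_{k+1}$ lies within $\frac{\sigma}{\sqrt{1-\sigma^2}}\|y_k-x^\star\|$ of the exact prox point. That point is nonexpansive toward $x^\star$, so
\[
\|x_{k+1}-x^\star\|\le C_x\|y_k-x^\star\|.
\]
Since $y_k$ is a convex combination, $\|y_k-x^\star\|\le\max(\|x_k-x^\star\|,\|z_k-x^\star\|)$, and this does not immediately close under induction because $C_x>1$. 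I will instead use the cleaner bound $\|x_{k+1}-x^\star\|\le\|y_k-x^\star\|+\|r_k\|$ together with the energy bound for $\|r_k\|$. Alternatively, I can derive $\|x_{k+1}-x^\star\|\le\|u_k-x^\star\|+\frac{\sigma}{\sqrt{1-\sigma^2}}\|u_k-y_k\|$ with $\|u_k-x^\star\|\le\|y_k-x^\star\|$, combined with an induction that tracks $\|y_k-x^\star\|\le R$ via the potential. I expect this step, which gives exactly the constant $C_x$, to be the main obstacle.

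\emph{Part (iii), path length.} For the movement $y_k-x_k=\frac{a_k}{A_{k+1}}(z_k-x_k)$, bound $\|z_k-x_k\|\le(1+C_x)R$ and use
\[
\frac{a_k}{A_{k+1}}\le\log\frac{A_{k+1}}{A_k}\qquad(k\ge1),
\]
with the $k=0$ term equal to $1$. Summing gives $(1+C_x)R\Lambda_N$. For $\sum\|r_k\|$, apply Cauchy--Schwarz:
\[
\sum\|r_k\|\le\Bigl(\sum\tfrac{A_{k+1}}{\lambda_k}\|r_k\|^2\Bigr)^{1/2}\Bigl(\sum\tfrac{\lambda_k}{A_{k+1}}\Bigr)^{1/2}.
\]
The first factor is at most $R/\sqrt{1-\sigma^2}$ by (i). For the second, $\lambda_k/A_{k+1}=a_k^2/A_{k+1}^2\le a_k/A_{k+1}$, which sums to at most $\Lambda_N$. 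Together these give the stated bound on $Q_N$.
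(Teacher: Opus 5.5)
Your parts (i) and (ii), and the path-length computation in (iii), are correct and follow the paper's argument. The gap is the bound $\|x_k-x^\star\|\le C_xR$. You flag it as the main obstacle but do not prove it, and the constant $(1+C_x)R\Lambda_N$ in $Q_N$ depends on it through $\zeta_k\le(1+C_x)R$.

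Neither of your sketches closes:
\begin{itemize}
\item Using $\|y_k-x^\star\|\le\max\{\|x_k-x^\star\|,\|z_k-x^\star\|\}$ discards exactly the structure you need. With the max, even $\|x_{k+1}-x^\star\|\le\|y_k-x^\star\|+\|r_k\|$ only yields $R+\sum_{k<N}\|r_k\|$, which grows like $\sqrt{\Lambda_N}$ rather than staying uniform.
\item Your alternative rests on $\|y_k-x^\star\|\le R$ ``via the potential''. But $\Psi_k\le\Psi_0$ controls only $\|z_k-x^\star\|$ and $A_k(f(x_k)-f^\star)$. Without strong convexity it says nothing about $\|x_k-x^\star\|$, hence nothing about $y_k$.
\end{itemize}

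The missing step is to keep the convex-combination weights and to put the factor $\sigma$, not $1$, in front of $\|r_k\|$.
\begin{itemize}
\item With $w_k=r_k+\lambda_k\nabla f(x_{k+1})$, one has $x_{k+1}=\operatorname{prox}_{\lambda_kf}(y_k+w_k)$ and $\|w_k\|\le\sigma\|r_k\|$. Nonexpansiveness of the prox and $\operatorname{prox}_{\lambda_kf}(x^\star)=x^\star$ then give $\|x_{k+1}-x^\star\|\le\|y_k-x^\star\|+\sigma\|r_k\|$.
\item Multiply by $A_{k+1}$ and use $A_{k+1}\|y_k-x^\star\|\le A_k\|x_k-x^\star\|+a_kR$. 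The paper telescopes this to $A_N\|x_N-x^\star\|\le A_NR+\sigma\sum_{j<N}A_{j+1}\|r_j\|$.
\item The paper bounds the last sum by $RA_N/\sqrt{1-\sigma^2}$ via Cauchy--Schwarz. It writes $A_{j+1}\|r_j\|=\big(\sqrt{A_{j+1}/\lambda_j}\,\|r_j\|\big)a_j$ and uses $\sum_ja_j^2\le A_N^2$.
\item Equivalently, the termwise energy bound $\|r_k\|\le\frac{R}{\sqrt{1-\sigma^2}}\frac{a_k}{A_{k+1}}$ makes $\|x_k-x^\star\|\le C_xR$ an invariant of the weighted recursion.
\end{itemize}
Your ``cleaner bound'' has coefficient $1$ on $\|r_k\|$. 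Even with the weights, it would only give the constant $1+1/\sqrt{1-\sigma^2}$ rather than $C_x$.
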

\begin{proof}
Part (i) is Lemma~\ref{lem:potential}; part (ii) combines
Lemmas~\ref{lem:agrowth} and~\ref{lem:moment}; and part (iii) combines
Lemma~\ref{lem:potential}(ii) with Lemmas~\ref{lem:bounded}
and~\ref{lem:pathlen}. Appendix~\ref{app:backbone}
proves each of these statements directly from the recursion and convexity.
\end{proof}

Each part of the proposition plays a role. Part~(i) reduces the
convergence rate to the growth of $A_N$ and gives the computable stopping
test $A_N\ge R^2/(2\varepsilon)$; its movement-energy bound controls the
total length of the proximal steps. Part~(ii) turns stepsize bounds into
growth of $A_N$: a uniform lower bound on $\lambda_k$ gives the
accelerated-gradient rate, while with $q=1$ it gives
$f(x_N)-f^\star\le 2R^2N^{-3}\sum_{k<N}\lambda_k^{-1}$, so a first inverse
moment growing only logarithmically yields a cubic rate up to a logarithmic
factor (removed in Section~\ref{sec:algo} by localizing to dyadic scales of
$A_k$). Part~(iii)
bounds the iterates and their path length. Section~\ref{sec:algo} uses
all three parts for RP-QNPE, where the path length controls the error of
reused columns and the first inverse moment gives the $k^{-3}$ rate.

\subsubsection{Certified quasi-Newton steps}\label{sec:framework}
Let $F$ be convex with an $\mathcal L$-Lipschitz Hessian. An exact proximal
step from a center $y$ requires solving the nonlinear equation
$r+\lambda\nabla F(y+r)=0$. The Newton proximal extragradient method of
\citet{monteiro2012iteration,monteiro2013accelerated} linearizes the
gradient at $y$ and solves a regularized linear system involving
$\nabla^2F(y)$. We replace this Hessian by a positive semidefinite matrix
$B$ computed from gradients, and take
\[
r=-(B+\lambda^{-1}I)^{-1}\nabla F(y),\qquad u=y+r.
\]
This point is not automatically HPE-admissible, but its residual is
controlled by the accuracy of $B$. If $E\ge\|B-\nabla^2F(y)\|_{\rm op}$,
Taylor's theorem gives
\begin{equation}\label{eq:interfacebound}
\|r+\lambda\nabla F(y+r)\|
\le\lambda\left(E+\frac{\mathcal L}{2}\|r\|\right)\|r\|,
\end{equation}
which is Lemma~\ref{lem:resid}. The term $E$ accounts for replacing the
Hessian by $B$, and $\mathcal L\|r\|/2$ for the change of the Hessian along
the step. Hence the step is admissible whenever
$\lambda(E+\mathcal L\|r\|/2)\le\sigma$. This is the principle on which
our methods are built: a bound $E$ known before the step, together with an
a priori bound on $\|r\|$, yields an admissible stepsize directly, so no
line search is needed and no gradient is spent on rejected trials. The
matrix need not be accurate; a smaller certified error only allows a larger
step. Finite differences of gradients supply such a certificate at the
point where they are computed. Since building a full matrix costs $\chi+1$ gradients,
one at the anchor and one displaced probe per group of
Definition~\ref{def:chi}, the method reuses $B$ over many steps, and the central question is how to keep $E$ valid during
this reuse.

RP-QNPE (Section~\ref{sec:algo}) and its strongly convex counterpart
(Section~\ref{sec:strongly}) take this step with $F=f$ and choose $\lambda$
from explicit bounds on $E$ and $\|r\|$ built from stored quantities; the
nonconvex method (Section~\ref{sec:nonconvex}) uses the same certified
model as the hint of an optimistic online learner, where the certificate
limits the learning rate rather than the stepsize.


\section{Rolling-probe QNPE}\label{sec:algo}

RP-QNPE uses the certified quasi-Newton step of
Section~\ref{sec:framework} with $F=f$, so each step of the accelerated
recursion is a regularized model step. Two choices then determine the
method: how to construct and maintain the matrix $B_k$, and how to choose
the stepsize $\lambda_k$ so that the step is admissible. Since a full
finite-difference matrix costs $\chi+1$ gradients, we query only one new
group of curvature directions per iteration, and the central question is how to
reuse the other directions without losing track of the error they
introduce into the proximal step.

A-QNPE \citep{jiang2023accelerated} makes both choices through a gradient line search. It solves a regularized model system and checks the relative
proximal condition at trial points. A rejected trial supplies a
displacement $s_k$ and a gradient difference $w_k$, and the loss
$\|w_k-Bs_k\|^2/\|s_k\|^2$ measures how well the matrix predicts this
observed gradient change. An online learning procedure updates the matrix,
and its regret bound controls the accumulated directional errors, which
leads to control of the second inverse-stepsize moment
$\sum_k\lambda_k^{-2}$. The line search also requires damping in the
accelerated recursion to accommodate the change from a trial stepsize to
an accepted one.

We make both choices differently, and deterministically. We refresh one
group of columns in a fixed cyclic order and bound the error of the stored columns
by how far their anchors have moved from the current iterate. This gives
an error certificate before any trial step is computed. Its accumulated
value controls the first inverse-stepsize moment $\sum_k\lambda_k^{-1}$,
which by Proposition~\ref{prop:hpeestimates}(ii) yields a faster rate than
the second moment. The same certificate also determines the stepsize
without trial gradients, so the undamped recursion applies directly, and
the matrix construction and the stepsize analysis are linked through a
computable error bound rather than through a regret argument.

It is worth being precise about which feature of the certificate the
rate depends on. Write $\bar E_k$ for the computable bound on
$\|B_k-H(x_k)\|_{\rm op}$ constructed in Block~1. Block~2 will produce
\[
\lambda_k^{-1}\ \le\ \frac{10}{\sigma}\bar E_k
+\big(\text{terms controlled by the path length and the gradient norms}\big),
\]
while Proposition~\ref{prop:hpeestimates}(ii) with $q=1$ gives
$A_N\ge N^3/(4S_1)$ with $S_1=\sum_{k<N}\lambda_k^{-1}$, hence
\[
f(x_N)-f^\star\ \le\ \frac{2R^2}{N^3}\sum_{k<N}\lambda_k^{-1}.
\]
Hence, the design target is the sum $\sum_{k<N}\bar E_k$, and we need it to grow at most logarithmically in $N$, with a
factor $\chi$ permitted; the analysis of Section~\ref{sec:rpguarantee}
uses a localized form of this target, which also removes the logarithm
from the rate. Neither smallness nor monotone decrease of the
individual $\bar E_k$ is needed.
This requirement rules out a fixed model with a nonvanishing error certificate. If the error bound is a positive constant $E>0$, however small, then $\sum_{k<N}\bar E_k=NE$, so
$S_1=O(N)$, $A_N=\Omega(N^2)$, and the guarantee is the
accelerated-gradient rate and nothing more. The certificate has to improve as
the trajectory settles.

\paragraph{The proposed method.}
RP-QNPE has four blocks: refresh one group of columns, choose the stepsize, take a
regularized model step, and update the accelerated state. Its name
reflects this design. As in the quasi-Newton proximal extragradient
methods of \citet{jiang2023online,jiang2023accelerated}, the curvature
matrix enters only through the regularized model step, and the
extragradient correction updates the accelerated state. The prefix refers
to how the matrix is maintained: each iteration spends one
finite-difference probe on a single group of coordinates, and the probes
cycle through the $\chi$ groups, so the model is renewed over a rolling
window of $\chi$ iterations. After choosing $\lambda_k$, the method evaluates the
gradient at the extrapolated point $y_k$, solves the regularized linear
system for the primal step, and evaluates the gradient at its endpoint to
update $z_k$; this gradient is cached for the next iteration. Thus the
probe, the model step, and the extragradient correction cost at most three
gradient queries per iteration. The mass $A_k$ records certified progress
and gives the stopping rule. Algorithm~1 summarizes the method;
Section~\ref{sec:rpblock1} builds the matrix and accounts for the
different ages of its columns, and Section~\ref{sec:rpblock2} derives the
scalar stepsize policy.

\vspace{3mm}

\par\noindent\begin{minipage}{\linewidth}\begin{framed}
\noindent\textbf{Algorithm 1 (RP-QNPE).} Initialize
$x_0=z_0$, $A_0=0$, $M_{-1}=0$, and compute $\nabla f(x_0)$.
For $k=0,1,\ldots$:
\begin{enumerate}[label=\arabic*.,leftmargin=2em]
\item \textbf{Refresh and certify.} If $\nabla f(x_k)=0$, return $x_k$. Refresh the columns of group $g_{i_k}$ with one finite
      difference and form $B_k$ and $\bar E_k$.
\item \textbf{Choose the stepsize.} Compute $\lambda_k$ using only scalar operations as described in Subroutine 1.1. Set $a_k=a(\lambda_k)$ and $A_{k+1}=A_k+a_k$.
\item \textbf{Take the model step.} Evaluate $\nabla f(y_k)$ at
      $y_k=(A_kx_k+a_kz_k)/A_{k+1}$, solve
      $(B_k+\lambda_k^{-1}I)r_k=-\nabla f(y_k)$,
      and set $x_{k+1}=y_k+r_k$.
\item \textbf{Correct the momentum.} Evaluate $\nabla f(x_{k+1})$ and set
      $z_{k+1}=z_k-a_k\nabla f(x_{k+1})$.
\item \textbf{Check progress.} For target accuracy $\varepsilon$, return $x_{k+1}$ once
      $A_{k+1}\ge R^2/(2\varepsilon)$.
\end{enumerate}
\end{framed}\end{minipage}\par\medskip

\subsection{Block 1: refresh and certify the curvature model}\label{sec:rpblock1}

Block~1 must supply two things. First, feasibility, $B_k\in\mathcal Z$;
this is used twice, to make the regularized system well posed and, through
the resolvent bound $\|(B_k+\lambda^{-1}I)^{-1}B_k\|_{\rm op}\le1$ of
Block~2, to keep the momentum displacement at first power in the stepsize
certificate. Second, a quantity $\bar E_k$, computable from stored data
before any new gradient is queried, with
$\|B_k-H(x_k)\|_{\rm op}\le\bar E_k$, whose sum meets the design target discussed previously. 

\paragraph{The construction.}
A gradient difference along a group direction approximates the Hessian
columns of that group: for a group $g$ and any $\tau>0$,
Lemma~\ref{lem:group} gives
\[
\Big(\sum_{j\in g}\Big\|P_j\frac{\nabla f(x+\tau v_g)-\nabla f(x)}{\tau}-H(x)e_j\Big\|^2\Big)^{1/2}
\le\frac{L_2\tau|g|}{2};
\]
in the dense case this is Lemma~\ref{lem:fd}, one coordinate direction per
column. Reconstructing every column at $x_k$ would cost $\chi$ gradients.
We instead refresh one group per iteration and retain the others. A cyclic
order makes every stored column at most $\chi-1$ iterations old once a
cycle is complete, and Hessian Lipschitz continuity converts that age into
the distance the iterates have traveled since the column was queried.
Reuse is therefore productive precisely when the trajectory has settled,
which is the regime the accelerated potential delivers.

Fix $b>0$, $\sigma\in(0,1)$, and $M_{-1}=0$; the headline bounds take
$b=L_2R$. At iteration $k$ let $i_k=(k\bmod\chi)+1$ be the index of the
group to be refreshed and set
\begin{equation}\label{eq:refresh}
u_k=\frac{\nabla f(x_k+\tau_kv_{g_{i_k}})-\nabla f(x_k)}{\tau_k},
\qquad
M_ke_j=
\begin{cases}
P_ju_k, & j\in g_{i_k},\\
M_{k-1}e_j, & j\notin g_{i_k},
\end{cases}
\qquad
B_k=\Pi_{\mathcal Z}\left(\frac{M_k+M_k^\top}{2}\right),
\end{equation}
with a probe radius $\tau_k$ specified below. Thus $M_k$ replaces the
columns of $M_{k-1}$ indexed by $g_{i_k}$ by the block read off from $u_k$
and keeps the other columns; in the dense case $g_{i_k}=\{i_k\}$ and
$P_{i_k}=I$, so column $i_k$ is replaced by $u_k$. The gradient at $x_k$
is cached, so the refresh costs one query. The matrix $M_k$ is not
symmetric in general, since its columns were queried at different
anchors; symmetrization and clipping restore the feasibility required
above and use no gradients. Section~\ref{sec:implementation} gives their
arithmetic cost and a projection-free alternative.

\paragraph{The certificate.}
Write $m_k=\|y_k-x_k\|$, $\delta_k=m_k+\|r_k\|$ (which bounds $\|x_{k+1}-x_k\|$), and $g_k=\|\nabla f(x_k)\|$. For $k\ge\chi$ set
\[
\widehat D_k=\sum_{j=k-\chi+1}^{k-1}\delta_j,\qquad
e_k=\frac{b}{(k-\chi+2)^2},
\]
the path length over the current window and the finite-difference
allowance, which with the radius chosen below bounds twice the
reconstruction error of the stored blocks. Both are available after the
refresh and before the next model step.

\begin{lemma}\label{lem:telescope}
For all $k\ge0$, we have $\|B_k-H(x_k)\|_{\rm op}\le\bar E_k$, where
\begin{equation}\label{eq:rollingcert}
\bar E_k=
\begin{cases}
L_1,&k<\chi,\\[2pt]
\min\{L_1,\ 2L_2\widehat D_k+e_k\},&k\ge\chi.
\end{cases}
\end{equation}\end{lemma}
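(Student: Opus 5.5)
The plan is to prove the two cases separately, using one elementary fact throughout: $B_k\in\mathcal Z$ and $H(x_k)\in\mathcal Z$. Both matrices have spectrum in $[0,L_1]$, so their difference has eigenvalues in $[-L_1,L_1]$. This gives $\|B_k-H(x_k)\|_{\rm op}\le L_1$ for every $k$, which settles the case $k<\chi$ and the first entry of the minimum for $k\ge\chi$. It remains to show $\|B_k-H(x_k)\|_{\rm op}\le 2L_2\widehat D_k+e_k$ for $k\ge\chi$. I will reduce this to the estimate $\|M_k-H(x_k)\|_{\rm op}\le L_2\widehat D_k+e_k/2$ on the raw column matrix.

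\emph{Step 1: symmetrization and clipping cost a factor two.} Put $X=(M_k+M_k^\top)/2$ and $H=H(x_k)$. Since $H$ is symmetric, $X-H$ is the symmetric part of $M_k-H$, so $\|X-H\|_{\rm op}\le\|M_k-H\|_{\rm op}$. Next, $\Pi_{\mathcal Z}$ clips eigenvalues, so $\|\Pi_{\mathcal Z}(X)-X\|_{\rm op}$ is the largest distance from an eigenvalue of $X$ to $[0,L_1]$. By Weyl's inequality every eigenvalue of $X$ lies within $\|X-H\|_{\rm op}$ of an eigenvalue of $H$, and those lie in $[0,L_1]$. Hence $\|\Pi_{\mathcal Z}(X)-X\|_{\rm op}\le\|X-H\|_{\rm op}$. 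The triangle inequality then gives $\|B_k-H\|_{\rm op}\le2\|M_k-H\|_{\rm op}$, which accounts for the factor $2$ in \eqref{eq:rollingcert}.

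\emph{Step 2: decomposition of $M_k-H(x_k)$.} For $k\ge\chi$ the cyclic order ensures that each group $g$ was last refreshed at a unique $s(g)\in\{k-\chi+1,\dots,k\}$, and these $s$ are distinct across groups. Let $D_s$ be the diagonal column selector of the group refreshed at step $s$; these selectors sum to $I$. Then
\[
M_k-H(x_k)=\sum_{s=k-\chi+1}^{k}\big(M_k-H(x_s)\big)D_s+\sum_{s=k-\chi+1}^{k}\big(H(x_s)-H(x_k)\big)D_s .
\]
The first sum is the finite-difference error. Its Frobenius norm is at most $(\sum_s(L_2\tau_s|g_s|/2)^2)^{1/2}$ by Lemma~\ref{lem:group}. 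The masking $P_j$ does no harm because $H(x_s)e_j$ is supported in $\mathcal S_j$. The probe radius $\tau_s$ (scaled by $1/|g_s|$ and decaying in $s$, as specified after the lemma) is chosen exactly so that this quantity is at most $e_k/2$. I will take that calibration as given and verify it is consistent with $e_k=b/(k-\chi+2)^2$ once the radius is written down.

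\emph{Step 3: telescoping the staleness term.} This is the main obstacle. A naive bound gives $\sum_s\|H(x_s)-H(x_k)\|_{\rm op}$, which carries a factor $\chi$. To avoid it, I write $H(x_s)-H(x_k)=\sum_{t=s}^{k-1}(H(x_t)-H(x_{t+1}))$ and exchange the order of summation:
\[
\sum_s\big(H(x_s)-H(x_k)\big)D_s=\sum_{t=k-\chi+1}^{k-1}\big(H(x_t)-H(x_{t+1})\big)\Big(\sum_{s\le t}D_s\Big).
\]
Each partial sum $\sum_{s\le t}D_s$ is an orthogonal coordinate projection, so its operator norm is at most $1$. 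By Hessian Lipschitz continuity and $\|x_{t+1}-x_t\|\le\delta_t$, the staleness term is therefore bounded by $L_2\sum_{t=k-\chi+1}^{k-1}\delta_t=L_2\widehat D_k$.

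Combining Steps 2 and 3 gives $\|M_k-H(x_k)\|_{\rm op}\le L_2\widehat D_k+e_k/2$, and Step 1 then gives the claimed bound $2L_2\widehat D_k+e_k$. Taking the minimum with the trivial bound $L_1$ completes the case $k\ge\chi$.
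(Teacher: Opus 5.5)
Your proposal is correct and follows the paper's proof essentially step for step. The matching steps are: the cap $L_1$ from $B_k,H(x_k)\in\mathcal Z$; the split of $M_k-H(x_k)$ into a finite-difference part with blockwise Frobenius errors plus a staleness part; the exchange of summation that turns staleness into Hessian increments times the coordinate projections $\sum_{s\le t}D_s$ (the paper's $\Pi_t$); and the factor $2$ from symmetrization plus clipping. For the clipping step you argue via Weyl, while the paper uses the equivalent Loewner bounds of Lemma~\ref{lem:clip}. The calibration you deferred takes one line: $L_2\tau_s|g_s|/2=b/(2\sqrt\chi\,(s+1)^2)\le b/(2\sqrt\chi\,(k-\chi+2)^2)$ since $s\ge k-\chi+1$, and summing $\chi$ such squares gives exactly $(e_k/2)^2$.
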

\noindent The proof is in Appendix~\ref{proof:lem:telescope}.

The cap $L_1$ is valid at every iteration, because $B_k$ and $H(x_k)$ both
lie in $\mathcal Z$, so their difference lies in $[-L_1I,L_1I]$. It is the
only bound available while the window is still incomplete.

The second branch is where the reuse is paid for, and the way it is assembled
is what determines the dependence on $\chi$. Bounding each stale block
separately and combining the $\chi$ block errors would convert a blockwise
bound into an operator-norm bound and lose a factor $\sqrt\chi$ on the drift.
This is avoided by exchanging the order of summation. Up to
finite-difference errors, the staleness of $M_k$ equals
$-\sum_{t=k-\chi+1}^{k-1}\bigl(H(x_{t+1})-H(x_t)\bigr)\Pi_t$, where $\Pi_t$
is the orthogonal projection onto the coordinates whose stored column was
probed at or before iteration $t$. This expresses the staleness through
\emph{full} Hessian increments multiplied by orthogonal projections. Each
$\Pi_t$ has operator norm at most one, so the drift contributes at most
$L_2\sum_t\|x_{t+1}-x_t\|\le L_2\widehat D_k$, with no factor depending on
$\chi$ or $d$. The finite-difference errors are the one place a $\sqrt\chi$
remains: the $\chi$ stored blocks occupy disjoint columns, so their
Frobenius errors add in squares, and $\chi$ blocks with Frobenius error at
most $e_k/(2\sqrt\chi)$ each give Frobenius norm $e_k/2$. Clipping doubles
both contributions by Lemma~\ref{lem:clip}, which gives
\eqref{eq:rollingcert}.

The above result also clarifies how the probe radius sequence $\tau_k$
should be set. Note that the second branch of \eqref{eq:rollingcert} has
two terms. The drift term $2L_2\widehat D_k$ is not a design choice: its
sum is controlled by the accelerated potential, as shown below. The
finite-difference term $e_k$ is entirely a design choice, and it must
satisfy $\sum_ke_k=O(b)$. A constant radius fails this, contributing
$\Theta(N)$ and capping the rate at $N^{-2}$ exactly as a fixed model
would. A summable radius suffices, and we take
\[
\tau_k=\frac{b}{\sqrt\chi\,L_2\,|g_{i_k}|\,(k+1)^2}.
\]
By Lemma~\ref{lem:group}, the block probed at iteration $k$ then has
Frobenius error at most $L_2\tau_k|g_{i_k}|/2=b/(2\sqrt\chi\,(k+1)^2)$,
independent of the size of the group: the factor $\sqrt\chi L_2|g_{i_k}|$
cancels the conversion above, so the finite-difference term is exactly
$e_k=b/(k-\chi+2)^2$, determined by the oldest block in the window and
free of $\chi$, $L_2$, and the group sizes, and these terms sum to at most
$2b$. Any exponent larger than one keeps this contribution an additive
constant (exponent one would add a logarithm); the choice $2$ gives the
explicit bound $2b$. In the
dense case the radius is $\tau_k=b/(\sqrt d\,L_2(k+1)^2)$.

The construction was designed around a sum, and the following lemma evaluates
it. Its proof uses only the window structure, not the stepsize schedule, so
the promised bound is available before Block 2 is specified.

\begin{lemma}\label{lem:sum}
Let $Q_N=\sum_{k<N}\delta_k$. Then $
\sum_{k<N}\bar E_k\le\chi L_1+2\chi L_2Q_N+2b$,
and the sum is also at most $NL_1$.
\end{lemma}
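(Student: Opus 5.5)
The plan is to split the sum at $k=\chi$ and bound the two ranges separately, using only the definition \eqref{eq:rollingcert} from Lemma~\ref{lem:telescope}.

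\textbf{The trivial bound.} Every $\bar E_k$ is at most $L_1$, since both branches of \eqref{eq:rollingcert} are capped at $L_1$. Hence $\sum_{k<N}\bar E_k\le NL_1$.

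\textbf{The initial window.} For the main bound, the iterations $k<\min\{N,\chi\}$ contribute at most $\chi L_1$, because there $\bar E_k=L_1$.

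\textbf{The remaining iterations.} For $\chi\le k<N$ I drop the cap and use $\bar E_k\le 2L_2\widehat D_k+e_k$. This leaves two sums to estimate.

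\emph{Finite-difference allowance.} The terms $e_k=b/(k-\chi+2)^2$, for $k\ge\chi$, are $b/n^2$ with $n\ge2$. Their sum is therefore at most $b(\pi^2/6-1)\le 2b$, which is the additive constant in the statement.

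\emph{Drift terms.} This is the only step needing care, and it comes down to counting multiplicities in a double sum. I exchange the order of summation:
\[
\sum_{k=\chi}^{N-1}\widehat D_k=\sum_{k=\chi}^{N-1}\sum_{j=k-\chi+1}^{k-1}\delta_j=\sum_{j}\delta_j\,\#\{k:\ \chi\le k\le N-1,\ j+1\le k\le j+\chi-1\}.
\]
For each $j$, the admissible $k$ form a window of length $\chi-1$. So each index $j$ is counted at most $\chi-1$ times.

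I also need the range of $j$. Since $j\le k-1\le N-2$ and $j\ge k-\chi+1\ge 1$, every $j$ that appears satisfies $0\le j<N$, and its term $\delta_j$ is included in $Q_N$. It follows that
\[
\sum_{k=\chi}^{N-1}\widehat D_k\le(\chi-1)Q_N\le \chi Q_N,
\]
and multiplying by $2L_2$ gives the contribution $2\chi L_2Q_N$.

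\textbf{Conclusion.} Adding the three pieces gives $\sum_{k<N}\bar E_k\le \chi L_1+2\chi L_2Q_N+2b$. When $N\le\chi$, the second range is empty and the bound holds trivially. No step is a real obstacle; the only point to get right is that each $\delta_j$ lies in at most $\chi-1$ windows and that all these $\delta_j$ have index below $N$.
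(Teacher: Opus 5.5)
Your proposal is correct and follows essentially the same route as the paper's proof. Both charge the first $\chi$ certificates to the cap $L_1$, then exchange the order of summation so that each $\delta_j$ (with $j\le N-2$) is counted at most $\chi-1$ times, and bound the allowances $\sum_{m\ge2}b/m^2$ by a constant at most $2b$. The paper gets $b$ there by an integral comparison, where you use $b(\pi^2/6-1)$.
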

\noindent The proof is in Appendix~\ref{proof:main:lem:sum}. The mechanism is
that a displacement is charged only while it remains in a window: the
condition $k-\chi+1\le j\le k-1$ is equivalent to $j+1\le k\le j+\chi-1$, so each
$\delta_j$ is counted at most $\chi-1$ times. The finite-difference allowances
sum to at most $2b$ by the previous paragraph.
Combining with the path-length estimate
$Q_N=O_\sigma(R\Lambda_N)$ of Proposition~\ref{prop:hpeestimates}(iii) and
taking $b=L_2R$ gives
\begin{equation}\label{eq:certsum}
\sum_{k<N}\bar E_k\ =\ O_\sigma\big(\chi(L_1+L_2R)\,\Lambda_N\big),
\end{equation}
which is the $\tilde O(\chi)$ required by the design target. The term
$\chi L_1$ is the cost of learning the initially missing columns. The
proof of Theorem~\ref{thm:iterationrate} uses a localized form of this
estimate: within a dyadic scale of the mass $A_k$, the path length is
$O_\sigma(R)$, and charging the first $\chi$ certificates of the scale to
the cap $L_1$ leaves windows that lie inside the scale, so the
certificates of each scale sum to $O_\sigma(\chi(L_1+L_2R))$, with no
logarithm.

\subsection{Block 2: choose a step from stored scalar data}\label{sec:rpblock2}

As mentioned earlier, we seek the largest
stepsize we can take at every iteration. The only limit is
that the pair $(y_k,x_{k+1})$ must still satisfy the HPE condition
\eqref{eq:ms}. By Lemma~\ref{lem:resid} applied at $y_k$, a sufficient
condition is
\begin{equation}\label{eq:targetcond}
\lambda_k\left(\|B_k-H(y_k)\|_{\rm op}
+\frac{L_2}{2}\|r_k\|\right)\ \le\ \sigma .
\end{equation}
Block 2 is therefore an optimization in a single scalar: maximize $\lambda$
subject to \eqref{eq:targetcond}.

Neither quantity in \eqref{eq:targetcond} is available when the choice is
made, and both depend on the quantity being chosen. The extrapolation point
$y_k=y(\lambda)$ moves with $\lambda$, so $\|B_k-H(y_k)\|_{\rm op}$ is not the certified error
$\bar E_k$ at $x_k$; and $r_k=r(\lambda)$ is the solution of a linear system
whose right-hand side is $\nabla f(y(\lambda))$, a gradient we have not paid
for. 
Testing \eqref{eq:ms} by backtracking with $y(\lambda)$ recomputed
at each trial would require up to two gradients per trial.
A-QNPE instead uses a line-search mechanism with a damping
correction to accommodate the change from its trial stepsize
to the accepted one.
We instead replace each of the two terms by an upper bound that is an explicit
function of $\lambda$ and of stored data, and solve the resulting scalar
problem by bisection.

For a candidate $\lambda>0$, let $a(\lambda)$ be the positive solution of
$a^2=\lambda(A_k+a)$, namely
$a(\lambda)=\big(\lambda+\sqrt{\lambda^2+4\lambda A_k}\big)/2$, and let
\[
y(\lambda)=\frac{A_kx_k+a(\lambda)z_k}{A_k+a(\lambda)},
\qquad
r(\lambda)=-(B_k+\lambda^{-1}I)^{-1}\nabla f(y(\lambda)).
\]
These are the points Algorithm~1 forms once $\lambda_k=\lambda$ is fixed.
Since $y(\lambda)-x_k=\frac{a(\lambda)}{A_k+a(\lambda)}(z_k-x_k)$, the
distance from the gradient anchor $x_k$ to $y(\lambda)$ is known in closed
form in terms of $\zeta_k=\|z_k-x_k\|$:
\begin{equation}\label{eq:mudef}
m(\lambda)=\|y(\lambda)-x_k\|=\frac{a(\lambda)}{A_k+a(\lambda)}\zeta_k
=\frac{\zeta_k}{2A_k}\Big(\sqrt{\lambda^2+4\lambda A_k}-\lambda\Big),
\end{equation}
with $m\equiv0$ when $A_k=0$, since $x_0=z_0$. The next lemma bounds
both terms of \eqref{eq:targetcond} by explicit functions of $\lambda$
built only from $m(\lambda)$, $\bar E_k$, and the cached
$g_k=\|\nabla f(x_k)\|$.

\begin{lemma}\label{lem:scalarbounds}
For every $\lambda>0$,
\begin{align}
\|B_k-H(y(\lambda))\|_{\rm op}\ &\le\ E_k(\lambda):=\min\{L_1,\ \bar E_k+L_2m(\lambda)\},
\label{eq:Etransport}\\
\|r(\lambda)\|\ &\le\ \widehat\rho_k(\lambda):=\lambda g_k
+m(\lambda)\left(1+\lambda\bar E_k+\frac{L_2\lambda}{2}m(\lambda)\right).
\label{eq:rhodef}
\end{align}
\end{lemma}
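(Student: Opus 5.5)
The two bounds are handled separately, and both rest on the fact that $y(\lambda)$ is displaced from the anchor $x_k$ by exactly $m(\lambda)$, by \eqref{eq:mudef}.

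\emph{Bound \eqref{eq:Etransport}.} The estimate $E_k(\lambda)$ is a minimum, so I prove each branch. For the cap $L_1$: $B_k\in\mathcal Z$ by construction \eqref{eq:refresh}, and $H(y(\lambda))\in\mathcal Z$ by Assumption~\ref{as:main}. Hence $B_k-H(y(\lambda))$ has spectrum in $[-L_1,L_1]$ and operator norm at most $L_1$. For the other branch I use the triangle inequality:
\[
\|B_k-H(y(\lambda))\|_{\rm op}\le\|B_k-H(x_k)\|_{\rm op}+\|H(x_k)-H(y(\lambda))\|_{\rm op}.
\]
The first term is at most $\bar E_k$ by Lemma~\ref{lem:telescope}. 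The second is at most $L_2\|y(\lambda)-x_k\|=L_2m(\lambda)$ by Hessian Lipschitz continuity.

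\emph{Bound \eqref{eq:rhodef}.} I expand the gradient at $y=y(\lambda)$ around the cached anchor:
\[
\nabla f(y)=\nabla f(x_k)+H(x_k)(y-x_k)+\rho,\qquad \|\rho\|\le\tfrac{L_2}{2}m(\lambda)^2,
\]
which is the standard Taylor remainder bound for an $L_2$-Lipschitz Hessian. I then write $H(x_k)=B_k+(H(x_k)-B_k)$ and set $G=(B_k+\lambda^{-1}I)^{-1}$, so that
\[
r(\lambda)=-G\nabla f(x_k)-GB_k(y-x_k)-G(H(x_k)-B_k)(y-x_k)-G\rho.
\]
Because $B_k\succeq0$, we have $\|G\|_{\rm op}\le\lambda$. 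The resolvent bound $\|GB_k\|_{\rm op}\le1$ holds because $GB_k$ has eigenvalues $\beta/(\beta+\lambda^{-1})\in[0,1)$ over the eigenvalues $\beta\ge0$ of $B_k$. Bounding the four terms in turn gives
\[
\lambda g_k,\qquad m(\lambda),\qquad \lambda\bar E_k\,m(\lambda),\qquad \lambda\tfrac{L_2}{2}m(\lambda)^2.
\]
Their sum is exactly $\widehat\rho_k(\lambda)$.

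When $A_k=0$ we have $y(\lambda)=x_0$ and $m\equiv0$, so both bounds reduce to the statements at the anchor and remain valid.

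\emph{Main subtlety.} The key step is splitting off $GB_k(y-x_k)$ using the resolvent bound, rather than bounding $\|GH(x_k)\|_{\rm op}$ by $\lambda L_1$. This keeps the momentum displacement $m(\lambda)$ at first power with coefficient one, and it is why Block~1 insisted that $B_k$ lie in $\mathcal Z$. Beyond that, the argument only needs Lemma~\ref{lem:telescope} at $x_k$ and Lipschitz transport.
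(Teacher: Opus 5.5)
Your proof is correct and follows the paper's own argument. The first bound is the transport estimate of Lemma~\ref{lem:mono}(iii): the cap comes from $B_k,H(y(\lambda))\in\mathcal Z$, and the other branch from the triangle inequality at $x_k$ plus Hessian Lipschitz continuity. The second bound is the resolvent decomposition of Lemma~\ref{lem:rescert}, which splits off $(B_k+\lambda^{-1}I)^{-1}B_k(y(\lambda)-x_k)$ exactly as you do. The only cosmetic difference is that the paper writes $\nabla f(y(\lambda))=\nabla f(x_k)+\bar H\,(y(\lambda)-x_k)$ with the averaged Hessian and bounds $\|\bar H-B_k\|_{\rm op}\le\bar E_k+\frac{L_2}{2}m(\lambda)$ in one step, whereas you peel off the Taylor remainder $\rho$ separately; the resulting terms are identical.
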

\noindent The proof is in Appendix~\ref{proof:lem:scalarbounds}.
The first bound is the certificate at $x_k$, inflated by the distance
$m(\lambda)$ to the extrapolated point and capped at $L_1$. The second
bounds the model step by three terms: the gradient at the anchor, the
displacement to the extrapolated point, and the curvature error along that
displacement. The displacement enters without a factor $\lambda L_1$
because $B_k\succeq0$; this is the second use of $B_k\in\mathcal Z$
promised in Block~1.

Substituting the bounds of Lemma~\ref{lem:scalarbounds} into
\eqref{eq:targetcond} gives
$\widehat\Psi_k(\lambda)=\lambda E_k(\lambda)+\frac{L_2\lambda}{2}\widehat\rho_k(\lambda)$,
which dominates the left-hand side of \eqref{eq:targetcond}; hence
$\widehat\Psi_k(\lambda)\le\sigma$ suffices for acceptance. Expanded, with
$m=m(\lambda)$,
\begin{equation}\label{eq:psiexpanded}
\widehat\Psi_k(\lambda)=\lambda\min\{L_1,\bar E_k+L_2m\}
+\frac{L_2g_k}{2}\lambda^2
+\frac{L_2}{2}\lambda m
+\frac{L_2\bar E_k}{2}\lambda^2 m
+\frac{L_2^2}{4}\lambda^2 m^2 .
\end{equation}
Note that every term is nondecreasing in $\lambda$, so $\widehat\Psi_k(\lambda)\le\sigma$
is a monotone test, and a nearly largest admissible stepsize can be found by
a scalar search.

\vspace{2mm}
\par\noindent\begin{minipage}{\linewidth}\begin{framed}
\noindent\textbf{Subroutine 1.1 (Stepsize selection).} Input: $A_k$,
$\zeta_k$, $g_k$, $\bar E_k$. Evaluate $\widehat\Psi_k$ through
\eqref{eq:psiexpanded} and \eqref{eq:mudef}.
\begin{enumerate}[label=\arabic*.,leftmargin=2em]
\item If $\widehat\Psi_k(\sigma/L_1)\ge\sigma$, return $\lambda_k=\sigma/L_1$.
\item Set $\lambda^-=\sigma/L_1$ and $\lambda^+=\sigma/\bar E_k$.
\item While $\lambda^+/\lambda^->2^{1/4}$: set
      $\lambda=\sqrt{\lambda^-\lambda^+}$; if $\widehat\Psi_k(\lambda)<\sigma$,
      set $\lambda^-=\lambda$, otherwise set $\lambda^+=\lambda$.
\item Return $\lambda_k=\lambda^-$.
\end{enumerate}
\end{framed}\end{minipage}\par\medskip

\paragraph{The logic of Subroutine 1.1.}
Two certificates are available for the stepsize. The smoothness step
$\sigma/L_1$ is admissible whatever the model does, by the second
certificate of Lemma~\ref{lem:resid}, so it needs no test. 
The model test certifies every $\lambda>0$ with
$\widehat\Psi_k(\lambda)\le\sigma$.
The function $\widehat\Psi_k$ is continuous and strictly
increasing, tends to zero as $\lambda\downarrow0$, and
diverges as $\lambda\to\infty$, as verified in
Appendix~\ref{proof:main:lem:wellposed}.
Consequently, there is a unique positive root
$\lambda_k^\circ$ of $\widehat\Psi_k(\lambda)=\sigma$,
and the model test certifies exactly the interval
$(0,\lambda_k^\circ]$.
The longest certified step is therefore
$\max\{\sigma/L_1,\lambda_k^\circ\}$. The root is not available in closed
form, but a single evaluation at the smoothness step tells which of the two
is larger: by monotonicity, $\widehat\Psi_k(\sigma/L_1)\ge\sigma$ if and
only if $\lambda_k^\circ\le\sigma/L_1$. In that case the model cannot
certify a step longer than the smoothness step, and Step~1 returns it.
Otherwise $\lambda_k^\circ>\sigma/L_1$, and the subroutine approximates the
root from below. This requires a step at which the test is guaranteed to
fail. The natural candidate is $\sigma/\bar E_k$: it is the step the test
would allow if the certified model error were the only error, and the
remaining terms of $\widehat\Psi_k$ can only shorten it. Indeed, since
$\bar E_k\le L_1$, the first term of \eqref{eq:psiexpanded} alone gives
$\widehat\Psi_k(\sigma/\bar E_k)\ge\sigma$. Its location depends on
the certificate, not on the gradient norm, which keeps the search short.
Steps~2--3 bisect on $\log\lambda$ and maintain
$\widehat\Psi_k(\lambda^-)<\sigma\le\widehat\Psi_k(\lambda^+)$. In both
cases $\lambda_k$ is within a factor $2^{1/4}$ of
$\max\{\sigma/L_1,\lambda_k^\circ\}$, and it never falls below the
globally safe step.

\begin{lemma}\label{lem:wellposed}
Subroutine~1.1 uses no gradient queries and at most
\[
1+\left\lceil
\log_2\!\left(
\max\left\{1,\frac{W_k}{\log(2^{1/4})}\right\}
\right)
\right\rceil
\]
evaluations of $\widehat\Psi_k$, where
$W_k=\log(L_1/\bar E_k)$. The returned stepsize
satisfies the relative condition \eqref{eq:ms}, and
\[
\frac{\sigma}{L_1}\ \le\ \lambda_k\ \le\ \frac{\sigma}{\bar E_k},
\qquad \widehat\Psi_k(\lambda_k)\ \ge\ \frac{\sigma}{2}.
\]
\end{lemma}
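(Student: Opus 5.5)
The proof has four parts: the query and evaluation count, admissibility in each branch of Subroutine~1.1, the bracket $\sigma/L_1\le\lambda_k\le\sigma/\bar E_k$, and the lower bound $\widehat\Psi_k(\lambda_k)\ge\sigma/2$. Two facts are used throughout: $\widehat\Psi_k$ is continuous and nondecreasing (each term of \eqref{eq:psiexpanded} is, since $m(\lambda)$ is increasing by \eqref{eq:mudef}), and $\widehat\Psi_k(\sigma/\bar E_k)\ge\sigma$, because $\bar E_k\le L_1$ makes the first term at least $\lambda\bar E_k$.

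\textbf{Counting.} The subroutine reads only the stored scalars $A_k,\zeta_k,g_k,\bar E_k$, so it makes no gradient queries. Step~1 costs one evaluation. In Steps~2--3, each bisection on $\log\lambda$ halves the log-width, which starts at $\log(\lambda^+/\lambda^-)=\log(L_1/\bar E_k)=W_k$. The loop therefore stops after at most $\lceil\log_2\max\{1,W_k/\log 2^{1/4}\}\rceil$ iterations, which gives the stated count. If $\bar E_k=L_1$, the initial interval is degenerate and the loop does not run.

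\textbf{Admissibility.} In the Step~1 branch, $\lambda_k=\sigma/L_1$. I would invoke the smoothness certificate of Lemma~\ref{lem:resid}, as the text indicates: with $\lambda\le\sigma/L_1$, the residual $r+\lambda\nabla f(y+r)$ is controlled by the global bound $0\preceq H\preceq L_1I$ and $0\preceq B_k\preceq L_1 I$, which gives $\|\cdot\|\le\lambda L_1\|r\|\le\sigma\|r\|$. In the bisection branch, $\widehat\Psi_k(\lambda_k)<\sigma$ holds by the invariant. Lemma~\ref{lem:scalarbounds} gives $\|B_k-H(y(\lambda))\|_{\rm op}+\frac{L_2}{2}\|r(\lambda)\|\le E_k(\lambda)+\frac{L_2}{2}\widehat\rho_k(\lambda)$, so \eqref{eq:targetcond} holds, and Lemma~\ref{lem:resid} with $F=f$ and $B=B_k$ then yields \eqref{eq:ms}.

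\textbf{Bracket.} The bound $\lambda_k\ge\sigma/L_1$ is immediate, since $\lambda^-$ never decreases from $\sigma/L_1$. For the upper bound, in the bisection branch $\lambda_k=\lambda^-<\lambda^+\le\sigma/\bar E_k$. In the Step~1 branch, $\sigma/L_1\le\sigma/\bar E_k$.

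\textbf{Lower bound $\widehat\Psi_k(\lambda_k)\ge\sigma/2$.} This is the main obstacle. In the Step~1 branch it holds trivially, since there $\widehat\Psi_k(\lambda_k)\ge\sigma$. In the bisection branch, at termination we have $\lambda^+\le 2^{1/4}\lambda^-$ and $\widehat\Psi_k(\lambda^+)\ge\sigma$, so it suffices to show $\widehat\Psi_k(\lambda^+)\le2\widehat\Psi_k(\lambda^-)$. I would prove the growth estimate $\widehat\Psi_k(c\lambda)\le c^4\widehat\Psi_k(\lambda)$ for $c\ge1$ and apply it with $c=2^{1/4}$. The estimate reduces to two facts:
\begin{itemize}
\item $m(c\lambda)\le c\,m(\lambda)$, because $a(\lambda)/(A_k+a(\lambda))$ is increasing and concave in $\lambda$ with value $0$ at $\lambda=0$;
\item each term of \eqref{eq:psiexpanded} is a monomial of total degree at most $4$ in $(\lambda,m)$. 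The capped first term is handled by $\min\{L_1,\bar E_k+cL_2m\}\le c\min\{L_1,\bar E_k+L_2m\}$.
\end{itemize}
Consequently each term scales by at most $c^4$, and so does the sum. Substituting $c^4=2$ gives $\sigma\le\widehat\Psi_k(\lambda^+)\le2\widehat\Psi_k(\lambda_k)$, which is the claim.
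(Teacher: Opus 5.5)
Your proposal is correct and follows the paper's proof essentially step for step: the scalars are evaluated from stored data only, $\log(\lambda^+/\lambda^-)$ is halved starting from $W_k$, the two certificates of Lemma~\ref{lem:resid} handle the two branches, and the scaling bound $\widehat\Psi_k(c\lambda)\le c^4\widehat\Psi_k(\lambda)$ is applied with $c=2^{1/4}$. The only difference is that you derive $m(c\lambda)\le c\,m(\lambda)$ from concavity of $(\sqrt{\lambda^2+4A_k\lambda}-\lambda)/(2A_k)$, which holds but needs that one-line check, whereas the paper reads it off the identity $m(\lambda)/\lambda=\zeta_k/a(\lambda)$; you should also record that $\bar E_k>0$ (from $e_k>0$), so $\sigma/\bar E_k$ is finite, and that failing the Step~1 test forces $\bar E_k<L_1$, so a degenerate initial interval never reaches Step~2.
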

\noindent The proof is in Appendix~\ref{proof:main:lem:wellposed}.
For $k<\chi$ we have $\bar E_k=L_1$, so $W_k=0$ and Step~1 returns the
smoothness step without search. For $k\ge\chi$,
$\bar E_k\ge\min\{L_1,e_k\}$ with $e_k=b/(k-\chi+2)^2$ by \eqref{eq:rollingcert}, so
$W_k\le\max\{0,2\log(k-\chi+2)+\log(L_1/b)\}$ and the search costs
$O(\log\log k)$ scalar evaluations. The bound
$\widehat\Psi_k(\lambda_k)\ge\sigma/2$ is the form in which near-maximality
enters the rate analysis; acceptance alone would not suffice, since every
small enough $\lambda$ passes the test.

\subsection{Guarantee and proof outline}\label{sec:rpguarantee}

The analysis has a simple skeleton and three places where it could fail.
The skeleton is the A-HPE accounting of Section~\ref{sec:prelim}: once every
step is admissible, inverse stepsizes convert into mass and mass into
accuracy, $f(x_N)-f^\star\le R^2/(2A_N)$. For the whole run,
Proposition~\ref{prop:hpeestimates}(ii) gives $A_N\gtrsim N^3/S_1$ with
$S_1=\sum_{k<N}\lambda_k^{-1}$, and \eqref{eq:certsum} bounds the leading
part of $S_1$ by $\tilde O(\chi)$, as the design target requires. Used
globally, this conversion loses a logarithm, because the path length of the
whole run is $O(R\Lambda_N)$; we perform it locally instead. Group the
iterations into dyadic scales of the mass: for $s=2^jA_1$, the scale $I$
consists of the $n$ consecutive iterations with $s\le A_{k+1}<2s$. Within a
scale, $\sum_{k\in I}\sqrt{\lambda_k}\le2\sqrt{2s}$, and H\"older's
inequality gives $n^3\le8sS_I$ with $S_I=\sum_{k\in I}\lambda_k^{-1}$.
Everything therefore rests on one statement, $S_I=O(\chi)$ on every scale:
the price of the curvature model, in inverse stepsizes, is one installment
of size $\chi$ per doubling of the mass, however long the method runs. The
obstacles are to make each step admissible without a gradient line search,
to keep that price proportional to $\chi$ rather than to $\chi\sqrt\chi$ or
$n$, and to close a circularity between stepsizes and gradient norms.

Admissibility is Lemma~\ref{lem:wellposed}. The acceptance condition
involves two quantities unavailable when the stepsize is chosen, the model
error at the extrapolated point and the length of the model step, and both
depend on the stepsize itself. We replace them by explicit upper bounds
built from stored scalars: the certificate is transported from $x_k$ to
$y_k$, and the step is bounded through the resolvent, with no factor
$\lambda L_1$ because $B_k\succeq0$. The surrogate is monotone in $\lambda$,
so a scalar bisection returns a nearly largest certified step, and the
guarantee $\widehat\Psi_k(\lambda_k)\ge\sigma/2$ is what the rate analysis
inverts: each $\lambda_k^{-1}$ is bounded by five terms, led by the
certified error $\bar E_k$ (Lemma~\ref{lem:fiveterm}). The price of the
model is thus $\sum_k\bar E_k$. A block of columns that is $\chi-1$
iterations old is wrong by the Hessian drift since it was probed; charging
the $\chi$ blocks separately costs a factor $\sqrt\chi$ in operator norm,
and a certificate that does not shrink along the trajectory costs
$\Theta(N)$, which caps the rate at $N^{-2}$. The exchange of summation in
Lemma~\ref{lem:telescope} regroups the stale blocks into full Hessian
increments times orthogonal projections, so the drift costs $L_2$ times the
path length with no dimension factor, and Lemma~\ref{lem:sum} charges each
displacement at most $\chi-1$ times. The path length is $O(R\Lambda_N)$ by
the accelerated potential, but only $O(R)$ within a scale, where
$\sum_{k\in I}a_k/A_{k+1}\le1+\log2$. This is the mechanism behind the
method: the trajectory settles, and stale columns become accurate,
precisely in the regime that acceleration produces. The result is
\eqref{eq:certsum}, $\sum_k\bar E_k=\tilde O(\chi)$, and its local form:
once the first $\chi$ certificates of a scale are charged to the cap $L_1$,
the remaining windows lie inside the scale, and
$\sum_{k\in I}\bar E_k=O(\chi)$.

The circularity is the term in $\sqrt{g_k}$: the stepsize bound involves
the gradient norm at the anchor, gradient norms are controlled by the energy
of the potential, and the energy is spent at a rate set by the stepsizes
themselves. Within a scale the circularity is harmless. Every weight there
satisfies $a_k^2=\lambda_kA_{k+1}\ge\sigma s/L_1$, so spending the energy
budget a second time gives
$\sum_{k\in I}\sqrt{g_k}\lesssim\sqrt{L_1R}+L_1^{1/4}R^{1/2}n^{3/4}s^{-1/4}$,
and $n^3\le8sS_I$ becomes an inequality in which $n$ enters the right-hand
side only through $n^{3/4}$. Absorbing it gives $n=O_\sigma((Ks)^{1/3})$
with $K=\chi(L_1+L_2R)$. The scales double, so these counts form a
geometric series dominated by the last scale, and
$N=O_\sigma((KA_N)^{1/3})$ with no logarithm. Two independent lower bounds
on the same $A_N$ follow: the floor $\lambda_k\ge\sigma/L_1$ gives
$A_N=\Omega(N^2/L_1)$ unconditionally, and the scale count gives
$A_N=\Omega(N^3/K)$. Theorem~\ref{thm:iterationrate} is their minimum, for
the same iterates. Appendix~\ref{app:rolling} gives the complete proofs,
including an explicit, non-asymptotic form of the bound,
\eqref{eq:fp-explicit-rate}. Throughout, $b=L_2R$.

\begin{theorem}\label{thm:iterationrate}
Under Assumption~\ref{as:main}, for all $k\ge1$,  Algorithm~1 with Subroutine~1.1 satisfies
\begin{equation}\label{eq:iterationheadline}
f(x_k)-f^\star\le O_\sigma\!\left(
\min\left\{\frac{L_1R^2}{k^2},
\frac{\chi(L_1+L_2R)R^2}{k^3}\right\}\right).
\end{equation}
Moreover, every iteration uses at most three gradients.
\end{theorem}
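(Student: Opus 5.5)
Both branches of \eqref{eq:iterationheadline} will come from the same run, via $f(x_k)-f^\star\le R^2/(2A_k)$ of Proposition~\ref{prop:hpeestimates}(i), which applies because every step is admissible by Lemma~\ref{lem:wellposed}. The three-gradient count is immediate from Algorithm~1: one probe, $\nabla f(y_k)$, and $\nabla f(x_{k+1})$, with $\nabla f(x_k)$ cached; Subroutine~1.1 queries nothing. The first branch follows from the floor $\lambda_k\ge\sigma/L_1$ of Lemma~\ref{lem:wellposed} together with $\sqrt{A_N}\ge\frac12\sum_{k<N}\sqrt{\lambda_k}$, which give $A_N\ge\sigma N^2/(4L_1)$. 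All the work is in showing $A_N=\Omega_\sigma(N^3/K)$ with $K=\chi(L_1+L_2R)$.

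For this I will first invert near-maximality. Since $\widehat\Psi_k(\lambda_k)\ge\sigma/2$ and \eqref{eq:psiexpanded} is a sum of five monotone terms, at least one of them is at least $\sigma/10$. Using $m(\lambda)\le\sqrt{\lambda}\,\zeta_k/\sqrt{A_k}$, which follows from $a^2=\lambda A_{k+1}$, each case yields a bound on $\lambda_k^{-1}$:
\[
\lambda_k^{-1}\lesssim_\sigma \bar E_k+\sqrt{L_2g_k}+L_2 m_k+\sqrt{L_2\bar E_k m_k}+L_2m_k .
\]
This is the five-term bound; the case where the smoothness step is returned is absorbed by $\lambda_k^{-1}\le L_1/\sigma$. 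Next I split iterations into dyadic scales $I$ with $s\le A_{k+1}<2s$, with $n=|I|$. On a scale, $\sum_{k\in I}\sqrt{\lambda_k}\le 2\sqrt{2s}$, so Hölder gives $n^3\le 8sS_I$ where $S_I=\sum_{k\in I}\lambda_k^{-1}$.

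To bound $S_I$, the $\bar E_k$ terms are handled by charging the first $\chi$ indices of the scale to $L_1$. The later windows then lie inside the scale, so by the argument of Lemma~\ref{lem:sum} their sum is at most $2\chi L_2Q_I+2b$. The local path length $Q_I$ is $O_\sigma(R)$, by repeating the proof of Proposition~\ref{prop:hpeestimates}(iii) with $\sum_{k\in I}a_k/A_{k+1}\le1+\log2$ and the movement energy. The $m_k$ terms are bounded by $Q_I$ as well, and the cross term is handled by AM--GM. This gives $S_I\le O_\sigma(K)+\sum_{k\in I}\sqrt{L_2g_k}$.

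The main obstacle is the gradient term $\sum_{k\in I}\sqrt{g_k}$, which is circular. My plan is to bound $g_{k+1}=\|\nabla f(x_{k+1})\|\le(1+\sigma)\|r_k\|/\lambda_k$ from \eqref{eq:ms}, then apply Cauchy--Schwarz against the movement energy $\sum_k A_{k+1}\|r_k\|^2/\lambda_k\le R^2/(1-\sigma^2)$. On the scale, $A_{k+1}\ge s$ and $\lambda_k^{-1}\le a_k^{-2}A_{k+1}\le L_1/\sigma$, so
\[
\sum_{k\in I}\sqrt{g_k}\lesssim\sqrt{L_1R}+L_1^{1/4}R^{1/2}n^{3/4}s^{-1/4},
\]
where the first term covers the initial index of the scale. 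Substituting this into $n^3\le 8sS_I$ gives $n^3\lesssim sK+ s^{3/4}(L_2^2L_1R^2)^{1/4}n^{3/4}$. Absorbing the $n^{3/4}$ term with Young's inequality, and using $L_2\sqrt{L_1}R\le K$ up to constants, yields $n=O_\sigma((Ks)^{1/3})$. Summing over the geometric sequence of scales up to the last gives $N=O_\sigma((KA_N)^{1/3})$, and therefore $A_N=\Omega_\sigma(N^3/K)$. The delicate points are the first index of each scale and the scale containing $A_1$; the latter is handled by the smoothness floor. Combining the two lower bounds on $A_k$ gives the minimum in \eqref{eq:iterationheadline}.
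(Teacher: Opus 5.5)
Your proposal is correct and follows the paper's proof essentially step for step: inverting $\widehat\Psi_k(\lambda_k)\ge\sigma/2$ into the five-term bound, dyadic mass scales with $n^3\le 8sS_I$, the local certificate sum with the first $\min\{\chi,n\}$ indices charged to $L_1$, the local path length $O_\sigma(R)$, the bound on $\sum_{k\in I}\sqrt{g_k}$ through $g_{k+1}\le(1+\sigma)\|r_k\|/\lambda_k$, the stepsize floor and the movement energy, then absorption and a geometric sum over scales; using AM--GM instead of Cauchy--Schwarz for the cross term changes nothing.

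Two small corrections are needed. First, the absorption step needs $L_1(L_2R)^2\le K^3$, that is, $(L_1L_2^2R^2)^{1/4}\le K^{3/4}$. Your stated ``$L_2\sqrt{L_1}R\le K$'' is dimensionally inconsistent and false for large $L_1=L_2R$. Second, the smoothness-step branch needs no separate handling, because Step~1 returns only when $\widehat\Psi_k(\sigma/L_1)\ge\sigma$, so the five-term bound already covers it. Charging those steps $L_1/\sigma$ each instead could sum to $nL_1/\sigma$ on a scale, and that would recover only the $k^{-2}$ rate.
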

\noindent The proof is in Appendix~\ref{proof:iterationrate}. For fixed
$L_1,L_2,R,\sigma$, the bound reads $O(\min\{k^{-2},\chi k^{-3}\})$, with no
logarithmic factor, and the $k^{-3}$ branch is the smaller one once
$k\gtrsim\chi$. The following corollary gives the corresponding total query
count, including stopping and initialization.

\begin{corollary}\label{thm:complexity}
Under Assumption~\ref{as:main}, for any $\varepsilon>0$ and $b=L_2R$,
Algorithm~1 with Subroutine~1.1 returns an $\varepsilon$-accurate point in
$G_\varepsilon$ gradient queries, where
\begin{equation}\label{eq:rpcomplexity}
G_\varepsilon=O_\sigma\left(1+
\min\left\{\sqrt{\frac{L_1R^2}{\varepsilon}},
\left(\frac{\chi(L_1+L_2R)R^2}{\varepsilon}\right)^{1/3}
\right\}\right).
\end{equation}
\end{corollary}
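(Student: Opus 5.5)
The corollary follows from Theorem~\ref{thm:iterationrate} together with the stopping rule and the per-iteration oracle count. The plan is to find an iteration count $N_\varepsilon$ at which the stopping test $A_N\ge R^2/(2\varepsilon)$ is guaranteed to have fired, and then multiply by the per-iteration cost. I will not work through the objective bound \eqref{eq:iterationheadline} itself. Instead I will use the two lower bounds on the mass from which it is built, both established in the proof of Theorem~\ref{thm:iterationrate}. The first is $A_N\ge c_\sigma N^2/L_1$, which comes from the floor $\lambda_k\ge\sigma/L_1$ of Lemma~\ref{lem:wellposed} together with Proposition~\ref{prop:hpeestimates}(ii): since $\sqrt{A_N}\ge\frac12 N\sqrt{\sigma/L_1}$, we get $A_N\ge \sigma N^2/(4L_1)$. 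The second is the scale-count bound $A_N\ge c_\sigma N^3/K$ with $K=\chi(L_1+L_2R)$. Using the mass directly matters because the algorithm stops on $A$, not on $f$. It also makes the returned point correct automatically: by Proposition~\ref{prop:hpeestimates}(i), once $A_{k+1}\ge R^2/(2\varepsilon)$ we have $f(x_{k+1})-f^\star\le R^2/(2A_{k+1})\le\varepsilon$.

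The steps are as follows. (1) Set $N_\varepsilon=\lceil\min\{\sqrt{L_1R^2/(2c_\sigma\varepsilon)},\,(KR^2/(2c_\sigma\varepsilon))^{1/3}\}\rceil$. Both lower bounds on $A_N$ are nondecreasing in $N$, so taking the maximum of the two bounds shows $A_{N_\varepsilon}\ge R^2/(2\varepsilon)$. Hence Step~5 returns at some iteration $k+1\le N_\varepsilon$, unless Step~1 has already returned an exact minimizer with $\nabla f(x_k)=0$, which is trivially $\varepsilon$-accurate. (2) Count queries: one gradient at $x_0$ at initialization, then at most three per iteration (the probe at $x_k+\tau_kv_{g_{i_k}}$, $\nabla f(y_k)$, and $\nabla f(x_{k+1})$, with $\nabla f(x_k)$ cached from the previous Step~4). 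Subroutine~1.1 uses no gradients by Lemma~\ref{lem:wellposed}. This gives $G_\varepsilon\le1+3N_\varepsilon$. (3) Since $\lceil t\rceil\le 1+t$, this is $O_\sigma(1+\min\{\cdot,\cdot\})$, which is exactly \eqref{eq:rpcomplexity}. The additive $1$ absorbs the rounding and the initialization, and it covers the case $\varepsilon\ge L_1R^2$, where the minimum can fall below one.

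The main obstacle is step (1): we need the mass bounds rather than the objective bound, because \eqref{eq:iterationheadline} controls $f(x_k)$ but not the stopping test. If the proof of Theorem~\ref{thm:iterationrate} supplies $N=O_\sigma((KA_N)^{1/3})$ only in the form of a bound on the number of iterations up to a given mass, I would invert it. The scale argument shows that the number of iterations with $A_{k+1}<2s$ is at most $C_\sigma(Ks)^{1/3}$ for every dyadic $s$. Taking $s$ to be the first dyadic level at or above $R^2/(2\varepsilon)$ bounds the iteration at which that mass is first reached, losing only a constant factor $2^{1/3}$. In both readings no logarithm and no additive $\chi$ enter, since the first $\chi$ iterations are already paid for inside the scale bound by the cap $L_1$.
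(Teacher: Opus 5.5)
Your proposal is correct and follows the paper's proof essentially step for step. The paper also stops on the mass, using $A_N\ge\sigma N^2/(4L_1)$ and $A_N\ge N^3/(D_\sigma^3K)$ (the latter already stated for every completed horizon, so your fallback of inverting the scale count is unnecessary), certifies the output by $f(x_{k+1})-f^\star\le R^2/(2A_{k+1})$, and counts one initial gradient plus at most three per iteration to get $1+3\lceil B_\varepsilon\rceil$ queries.
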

\noindent The proof is in Appendix~\ref{proof:thm:complexity}, which also
gives the bound with explicit constants, \eqref{eq:fp-explicit-queries}.
The stopping rule is implementable because $A_k$ is computed by the
method; it does not require knowing $f^\star$. There is no additive $\chi$
in the bound: the method starts optimizing while the curvature model is
being built, and the smoothness branch remains available throughout.
The normalized consequence makes the dependence on dimension and
accuracy explicit.

The above result implies that, for fixed $L_1,L_2,R,\sigma$, the RP-QNPE method has gradient complexity of $
G_\varepsilon=O\left(1+
\min\{\varepsilon^{-1/2},\chi^{1/3}\varepsilon^{-1/3}\}\right)$, with
$\chi=d$ in the dense case.

\subsection{Comparison with existing guarantees}\label{sec:compare}
Throughout this section $L_1$, $L_2$, $R$ and $\sigma$ are fixed,
$0<\varepsilon\le1$, and logarithmic factors are suppressed unless
displayed; a \emph{normalized} bound is the resulting function of the
dimension and $\varepsilon$ (or $k$). 
 All comparisons are between proved upper bounds on gradient
queries or equivalent gradient costs under the stated Hessian
cost convention; matrix arithmetic is treated in
Section~\ref{sec:implementation}. 
Table~\ref{tab:convexcompare} collects the bounds and Figure~\ref{fig:convexregimes} draws the bounds against $1/\varepsilon$ in the dense case and on a sparse slice, and Figure~\ref{fig:convexmaps} maps the smallest known bound as a function of accuracy and sparsity. There and below, $\bar d$ denotes the Hessian-cost parameter of the exact-Hessian lazy analyses, instantiated as $d$ in the dense case and, where indicated, as $\chi$, the finite-difference cost of Lemma~\ref{lem:group}. Prior bounds are stated in terms of $d$, so
Sections~\ref{sec:aqcompare} and~\ref{sec:lazycompare} compare in the dense
case $\chi=d$, which suffices because our bound is nondecreasing in $\chi$;
Section~\ref{sec:structurecompare} quantifies the additional gain when
$\chi<d$.

\begin{table}[t]
\centering\small
\begin{tabular}{@{}lll@{}}
\toprule
Method & Oracle & Gradient complexity\\
\midrule
AGD \citep{nesterov2018lectures} & gradient & $O(\varepsilon^{-1/2})$\\
Cutting planes \citep{lee2015faster,jiang2020improved} & gradient & $\tilde O(d)$\\
A-QNPE \citep{jiang2023accelerated} & gradient & $\tilde O(\min\{\varepsilon^{-1/2},d^{1/5}\varepsilon^{-2/5}\})$\\
\midrule
LazyCRN \citep{chayti2024lazy} & Hessian $=\bar d$ gradients & $O(\bar d+\bar d^{1/2}\varepsilon^{-1/2})$\\
CALEN \citep{chen2025faster} & Hessian $=\bar d$ gradients & $\tilde O(\bar d+\bar d^{13/21}\varepsilon^{-2/7})$\\
\midrule
RP-QNPE (Corollary~\ref{thm:complexity}) & gradient & $O(\min\{\varepsilon^{-1/2},\chi^{1/3}\varepsilon^{-1/3}\})$\\
\bottomrule
\end{tabular}
\caption{\small Convex case, with $L_1$, $L_2$, $R$ fixed and logarithmic factors suppressed;
$\chi\le d$ is the effective dimension of Definition~\ref{def:chi} ($\chi=d$ for a dense Hessian).
The LazyCRN and CALEN bounds concern exact-Hessian implementations, counted at $\bar d$ gradients
per Hessian as in \citet{chen2025faster}: $\bar d=d$ as published, $\bar d=\chi$
(Lemma~\ref{lem:group}) as a hypothetical instantiation.}
\label{tab:convexcompare}
\end{table}

\begin{figure}[t]
\centering
\begin{subfigure}[t]{0.49\textwidth}
\centering
\begin{tikzpicture}
\begin{axis}[width=\linewidth, height=0.85\linewidth, grid=major, grid style={gray!20}, axis on top,
  xmin=0, xmax=8, ymin=0, ymax=4.5,
  xtick={0,2,4,6,8}, xticklabels={$1$,$d^{2}$,$d^{4}$,$d^{6}$,$d^{8}$},
  ytick={0,...,4}, yticklabels={$1$,$d$,$d^{2}$,$d^{3}$,$d^{4}$},
  xlabel={$1/\varepsilon$}, ylabel={gradient queries},
  tick label style={font=\scriptsize}, label style={font=\small},
  every axis plot/.append style={line width=1.1pt},
  legend to name=leg:convexregimes, legend columns=3, legend cell align={left},
  legend style={font=\footnotesize, draw=gray!60,
                /tikz/every even column/.append style={column sep=0.4cm}}]
\fill[red!10] (axis cs:2,0) rectangle (axis cs:6,4.5);
\addplot[blue] coordinates {(0,0) (8,4)};
  \addlegendentry{AGD: $\varepsilon^{-1/2}$}
\addplot[green!60!black, densely dashed] coordinates {(0,1) (8,1)};
  \addlegendentry{cutting planes: $\tilde O(d)$}
\addplot[violet] coordinates {(0,0) (2,1) (8,3.4)};
  \addlegendentry{A-QNPE: $\min\{\varepsilon^{-1/2},\,d^{1/5}\varepsilon^{-2/5}\}$}
\addplot[brown, dashed] coordinates {(0,1) (1,1) (8,4.5)};
  \addlegendentry{LazyCRN: $\bar d+\bar d^{1/2}\varepsilon^{-1/2}$}
\addplot[orange, dashed] coordinates {(0,1) (1.3333,1) (8,2.9048)};
  \addlegendentry{CALEN: $\bar d+\bar d^{13/21}\varepsilon^{-2/7}$}
\addplot[red, very thick] coordinates {(0,0) (2,1) (8,3)};
  \addlegendentry{RP-QNPE: $\min\{\varepsilon^{-1/2},\,\chi^{1/3}\varepsilon^{-1/3}\}$}
\draw[dashed, gray!70] (axis cs:2,0) -- (axis cs:2,4.5) node[pos=1, below right, font=\scriptsize, text=gray!50!black] {$\varepsilon=d^{-2}$};
\draw[dashed, gray!70] (axis cs:6,0) -- (axis cs:6,4.5) node[pos=1, below left,  font=\scriptsize, text=gray!50!black] {$\varepsilon=d^{-6}$};
\node[font=\scriptsize, align=center] at (axis cs:4,0.45) {below A-QNPE, LazyCRN, CALEN;\\above cutting planes};
\end{axis}
\end{tikzpicture}
\caption{Dense Hessian: $\chi=\bar d=d$.}
\label{fig:convexregimes-dense}
\end{subfigure}\hfill
\begin{subfigure}[t]{0.49\textwidth}
\centering
\begin{tikzpicture}
\begin{axis}[width=\linewidth, height=0.85\linewidth, grid=major, grid style={gray!20}, axis on top,
  xmin=0, xmax=10, ymin=0, ymax=5.5,
  xtick={0,2,4,6,8,10}, xticklabels={$1$,$\chi^{2}$,$\chi^{4}$,$\chi^{6}$,$\chi^{8}$,$\chi^{10}$},
  ytick={0,...,5}, yticklabels={$1$,$\chi$,$\chi^{2}$,$\chi^{3}$,$\chi^{4}$,$\chi^{5}$},
  xlabel={$1/\varepsilon$}, ylabel={gradient queries},
  tick label style={font=\scriptsize}, label style={font=\small},
  every axis plot/.append style={line width=1.1pt}]
\fill[red!12]            (axis cs:2,0) rectangle (axis cs:6,5.5);
\fill[green!60!black!15] (axis cs:6,0) rectangle (axis cs:8,5.5);
\addplot[blue] coordinates {(0,0) (10,5)};
\addplot[green!60!black, densely dashed] coordinates {(0,3) (10,3)};
\addplot[violet] coordinates {(0,0) (6,3) (10,4.6)};
\addplot[brown, dashed] coordinates {(0,1) (1,1) (10,5.5)};
\addplot[orange, dashed] coordinates {(0,1) (1.3333,1) (10,3.4762)};
\addplot[red, very thick] coordinates {(0,0) (2,1) (10,3.6667)};
\draw[dashed, gray!70] (axis cs:2,0) -- (axis cs:2,5.5) node[pos=1, below right, font=\scriptsize, text=gray!50!black] {$\varepsilon=\chi^{-2}$};
\draw[dashed, gray!70] (axis cs:6,0) -- (axis cs:6,5.5) node[pos=1, below left,  font=\scriptsize, text=gray!50!black] {$\varepsilon=\chi^{-6}$};
\draw[dashed, gray!70] (axis cs:8,0) -- (axis cs:8,5.5) node[pos=1, below right, font=\scriptsize, text=gray!50!black] {$\varepsilon=\chi d^{-3}$};
\node[font=\scriptsize, align=center] at (axis cs:4,0.4) {smallest\\published bound};
\node[font=\scriptsize, align=center] at (axis cs:7,0.4) {below\\AGD, CP};
\end{axis}
\end{tikzpicture}
\caption{Sparse Hessian with $d=\chi^{3}$; lazy bounds at $\bar d=\chi$.}
\label{fig:convexregimes-sparse}
\end{subfigure}

\medskip
\pgfplotslegendfromname{leg:convexregimes}
\caption{Convex case: gradient complexity against $1/\varepsilon$, with both axes in
powers of $d$ in (a) and of $\chi$ in (b); constants and logarithmic factors are suppressed. The dashed brown and orange curves represent the exact-Hessian
analyses of LazyCRN and CALEN counted at $\bar d$ gradients per
Hessian, with $\bar d=d$ in (a) and the hypothetical instantiation $\bar d=\chi$ in (b). The curves are the bounds of
Table~\ref{tab:convexcompare}. On the shaded range of (a) RP-QNPE is below A-QNPE, LazyCRN and CALEN
(and above cutting planes); in (b) it is the smallest bound shown on the red range and below AGD and
cutting planes, but above CALEN at $\bar d=\chi$, on the green range.}
\label{fig:convexregimes}
\end{figure}

\subsubsection{RP-QNPE versus A-QNPE}\label{sec:aqcompare}
The normalized rates are $\min\{k^{-2},dk^{-3}\}$ for RP-QNPE and
$\min\{k^{-2},\sqrt{d\log(k+1)}\,k^{-5/2}\}$ for A-QNPE. Ignoring logarithmic factors, both equal $k^{-2}$
for $k\le d$, and for every $k>d$ the RP-QNPE bound $dk^{-3}$ is strictly below both branches of the A-QNPE bound (its ratio to $\sqrt{d\log(k+1)}\,k^{-5/2}$ is $\sqrt{d/(k\log(k+1))}<1$), by a factor that tends to zero. Since both methods use a
constant number of gradients per iteration on average, the same holds for the
polynomial gradient complexities $\min\{\varepsilon^{-1/2},d^{1/3}\varepsilon^{-1/3}\}$
and $\min\{\varepsilon^{-1/2},d^{1/5}\varepsilon^{-2/5}\}$ (with logarithmic factors suppressed): both curvature branches are at least $\varepsilon^{-1/2}$ when $\varepsilon\ge d^{-2}$,
where both bounds equal that of AGD, and for $\varepsilon<d^{-2}$ their
ratio is $(d^2\varepsilon)^{1/15}<1$ (retaining the logarithmic factor in A-QNPE places its crossover with AGD at $\varepsilon\asymp(d\log d)^{-2}$). Thus RP-QNPE improves on A-QNPE
throughout the regime in which A-QNPE improves on AGD, and matches AGD
outside it; the comparison concerns polynomial exponents, not constants.

\subsubsection{Hessian-based lazy methods}\label{sec:lazycompare}
LazyCRN and CALEN  use an
exact Hessian oracle, so we compare them with RP-QNPE under the convention
that one Hessian counts as $d$ gradients. Under this convention, every lazy
bound carries an additive $d$ for the Hessian computed before the first
step and has no $\varepsilon^{-1/2}$ branch, whereas the RP-QNPE bound
$\min\{\varepsilon^{-1/2},d^{1/3}\varepsilon^{-1/3}\}$ has no additive term.
Consequently, RP-QNPE is strictly smaller than LazyCRN at every accuracy
(LazyCRN reads $d+d^{1/2}\varepsilon^{-1/2}$, which exceeds
$\varepsilon^{-1/2}$ throughout). Compared with CALEN, whose bound is
$\tilde O(d+d^{13/21}\varepsilon^{-2/7})$, RP-QNPE is smaller for
$\varepsilon>d^{-6}$, first through its $\varepsilon^{-1/2}$ branch (which
beats the additive $d$ for $\varepsilon>d^{-2}$ and beats
$d^{13/21}\varepsilon^{-2/7}$ down to $\varepsilon\asymp d^{-26/9}$) and
then through its $d^{1/3}\varepsilon^{-1/3}$ branch on
$d^{-6}<\varepsilon\le d^{-2}$, while CALEN is smaller for
$\varepsilon<d^{-6}$ by virtue of its $\varepsilon^{-2/7}$ exponent. For structured Hessians, instantiating the lazy bounds at $\bar d=\chi$, the finite-difference cost of Lemma~\ref{lem:group}, transfers all of these crossovers with $d$ replaced by $\chi$; this instantiation is hypothetical, since the lazy analyses are for exact Hessians; Figure~\ref{fig:convexregimes}(b) draws it dashed, and Figure~\ref{fig:convexmaps} maps the resulting regimes as functions of the accuracy and of the sparsity.

\subsubsection{Cutting-plane methods}\label{sec:structurecompare}

In the dense case $\chi=d$, every curvature bound in Table~\ref{tab:convexcompare} is at least $d$ on the range $\varepsilon<d^{-2}$ where it improves on AGD's $\varepsilon^{-1/2}$, so none improves on the better of AGD and cutting-plane methods \citep{lee2015faster,jiang2020improved} by more than logarithmic factors. Here we focus on structured scenarios ($\chi<d$) in which our bound improves on both.

\begin{figure}[t]
\centering
\begin{subfigure}[t]{0.49\textwidth}
\centering
\begin{tikzpicture}
\begin{axis}[width=\linewidth, height=0.72\linewidth, grid=major, grid style={gray!30}, axis on top,
  xmin=0, xmax=3.5, ymin=0, ymax=1,
  xtick={0,1,2,3}, xticklabels={$1$,$d$,$d^{2}$,$d^{3}$},
  ytick={0,0.25,0.5,0.75,1}, yticklabels={$1$,$d^{1/4}$,$d^{1/2}$,$d^{3/4}$,$d$},
  xlabel={$1/\varepsilon$}, ylabel={effective dimension $\chi$},
  tick label style={font=\scriptsize}, label style={font=\small},
  legend to name=leg:convexmaps, legend columns=2, legend cell align={left},
  legend style={font=\footnotesize, draw=gray!60}]
\fill[blue!25]   (axis cs:0,0) -- (axis cs:2,1) -- (axis cs:0,1) -- cycle;
\fill[red!40]    (axis cs:0,0) -- (axis cs:2,1) -- (axis cs:2.5714,0.4286) -- cycle;
\fill[red!15]    (axis cs:0,0) -- (axis cs:2.5714,0.4286) -- (axis cs:3,0) -- cycle;
\fill[orange!35] (axis cs:3,0) -- (axis cs:2.5714,0.4286) -- (axis cs:3.5,0) -- cycle;
\fill[green!30]  (axis cs:2,1) -- (axis cs:3.5,1) -- (axis cs:3.5,0) -- (axis cs:2.5714,0.4286) -- cycle;
\addlegendimage{area legend, fill=blue!25, draw=none}
  \addlegendentry{AGD smallest (RP-QNPE ties it)}
\addlegendimage{area legend, fill=green!30, draw=none}
  \addlegendentry{cutting planes smallest}
\addlegendimage{area legend, fill=red!40, draw=none}
  \addlegendentry{RP-QNPE smallest published bound}
\addlegendimage{area legend, fill=red!15, draw=none}
  \addlegendentry{RP-QNPE below AGD and cutting planes, above CALEN at $\bar d=\chi$}
\addlegendimage{area legend, fill=orange!35, draw=none}
  \addlegendentry{CALEN at $\bar d=\chi$ smallest (hypothetical)}
\addlegendimage{black, dotted, thick}
  \addlegendentry{slice $d=\chi^{3}$ shown in Figure~\ref{fig:convexregimes}(b)}
\draw[dashed, blue!50!black] (axis cs:0,0) -- (axis cs:2,1)
  node[pos=0.55, sloped, above, font=\scriptsize] {$\varepsilon=\chi^{-2}$};
\draw[dashed, red!60!black] (axis cs:0,0) -- (axis cs:2.5714,0.4286)
  node[pos=0.6, sloped, below, font=\scriptsize] {$\varepsilon=\chi^{-6}$};
\draw[dashed, green!40!black] (axis cs:2,1) -- (axis cs:3,0)
  node[pos=0.3, sloped, above, font=\scriptsize] {$\varepsilon=\chi d^{-3}$};
\draw[dashed, orange!70!black] (axis cs:2.5714,0.4286) -- (axis cs:3.5,0)
  node[pos=0.55, sloped, above, font=\tiny] {CALEN($\chi$) $=$ CP};
\draw[dotted, thick, black] (axis cs:0,0.3333) -- (axis cs:3.5,0.3333)
  node[pos=0.12, above, font=\scriptsize] {$\chi=d^{1/3}$};
\end{axis}
\end{tikzpicture}
\caption{Axes in powers of $d$; the top edge $\chi=d$ is the dense case.}
\label{fig:convexmap-d}
\end{subfigure}\hfill
\begin{subfigure}[t]{0.49\textwidth}
\centering
\begin{tikzpicture}
\begin{axis}[width=\linewidth, height=0.72\linewidth, grid=major, grid style={gray!30}, axis on top,
  xmin=0, xmax=10, ymin=1, ymax=4,
  xtick={0,2,4,6,8,10}, xticklabels={$1$,$\chi^{2}$,$\chi^{4}$,$\chi^{6}$,$\chi^{8}$,$\chi^{10}$},
  ytick={1,2,3,4}, yticklabels={$\chi$,$\chi^{2}$,$\chi^{3}$,$\chi^{4}$},
  xlabel={$1/\varepsilon$}, ylabel={dimension $d$},
  tick label style={font=\scriptsize}, label style={font=\small}]
\fill[blue!25]   (axis cs:0,1) -- (axis cs:2,1) -- (axis cs:2,4) -- (axis cs:0,4) -- cycle;
\fill[red!40]    (axis cs:2,1) -- (axis cs:6,2.3333) -- (axis cs:6,4) -- (axis cs:2,4) -- cycle;
\fill[red!15]    (axis cs:6,2.3333) -- (axis cs:10,3.6667) -- (axis cs:10,4) -- (axis cs:6,4) -- cycle;
\fill[orange!35] (axis cs:6,2.3333) -- (axis cs:10,3.4762) -- (axis cs:10,3.6667) -- cycle;
\fill[green!30]  (axis cs:2,1) -- (axis cs:10,1) -- (axis cs:10,3.4762) -- (axis cs:6,2.3333) -- cycle;
\draw[dashed, blue!50!black] (axis cs:2,1) -- (axis cs:2,4)
  node[pos=0.5, sloped, above, font=\scriptsize] {$\varepsilon=\chi^{-2}$};
\draw[dashed, red!60!black] (axis cs:6,2.3333) -- (axis cs:6,4)
  node[pos=0.5, sloped, above, font=\scriptsize] {$\varepsilon=\chi^{-6}$};
\draw[dashed, green!40!black] (axis cs:2,1) -- (axis cs:10,3.6667)
  node[pos=0.3, sloped, below, font=\scriptsize] {$\varepsilon=\chi d^{-3}$};
\draw[dashed, orange!70!black] (axis cs:6,2.3333) -- (axis cs:10,3.4762)
  node[pos=0.6, sloped, below, font=\tiny] {CALEN($\chi$) $=$ CP};
\draw[dotted, thick, black] (axis cs:0,3) -- (axis cs:10,3)
  node[pos=0.1, above, font=\scriptsize] {$d=\chi^{3}$};
\end{axis}
\end{tikzpicture}
\caption{Axes in powers of $\chi$; the bottom edge $d=\chi$ is the dense case.}
\label{fig:convexmap-chi}
\end{subfigure}

\medskip
\pgfplotslegendfromname{leg:convexmaps}
\caption{Convex case: the smallest known bound as a function of the accuracy and of the sparsity,
with constants and logarithmic factors suppressed and the lazy-Hessian bounds instantiated at the
finite-difference cost $\bar d=\chi$ of Lemma~\ref{lem:group} (a hypothetical instantiation of
exact-Hessian analyses). The two panels show the same regimes in different coordinates: in (a)
$1/\varepsilon=d^{a}$ and $\chi=d^{c}$, in (b) $1/\varepsilon=\chi^{a}$ and $d=\chi^{\gamma}$. The
dotted line is the slice $d=\chi^{3}$ of Figure~\ref{fig:convexregimes}(b), along which the regimes
are AGD, RP-QNPE, RP-QNPE below AGD and cutting planes, CALEN($\chi$), and cutting planes in that
order. The RP-QNPE region is empty on the dense edge and widens as the Hessian becomes sparser;
the three boundaries meet at $\varepsilon=\chi^{-6}$, $d=\chi^{7/3}$.}
\label{fig:convexmaps}
\end{figure}

\noindent The cutting-plane bound does not depend on the Hessian pattern, whereas
our curvature bound is $\chi^{1/3}\varepsilon^{-1/3}$, which is smaller
than $d$ whenever $\varepsilon>\chi d^{-3}$ (up to logarithmic factors).
Since our curvature branch is active for $\varepsilon<\chi^{-2}$, it
improves on both AGD and cutting planes throughout the range
$\chi d^{-3}<\varepsilon<\chi^{-2}$, which is nonempty whenever $\chi<d$;
for a tridiagonal Hessian ($\chi=3$), this range is
$d^{-3}\lesssim\varepsilon\lesssim1$. These are comparisons with AGD and cutting planes, neither of which uses the pattern. Against lazy-Hessian bounds instantiated at $\bar d=\chi$, the window on which RP-QNPE is the smallest published bound shrinks to $\max\{\chi d^{-3},\chi^{-6}\}<\varepsilon<\chi^{-2}$, which for $\chi=3$ and $d\ge13$ is, up to constants and logarithms, $10^{-3}\lesssim\varepsilon\lesssim10^{-1}$ (Figure~\ref{fig:convexmaps}). We exclude two structures that
reduce the problem dimension for every method alike: a known subspace
containing the range of every $H(x)$, and a disconnected sparsity pattern
(which makes $f$ separable into components of size at most $s_{\max}$, so
$d$ is replaced by $s_{\max}$). This compares proved upper bounds, as we are not aware of localization methods or lower bounds that exploit Hessian sparsity
; the strongly convex counterpart is in Section~\ref{sec:sccompare}.

\section{Extension to the strongly convex case}\label{sec:strongly}
Under strong convexity, the accelerated recursion can use the modulus
$\mu$ directly, and a small objective gap certifies a small distance to
the minimizer. The method of this section uses both facts. It keeps the
rolling curvature model of Section~\ref{sec:algo} throughout the run,  and it replaces the accelerated recursion of
Section~\ref{sec:prelim} by the strongly convex A-HPE recursion
of \citet[Algorithm~1]{marques2022variants}, specialized to
a differentiable objective and zero enlargement error.
Strong convexity permits a larger relative residual;
see also \citet[Section~5]{barre2022approximate}.
Its stepsize is chosen, as in Subroutine~1.1, by a scalar search over
stored quantities. The certificate that drives the search has a branch
that contains no gradient norm, and this is what makes the accumulated
price of the model independent of the length of the run. The method has an accelerated linear guarantee matching AGD
and a global superlinear upper envelope for the same iterates
(Section~\ref{sec:scguarantee}).
Section~\ref{sec:sccompare} compares these guarantees with existing ones
and with the simpler alternative of restarting the convex method, which
is analyzed in Appendix~\ref{app:strongly}.

\begin{assumption}\label{as:sc}
In addition to Assumption~\ref{as:main}, $f$ is $\mu$-strongly convex for a
known $\mu>0$, that is, $\mu I\preceq H(x)$ for every $x$. In particular,
the minimizer $x^\star$ is unique.
\end{assumption}
\noindent Throughout this section,
\[
\kappa=\frac{L_1}{\mu}\ge1,\qquad
\kappa_2=\frac{L_2R}{\mu},\qquad
\mathcal L_\varepsilon=\log_+\frac{L_1R^2}{2\sigma\varepsilon},\qquad
\mathcal S_\varepsilon=1+\log_+\frac{L_1R^2}{\varepsilon},
\]
where $\log_+=\max\{0,\log\}$. The quantity $\kappa_2$ is dimensionless
and measures the variation of the Hessian over the initial ball against
the strong convexity; in normalized units ($L_1=L_2=R=1$) it coincides
with $\kappa$. The method uses $\mu$, as does the restart scheme of
Appendix~\ref{app:strongly}; a known lower bound $0<\hat\mu\le\mu$ may
replace it, and the bounds then hold with $\mu$ replaced by $\hat\mu$.

\subsection{The method}\label{sec:scmethod}
Algorithm~2 retains the three building blocks of RP-QNPE from
Section~\ref{sec:algo}: it maintains the same rolling finite-difference
model $B_k$ and error certificate $\bar E_k$, takes the same regularized
quasi-Newton step $r_k=-(B_k+\lambda_k^{-1}I)^{-1}\nabla f(y_k)$ of
Section~\ref{sec:framework}, and selects $\lambda_k$ by a gradient-free
scalar bisection. Strong convexity enters in two places: it modifies the
A-HPE recursion so that the weights $A_k$ grow geometrically even for a
constant stepsize, and it relaxes the HPE admissibility condition
\eqref{eq:ms} by a factor $\sqrt{1+\mu\lambda_k}$, which allows $\lambda_k$
to grow without bound as the iterates approach $x^\star$ and the model
error shrinks.

\paragraph{The accelerated recursion.}
To incorporate $\mu$-strong convexity into the A-HPE framework of
Section~\ref{sec:prelim}, we follow the strongly convex extension of
\citet[Algorithm~1]{marques2022variants} and \citet[Section~5]{barre2022approximate},
which strengthens the quadratic term of the potential from
$\frac12\|z_k-x^\star\|^2$ to $\frac{1+\mu A_k}{2}\|z_k-x^\star\|^2$. As in Section~\ref{sec:prelim}, the
state consists of a primal iterate $x_k$, an auxiliary iterate $z_k$, and a
weight $A_k$, initialized at $A_0=0$ and $z_0=x_0$. Given a stepsize
$\lambda_k>0$, the new weight $a_k>0$ is chosen as the positive root of
\begin{equation}\label{eq:sc-a}
a_k^2=\lambda_k\big[A_k(1+\mu A_k)+a_k(1+2\mu A_k)\big],
\end{equation}
which reduces to \eqref{eq:aident} when $\mu=0$, and the updated weight and
extrapolation center are
\begin{equation}\label{eq:sc-weights}
A_{k+1}=A_k+a_k,\qquad
\eta_k=\frac{a_k(1+\mu A_k)}{1+\mu A_{k+1}},\qquad
W_k=A_k+\eta_k,\qquad
y_k=\frac{A_kx_k+\eta_kz_k}{W_k}.
\end{equation}
Here $y_k$ is a convex combination of $x_k$ and $z_k$, with
$W_k=A_{k+1}/(1+\mu\lambda_k)$ (Lemma~\ref{lem:sc-weights}). From the
center $y_k$, we take the regularized quasi-Newton step of
Section~\ref{sec:framework} and follow
\begin{equation}\label{eq:sc-update}
r_k=-(B_k+\lambda_k^{-1}I)^{-1}\nabla f(y_k),\qquad
x_{k+1}=y_k+r_k,\qquad
z_{k+1}=z_k+\frac{a_k}{1+\mu A_{k+1}}\big[\mu(x_{k+1}-z_k)-\nabla f(x_{k+1})\big],
\end{equation}
where the additional term $\mu(x_{k+1}-z_k)$ pulls $z_k$ toward $x_{k+1}$
and vanishes when $\mu=0$. Strong convexity also relaxes the relative
proximal condition \eqref{eq:ms}: we call the step \emph{admissible} if
\begin{equation}\label{eq:sc-hpe}
\|r_k+\lambda_k\nabla f(x_{k+1})\|\le\sigma\sqrt{1+\mu\lambda_k}\,\|r_k\|.
\end{equation}
Under \eqref{eq:sc-hpe}, the potential
\begin{equation}\label{eq:sc-potential}
\Psi_k=A_k\big(f(x_k)-f^\star\big)+\frac{1+\mu A_k}{2}\|z_k-x^\star\|^2
\end{equation}
satisfies
$\Psi_{k+1}+\frac{(1-\sigma^2)A_{k+1}}{2\lambda_k}\|r_k\|^2\le\Psi_k$
(Lemma~\ref{lem:sc-contract}); the factor $\sqrt{1+\mu\lambda_k}$ in
\eqref{eq:sc-hpe} comes directly from the $\mu$-strong convexity inequality
between $x_{k+1}$ and $x^\star$. Since $\Psi_0\le R^2/2$, every admissible
run satisfies $f(x_k)-f^\star\le R^2/(2A_k)$, and strong convexity then
gives $\|x_k-x^\star\|\le R/\sqrt{\mu A_k}$.

\paragraph{The curvature model.}
The matrix $B_k$ and the certificate $\bar E_k$ are constructed exactly as
in Block~1 of Section~\ref{sec:rpblock1}: one group of columns is refreshed
per iteration by \eqref{eq:refresh} with $b=L_2R$, and $\bar E_k$ is given
by \eqref{eq:rollingcert} using the actual step displacement
$\delta_j=\|x_{j+1}-x_j\|$ in place of $\|y_j-x_j\|+\|r_j\|$. Since
Lemma~\ref{lem:telescope} uses $\delta_j$ only through
$\|H(x_{j+1})-H(x_j)\|_{\rm op}\le L_2\delta_j$, it applies verbatim and
gives $B_k\in\mathcal Z$ and $\|B_k-H(x_k)\|_{\rm op}\le\bar E_k\le L_1$.
Because strong convexity makes the total path length $\sum_j\delta_j$
finite over an infinite run, the rolling model never needs to be reset.

\paragraph{The certified stepsize.}
By Lemma~\ref{lem:resid} applied at $y_k$, the relaxed HPE condition
\eqref{eq:sc-hpe} holds whenever
\[
\lambda_k\Big(\|B_k-H(y_k)\|_{\rm op}+\frac{L_2}{2}\|r_k\|\Big)
\ \le\ \sigma\sqrt{1+\mu\lambda_k}.
\]
As in Block~2 (Section~\ref{sec:rpblock2}), neither $\|B_k-H(y_k)\|_{\rm op}$
nor $\|r_k\|$ is available before $\lambda_k$ is chosen, so we bound both by
explicit functions of a candidate stepsize $\lambda>0$. Let $a(\lambda)$,
$A'(\lambda)=A_k+a(\lambda)$, $\eta(\lambda)$, $W(\lambda)$ and $y(\lambda)$
be the quantities \eqref{eq:sc-a}--\eqref{eq:sc-weights} formed with
$\lambda_k=\lambda$, and write $\zeta_k=\|z_k-x_k\|$ and
$g_k=\|\nabla f(x_k)\|$. Using $W(\lambda)=A'(\lambda)/(1+\mu\lambda)$ and
$\eta(\lambda)a(\lambda)=\lambda A'(\lambda)(1+\mu A_k)/(1+\mu\lambda)$, the
displacement from the anchor $x_k$ to the trial center $y(\lambda)$ is known
in closed form:
\[
m_k(\lambda):=\|y(\lambda)-x_k\|=\frac{\eta(\lambda)}{W(\lambda)}\,\zeta_k
=\frac{\lambda(1+\mu A_k)}{a(\lambda)}\,\zeta_k,
\]
so $\|B_k-H(y(\lambda))\|_{\rm op}\le\min\{L_1,\bar E_k+L_2m_k(\lambda)\}$.
For the trial step length $\|r(\lambda)\|$, we take the minimum of two
upper bounds: the resolvent bound of Lemma~\ref{lem:scalarbounds}, which
holds unconditionally, and the movement-energy bound
$\|r_k\|\le R\sqrt{\lambda_k/((1-\sigma^2)A_{k+1})}$ implied by the
potential decrease $\frac{(1-\sigma^2)A_{k+1}}{2\lambda_k}\|r_k\|^2\le\Psi_k\le R^2/2$
for any admissible step, giving
\begin{equation}\label{eq:sc-rho}
\rho_k(\lambda)=\min\Big\{
\lambda g_k+m_k(\lambda)\Big(1+\lambda\bar E_k+\frac{L_2\lambda}{2}m_k(\lambda)\Big),\ \
R\sqrt{\frac{\lambda}{(1-\sigma^2)A'(\lambda)}}\Big\}.
\end{equation}
Combining the model-error and step-length bounds yields the certificate and
the scalar test function
\begin{equation}\label{eq:sc-test}
\omega_k(\lambda)=\min\Big\{L_1,\ \bar E_k+L_2\big(m_k(\lambda)+\rho_k(\lambda)\big)\Big\},
\qquad
\phi_k(\lambda)=\frac{\lambda\,\omega_k(\lambda)}{\sqrt{1+\mu\lambda}} .
\end{equation}
The function $\phi_k$ is continuous and strictly increasing
(Lemma~\ref{lem:sc-mono}), 
and along the run $\phi_k(\lambda)\le\sigma$ certifies \eqref{eq:sc-hpe} for the step with stepsize $\lambda$ (Lemma~\ref{lem:sc-admissible}); although the second branch of \eqref{eq:sc-rho} holds a priori only for admissible steps, a continuation argument on $(0,\lambda]$ in Lemma~\ref{lem:sc-admissible} shows that $\phi_k(\lambda)\le\sigma$ simultaneously certifies admissibility and that step-length bound. Two values bracket its root: the
smoothness step $\underline\lambda$, the positive root of
$\lambda L_1=\sigma\sqrt{1+\mu\lambda}$, which satisfies
$\phi_k(\underline\lambda)\le\sigma$ and $\underline\lambda\ge\sigma/L_1$;
and the model step $\bar\lambda_k$, the positive root of
$\lambda\bar E_k=\sigma\sqrt{1+\mu\lambda}$, which satisfies
$\phi_k(\bar\lambda_k)\ge\sigma$. 
Lemma~\ref{lem:sc-mono} also gives
$\phi_k(c\lambda)\le c^4\phi_k(\lambda)$ for $c\ge1$.
Thus reducing the bracket ratio to $2^{1/4}$ ensures
$\sigma/2\le\phi_k(\lambda_k)\le\sigma$, using only
evaluations of stored scalar formulas.

\vspace{2mm}
\par\noindent\begin{minipage}{\linewidth}\begin{framed}
\noindent\textbf{Subroutine 2.1 (Stepsize selection).} Input: $A_k$,
$\zeta_k$, $g_k$, $\bar E_k$. Evaluate $\phi_k$ through
\eqref{eq:sc-rho}--\eqref{eq:sc-test}.
\begin{enumerate}[label=\arabic*.,leftmargin=2em]
\item Set $\lambda^-=\underline\lambda$ and $\lambda^+=\bar\lambda_k$.
\item While $\lambda^+/\lambda^->2^{1/4}$: set
      $\lambda=\sqrt{\lambda^-\lambda^+}$; if $\phi_k(\lambda)\le\sigma$,
      set $\lambda^-=\lambda$, otherwise set $\lambda^+=\lambda$.
\item Return $\lambda_k=\lambda^-$.
\end{enumerate}
\end{framed}\end{minipage}\par\medskip

\par\noindent\begin{minipage}{\linewidth}\begin{framed}
\noindent\textbf{Algorithm 2 (SC-RP-QNPE: RP-QNPE for strongly convex functions).}
Initialize $x_0=z_0$, $A_0=0$, $M_{-1}=0$ and $b=L_2R$, and compute
$\nabla f(x_0)$. For $k=0,1,\ldots$:
\begin{enumerate}[label=\arabic*.,leftmargin=2em]
\item \textbf{Refresh and certify.} If $\nabla f(x_k)=0$, return $x_k$. Refresh the
      columns of group $g_{i_k}$ and form $B_k$ by \eqref{eq:refresh}, and
      form $\bar E_k$ by \eqref{eq:rollingcert} with $\delta_j=\|x_{j+1}-x_j\|$.
\item \textbf{Choose the stepsize} $\lambda_k$ by Subroutine~2.1, and form
      $a_k$, $A_{k+1}$, $\eta_k$, $W_k$ and $y_k$ by
      \eqref{eq:sc-a}--\eqref{eq:sc-weights}.
\item \textbf{Take the model step.} Evaluate $\nabla f(y_k)$, compute
      $r_k$, and set $x_{k+1}=y_k+r_k$ as in \eqref{eq:sc-update}.
\item \textbf{Correct the momentum.} Evaluate $\nabla f(x_{k+1})$ and update
      $z_{k+1}$ by \eqref{eq:sc-update}.
\item \textbf{Check progress.} For target accuracy $\varepsilon$, return
      $x_{k+1}$ once $A_{k+1}\ge R^2/(2\varepsilon)$.
\end{enumerate}
\end{framed}\end{minipage}\par\medskip

Each iteration queries the probe gradient, $\nabla f(y_k)$ and
$\nabla f(x_{k+1})$, which is cached as the next anchor gradient, so it uses
at most three gradients; Subroutine~2.1 uses none. The search is the
analogue of Subroutine~1.1, with two differences. First, the strongly
convex recursion relaxes the acceptance threshold from $\sigma$ to
$\sigma\sqrt{1+\mu\lambda}$, which permits stepsizes that grow without
bound as the model improves. Second, the step-length bound
\eqref{eq:sc-rho} has a gradient-free branch, which is the one the
analysis uses. It is of order $(a(\lambda)/A'(\lambda))R$ while the weights
grow polynomially, and of order
$\sqrt{\mu\lambda/(1+\mu\lambda)}\,R/\sqrt{\mu A_k}$ once they grow
geometrically; along the run, both are summable up to a logarithm in
$\kappa$. The resolvent branch is not needed for the guarantees: it adapts
the step to the observed gradient and can only enlarge the accepted
stepsize.

\subsection{Guarantees}\label{sec:scguarantee}
We now state the convergence and complexity guarantees of Algorithm~2. In
the convex case, the stepsize rule bounded the inverse-stepsize sum
$\sum_{k<N}\lambda_k^{-1}$ by the accumulated certificates and displacements
over each dyadic scale of $A_k$. Under strong convexity, the search guarantee
$\sigma/2\le\phi_k(\lambda_k)\le\sigma$ instead controls the
moment $\sum_{k<N}\sqrt{1+\mu\lambda_k}/\lambda_k$.
The stepsize floor guarantees geometric growth of $A_k$,
and the accelerated recursion gives a total path-length bound
$T_\sharp R$ independent of the horizon. Specifically, define the per-step
accelerated contraction factor $q$, the dimensionless path-length factor
$T_\sharp$, and the resulting moment constants $C_\sharp$ and $B_\sharp$ by
\[
q=1-\sqrt{\frac{\sigma}{\kappa+\sigma}},\qquad
T_\sharp=\Big(1+C_x+\frac{1}{\sqrt{1-\sigma^2}}\Big)
\Big(2+\log_+\frac{1}{\mu\underline\lambda}\Big)+4\sqrt2+\frac{2}{\sqrt{1-\sigma^2}},
\]
\[
C_\sharp=\frac{2}{\sigma}\Big[\chi L_1+2L_2R+(2\chi+1)L_2RT_\sharp\Big],
\qquad
B_\sharp=\frac{C_\sharp}{\mu}
=O_\sigma\big(\chi\kappa+\chi\kappa_2\log(2+\kappa)\big),
\]
where $C_x=1+\sigma/\sqrt{1-\sigma^2}$ is the constant of
Proposition~\ref{prop:hpeestimates}, and
$\log_+\frac{1}{\mu\underline\lambda}\le\log\frac{\kappa}{\sigma}$. The
next theorem shows that the moment sum is uniformly bounded by $C_\sharp$
for all $N\ge1$ and converts this bound into three simultaneous upper
bounds on the objective gap.

\begin{theorem}\label{thm:scmain}
Under Assumptions~\ref{as:main} and~\ref{as:sc}, Algorithm~2 with
Subroutine~2.1 is well defined, every step satisfies \eqref{eq:sc-hpe},
$\lambda_k\ge\underline\lambda\ge\sigma/L_1$, and
\begin{equation}\label{eq:sc-moment}
\sum_{k<N}\frac{\sqrt{1+\mu\lambda_k}}{\lambda_k}\ \le\ C_\sharp
\qquad\text{for every }N\ge1 .
\end{equation}
Consequently, after every completed iteration $N\ge1$, with $n=N-1$,
\begin{equation}\label{eq:sc-rate}
f(x_N)-f^\star\le\frac{L_1R^2}{2\sigma}\,
\min\Big\{q^{\,n},\ \Big(1+\frac{n^2}{B_\sharp^2}\Big)^{-n},\
\exp\Big(-\frac{1}{\sqrt2}\min\Big\{\frac n2,\ \Big(\frac n2\Big)^{3/2}B_\sharp^{-1/2}\Big\}\Big)\Big\}.
\end{equation}
Every iteration uses at most three gradients, and Subroutine~2.1 uses
$O(1+\log_+\log(\bar\lambda_k/\underline\lambda))$ evaluations of $\phi_k$ and
no gradient.
\end{theorem}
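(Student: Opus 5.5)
The plan is to separate the statement into well-posedness and admissibility, the moment bound \eqref{eq:sc-moment}, and the conversion of that bound into the three rates.

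\textbf{Well-posedness and admissibility.} These follow from the stated lemmas. Lemma~\ref{lem:sc-mono} gives continuity and monotonicity of $\phi_k$, and the bracket $\phi_k(\underline\lambda)\le\sigma\le\phi_k(\bar\lambda_k)$. Bisection on $\log\lambda$ then keeps the invariant, so the returned $\lambda_k$ satisfies $\phi_k(\lambda_k)\le\sigma$. Lemma~\ref{lem:sc-admissible} turns this into \eqref{eq:sc-hpe}, and $\lambda_k\ge\lambda^-\ge\underline\lambda\ge\sigma/L_1$ holds because $\lambda^-$ never decreases. The estimate $\phi_k(c\lambda)\le c^4\phi_k(\lambda)$ with final ratio $2^{1/4}$ gives $\phi_k(\lambda_k)\ge\sigma/2$. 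The evaluation count is the usual bisection count in $\log$ scale, $O(1+\log_+\log(\bar\lambda_k/\underline\lambda))$.

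\textbf{Moment bound.} Inverting $\phi_k(\lambda_k)\ge\sigma/2$ gives
\[
\frac{\sqrt{1+\mu\lambda_k}}{\lambda_k}\le\frac{2}{\sigma}\,\omega_k(\lambda_k)\le\frac2\sigma\Big(\bar E_k+L_2m_k(\lambda_k)+L_2\rho_k(\lambda_k)\Big).
\]
I will handle the terms as follows:
\begin{itemize}
\item \emph{Certificate term.} Lemma~\ref{lem:sum}, with $\delta_j=\|x_{j+1}-x_j\|$, gives $\sum_k\bar E_k\le\chi L_1+2\chi L_2Q_N+2L_2R$.
\item \emph{Displacements.} Note $m_k(\lambda_k)=\|y_k-x_k\|$ and $\delta_k\le m_k+\|r_k\|$. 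Using the second branch of $\rho_k$, which is valid for admissible steps, both $m_k$ and $\rho_k$ enter through the path-length quantity $P_N=\sum_k(\|y_k-x_k\|+R\sqrt{\lambda_k/((1-\sigma^2)A_{k+1})})$.
\end{itemize}
Collecting terms bounds the sum by $\frac2\sigma[\chi L_1+2L_2R+(2\chi+1)L_2P_N]$. It therefore suffices to prove $P_N\le T_\sharp R$ uniformly in $N$.

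\textbf{The obstacle: a horizon-free path-length bound.} I expect the main difficulty to be showing $P_N\le T_\sharp R$ uniformly. I would split the run at the first index $k_0$ where $\mu A_k\ge1$.
\begin{itemize}
\item \emph{Before $k_0$.} Mimic Proposition~\ref{prop:hpeestimates}(iii). One has $\|y_k-x_k\|\le(\eta_k/W_k)\zeta_k$ with $\zeta_k\le(1+C_x)R$, since the potential bounds both $\|z_k-x^\star\|$ and $\|x_k-x^\star\|$. Also $\eta_k/W_k\lesssim a_k/A_{k+1}$, and Cauchy--Schwarz against the movement energy handles the $r$-term. This yields $\sum a_k/A_{k+1}\le1+\log(A_{k_0}/A_1)$. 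Since $A_1\ge\underline\lambda$ and $A_{k_0}\lesssim1/\mu$, this is $2+\log_+\frac{1}{\mu\underline\lambda}$, the logarithmic factor in $T_\sharp$.
\item \emph{After $k_0$.} Use strong convexity: $\|x_k-x^\star\|,\|z_k-x^\star\|\le R/\sqrt{\mu A_k}$, together with the gradient-free step bound $R\sqrt{\lambda_k/A_{k+1}}$. The stepsize floor forces geometric growth $A_{k+1}\ge A_k/q$. The resulting series $\sum(\mu A_k)^{-1/2}$ and $\sum\sqrt{\lambda_k/A_{k+1}}$ should then be bounded by a geometric-type sum. Bounding the second needs $a_k/A_{k+1}$ controlled via \eqref{eq:sc-a}, giving $\lambda_k/A_{k+1}\lesssim (a_k/A_{k+1})^2/(\mu A_k)$. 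This should produce the constants $4\sqrt2$ and $2/\sqrt{1-\sigma^2}$.
\end{itemize}
The care needed is in making this second-phase sum independent of $\kappa$. My first attempt is to sum $\sqrt{(A_{k+1}-A_k)/A_{k+1}}\cdot(\mu A_k)^{-1/2}$ by comparison with $\int t^{-3/2}\,dt$.

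\textbf{Rates.} For each $N$:
\begin{itemize}
\item \emph{Branch $q^n$.} From \eqref{eq:sc-a} with $\lambda\ge\underline\lambda$, $A_{k+1}(1+\mu A_{k+1})$ grows by the factor $1/q$. Starting from $A_1=\lambda_0\ge\sigma/L_1$ this gives $A_N\ge(\sigma/L_1)q^{-n}$, and hence $R^2/(2A_N)\le\frac{L_1R^2}{2\sigma}q^n$.
\item \emph{Superlinear branches.} Write $\log(A_{k+1}/A_k)\gtrsim\log(1+\sqrt{\mu\lambda_k})$ once $\mu A_k\ge1$. With the moment constraint $\sum\sqrt{1+\mu\lambda_k}/\lambda_k\le C_\sharp$, minimize $\sum_k\log(1+\sqrt{\mu\lambda_k})$ by convexity/Jensen. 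Equal stepsizes are extremal, giving $\mu\lambda\gtrsim n^2/B_\sharp^2$ and hence $(1+n^2/B_\sharp^2)^{-n}$. Treating the regimes $\mu\lambda\lessgtr1$ separately, with H\"older applied to half of the iterations, gives the exponential branch with $\min\{n/2,(n/2)^{3/2}B_\sharp^{-1/2}\}$.
\end{itemize}
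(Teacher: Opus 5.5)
Your reduction of \eqref{eq:sc-moment} to a horizon-free bound $P_N\le T_\sharp R$, and your treatment of the search, match the paper. The gap is in the step you flag as the obstacle, and both computations you indicate for it give $\kappa$-dependent bounds.

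\emph{The late phase of the path length.} As written, you bound $m_k$ there through $\zeta_k\le2D_k$ and sum $\sum_k(\mu A_k)^{-1/2}$ geometrically. With contraction $\rho_\star=1-\Theta_\sigma(\kappa^{-1/2})$ this sum is of order $\sqrt\kappa$. That would replace the $\log(2+\kappa)$ in the $\chi\kappa_2$ term of $B_\sharp$ by a factor of order $\sqrt\kappa$, the kind of loss Remark~\ref{rem:scexplicit} accepts for the explicit rule.

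Your summand $\sqrt{(A_{k+1}-A_k)/A_{k+1}}\,(\mu A_k)^{-1/2}$ for the $\rho^{\rm E}$ term has the wrong power of $a_k/A_{k+1}$. At the floor stepsize $a_k/A_{k+1}\asymp\kappa^{-1/2}$, while $D_k$ needs $\asymp\sqrt\kappa$ steps to halve, so that sum is of order $\kappa^{1/4}$.

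The missing idea is to keep a full factor $v_k$ on \emph{both} terms. Lemma~\ref{lem:sc-weights}(ii)--(iv), with $\mu A_{k+1}\ge1$, gives $m_k\le(a_k/A_{k+1})\zeta_k\le2\sqrt2\,v_kD_k$ and $\rho^{\rm E}_k\le v_kD_k/\sqrt{1-\sigma^2}$. Then telescope: $D_{k+1}=D_k\sqrt{A_k/A_{k+1}}\le D_k\sqrt{1-v_k}\le D_k(1-v_k/2)$ gives $v_kD_k\le2(D_k-D_{k+1})$. Hence $\sum_{k\ge k_1}v_kD_k\le2D_{k_1}\le2R$, however large the growth ratios are. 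With the first power of $a_k/A_{k+1}$, your comparison with $\int t^{-3/2}\,dt$ is exactly this telescoping.

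\emph{Two corrections in the early phase.} First, $\|x_k-x^\star\|\le C_xR$ does not follow from the potential, which only gives $D_k$, up to order $R\sqrt\kappa$ while $\mu A_k<1$. It needs the strong-monotonicity telescoping of Lemma~\ref{lem:sc-bounded}. Second, $A_{k_0}$ may exceed $1/\mu$ by any factor, so the term $k=k_0-1$ must be bounded by one and the logarithm taken only up to $A_{k_0-1}<1/\mu$.

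\emph{The rates.} These need a sharper per-step growth estimate. From $\log(A_{k+1}/A_k)\gtrsim\log(1+\sqrt{\mu\lambda_k})$ with equal steps $\mu\lambda\asymp n^2/B_\sharp^2$, you get for $n\gg B_\sharp$ only about $(n/B_\sharp)^{-n}$. That is roughly the square root of the stated $(1+n^2/B_\sharp^2)^{-n}$. Restricting to $\mu A_k\ge1$ also discards up to $O(\sqrt\kappa)$ initial iterations.

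The paper uses, for every $k\ge1$, $A_k/A_{k+1}\le1-v_k$ (that is, $a_k/A_{k+1}\ge v_k$, Lemma~\ref{lem:sc-weights}(iii)). Combined with the identity $\frac1{1-v}=(1+\mu\lambda)(1+v)$, this gives $\log(A_{k+1}/A_k)\ge\max\{\log(1+\mu\lambda_k),v_k\}$. It then writes $\log(1+\mu\lambda_k)=2\operatorname{arsinh}(\mu/(2s_k))$, which is convex and decreasing in $s_k=\sqrt{1+\mu\lambda_k}/\lambda_k$. Jensen and \eqref{eq:sc-moment} give $\log(A_N/A_1)\ge2n\operatorname{arsinh}(n/(2B_\sharp))$, and $e^{2\operatorname{arsinh}x}\ge1+4x^2$ yields the stated base. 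The bound $v_k\ge2^{-1/2}\min\{1,(s_k/\mu)^{-1/2}\}$, with H\"older on $\{k:s_k\ge\mu\}$, gives the third branch.

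The same inequality $A_k/A_{k+1}\le1-v_k\le\rho_\star\le q$ is what the $q^n$ branch needs. Growth of $A_k(1+\mu A_k)$ by the factor $1/q$, as you state it, only makes $A_N$ grow at rate $q^{-n/2}$ once $\mu A_N\ge1$.
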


\noindent The proof is in Appendix~\ref{app:sc}. The three branches of
\eqref{eq:sc-rate} come from three lower bounds on the same weight $A_N$.
The first uses only the floor $\lambda_k\ge\underline\lambda$: since
$A_k/A_{k+1}\le1-\sqrt{\mu\lambda_k/(1+\mu\lambda_k)}$, the weights grow
geometrically, and $q^n=\exp(-\Theta_\sigma(n/\sqrt\kappa))$ is the rate of
AGD. The other two use the moment bound \eqref{eq:sc-moment}, whose
right-hand side does not depend on $N$; on an infinite run it forces
$\lambda_k\to\infty$. The second branch is superlinear: for fixed
parameters its logarithm is $-2n\log n+O(n)$. For $n\lesssim B_\sharp$, the third branch has an exponent
of order $-n^{3/2}B_\sharp^{-1/2}$.
For sufficiently large $\kappa$, it improves on the linear
branch once $n\gtrsim B_\sharp/\kappa$.

The proof bounds the accumulated accelerated gain from below
using the moment estimate, treating small and large
$\mu\lambda_k$ separately. The theorem bounds the objective error from
above; it does not assert Q-superlinear convergence of the errors
themselves.

The weight $A_k$ gives the stopping rule of Algorithm~2, as in the convex
case: $f(x_k)-f^\star\le R^2/(2A_k)$, so $A_k\ge R^2/(2\varepsilon)$ certifies
the requested accuracy. If $\varepsilon\ge L_1R^2/2$, the initial point is
already $\varepsilon$-accurate.

\begin{corollary}\label{cor:scqueries}
Under the assumptions of Theorem~\ref{thm:scmain}, let
$0<\varepsilon<L_1R^2/2$ and
\[
n_\varepsilon=\Big\lceil\min\Big\{
\frac{\mathcal L_\varepsilon}{\log(1/q)},\
\max\big\{2\sqrt2\,\mathcal L_\varepsilon,\ 2^{4/3}\mathcal L_\varepsilon^{2/3}B_\sharp^{1/3}\big\},\
\frac{\mathcal L_\varepsilon}{\log\big(1+(\mathcal L_\varepsilon/B_\sharp)^{2/3}\big)}
\Big\}\Big\rceil .
\]
Algorithm~2 returns an $\varepsilon$-accurate point within $1+n_\varepsilon$
iterations and at most $4+3n_\varepsilon$ gradient queries. In particular,
\begin{equation}\label{eq:sc-complexity}
G_\varepsilon=O_\sigma\Big(1+\min\Big\{\sqrt\kappa\,\mathcal L_\varepsilon,\
\mathcal L_\varepsilon+\big(B_\sharp\mathcal L_\varepsilon^2\big)^{1/3},\
\frac{\mathcal L_\varepsilon}{\log\big(1+(\mathcal L_\varepsilon/B_\sharp)^{2/3}\big)}\Big\}\Big).
\end{equation}
\end{corollary}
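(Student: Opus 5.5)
The plan is to derive Corollary~\ref{cor:scqueries} from Theorem~\ref{thm:scmain} by inverting each branch of \eqref{eq:sc-rate}. The quantity being inverted is the certified mass $A_N$, because the stopping rule tests $A_{k+1}\ge R^2/(2\varepsilon)$. The proof of Theorem~\ref{thm:scmain} produces three lower bounds on $A_N$, each of the form $A_N\ge\frac{2\sigma}{L_1}\cdot(\text{branch})^{-1}$. Hence it suffices to show the following: if $n=N-1\ge n_\varepsilon$, then at least one branch of the min in \eqref{eq:sc-rate} is at most $2\sigma\varepsilon/(L_1R^2)=e^{-\mathcal L_\varepsilon}$. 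In that case $R^2/(2A_N)\le\varepsilon$, so the stopping test fires by iteration $N=1+n_\varepsilon$. (If the theorem is available only as a bound on $f(x_N)-f^\star$, the same inequalities on $A_N$ are read off its proof. If the algorithm stops earlier, the output is still $\varepsilon$-accurate, since $f(x_k)-f^\star\le R^2/(2A_k)$.) Because $\varepsilon<L_1R^2/2$, the test cannot be met at $k=0$, but $\mathcal L_\varepsilon$ may still be $0$ when $\sigma$ is small. That case is trivial: every branch is at most $1$ and $n_\varepsilon=0$.

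Next I verify each branch separately for $n\ge n_\varepsilon$, using that each branch is nonincreasing in $n$.
\begin{itemize}
\item Linear branch: $q^n\le e^{-\mathcal L_\varepsilon}$ as soon as $n\ge\mathcal L_\varepsilon/\log(1/q)$.
\item Superlinear branch: if $n\ge\mathcal L_\varepsilon/\log(1+(\mathcal L_\varepsilon/B_\sharp)^{2/3})=:n_3$, I need $n\log(1+n^2/B_\sharp^2)\ge\mathcal L_\varepsilon$. It suffices to show $n^2/B_\sharp^2\ge(\mathcal L_\varepsilon/B_\sharp)^{2/3}$, i.e.\ $n\ge\mathcal L_\varepsilon^{1/3}B_\sharp^{2/3}$. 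I check this by cases. If $\mathcal L_\varepsilon\le B_\sharp$, then $\log(1+t)\le t$ gives $n_3\ge\mathcal L_\varepsilon^{1/3}B_\sharp^{2/3}$. If $\mathcal L_\varepsilon>B_\sharp$, then $\mathcal L_\varepsilon^{1/3}B_\sharp^{2/3}<\mathcal L_\varepsilon$, and I compare with $n_3$ using $\log(1+x^{2/3})\le x^{2/3}$ together with a direct check on the regime $x=\mathcal L_\varepsilon/B_\sharp$ large. I expect this comparison to be the fiddliest inequality. If it fails for very large $x$, the fallback is to note that the middle term of $n_\varepsilon$ is then of order $\mathcal L_\varepsilon$ and dominates.
\item Third branch: if $n/2\ge\sqrt2\,\mathcal L_\varepsilon$ and $(n/2)^{3/2}B_\sharp^{-1/2}\ge\sqrt2\,\mathcal L_\varepsilon$, the exponent is at least $\mathcal L_\varepsilon$. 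These two conditions rearrange to $n\ge2\sqrt2\,\mathcal L_\varepsilon$ and $n\ge2\cdot2^{1/3}\mathcal L_\varepsilon^{2/3}B_\sharp^{1/3}=2^{4/3}\mathcal L_\varepsilon^{2/3}B_\sharp^{1/3}$, which are exactly the entries of the $\max$.
\end{itemize}
The ceiling only increases $n$, so the min defining $n_\varepsilon$ is handled by choosing whichever branch attains it.

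For the query count, there is one gradient at $x_0$ initially, and each of the at most $1+n_\varepsilon$ iterations uses at most three. This gives $1+3(1+n_\varepsilon)=4+3n_\varepsilon$. Subroutine~2.1 uses none, and the early-return test uses the cached gradient. For \eqref{eq:sc-complexity}, I bound $1/\log(1/q)$: since $q=1-\sqrt{\sigma/(\kappa+\sigma)}$, we have $\log(1/q)\ge\sqrt{\sigma/(\kappa+\sigma)}$, so $1/\log(1/q)=O_\sigma(\sqrt\kappa)$. The max is bounded by the sum $\mathcal L_\varepsilon+(B_\sharp\mathcal L_\varepsilon^2)^{1/3}$ up to constants. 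The ceiling is absorbed into the leading $1$.
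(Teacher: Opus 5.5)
Your argument is the paper's proof. You read the lower bounds $A_N\ge\underline\lambda\,e^{\Gamma}$ off the proof of Theorem~\ref{thm:scmain}, note that $\underline\lambda e^{\mathcal L_\varepsilon}\ge R^2/(2\varepsilon)$, show that each of the three thresholds by itself forces the corresponding exponent up to $\mathcal L_\varepsilon$, and count $1+3(1+n_\varepsilon)$ gradients.

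The one step you leave open is the superlinear branch when $\mathcal L_\varepsilon>B_\sharp$, and there is nothing left to prove there. The inequality $\log(1+u)\le u$ holds for every $u\ge0$, not only for $u\le1$. With $u=(\mathcal L_\varepsilon/B_\sharp)^{2/3}$ this gives $n_3\ge\mathcal L_\varepsilon/u=\mathcal L_\varepsilon^{1/3}B_\sharp^{2/3}$ in every regime. Hence $n^2/B_\sharp^2\ge u$, and so $n\log(1+n^2/B_\sharp^2)\ge n_3\log(1+u)=\mathcal L_\varepsilon$. This is exactly the paper's inequality $m^2/B_\sharp^2=u^3/\log^2(1+u)\ge u$.

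Your proposed fallback would not have worked in any case. Since $n_\varepsilon$ is the ceiling of the \emph{minimum} of the three thresholds, each threshold must suffice on its own, and a larger middle term cannot rescue the third one.

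Two small slips:
\begin{itemize}
\item The proof of the theorem gives $A_N\ge\underline\lambda\,(\text{branch})^{-1}\ge(\sigma/L_1)(\text{branch})^{-1}$, not $2\sigma/L_1$. Your conclusion $R^2/(2A_N)\le\varepsilon$ is what the correct constant yields.
\item Under $\varepsilon<L_1R^2/2$ one always has $\mathcal L_\varepsilon>\log(1/\sigma)>0$, so the case $\mathcal L_\varepsilon=0$ never occurs. A small $\sigma$ makes $\mathcal L_\varepsilon$ larger, not smaller.
\end{itemize}
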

\noindent The proof is in Appendix~\ref{proof:scqueries}. The first branch
is the bound of AGD. The middle branch, of order
$(B_\sharp\mathcal L_\varepsilon^2)^{1/3}$, governs moderate accuracies, and
the last branch gives $O(\log(1/\varepsilon)/\log\log(1/\varepsilon))$ as
$\varepsilon\downarrow0$ for fixed problem parameters.

\begin{remark}[An explicit stepsize]\label{rem:scexplicit}
Let $D_0=R$, $D_k=R/\sqrt{\mu A_k}$ for $k\ge1$, and
$c_\sigma=2+1/\sqrt{1-\sigma^2}$. Along the run, $m_k(\lambda)\le2D_k$ and
$\rho_k(\lambda)\le D_k/\sqrt{1-\sigma^2}$ for every $\lambda$, so
$\omega_k(\lambda)\le\omega_k^{\rm ex}:=\min\{L_1,\bar E_k+c_\sigma L_2D_k\}$.
The certificate $\omega_k^{\rm ex}$ does not depend on $\lambda$, and the
stepsize can be taken as the explicit root of
$\lambda\,\omega_k^{\rm ex}=\sigma\sqrt{1+\mu\lambda}$, with no search. This
rule is admissible, and Theorem~\ref{thm:scmain} holds for it with
$B_\sharp$ replaced by
$O_\sigma(\chi\kappa+\chi\kappa_2\log(2+\kappa)+\kappa_2\sqrt\kappa\log(2+\kappa))$
(Appendix~\ref{proof:scexplicit}). The additional term comes from the bound
$\sum_kD_k=O_\sigma(R\sqrt\kappa\log(2+\kappa))$.
Quadratic weight growth gives a harmonic bound on the radii,
while geometric growth controls the tail. Subroutine~2.1 replaces these radii by the actual
displacement $m_k(\lambda)$ and by a step bound that is small whenever the
new weight is small relative to $A_k$.
\end{remark}


\subsection{Comparison with existing guarantees}\label{sec:sccompare}
Table~\ref{tab:sccompare} collects the guarantees; Figure~\ref{fig:scaccuracy} draws the bounds against the accuracy at $\kappa=d$, and Figure~\ref{fig:scmaps} maps the smallest known bound over condition numbers and accuracies. To compare polynomial
factors we normalize $L_1=L_2=R=1$, so that $\kappa_2=\kappa$,
$\mathcal L_\varepsilon\asymp\mathcal S_\varepsilon\asymp\log(1/\varepsilon)$
and $B_\sharp\asymp\chi\kappa\log(2+\kappa)$; we write $\mathcal L$ for
$\mathcal L_\varepsilon$ and ignore constants.

\begin{table}[t]
\centering\small
\begin{tabular}{@{}ll@{}}
\toprule
Method & Gradient complexity\\
\midrule
AGD \citep{nesterov2018lectures} & $O\big(\sqrt{\kappa}\,\mathcal S_\varepsilon\big)$\\[3pt]
QNPE \citep{jiang2023online} & $O\!\left(\min\left\{\kappa\,\mathcal S_\varepsilon,\;
\dfrac{\mathcal S_\varepsilon}{\log\!\big(1+(\mathcal S_\varepsilon/(d\kappa^2+\kappa\kappa_2^2))^{1/3}\big)}\right\}\right)$\\[4pt]
Cutting planes \citep{jiang2020improved} & {$\tilde O(d\,\mathcal S_\varepsilon)$}\\
\midrule
{Restarted CALEN \citep{chen2025faster}} & {$O\big((\bar d\,\mathcal S_\varepsilon+\bar d^{13/21}\kappa_2^{2/7})\log\bar d\big)$}\\
\midrule
{Restarted Algorithm~1 (Theorem~\ref{thm:screstart})} & {$O\big(\mathcal S_\varepsilon+\min\{\sqrt\kappa\,\mathcal S_\varepsilon,\ (\chi\kappa)^{1/3}\mathcal S_\varepsilon+(\chi\kappa_2)^{1/3}\}\big)$}\\[3pt]
SC-RP-QNPE (Corollary~\ref{cor:scqueries}) &
$O\!\left(\min\left\{\sqrt\kappa\,\mathcal S_\varepsilon,\;
\mathcal S_\varepsilon+(B_\sharp\mathcal S_\varepsilon^2)^{1/3},\;
\dfrac{\mathcal S_\varepsilon}{\log\big(1+(\mathcal S_\varepsilon/B_\sharp)^{2/3}\big)}\right\}\right)$\\
\bottomrule
\end{tabular}
\caption{\small Strongly convex case, with $\kappa=L_1/\mu$, $\kappa_2=L_2R/\mu$,
$\mathcal S_\varepsilon=1+\log_+(L_1R^2/\varepsilon)$ and
$B_\sharp=O(\chi\kappa+\chi\kappa_2\log(2+\kappa))$; the constants of the SC-RP-QNPE row depend
only on $\sigma$, and $\chi\le d$ is the effective dimension of Definition~\ref{def:chi}
($\chi=d$ for a dense Hessian). {Restarted CALEN counts one Hessian as $\bar d$
gradients: $\bar d=d$ as published, $\bar d=\chi$ (Lemma~\ref{lem:group}) as a hypothetical
instantiation.}}
\label{tab:sccompare}
\end{table}

\begin{figure}[t]
\centering
\begin{subfigure}[t]{0.49\textwidth}
\centering
\begin{tikzpicture}
\begin{axis}[width=\linewidth, height=0.85\linewidth, grid=major, grid style={gray!20}, axis on top,
  xmin=0, xmax=5, ymin=0, ymax=6,
  xtick={0,...,5}, xticklabels={$1$,$d$,$d^{2}$,$d^{3}$,$d^{4}$,$d^{5}$},
  ytick={0,...,6}, yticklabels={$1$,$d$,$d^{2}$,$d^{3}$,$d^{4}$,$d^{5}$,$d^{6}$},
  xlabel={accuracy $\mathcal L=\log(1/\varepsilon)$}, ylabel={gradient queries},
  tick label style={font=\scriptsize}, label style={font=\small},
  every axis plot/.append style={line width=1.1pt},
  legend to name=leg:scaccuracy, legend columns=2, legend cell align={left},
  legend style={font=\scriptsize, draw=gray!60,
                /tikz/every even column/.append style={column sep=0.6cm}}]
\fill[blue!8]  (axis cs:0,0)   rectangle (axis cs:0.5,6);
\fill[red!14]  (axis cs:0.5,0) rectangle (axis cs:3,6);
\fill[red!6]   (axis cs:3,0)   rectangle (axis cs:5,6);
\addplot[blue] coordinates {(0,0.5) (5,5.5)};
  \addlegendentry{AGD: $\sqrt{\kappa}\,\mathcal L$}
\addplot[green!60!black] coordinates {(0,1) (5,6)};
  \addlegendentry{cutting planes: $\tilde O(d\,\mathcal L)$}
\addplot[violet] coordinates {(0,1) (3,3) (5,5)};
  \addlegendentry{QNPE: $\min\{\kappa\mathcal L,\ \mathcal L/\log(1+(\mathcal L/X)^{1/3})\}$, $X=d\kappa^{2}+\kappa^{3}$}
\addplot[orange, dashed] coordinates {(0,1) (5,6)};
  \addlegendentry{restarted CALEN: $\tilde O(\bar d\,\mathcal L)$}
\addplot[red, very thick] coordinates {(0,0.5) (0.5,1) (2,2) (5,5)};
  \addlegendentry{SC-RP-QNPE: $\min\{\sqrt{\kappa}\mathcal L,\ \mathcal L+(B_\sharp\mathcal L^{2})^{1/3},\ \mathcal L/\log(1+(\mathcal L/B_\sharp)^{2/3})\}$, $B_\sharp=\chi\kappa$}
\addplot[brown] coordinates {(0,0.5) (5,5.5)};
\addlegendentry{restarted RP-QNPE:$\min\{\sqrt\kappa,(\chi\kappa)^{1/3}\}\mathcal L$}
\draw[dashed, gray!70] (axis cs:0.5,0) -- (axis cs:0.5,6) node[pos=1, below right, font=\scriptsize, text=gray!50!black] {$\mathcal L=d/\sqrt{\kappa}$};
\draw[dashed, gray!70] (axis cs:2,0) -- (axis cs:2,6) node[pos=1, below right, font=\scriptsize, text=gray!50!black] {$\mathcal L=B_\sharp$};
\draw[dashed, gray!70] (axis cs:3,0) -- (axis cs:3,6) node[pos=1, below right, font=\scriptsize, text=gray!50!black] {$\mathcal L=X$};
\node[font=\scriptsize, inner sep=1pt, rotate=90] at (axis cs:0.25,1.6) {ties AGD};
\node[font=\scriptsize, align=center, inner sep=1pt] at (axis cs:1.75,0.45) {strictly smallest};
\node[font=\scriptsize, align=center, inner sep=1pt] at (axis cs:4,0.45) {same polynomial order as QNPE\\normalized expression ratio $\to2$};
\end{axis}
\end{tikzpicture}
\caption{Dense: $\chi=\bar d=d$, at $\kappa=d$.}
\label{fig:scaccuracy-dense}
\end{subfigure}\hfill
\begin{subfigure}[t]{0.49\textwidth}
\centering
\begin{tikzpicture}
\begin{axis}[width=\linewidth, height=0.85\linewidth, grid=major, grid style={gray!20}, axis on top,
  xmin=0, xmax=5, ymin=0, ymax=6,
  xtick={0,...,5}, xticklabels={$1$,$\chi$,$\chi^{2}$,$\chi^{3}$,$\chi^{4}$,$\chi^{5}$},
  ytick={0,...,6}, yticklabels={$1$,$\chi$,$\chi^{2}$,$\chi^{3}$,$\chi^{4}$,$\chi^{5}$,$\chi^{6}$},
  xlabel={accuracy $\mathcal L=\log(1/\varepsilon)$}, ylabel={gradient queries},
  tick label style={font=\scriptsize}, label style={font=\small},
  every axis plot/.append style={line width=1.1pt}]
\fill[red!6]  (axis cs:0,0) rectangle (axis cs:1,6);
\fill[red!14] (axis cs:1,0) rectangle (axis cs:5,6);
\addplot[blue] coordinates {(0,1.5) (4.5,6)};
\addplot[green!60!black] coordinates {(0,3) (3,6)};
\addplot[violet] coordinates {(0,3) (4.5,6)};
\addplot[orange, dashed] coordinates {(0,1) (5,6)};
\addplot[red, very thick] coordinates {(0,1.3333) (4,4) (5,5)};
\addplot[brown] coordinates {(0,1.3333) (4.6667,6)};
\draw[dashed, gray!70] (axis cs:1,0) -- (axis cs:1,6) node[pos=1, below right, font=\scriptsize, text=gray!50!black] {$\mathcal L=\kappa/\chi^{2}$};
\draw[dashed, gray!70] (axis cs:4,0) -- (axis cs:4,6) node[pos=1, below right, font=\scriptsize, text=gray!50!black] {$\mathcal L=B_\sharp$};
\node[font=\scriptsize, inner sep=1pt, rotate=90] at (axis cs:0.5,3) {above CALEN's stage term only};
\node[font=\scriptsize, align=center, inner sep=1pt] at (axis cs:3,0.45) {strictly smallest};
\end{axis}
\end{tikzpicture}
\caption{Sparse: $d=\chi^{3}$, $\bar d=\chi$, at $\kappa=d$.}
\label{fig:scaccuracy-sparse}
\end{subfigure}

\medskip
\pgfplotslegendfromname{leg:scaccuracy}
\caption{Strongly convex case: gradient complexity against the accuracy
$\mathcal L=\log(1/\varepsilon)$ at $\kappa=d$, in powers of $d$ (a) and of $\chi$ (b);
constants and logarithms suppressed (the bounds are those of Table~\ref{tab:sccompare}), so a branch $\mathcal L/\log(1+(\mathcal L/Y)^{\theta})$ is
drawn as $\mathcal L^{1-\theta}Y^{\theta}$ for $\mathcal L<Y$ and as $\mathcal L$ for
$\mathcal L>Y$. Dashed: restarted CALEN at $\bar d$ gradients per Hessian, up to a transient
($\bar d=d$ in (a), where it coincides with cutting planes; hypothetically $\bar d=\chi$ in (b)).
SC-RP-QNPE shows its three phases, with the superlinear one starting at
$\mathcal L=B_\sharp\asymp\chi\kappa$, whereas QNPE's starts only at $\mathcal L=X$; beyond
$\mathcal L=X$ both bounds are $\Theta(\mathcal L/\log\mathcal L)$, the ratio of the displayed normalized expressions approaches $2$.}
\label{fig:scaccuracy}
\end{figure}

\subsubsection{AGD and QNPE}\label{sec:scagd}
Compared with AGD, whose complexity is $O(\sqrt\kappa\,\mathcal L)$
\citep{nesterov2018lectures}, SC-RP-QNPE is never slower because the first
branch of \eqref{eq:sc-complexity} matches AGD at every accuracy. The
curvature branches improve on AGD once the iteration count exceeds
$O(B_\sharp/\kappa)=O(\chi\log\kappa)$: in terms of accuracy, the middle
branch $\mathcal L+(B_\sharp\mathcal L^2)^{1/3}$ is smaller than
$\sqrt\kappa\,\mathcal L$ as soon as
$\mathcal L\gtrsim\chi\log\kappa/\sqrt\kappa$, and when
$\kappa\gtrsim(\chi\log\kappa)^2$ this threshold is at most a constant, so
SC-RP-QNPE  improves on AGD at every nontrivial accuracy.

Compared with QNPE \citep{jiang2023online}, SC-RP-QNPE improves the guaranteed
bound in every regime. QNPE's upper bound has an initial linear rate
$(1-\Theta(\kappa^{-1}))^k$ matching gradient descent rather than AGD, and
transitions after $O(d+\kappa_2^2/\kappa)$ iterations to a superlinear
envelope $(1+c\sqrt{k/X})^{-k}$ with $X=O(d\kappa^2+\kappa\kappa_2^2)$ (so
$X\asymp d\kappa^2+\kappa^3$ in normalized units). Even in the dense case
$\chi=d$, SC-RP-QNPE replaces the unaccelerated linear phase $\kappa\mathcal L$
by the accelerated bound $\sqrt\kappa\,\mathcal L$, and at moderate
accuracies ($\mathcal L\le B_\sharp$) its curvature
branch improves from $(X\mathcal L^2)^{1/3}$ to
$\mathcal L+(B_\sharp\mathcal L^2)^{1/3}$, with
$B_\sharp\asymp d\kappa\log\kappa\ll X\asymp d\kappa^2+\kappa^3$. At high
accuracies, the eventual superlinear envelope improves from
$(1+c\sqrt{k/X})^{-k}$ to $(1+k^2/B_\sharp^2)^{-k}$ in two ways: the base becomes bounded away from one at
$k\asymp B_\sharp\asymp d\kappa\log\kappa$, rather than
$k\asymp X\asymp d\kappa^2+\kappa^3$, and beyond these
scales the respective bases decay as $k^{-2}$ and $k^{-1/2}$
for fixed problem parameters
(because for $\mu\lambda_k\gg1$ the normalized summand of
\eqref{eq:sc-moment} satisfies
$\mu^{-1}\lambda_k^{-1}\sqrt{1+\mu\lambda_k}\asymp(\mu\lambda_k)^{-1/2}$,
controlling the half-moment of the large-step contraction factor
$(\mu\lambda_k)^{-1}$, whereas QNPE's regret analysis bounds the
inverse-stepsize second moment $\sum_k(\mu\lambda_k)^{-2}$).

\subsubsection{Restarted methods}\label{sec:screstartcompare}
CALEN is designed for convex functions, and its strongly convex guarantee
\citep[Theorem~I.1]{chen2025faster} is obtained by restarting it whenever
the distance to $x^\star$ has provably halved: with one Hessian counted as
$\bar d$ gradients, restarted CALEN needs
$O((\bar d\,\mathcal S_\varepsilon+\bar d^{13/21}\kappa_2^{2/7})\log\bar d)$
equivalent gradient queries. Each stage pays the additive $\bar d$ of the
Hessian computed at its start, so in the dense case the bound is
$\tilde O(d\,\mathcal L)$ plus a transient. It is smaller than
Corollary~\ref{cor:scqueries} only when
$\kappa\gtrsim d^2\max\{1,\mathcal L/\log\kappa\}$ (up to the transient),
and SC-RP-QNPE is smaller otherwise.

Similarly, restarting our own convex method, Algorithm~1, in the same way
gives
$O(\mathcal S_\varepsilon+\min\{\sqrt\kappa\,\mathcal S_\varepsilon,(\chi\kappa)^{1/3}\mathcal S_\varepsilon+(\chi\kappa_2)^{1/3}\})$
gradient queries (Theorem~\ref{thm:screstart} in
Appendix~\ref{app:strongly}). Restarting pays the
inverse-stepsize moment at every stage (giving $(\chi\kappa)^{1/3}\mathcal L$
in normalized units), whereas Algorithm~2 bounds the strongly convex moment
\eqref{eq:sc-moment} once over the whole run (giving
$(B_\sharp\mathcal L^2)^{1/3}\asymp(\chi\kappa\log\kappa)^{1/3}\mathcal L^{2/3}$
before its superlinear phase): consequently, Corollary~\ref{cor:scqueries}
improves the accuracy dependence and dominates restarting as soon as
$\mathcal L\gtrsim\log\kappa$, whereas at coarser accuracies
($\mathcal L\lesssim\log\kappa$) the direct bound is at most
a factor $(\log\kappa)^{1/3}$ worse than the restarted bound.

\subsubsection{Cutting plane methods}\label{sec:scstructure}
Cutting-plane methods need $\tilde O(d\,\mathcal S_\varepsilon)$ gradient
queries irrespective of $\kappa$. Our bound is below this whenever its AGD
branch is, $\kappa\lesssim d^2$, or its middle branch is,
$(\chi\kappa\log\kappa)^{1/3}\mathcal L^{2/3}\lesssim d\,\mathcal L$, that
is, $\kappa\lesssim d^3\mathcal L/(\chi\log\kappa)$. In the dense case the
two conditions combine to $\kappa\lesssim d^2\max\{1,\mathcal L/\log\kappa\}$,
and cutting planes are smaller for larger $\kappa$. For $\chi<d$ the second
condition widens by the factor $d/\chi$, because the cutting-plane bound
does not depend on the Hessian pattern while ours depends on the dimension
only through $\chi$; in particular, up to logarithmic factors, on the range
$\chi^2\lesssim\kappa\lesssim d^3/\chi$ our bound is below both the AGD and
the cutting-plane bounds.

\begin{figure}[t]
\centering
\begin{subfigure}[t]{0.49\textwidth}
\centering
\begin{tikzpicture}
\begin{axis}[width=\linewidth, height=0.75\linewidth, grid=major, grid style={gray!30}, axis on top,
  xmin=0, xmax=4, ymin=0, ymax=3,
  xtick={0,...,4}, xticklabels={$1$,$d$,$d^{2}$,$d^{3}$,$d^{4}$},
  ytick={0,...,3}, yticklabels={$1$,$d$,$d^{2}$,$d^{3}$},
  xlabel={condition number $\kappa$}, ylabel={$\mathcal L=\log(1/\varepsilon)$},
  tick label style={font=\scriptsize}, label style={font=\small},
  legend to name=leg:scmaps, legend columns=2, legend cell align={left},
  legend style={font=\scriptsize, draw=gray!60,
                /tikz/every even column/.append style={column sep=0.6cm}}]
\fill[blue!25]  (axis cs:0,0) -- (axis cs:2,0) -- (axis cs:0,1) -- cycle;
\fill[green!30] (axis cs:2,0) -- (axis cs:4,0) -- (axis cs:4,2) -- cycle;
\fill[red!40]   (axis cs:0,1) -- (axis cs:2,0) -- (axis cs:4,2) -- (axis cs:4,3) -- (axis cs:0,3) -- cycle;
\addlegendimage{area legend, fill=blue!25, draw=none}
  \addlegendentry{AGD smallest ($=$ first branch of SC-RP-QNPE)}
\addlegendimage{area legend, fill=green!30, draw=none}
  \addlegendentry{cutting planes smallest}
\addlegendimage{area legend, fill=red!40, draw=none}
  \addlegendentry{SC-RP-QNPE strictly smallest}
\addlegendimage{area legend, fill=red!15, draw=none}
  \addlegendentry{below AGD and cutting planes only (not below restarted CALEN at $\bar d=\chi$)}
\draw[dashed, blue!50!black] (axis cs:0,1) -- (axis cs:2,0)
  node[midway, sloped, above, font=\scriptsize] {$\mathcal L=d/\sqrt{\kappa}$};
\draw[dashed, green!40!black] (axis cs:2,0) -- (axis cs:4,2)
  node[midway, sloped, above, font=\scriptsize] {$\mathcal L=\kappa/d^{2}$};
\draw[dashed, gray!60!black] (axis cs:0,1) -- (axis cs:2,3)
  node[midway, sloped, above, font=\scriptsize] {$\mathcal L=B_\sharp=d\kappa$};
\draw[dotted, black] (axis cs:2,0) -- (axis cs:2,3) node[pos=0.08, right, font=\scriptsize] {$\kappa=d^{2}$};
\end{axis}
\end{tikzpicture}
\caption{Dense Hessian: $\chi=\bar d=d$.}
\label{fig:scmap-dense}
\end{subfigure}\hfill
\begin{subfigure}[t]{0.49\textwidth}
\centering
\begin{tikzpicture}
\begin{axis}[width=\linewidth, height=0.75\linewidth, grid=major, grid style={gray!30}, axis on top,
  xmin=0, xmax=12, ymin=0, ymax=4,
  xtick={0,2,...,12}, xticklabels={$1$,$\chi^{2}$,$\chi^{4}$,$\chi^{6}$,$\chi^{8}$,$\chi^{10}$,$\chi^{12}$},
  ytick={0,...,4}, yticklabels={$1$,$\chi$,$\chi^{2}$,$\chi^{3}$,$\chi^{4}$},
  xlabel={condition number $\kappa$}, ylabel={$\mathcal L=\log(1/\varepsilon)$},
  tick label style={font=\scriptsize}, label style={font=\small}]
\fill[blue!25]  (axis cs:0,0) -- (axis cs:2,0) -- (axis cs:0,1) -- cycle;
\fill[green!30] (axis cs:8,0) -- (axis cs:12,0) -- (axis cs:12,4) -- cycle;
\fill[red!15]   (axis cs:2,0) -- (axis cs:8,0) -- (axis cs:12,4) -- (axis cs:6,4) -- cycle;
\fill[red!40]   (axis cs:0,1) -- (axis cs:2,0) -- (axis cs:6,4) -- (axis cs:0,4) -- cycle;
\draw[dashed, blue!50!black] (axis cs:0,1) -- (axis cs:2,0)
  node[midway, sloped, above, font=\scriptsize] {$\mathcal L=\chi/\sqrt{\kappa}$};
\draw[dashed, red!60!black] (axis cs:2,0) -- (axis cs:6,4)
  node[midway, sloped, above, font=\scriptsize] {$\mathcal L=\kappa/\chi^{2}$};
\draw[dashed, green!40!black] (axis cs:8,0) -- (axis cs:12,4)
  node[midway, sloped, above, font=\scriptsize] {$\mathcal L=\chi\kappa/d^{3}$};
\draw[dashed, gray!60!black] (axis cs:0,1) -- (axis cs:3,4)
  node[midway, sloped, above, font=\scriptsize] {$\mathcal L=B_\sharp=\chi\kappa$};
\draw[dotted, black] (axis cs:6,0) -- (axis cs:6,4) node[pos=0.08, right, font=\scriptsize] {$\kappa=d^{2}$};
\end{axis}
\end{tikzpicture}
\caption{Sparse Hessian with $d=\chi^{3}$; restarted CALEN at $\bar d=\chi$.}
\label{fig:scmap-sparse}
\end{subfigure}

\medskip
\pgfplotslegendfromname{leg:scmaps}
\caption{Strongly convex case: the smallest known bound as a function of $\kappa$ and of the
accuracy $\mathcal L=\log(1/\varepsilon)$, in powers of $d$ (a) and of $\chi$ (b); constants and
logarithms suppressed. The dashed lines are the crossovers of SC-RP-QNPE with AGD
($\mathcal L=\chi/\sqrt\kappa$), with cutting planes ($\mathcal L=\chi\kappa/d^{3}$) and, in (b),
with restarted CALEN instantiated at $\bar d=\chi$ ($\mathcal L=\kappa/\chi^{2}$, a hypothetical
instantiation of an exact-Hessian analysis); above $\mathcal L=B_\sharp$ its superlinear branch is
active. QNPE is never the smallest bound and is omitted. At $\mathcal L=O(1)$ the red region of
(a) shrinks to the point $\kappa\asymp d^{2}$, whereas in (b) it covers $\chi^{2}\lesssim\kappa\lesssim d^{3}/\chi$.}
\label{fig:scmaps}
\end{figure}

Table~\ref{tab:sccompare} and Figure~\ref{fig:scmaps} collect these comparisons.
In the moderate-accuracy regime, SC-RP-QNPE improves on both
AGD and cutting planes when
$\mathcal L\gtrsim\max\{\chi\log\kappa/\sqrt\kappa,\
\chi\kappa\log\kappa/d^3\}$.
Replacing the second term by $\kappa\log\kappa/\chi^2$
also ensures improvement over the stage term of restarted
CALEN instantiated at $\bar d=\chi$, and hence over its
full bound; this condition need not be necessary because
CALEN also carries a transient.
In the dense case, the comparison with AGD and cutting planes
gives the band
$(d\log\kappa/\mathcal L)^2\lesssim\kappa
\lesssim d^2\mathcal L/\log\kappa$,
which is nonempty once $\mathcal L\gtrsim\log\kappa$.
Within this band and the moderate-accuracy regime, the gain
over the better of AGD and cutting planes is at most a factor
$(\mathcal L/\log\kappa)^{1/3}$.
For $\chi<d$, the band widens to
$(\chi\log\kappa/\mathcal L)^2\lesssim\kappa
\lesssim d^3\mathcal L/(\chi\log\kappa)$,
which for $\mathcal L\asymp\log\kappa$ gives the range
$\chi^2\lesssim\kappa\lesssim d^3/\chi$ above.

\section{Extension to nonconvex optimization}\label{sec:nonconvex}
The curvature model of Block~1 does not use convexity. Lemma~\ref{lem:group}
and the exchange of summation in the proof of Lemma~\ref{lem:telescope}
use only the pattern $\mathcal S$ and the Lipschitz continuity of the
Hessian, and together they bound the error of a rolling finite-difference
model at the current point by the path traveled over the current window and the finite-difference error.
What uses convexity is everything downstream: the accelerated recursion,
its potential, and the path-length estimate that makes the summed
certificate logarithmic. In this section we replace that machinery by the
online-to-nonconvex conversion of \citet{cutkosky2023optimal}, in the extrapolated-gradient form
of \citet{patitucci2025double,jiang2025improved}, and ask what the rolling model
contributes there. The answer is a gradient complexity of $O(\chi^{1/3}\varepsilon^{-3/2}+\chi)$ {\color{ReviewBlue}(the additive $\chi$ is a warm-up, dominated by the first term on the range where the model is used; Remark~\ref{rem:ncadditive})} for finding an $\varepsilon$-stationary point of a function with Lipschitz gradient and Lipschitz Hessian and, for
the same conversion and learner with the model switched off, the
first-order rate $O(\varepsilon^{-7/4})$; together,
$O(\min\{\chi^{1/3}\varepsilon^{-3/2},\varepsilon^{-7/4}\})$.

The conversion has a natural slot for curvature information. It drives an
online learner whose linear losses are gradients of $f$ along the
trajectory, and an \emph{optimistic} learner may use a prediction, or
hint, of its next loss. A quasi-Newton hint predicts the next gradient by
linearizing at the current point with a model Hessian \citep{jiang2025improved}. The question is
which property of the model the regret bound rewards, and the answer
differs from the convex case: not a certificate whose sum over the run is
small, but a certificate that holds at \emph{every} step with a small
constant. The per-step inequality of an optimistic learner contains a
negative term, proportional to the squared change of the increment, that
is otherwise discarded; a pointwise bound on the prediction error in terms
of that change lets the negative term absorb the error entirely, and the
only trace of the model that remains is an upper limit on the learning
rate. The rolling model supplies exactly such a bound, with a constant
proportional to $\chi L_2D$, where $D$ is the radius of the learner's
increments. Sections~\ref{sec:ncconversion} and~\ref{sec:nclearner} set up
the conversion and the learner and isolate the design target,
Section~\ref{sec:ncalgorithm} places the rolling model in the hint and
certifies it, Section~\ref{sec:ncguarantee} states the guarantees,
Section~\ref{sec:ncimplementation} discusses the arithmetic, and
Section~\ref{sec:nccompare} compares the guarantees with existing bounds.

\subsection{Setting}\label{sec:ncsetting}
\begin{assumption}\label{as:nonconvex}
The function $f:\mathbb R^d\to\mathbb R$ is twice continuously
differentiable and bounded below, with known positive constants $L_1,L_2$
such that
\[
\|H(x)\|_{\rm op}\le L_1,\qquad
\|H(x)-H(y)\|_{\rm op}\le L_2\|x-y\|\qquad\text{for all }x,y\in\mathbb R^d .
\]
The oracle returns exact gradients at arbitrary query points. A pattern
$\mathcal S$ with $\operatorname{supp}H(x)\subseteq\mathcal S$ for every
$x$, and a partition $g_1,\ldots,g_\chi$ of $[d]$ that is structurally
orthogonal for $\mathcal S$, are supplied as in Assumption~\ref{as:main};
when no structure is known, $\chi=d$.
\end{assumption}
\noindent Write $f^\star=\inf_xf(x)$ and $\Phi_0=f(x_0)-f^\star$ for the
initial gap. The goal is a point $\bar w$ with
$\|\nabla f(\bar w)\|\le\varepsilon$, and the resource counted is the
number of gradient queries. Neither $R$ nor $x^\star$ plays a role: a
minimizer need not exist, and progress is measured by the decrease of
$f$, which is at most $\Phi_0$ over any run. The set $\mathcal Z$ and the
projection $\Pi_{\mathcal Z}$ of the convex method play no role either;
the model used below is the raw symmetrized finite-difference matrix,
and the analysis asks nothing of it beyond symmetry and an error bound.
Lemma~\ref{lem:group} holds under Assumption~\ref{as:nonconvex} with the
proof given in Appendix~\ref{app:backbone}, since that proof uses only
the pattern and the Lipschitz constant of the Hessian; we use it below
without further comment.

\subsection{From regret to stationarity}\label{sec:ncconversion}
 We use the deterministic midpoint formulation of
\citet{jiang2025improved}, which builds on the
online-to-nonconvex conversion of \citet{cutkosky2023optimal}.
The conversion runs for $N=KT$ iterations grouped into $K$ episodes
$\mathcal N_k=\{(k-1)T+1,\ldots,kT\}$ of length $T$. An online learner
chooses increments $\Delta_n$ with $\|\Delta_n\|\le D$, and the iterates
and two auxiliary points are
\begin{equation}\label{eq:ncpoints}
x_n=x_{n-1}+\Delta_n,\qquad
w_n=x_{n-1}+\tfrac12\Delta_n,\qquad
z_n=x_n+\tfrac12\Delta_n,\qquad n\ge1,
\end{equation}
with $\Delta_0=0$ and $z_0=x_0$. The point $w_n$ is the midpoint of the
$n$th step; the gradient there is the learner's $n$th loss. The point
$z_n$ is where the midpoint of the next step would fall if the increment
were repeated; it is where the hint and the curvature probe of the next
step are evaluated. The next midpoint lies within $D$ of it,
\begin{equation}\label{eq:ncsn}
w_{n+1}=z_n+s_n,\qquad s_n=\tfrac12(\Delta_{n+1}-\Delta_n),\qquad
\|s_n\|\le D,
\end{equation}
and consecutive probe points satisfy
\begin{equation}\label{eq:nczstep}
z_{n+1}-z_n=\tfrac32\Delta_{n+1}-\tfrac12\Delta_n,\qquad
\|z_{n+1}-z_n\|\le2D .
\end{equation}
Two facts tie the losses to the objective. Evaluating the gradient at the
midpoint makes the first-order expansion of $f$ along a step exact up to
a third-order term, and averaging the midpoints of an episode produces a
point whose gradient is close to the average loss of the episode. 
The following estimates follow the arguments of
\citet[Lemmas~2.1--2.2]{jiang2025improved};
see also \citet[Appendix~A]{patitucci2025double}.
We include proofs with our constants for completeness.

\begin{lemma}\label{lem:ncconversion}
Let Assumption~\ref{as:nonconvex} hold and $\|\Delta_n\|\le D$ for all
$n$. Then for every $n\ge1$,
\begin{equation}\label{eq:ncdescent}
f(x_n)-f(x_{n-1})\le\langle\nabla f(w_n),\Delta_n\rangle+\frac{L_2D^3}{24},
\end{equation}
and for every episode $k$, with
$\bar w^k=\frac1T\sum_{n\in\mathcal N_k}w_n$ and
$G^k=\sum_{n\in\mathcal N_k}\nabla f(w_n)$,
\begin{equation}\label{eq:ncavg}
\|\nabla f(\bar w^k)\|\le\frac{\|G^k\|}{T}+\frac{L_2T^2D^2}{2}.
\end{equation}
\end{lemma}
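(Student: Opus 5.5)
Both estimates come from Taylor expansions controlled only by the Lipschitz continuity of the Hessian (Assumption~\ref{as:nonconvex}); convexity and the pattern are not used.

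\textbf{Descent inequality \eqref{eq:ncdescent}.} I parametrize the segment by its midpoint: $\phi(t)=f(w_n+t\Delta_n)$ for $t\in[-\tfrac12,\tfrac12]$, so that $\phi(\pm\tfrac12)$ are $f(x_n)$ and $f(x_{n-1})$. Then
\[
f(x_n)-f(x_{n-1})=\int_{-1/2}^{1/2}\langle\nabla f(w_n+t\Delta_n),\Delta_n\rangle\,dt .
\]
Next I write $\nabla f(w_n+t\Delta_n)=\nabla f(w_n)+tH(w_n)\Delta_n+\rho(t)$ with $\|\rho(t)\|\le \tfrac{L_2}{2}t^2\|\Delta_n\|^2$. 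The linear term $t\langle H(w_n)\Delta_n,\Delta_n\rangle$ integrates to zero by symmetry; this is exactly why the gradient is evaluated at the midpoint. The remainder contributes at most
\[
\int_{-1/2}^{1/2}\tfrac{L_2}{2}t^2\|\Delta_n\|^3\,dt=\tfrac{L_2}{24}\|\Delta_n\|^3\le\tfrac{L_2D^3}{24},
\]
which is the claimed constant.

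\textbf{Averaging inequality \eqref{eq:ncavg}.} I expand each $\nabla f(w_n)$ around the episode average $\bar w^k$:
\[
\nabla f(w_n)=\nabla f(\bar w^k)+H(\bar w^k)(w_n-\bar w^k)+e_n,\qquad \|e_n\|\le\tfrac{L_2}{2}\|w_n-\bar w^k\|^2 .
\]
Summing over $n\in\mathcal N_k$, the linear terms cancel because $\sum_n(w_n-\bar w^k)=0$. Hence
\[
T\nabla f(\bar w^k)=G^k-\sum_n e_n
\quad\Longrightarrow\quad
\|\nabla f(\bar w^k)\|\le\frac{\|G^k\|}{T}+\frac{L_2}{2T}\sum_n\|w_n-\bar w^k\|^2 .
\]

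It remains to bound the spread of the midpoints within an episode. Consecutive midpoints satisfy $w_{n+1}-w_n=\tfrac12\Delta_n+\tfrac12\Delta_{n+1}$, so $\|w_{n+1}-w_n\|\le D$. Any two midpoints of the episode are therefore within $(T-1)D$ of each other, and so is each $w_n$ from their average $\bar w^k$, which lies in their convex hull. This gives
\[
\frac1T\sum_n\|w_n-\bar w^k\|^2\le T^2D^2,
\]
and therefore the error term is at most $\tfrac{L_2T^2D^2}{2}$.

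\textbf{Main obstacle.} The only delicate point is keeping the constants as stated. In the first part this means using the symmetric midpoint integral rather than two one-sided expansions from the endpoints, which would give a worse constant. In the second part it means expanding around $\bar w^k$ itself, so that the linear terms cancel exactly and no factor from an endpoint expansion appears. Everything else is routine.
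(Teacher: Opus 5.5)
Your proof is correct and follows essentially the same route as the paper. For the descent inequality, both arguments expand the gradient around the midpoint $w_n$: the linear term integrates to zero and the Lipschitz-Hessian remainder integrates to $L_2\|\Delta_n\|^3/24$. For the averaging inequality, both expand around $\bar w^k$ so that the linear terms cancel, and then use $\|w_n-\bar w^k\|\le(T-1)D$, which follows from $\|w_{m+1}-w_m\|\le D$. The only differences are cosmetic: you parametrize over $[-\tfrac12,\tfrac12]$ rather than $[0,1]$, and you average the squared spreads rather than taking their maximum; both give the same constants.
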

\noindent The proof is in Appendix~\ref{proof:ncconversion}.
Summing \eqref{eq:ncdescent} over the run shows that the learner's
cumulative loss $\sum_n\langle\nabla f(w_n),\Delta_n\rangle$ is at least
$-\Phi_0$ minus the third-order terms: the learner cannot keep descending.
Regret compares this cumulative loss with that of a fixed comparator in
each episode, and the comparator that makes the comparison informative is
the direction opposite to the episode's summed gradient, scaled to the
radius:
\begin{equation}\label{eq:nccomparator}
u^k=-D\,\frac{G^k}{\|G^k\|}\ \ (u^k=0\text{ if }G^k=0),\qquad
\mathrm{Reg}_N=\sum_{k=1}^K\sum_{n\in\mathcal N_k}
\langle\nabla f(w_n),\Delta_n-u^k\rangle .
\end{equation}
The comparator's loss in episode $k$ is $-D\|G^k\|$, so a small regret
forces the summed gradients to be small. The output is the average of the
episode with the smallest summed gradient, $\bar w^{k^\star}$ with
$k^\star\in\arg\min_k\|G^k\|$; it is determined by gradients the method
has already queried and costs no additional oracle call. The next proposition adapts the regret-to-stationarity
argument of \citet[Proposition~2.3]{jiang2025improved}
to the output rule above.

\begin{proposition}\label{prop:ncmaster}
Under Assumption~\ref{as:nonconvex}, for every $K\ge1$ and $N=KT$,
\begin{equation}\label{eq:ncmaster}
\|\nabla f(\bar w^{k^\star})\|\le
\frac{\Phi_0+\mathrm{Reg}_N}{DN}+\frac{L_2D^2}{24}+\frac{L_2T^2D^2}{2}.
\end{equation}
\end{proposition}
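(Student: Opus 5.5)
The plan is to sum the descent inequality \eqref{eq:ncdescent} of Lemma~\ref{lem:ncconversion} over the whole run, rewrite the learner's cumulative loss through the definition of regret \eqref{eq:nccomparator}, and then use the episode-average bound \eqref{eq:ncavg} at the best episode.

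\emph{Step 1: telescoping.} Summing \eqref{eq:ncdescent} over $n=1,\ldots,N$ gives
\[
f(x_N)-f(x_0)\le\sum_{n=1}^N\langle\nabla f(w_n),\Delta_n\rangle+\frac{NL_2D^3}{24}.
\]
Since $f(x_N)\ge f^\star$, the left-hand side is at least $-\Phi_0$.

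\emph{Step 2: insert the comparators.} By \eqref{eq:nccomparator},
\[
\sum_n\langle\nabla f(w_n),\Delta_n\rangle=\mathrm{Reg}_N+\sum_{k=1}^K\langle G^k,u^k\rangle=\mathrm{Reg}_N-D\sum_{k=1}^K\|G^k\|.
\]
This also holds when $G^k=0$. Combining with Step 1 yields
\[
D\sum_{k=1}^K\|G^k\|\le\Phi_0+\mathrm{Reg}_N+\frac{NL_2D^3}{24}.
\]
Because $k^\star$ minimizes $\|G^k\|$, we have $K\|G^{k^\star}\|\le\sum_k\|G^k\|$. Dividing by $DKT=DN$ therefore gives
\[
\frac{\|G^{k^\star}\|}{T}\le\frac{\Phi_0+\mathrm{Reg}_N}{DN}+\frac{L_2D^2}{24}.
\]

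\emph{Step 3: stationarity of the output.} Apply \eqref{eq:ncavg} with $k=k^\star$:
\[
\|\nabla f(\bar w^{k^\star})\|\le\|G^{k^\star}\|/T+L_2T^2D^2/2.
\]
Substituting the bound from Step 2 gives exactly \eqref{eq:ncmaster}.

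There is no real obstacle here, since the analytic work is contained in Lemma~\ref{lem:ncconversion}. The only care points are twofold. First, regret is defined with a separate comparator per episode, so the comparator sum splits episode by episode. Second, the minimum over episodes is bounded by the average, which turns the factor $1/K$ into $1/N$ after dividing by $T$.
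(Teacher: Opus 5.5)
Your proposal is correct and follows essentially the same route as the paper's proof: you sum \eqref{eq:ncdescent} over the run, rewrite the cumulative loss as $\mathrm{Reg}_N-D\sum_k\|G^k\|$, bound the minimum over episodes by the average, and apply \eqref{eq:ncavg} at $k^\star$. The constants and the division by $DN=DKT$ match the paper's argument exactly.
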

\noindent The proof is in Appendix~\ref{proof:ncmaster}. The inequality
holds for any increments in the ball of radius $D$, however they are
chosen; the learner enters only through $\mathrm{Reg}_N$. The parameters
have distinct roles. The radius $D$ trades the first term against the last
two, which are the discretization errors of the midpoint rule and of the
averaging. The episode length $T$ enters the averaging error and the regret, as the next section shows, through the number $K=N/T$ of
comparator changes. The task of the learner is to make
$\mathrm{Reg}_N/(DN)$ small, and the quality of its hint determines how
small.

\subsection{The optimistic learner and the design target}\label{sec:nclearner}
The learner is optimistic online gradient descent on the ball
\citep{rakhlin2013online}. 
We use the optimistic quasi-Newton update and hint construction
of \citet[Section~3.1]{jiang2025improved}. Before choosing $\Delta_{n+1}$ it forms a hint
$h_{n+1}$ for the next loss $\nabla f(w_{n+1})$, and it corrects for the
error $\theta_n=\nabla f(w_n)-h_n$ of the previous hint:
\begin{equation}\label{eq:nclearner}
\Delta_{n+1}=\Pi_D\big(\Delta_n-\eta\,\theta_n-\eta\,h_{n+1}\big),
\qquad \theta_0=0,
\end{equation}
where $\Pi_D$ is the projection onto $\{\|\Delta\|\le D\}$ and $\eta>0$ is
the learning rate. The hint is a quasi-Newton prediction. The next loss is
the gradient at $w_{n+1}=z_n+s_n$ by \eqref{eq:ncsn}, and we predict it
by linearizing at $z_n$ with a symmetric model $B_n$ of the Hessian:
\begin{equation}\label{eq:nchint}
h_{n+1}=\nabla f(z_n)+B_ns_n=\nabla f(z_n)+\tfrac12B_n(\Delta_{n+1}-\Delta_n).
\end{equation}
The prediction error of this hint is
\begin{equation}\label{eq:nctheta}
\theta_{n+1}=\nabla f(w_{n+1})-h_{n+1}
=\nabla f(z_n+s_n)-\nabla f(z_n)-B_ns_n ,
\end{equation}
the error of $B_n$ as a predictor of the gradient change along $s_n$. Its squared norm is the prediction loss used by the
matrix learner of \citet{jiang2025improved}.
Since $h_{n+1}$ depends on
$\Delta_{n+1}$, the update \eqref{eq:nclearner} is an implicit equation.
Following \citet[Lemma~3.1 and Definition~3.1]{jiang2025improved},
we express this implicit update through the stationarity
conditions of a trust-region subproblem. The following lemma
also records the equivalent formulation with an inexact residual.

\begin{lemma}\label{lem:nctrust}
Let $B_n$ be symmetric, $\eta,D>0$, and
\begin{equation}\label{eq:nctr}
\phi_n(\Delta)=\langle\nabla f(z_n)+\theta_n,\Delta\rangle
+\tfrac14\langle B_n(\Delta-\Delta_n),\Delta-\Delta_n\rangle
+\frac{1}{2\eta}\|\Delta-\Delta_n\|^2 .
\end{equation}
For $\nu\ge0$, a point $\Delta$ with $\|\Delta\|\le D$ satisfies
\begin{equation}\label{eq:nckkt}
\|\nabla\phi_n(\Delta)+\varrho\Delta\|\le\nu
\quad\text{for some }\varrho\ge0\text{ with }\varrho(\|\Delta\|-D)=0
\end{equation}
if and only if
\begin{equation}\label{eq:ncinexact}
\Delta=\Pi_D\big(\Delta_n-\eta\theta_n-\eta h_{n+1}+\eta q\big)
\quad\text{for some }q\text{ with }\|q\|\le\nu ,
\end{equation}
where $h_{n+1}$ is \eqref{eq:nchint} evaluated at $\Delta_{n+1}=\Delta$.
In particular, the solutions of the implicit update \eqref{eq:nclearner}
are exactly the KKT points of $\min_{\|\Delta\|\le D}\phi_n(\Delta)$, the
case $\nu=0$ of \eqref{eq:nckkt}. The problem has a global minimizer,
every global minimizer is a KKT point, and hence \eqref{eq:nclearner} has
a solution.
\end{lemma}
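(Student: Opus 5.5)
The plan is to compute $\nabla\phi_n$ and show that the KKT system \eqref{eq:nckkt} and the projection identity \eqref{eq:ncinexact} are the same equation, written once through the normal cone of the ball and once through the projection. Then existence follows by a compactness argument.

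Since $B_n$ is symmetric,
\[
\nabla\phi_n(\Delta)=\nabla f(z_n)+\theta_n+\tfrac12B_n(\Delta-\Delta_n)+\tfrac1\eta(\Delta-\Delta_n).
\]
With $h_{n+1}$ from \eqref{eq:nchint} evaluated at $\Delta_{n+1}=\Delta$, this reads $\eta\nabla\phi_n(\Delta)=\Delta-(\Delta_n-\eta\theta_n-\eta h_{n+1})$. Write $v=\Delta_n-\eta\theta_n-\eta h_{n+1}$, which depends on $\Delta$.

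For the direction \eqref{eq:nckkt} $\Rightarrow$ \eqref{eq:ncinexact}, set $q=\nabla\phi_n(\Delta)+\varrho\Delta$, so that $\|q\|\le\nu$. Then
\[
\Delta-(v+\eta q)=-\eta\varrho\Delta .
\]
Since $\varrho\ge0$ and $\varrho(\|\Delta\|-D)=0$, the vector $-\eta\varrho\Delta$ has the form $-\eta\varrho\Delta\in-N_{\mathcal B}(\Delta)$, where $N_{\mathcal B}(\Delta)$ is the normal cone of the ball $\mathcal B=\{\|\cdot\|\le D\}$ at $\Delta$. This cone is $\{0\}$ in the interior and $\{t\Delta:t\ge0\}$ on the sphere. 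The characterization $\Delta=\Pi_D(p)\iff p-\Delta\in N_{\mathcal B}(\Delta)$, applied with $p=v+\eta q$, gives $\Delta=\Pi_D(v+\eta q)$.

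For the converse, $\Delta=\Pi_D(v+\eta q)$ gives $v+\eta q-\Delta=t\Delta$ for some $t\ge0$, with $t=0$ unless $\|\Delta\|=D$. Put $\varrho=t/\eta$. Then $\nabla\phi_n(\Delta)+\varrho\Delta=q$, so $\|\nabla\phi_n(\Delta)+\varrho\Delta\|\le\nu$ and complementarity holds. The case $\nu=0$ forces $q=0$, which identifies the solutions of \eqref{eq:nclearner} with the KKT points.

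For existence, $\phi_n$ is continuous and the ball is compact, so a global minimizer exists. The constraint $\|\Delta\|^2\le D^2$ satisfies LICQ, since its gradient $2\Delta\neq0$ on the boundary. Hence every local minimizer, and in particular every global one, satisfies the first-order KKT conditions with a multiplier $\varrho\ge0$. This uses the standard Lagrange condition for a smooth objective and needs no convexity, so it applies even though $B_n$ may be indefinite. By the equivalence, such a point solves \eqref{eq:nclearner}.

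The only delicate step is the normal-cone description of the projection onto the ball, in particular keeping track of the correct sign and the scaling $\varrho=t/\eta$. Everything else is direct substitution.
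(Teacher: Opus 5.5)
Your proposal is correct and follows essentially the same route as the paper: compute $\nabla\phi_n$, set $q=\nabla\phi_n(\Delta)+\varrho\Delta$, identify the KKT system with the projection identity, and then obtain existence from compactness together with LICQ. The only difference is cosmetic: you invoke the general normal-cone characterization of $\Pi_D$, whereas the paper proves directly, in Lemma~\ref{lem:ncball}(ii), the equivalent radial description $(1+\eta\varrho)\Delta=v$ with $\varrho(\|\Delta\|-D)=0$.
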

\noindent The proof is in Appendix~\ref{proof:nctrust}. We call a point
satisfying \eqref{eq:nckkt} a \emph{$\nu$-approximate solution} of the
subproblem at step $n$; it is what a trust-region solver returns when run
to residual $\nu$ in the stationarity condition, and $\nu=0$ is the exact
update. Only the stationarity condition is required: a KKT point that is
not the global minimizer of $\phi_n$ over the ball is admissible, a point
we return to in Section~\ref{sec:ncimplementation}. The problem
$\min_{\|\Delta\|\le D}\phi_n$ is a classical trust-region subproblem
\citep{more1983computing} in the stored matrix $\frac12B_n+\eta^{-1}I$,
which may be indefinite when $\eta$ is large, the regime of interest
below. No gradient is queried in solving it.
The next lemma is the one-step inequality of
\citet[Lemma~B.1]{jiang2025improved}, written in our notation.
It includes the subproblem error and retains the negative term.
We reproduce its proof for completeness.

\begin{lemma}\label{lem:nconestep}
Let $\Delta_{n+1}$ be a $\nu$-approximate solution of the subproblem at
step $n$. Then for every $u$ with $\|u\|\le D$,
\begin{equation}\label{eq:nconestep}
\begin{aligned}
\langle\nabla f(w_{n+1}),\Delta_{n+1}-u\rangle
\le{}&\frac{\|\Delta_n-u\|^2-\|\Delta_{n+1}-u\|^2}{2\eta}
+\langle\theta_{n+1},\Delta_{n+1}-u\rangle-\langle\theta_n,\Delta_n-u\rangle\\
&+\eta\|\theta_n\|^2-\frac{\|\Delta_{n+1}-\Delta_n\|^2}{4\eta}+2D\nu .
\end{aligned}
\end{equation}
\end{lemma}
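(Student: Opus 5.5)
The plan is to work from the projection form of the update rather than from the subproblem $\phi_n$ directly. By Lemma~\ref{lem:nctrust}, a $\nu$-approximate solution satisfies
\[
\Delta_{n+1}=\Pi_D(v),\qquad v=\Delta_n-\eta\theta_n-\eta h_{n+1}+\eta q,\qquad \|q\|\le\nu,
\]
where $h_{n+1}$ is the hint \eqref{eq:nchint} evaluated at $\Delta_{n+1}$. Since $h_{n+1}$ is then a fixed vector, the implicitness plays no further role. The variational inequality of the Euclidean projection onto the convex ball gives $\langle v-\Delta_{n+1},u-\Delta_{n+1}\rangle\le0$ for every $u$ with $\|u\|\le D$. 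Rearranging this inequality and applying the three-point identity
\[
2\langle\Delta_n-\Delta_{n+1},\Delta_{n+1}-u\rangle=\|\Delta_n-u\|^2-\|\Delta_{n+1}-u\|^2-\|\Delta_{n+1}-\Delta_n\|^2
\]
yields
\[
\langle h_{n+1}+\theta_n,\Delta_{n+1}-u\rangle\le\frac{\|\Delta_n-u\|^2-\|\Delta_{n+1}-u\|^2-\|\Delta_{n+1}-\Delta_n\|^2}{2\eta}+\langle q,\Delta_{n+1}-u\rangle.
\]
The last term is at most $\nu\|\Delta_{n+1}-u\|\le2D\nu$, because both $\Delta_{n+1}$ and $u$ lie in the ball of radius $D$.

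Next, decompose the loss using the definition \eqref{eq:nctheta} of $\theta_{n+1}$, namely $\nabla f(w_{n+1})=h_{n+1}+\theta_{n+1}$:
\[
\langle\nabla f(w_{n+1}),\Delta_{n+1}-u\rangle=\langle h_{n+1}+\theta_n,\Delta_{n+1}-u\rangle+\langle\theta_{n+1},\Delta_{n+1}-u\rangle-\langle\theta_n,\Delta_{n+1}-u\rangle .
\]
The first term is bounded by the display above. The second term appears verbatim in \eqref{eq:nconestep}. For the third term, split $\Delta_{n+1}-u=(\Delta_n-u)+(\Delta_{n+1}-\Delta_n)$ and apply Young's inequality:
\[
-\langle\theta_n,\Delta_{n+1}-\Delta_n\rangle\le\eta\|\theta_n\|^2+\frac{\|\Delta_{n+1}-\Delta_n\|^2}{4\eta}.
\]
This produces $-\langle\theta_n,\Delta_n-u\rangle+\eta\|\theta_n\|^2$ together with a positive quadratic term.

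Finally, combine the pieces. The negative term $-\|\Delta_{n+1}-\Delta_n\|^2/(2\eta)$ from the projection step absorbs the $+\|\Delta_{n+1}-\Delta_n\|^2/(4\eta)$ from Young's inequality and leaves $-\|\Delta_{n+1}-\Delta_n\|^2/(4\eta)$. Collecting all terms gives exactly \eqref{eq:nconestep}.

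There is no real obstacle here. The only points needing care are the sign convention for $q$ and the observation that the hint can be treated as a fixed vector once $\Delta_{n+1}$ is fixed. The Young's inequality split is chosen so that half of the negative movement term survives, which is the term later used to absorb the model error.
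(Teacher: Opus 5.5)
Your proposal is correct and follows the paper's proof essentially step for step. You pass to the projection form $\Delta_{n+1}=\Pi_D(v)$ via Lemma~\ref{lem:nctrust}, apply the projection variational inequality with the three-point identity and the bound $\langle q,\Delta_{n+1}-u\rangle\le 2D\nu$, decompose the loss through $\nabla f(w_{n+1})=h_{n+1}+\theta_{n+1}$, and use Young's inequality on $-\langle\theta_n,\Delta_{n+1}-\Delta_n\rangle$ so that $-\|\Delta_{n+1}-\Delta_n\|^2/(2\eta)$ leaves $-\|\Delta_{n+1}-\Delta_n\|^2/(4\eta)$.
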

\noindent The proof is in Appendix~\ref{proof:nconestep}. Summing over an
episode with $u=u^k$ telescopes the first two groups of terms, and
summing over episodes gives the regret bound below. Its first form keeps
the prediction errors and the negative term side by side; its second form
is what a pointwise certificate makes of them. 
Retaining the negative term and absorbing prediction errors
through a bound on consecutive increments follows the analysis
of \citet[Lemmas~3.1--3.2]{patitucci2025double}.
Here we state the argument for a general certificate $\Gamma$
and include the subproblem residual.

\begin{lemma}\label{lem:ncregret}
Suppose every step uses a $\nu$-approximate solution of the subproblem.
Then
\begin{equation}\label{eq:ncregret}
\mathrm{Reg}_N\le\frac{4KD^2}{\eta}
+3\eta\sum_{n=0}^{N}\|\theta_n\|^2
-\frac1{4\eta}\sum_{n=0}^{N-1}\|\Delta_{n+1}-\Delta_n\|^2+2DN\nu .
\end{equation}
If, in addition, there is $\Gamma>0$ such that
\begin{equation}\label{eq:nccert}
\|\theta_{n+1}\|\le\Gamma\,\|\Delta_{n+1}-\Delta_n\|
\qquad\text{for every }n\ge0,
\end{equation}
and $\eta\le1/(2\sqrt3\,\Gamma)$, then
\begin{equation}\label{eq:ncregretabs}
\mathrm{Reg}_N\le\frac{4KD^2}{\eta}+2DN\nu .
\end{equation}
\end{lemma}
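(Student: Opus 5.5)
Within an episode $\mathcal N_k$ I would sum Lemma~\ref{lem:nconestep} with the fixed comparator $u=u^k$, then sum over the $K$ episodes. Indexing is the first thing to fix: the loss at step $n+1$ is $\langle\nabla f(w_{n+1}),\Delta_{n+1}-u\rangle$, so episode $k$ uses the one-step inequality for $n=(k-1)T,\ldots,kT-1$.

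\emph{Telescoping terms.} In each episode the distance terms telescope to
\[
\frac{\|\Delta_{(k-1)T}-u^k\|^2-\|\Delta_{kT}-u^k\|^2}{2\eta}\le\frac{(2D)^2}{2\eta}=\frac{2D^2}{\eta},
\]
because every increment and comparator lies in the ball of radius $D$. The coupling terms telescope to $\langle\theta_{kT},\Delta_{kT}-u^k\rangle-\langle\theta_{(k-1)T},\Delta_{(k-1)T}-u^k\rangle$. I bound each of these two boundary inner products by Young's inequality,
\[
|\langle\theta,\Delta-u\rangle|\le\eta\|\theta\|^2+\frac{\|\Delta-u\|^2}{4\eta}\le\eta\|\theta\|^2+\frac{D^2}{\eta}.
\]
Over the whole run this costs at most $2KD^2/\eta$ from the boundary inner products. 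It also adds at most $2\eta\|\theta_n\|^2$ per boundary index, since each boundary index $n=kT$ is shared by two adjacent episodes.

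\emph{Collecting terms.} The per-step terms $\eta\|\theta_n\|^2$, $-\|\Delta_{n+1}-\Delta_n\|^2/(4\eta)$ and $2D\nu$ sum over $n=0,\ldots,N-1$. The total coefficient of $\|\theta_n\|^2$ is therefore at most $3\eta$, and the $D^2/\eta$ terms total at most $2KD^2/\eta+2KD^2/\eta=4KD^2/\eta$. This gives \eqref{eq:ncregret}. The endpoint cases are easy: $\theta_0=0$ kills the very first boundary term, and the last boundary $\theta_N$ is the reason the sum runs to $N$.

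\emph{Second claim.} Shift the index in \eqref{eq:nccert} to get $\|\theta_n\|\le\Gamma\|\Delta_n-\Delta_{n-1}\|$ for $n\ge1$, while $\theta_0=0$. Then
\[
3\eta\sum_{n=0}^{N}\|\theta_n\|^2\le3\eta\Gamma^2\sum_{n=0}^{N-1}\|\Delta_{n+1}-\Delta_n\|^2.
\]
This is at most $\frac1{4\eta}\sum\|\Delta_{n+1}-\Delta_n\|^2$ exactly when $12\eta^2\Gamma^2\le1$, that is, when $\eta\le1/(2\sqrt3\,\Gamma)$. The negative term then absorbs it, leaving \eqref{eq:ncregretabs}.

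The main obstacle is the bookkeeping at episode boundaries. One has to check that the comparator switch costs only $O(D^2/\eta)$ per episode and that the boundary $\theta$-terms raise the coefficient only to $3\eta$, not more. If the Young split above miscounts, I would instead bound the boundary inner products by $2D\|\theta\|\le\eta\|\theta\|^2+D^2/\eta$ and recount the constants.
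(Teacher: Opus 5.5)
Your proposal is correct and follows essentially the same route as the paper. You sum Lemma~\ref{lem:nconestep} over each episode with $u=u^k$, bound the two boundary coupling terms by $\eta\|\theta_m\|^2+D^2/\eta$, count each boundary index in at most two episodes to reach the coefficient $3\eta$, and absorb via $12\eta^2\Gamma^2\le1$ using $\theta_0=0$. Your direct Young split gives exactly the same constant as the paper's $2D\|\theta_m\|\le\eta\|\theta_m\|^2+D^2/\eta$, so the fallback you mention is not needed.
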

\noindent The proof is in Appendix~\ref{proof:ncregret}.

Under the certificate \eqref{eq:nccert}, the regret
\eqref{eq:ncregretabs} is, up to the subproblem error, that of an
optimistic learner with perfect hints, $4KD^2/\eta$, and the hint is
felt only through the admissible learning rate $\eta\asymp1/\Gamma$.
Substituting into \eqref{eq:ncmaster} gives
\[
\|\nabla f(\bar w^{k^\star})\|\le\frac{\Phi_0}{DN}+\frac{4D}{\eta T}
+2\nu+\frac{L_2D^2}{24}+\frac{L_2T^2D^2}{2},
\]
and with $\eta\asymp1/\Gamma$ the second term is of order $\Gamma D/T$.
Balancing it against the averaging error $L_2T^2D^2/2$ gives
$T\asymp(\Gamma/(L_2D))^{1/3}$, and both terms are then of order
$L_2D^2\,(\Gamma/(L_2D))^{2/3}$.  The figure of merit is therefore the
ratio  $\Gamma/(L_2D)$, which we normalize as
\[
 \omega=\max\left\{1,\frac{\Gamma}{L_2D}\right\}.
\]

Accounting for $T\ge1$, we therefore choose
$T\asymp\omega^{1/3}$ and
$\eta\asymp1/(\omega L_2D)$.
 If $\omega$ does not depend on $D$, choosing $D\asymp\sqrt{\varepsilon/(L_2\omega^{2/3})}$
makes the discretization errors $\varepsilon/2$, and
$\Phi_0/(DN)\le\varepsilon/4$ then requires
$N\asymp\Phi_0\sqrt{L_2}\,\omega^{1/3}\varepsilon^{-3/2}$ iterations
(Theorem~\ref{thm:ncmaster} below makes the constants explicit). Three
hints calibrate the scale. With no model, $B_n=0$, the prediction error
is a gradient difference over a step of length $\|s_n\|$, so
$\Gamma=L_1/2$ and $\omega=\max\{1,L_1/(2L_2D)\}$ depends on $D$; the balance then
gives the first-order rate $\varepsilon^{-7/4}$
(Theorem~\ref{thm:ncsmooth}). With the exact Hessian, $B_n=H(z_n)$, the error is second order in $\|s_n\|$, so $\Gamma=L_2D/4$, Theorem~\ref{thm:ncmaster} applies with $\omega=1$, and the count is $O(\Phi_0\sqrt{L_2}\,\varepsilon^{-3/2})$, the iteration
count of cubic regularization \citep{nesterov2006cubic}. The rolling
model will give $\|B_n-H(z_n)\|_{\rm op}\le(2\chi-1)L_2D$ at every step,
hence $\Gamma\le\chi L_2D$ and $\omega=\chi$, independent of $D$.

The design target is thus a \emph{uniform} pointwise bound on
$\|B_n-H(z_n)\|_{\rm op}$ of order $L_2D$, with a factor as small as
possible. Unlike in Section~\ref{sec:algo}, no decay of the certificate
along the run is needed, because the conversion operates at the fixed
scale $D$ rather than along a settling trajectory.
An aggregate prediction-error bound alone does not guarantee
the cancellation in \eqref{eq:ncregret} unless it also relates
the error to the squared changes of the increments.
To see the cost of discarding this relationship, retain only
the consequence
$\sum_{n=0}^{N}\|\theta_n\|^2\le4\Gamma^2D^2N$
of \eqref{eq:nccert}, $\theta_0=0$, and
$\|\Delta_{n+1}-\Delta_n\|\le2D$, and drop the negative term.
Balancing $4KD^2/\eta$ against $12\eta\Gamma^2D^2N$ then gives
$\eta\asymp1/(\Gamma\sqrt T)$ and a bound on the regret,
excluding the subproblem error, larger by a factor $\sqrt T$
than the bound obtained from \eqref{eq:ncregretabs}
with $\eta\asymp1/\Gamma$.
This distinction
separates the present analysis
from \citet{jiang2025improved}, where $B_n$
is produced by an online learner in matrix space whose guarantee is a
regret bound on the prediction losses, an aggregate statement over the
run rather than a bound at each step; their analysis obtains
$O(d^{1/4}\varepsilon^{-13/8})$.
The rolling finite-difference construction supplies the required
pointwise bound by controlling both probe error and drift since
each group was refreshed. This bound enables the cancellation
and the improved rate below.

\begin{theorem}\label{thm:ncmaster}
Let Assumption~\ref{as:nonconvex} hold, let the symmetric matrices $B_n$
satisfy the certificate \eqref{eq:nccert} with $\Gamma\le\omega L_2D$ for
some $\omega\ge1$, and run the conversion with the optimistic learner,
$\nu$-approximate subproblem solutions, and
\[
\eta=\frac{1}{2\sqrt3\,\omega L_2D},\qquad
T=\big\lceil(16\sqrt3\,\omega)^{1/3}\big\rceil\le4.1\,\omega^{1/3}.
\]
Then for every $K\ge1$ and $N=KT$,
\begin{equation}\label{eq:ncmasterbound}
\|\nabla f(\bar w^{k^\star})\|\le\frac{\Phi_0}{DN}+2\nu
+13\,\omega^{2/3}L_2D^2 .
\end{equation}
\end{theorem}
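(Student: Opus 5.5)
The plan is to combine Proposition~\ref{prop:ncmaster} with the absorbed regret bound \eqref{eq:ncregretabs} of Lemma~\ref{lem:ncregret}, and then check the numerical constants.

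First I verify the hypothesis of the second part of Lemma~\ref{lem:ncregret}. The matrices satisfy \eqref{eq:nccert} with $\Gamma\le\omega L_2D$, so
\[
\eta=\frac{1}{2\sqrt3\,\omega L_2D}\le\frac{1}{2\sqrt3\,\Gamma}.
\]
Every step uses a $\nu$-approximate solution, so \eqref{eq:ncregretabs} applies and gives $\mathrm{Reg}_N\le 4KD^2/\eta+2DN\nu$.

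Next I substitute this into \eqref{eq:ncmaster}. Using $N=KT$,
\[
\frac{\mathrm{Reg}_N}{DN}\le\frac{4D}{\eta T}+2\nu=\frac{8\sqrt3\,\omega L_2D^2}{T}+2\nu .
\]
This yields
\[
\|\nabla f(\bar w^{k^\star})\|\le\frac{\Phi_0}{DN}+2\nu+L_2D^2\Big(\frac{8\sqrt3\,\omega}{T}+\frac1{24}+\frac{T^2}{2}\Big).
\]

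It then remains to show that the bracket is at most $13\,\omega^{2/3}$. This constant bookkeeping is the only real obstacle, and it splits into three estimates.
\begin{itemize}
\item The first term uses the lower bound $T\ge(16\sqrt3\,\omega)^{1/3}$, which gives
\[
\frac{8\sqrt3\,\omega}{T}\le\frac{8\sqrt3}{(16\sqrt3)^{1/3}}\,\omega^{2/3}\le 4.6\,\omega^{2/3}.
\]
\item The last term uses the upper bound on $T$. Since $\omega\ge1$, we have $T\le(16\sqrt3\,\omega)^{1/3}+1\le\big((16\sqrt3)^{1/3}+1\big)\omega^{1/3}\le4.03\,\omega^{1/3}$. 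This also justifies the stated bound $T\le4.1\,\omega^{1/3}$, and it gives $T^2/2\le 8.2\,\omega^{2/3}$.
\item The middle term satisfies $1/24\le0.05\,\omega^{2/3}$, again because $\omega\ge1$.
\end{itemize}
Summing, the bracket is at most $12.85\,\omega^{2/3}\le13\,\omega^{2/3}$, which is \eqref{eq:ncmasterbound}.
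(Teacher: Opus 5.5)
Your proof is correct and follows the paper's argument exactly: you check $\eta\le 1/(2\sqrt3\,\Gamma)$, insert the absorbed regret bound \eqref{eq:ncregretabs} into Proposition~\ref{prop:ncmaster}, and bound $8\sqrt3\,\omega/T+1/24+T^2/2$ using $c\,\omega^{1/3}\le T\le(c+1)\,\omega^{1/3}$ with $c=(16\sqrt3)^{1/3}$. The only difference is cosmetic: the paper uses the identity $8\sqrt3/c=c^2/2$ to collapse the bracket to $(c^2+c+13/24)\,\omega^{2/3}$ and then applies $c<3.05$, while you bound each term numerically, and your decimals check out ($c\approx3.026$).
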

\noindent The proof is in Appendix~\ref{proof:thm:ncmaster}. The theorem
is stated for an arbitrary model satisfying the certificate; it remains to
construct one with $\omega$ independent of $D$.

\subsection{The algorithm: a rolling model in the hint}\label{sec:ncalgorithm}
The model is the rolling finite-difference matrix of Block~1, built at
the probe points $z_n$ with a constant probe radius and read out by
symmetrization alone. All $\chi$ groups are probed once at the starting
point; afterwards one group is refreshed per iteration in a fixed cyclic
order. With $i_n=((n-1)\bmod\chi)+1$ for $n\ge1$ and
$\tau_g=2D/(\sqrt\chi\,|g|)$ for every group $g$, the refresh at
iteration $n\ge1$ is
\begin{equation}\label{eq:ncrefresh}
u_n=\frac{\nabla f(z_n+\tau_gv_g)-\nabla f(z_n)}{\tau_g},\quad g=g_{i_n},
\qquad
M_ne_j=
\begin{cases}
P_ju_n, & j\in g_{i_n},\\
M_{n-1}e_j, & j\notin g_{i_n},
\end{cases}
\qquad
B_n=\frac{M_n+M_n^\top}{2},
\end{equation}
and the warm-up applies the block formula at $z_0=x_0$ to every group
to form $M_0$. As in Section~\ref{sec:algo}, the stored matrix $M_n$ is
kept raw and unsymmetrized, so that a refresh replaces exactly the
columns of one group and leaves the others untouched. Unlike there, no
projection is applied at the read-out: the convex method needs
$B_k\succeq0$ for its resolvent bounds, whereas the analysis of this
section uses $B_n$ only through its symmetry and its error at $z_n$, and
the symmetrized raw matrix has the smaller error. The probe radius is
divided by $|g|$ so that every block carries the same finite-difference
error, $L_2\tau_g|g|/2=L_2D/\sqrt\chi$ in Frobenius norm by
Lemma~\ref{lem:group}, and the $\chi$ blocks together have Frobenius
error $L_2D$, one step of Hessian drift. A constant radius suffices here,
in contrast to the summable radii of Section~\ref{sec:algo}, for the
reason given above: the certificate need not improve along the run.

The method has a second variant in which the model is switched off,
$B_n=0$ for all $n$, no probes are taken, and the hint is
$h_{n+1}=\nabla f(z_n)$. We call the two variants the \emph{model
variant} and the \emph{gradient variant}. In the gradient variant the hint
does not depend on $\Delta_{n+1}$, so the update \eqref{eq:nclearner} is
explicit, $\Delta_{n+1}=\Pi_D(\Delta_n-\eta\theta_n-\eta\nabla f(z_n))$,
and $\nu=0$ at no cost. The gradient variant is the doubly optimistic gradient
method of \citet{patitucci2025double} within the online-to-nonconvex conversion of
\citet{cutkosky2023optimal} (which removes the inner fixed-point loop and logarithmic factors of \citet{cutkosky2023optimal}; see also \citet{jiang2025improved}), and we include it because the same code
provides the fallback when the model does not pay, and because its
explicit constants are needed to select between the two in advance.

\vspace{2mm}
\par\noindent\begin{minipage}{\linewidth}\begin{framed}
\noindent\textbf{Algorithm 3 (RP-OQN: rolling-probe optimistic
quasi-Newton method).} Input: $x_0$, a radius $D$, an episode length
$T$, a learning rate $\eta$, a number of episodes $K$, a subproblem
accuracy $\nu$, and the variant (model or gradient). Set $N=KT$,
$\Delta_0=0$, $z_0=x_0$, $\theta_0=0$, and, in the model variant,
$\tau_g=2D/(\sqrt\chi\,|g|)$ for every group $g$.
\begin{enumerate}[label=\arabic*.,leftmargin=2em]
\item \textbf{Warm-up.} Evaluate $\nabla f(x_0)$. In the model variant,
      evaluate $\nabla f(x_0+\tau_gv_g)$ for every group $g$ and store
      the $\chi$ blocks in $M_0$ by the block formula of
      \eqref{eq:ncrefresh}.
\item For $n=0,1,\ldots,N-1$:
\begin{enumerate}[label=(\alph*),leftmargin=1.8em]
\item \textbf{Probe and read out.} If $n\ge1$, evaluate $\nabla f(z_n)$
      and, in the model variant, refresh the columns of group $g_{i_n}$
      by \eqref{eq:ncrefresh}. Set $B_n=(M_n+M_n^\top)/2$ in the model
      variant and $B_n=0$ in the gradient variant.
\item \textbf{Choose the increment.} Compute a $\nu$-approximate solution
      $\Delta_{n+1}$ of $\min_{\|\Delta\|\le D}\phi_n(\Delta)$, with
      $\phi_n$ as in \eqref{eq:nctr}; in the gradient variant,
      $\Delta_{n+1}=\Pi_D(\Delta_n-\eta\theta_n-\eta\nabla f(z_n))$.
\item \textbf{Step and observe.} Set $x_{n+1}=x_n+\Delta_{n+1}$,
      $w_{n+1}=x_n+\frac12\Delta_{n+1}$ and
      $z_{n+1}=x_{n+1}+\frac12\Delta_{n+1}$; evaluate $\nabla f(w_{n+1})$
      and set
      $\theta_{n+1}=\nabla f(w_{n+1})-\nabla f(z_n)-\frac12B_n(\Delta_{n+1}-\Delta_n)$.
\end{enumerate}
\item \textbf{Return} $\bar w^{k^\star}$, the average of the midpoints
      $w_n$, $n\in\mathcal N_{k^\star}$, of an episode with the smallest
      $\|G^k\|$.
\end{enumerate}
\end{framed}\end{minipage}\par\medskip

\paragraph{Cost.} In the model variant, iteration $n\ge1$ queries three
gradients: at the probe point $z_n$, at the displaced probe point
$z_n+\tau_gv_g$, and at the new midpoint $w_{n+1}$; the gradient at
$z_0=x_0$ and the $\chi$ warm-up probes are queried once. A run of $N$
iterations therefore costs $3N+\chi-1$ gradient queries. In the gradient
variant, iteration $n\ge1$ queries two gradients, at $z_n$ and at
$w_{n+1}$, and a run costs $2N$. The remaining operations are arithmetic
on stored data: the running sums $G^k$ and the episode averages
$\bar w^k$, the symmetrization, and, in the model variant, the
trust-region solve of step~2(b), whose cost is the subject of
Section~\ref{sec:ncimplementation}. None of it queries the oracle.

\paragraph{The certificate.}
The pointwise error of the model is bounded by the same exchange of
summation as in Lemma~\ref{lem:telescope}, with $z_j$ in place of $x_j$
and with the probe-point displacements \eqref{eq:nczstep} in place of the
$\delta_j$. Because the window has at most $\chi-1$ displacements of
length at most $2D$, the bound is a constant multiple of $\chi L_2D$ at
every iteration, including the first ones: the warm-up anchors every
initial block at $z_0$, so there is no initial phase in which the model
is incomplete.

\begin{lemma}\label{lem:ncpointwise}
Under Assumption~\ref{as:nonconvex}, the matrices of the model variant of
Algorithm~3 satisfy, for every $n\ge0$,
\begin{equation}\label{eq:ncpointwise}
\|B_n-H(z_n)\|_{\rm op}
\le L_2\!\!\sum_{j=\max\{0,n-\chi+1\}}^{n-1}\!\!\|z_{j+1}-z_j\|
+L_2D
\le(2\chi-1)L_2D .
\end{equation}
In particular $\|B_n\|_{\rm op}\le L_1+(2\chi-1)L_2D$.
\end{lemma}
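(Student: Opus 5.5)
Our plan is to bound the unsymmetrized matrix $M_n$ against $H(z_n)$ and then symmetrize. Symmetrization costs nothing because $H(z_n)$ is symmetric: $B_n-H(z_n)=\tfrac12\big((M_n-H(z_n))+(M_n-H(z_n))^\top\big)$, so $\|B_n-H(z_n)\|_{\rm op}\le\|M_n-H(z_n)\|_{\rm op}$, and no clipping factor enters (unlike Lemma~\ref{lem:clip} in the convex case). For each column $j$, let $t(j)\le n$ be the last iteration at which its group was refreshed. The warm-up gives $t(j)=0$ for groups not yet refreshed. The cyclic order then gives $t(j)\ge\max\{0,n-\chi+1\}$. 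Write $M_n=F_n+\sum_j H(z_{t(j)})e_je_j^\top$, where $F_n$ collects the finite-difference errors $P_ju_{t(j)}-H(z_{t(j)})e_j$ column by column.

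The finite-difference part comes first. Within one group the columns are those of a single probe, so by Lemma~\ref{lem:group} the block has Frobenius error at most $L_2\tau_g|g|/2=L_2D/\sqrt\chi$. Distinct groups occupy disjoint columns, so the squared Frobenius norms add over the $\chi$ groups. This gives $\|F_n\|_{\rm op}\le\|F_n\|_F\le\sqrt{\chi}\cdot L_2D/\sqrt\chi=L_2D$, which is the second term of \eqref{eq:ncpointwise}.

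The drift part uses the exchange of summation from Lemma~\ref{lem:telescope}. Let $m_0=\max\{0,n-\chi+1\}$ and, for $m_0\le t\le n-1$, let $\Pi_t$ be the diagonal coordinate projection onto the columns $j$ with $t(j)\le t$. Telescoping gives, for each column,
\[
H(z_{t(j)})e_j-H(z_n)e_j=-\sum_{t=t(j)}^{n-1}\big(H(z_{t+1})-H(z_t)\big)e_j .
\]
Summing over $j$ and swapping the order, the column $j$ appears in the term $t$ exactly when $t(j)\le t$. Hence
\[
\sum_j H(z_{t(j)})e_je_j^\top-H(z_n)=-\sum_{t=m_0}^{n-1}\big(H(z_{t+1})-H(z_t)\big)\Pi_t .
\]
Each term has operator norm at most $L_2\|z_{t+1}-z_t\|$ because $\|\Pi_t\|_{\rm op}\le1$. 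This yields the first term of \eqref{eq:ncpointwise} with no $\sqrt\chi$ loss. The step to check carefully is this index bookkeeping: the lower limit must be $m_0$, which holds because every $t(j)\ge m_0$ thanks to the cyclic order together with the warm-up at $z_0$.

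For the second inequality, the window contains at most $\chi-1$ indices, and each displacement is at most $2D$ by \eqref{eq:nczstep}. So the total is at most $2(\chi-1)L_2D+L_2D=(2\chi-1)L_2D$. The final claim follows from the triangle inequality with $\|H(z_n)\|_{\rm op}\le L_1$ from Assumption~\ref{as:nonconvex}. The only real obstacle is the exchange-of-summation identity; everything else is a direct application of Lemma~\ref{lem:group} and the Lipschitz Hessian.
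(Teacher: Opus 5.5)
Your proposal is correct and follows the paper's proof essentially step for step. It uses the same anchor bookkeeping: each column's anchor is its last refresh or the warm-up at $z_0$, so every anchor is at least $\max\{0,n-\chi+1\}$. It splits the error the same way, into a blockwise finite-difference term of Frobenius norm at most $L_2D$ and a drift term handled by the exchange of summation with the diagonal projections $\Pi_t$. Symmetrization is likewise free because no clipping is applied, and the $2D$-per-displacement count over at most $\chi-1$ window terms gives the uniform bound.
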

\noindent The proof is in Appendix~\ref{proof:ncpointwise}.  The error of the hint along the realized step also involves the averaged Hessian $\bar H_n=\int_0^1H(z_n+t s_n)\,dt$. Its difference from $H(z_n)$ has operator norm at most $L_2\|s_n\|/2\le L_2D/2$, which gives the certificate below. In the gradient variant the hint error
is a gradient difference over a step of length $\|s_n\|$.

\begin{lemma}\label{lem:nccertificate}
Under Assumption~\ref{as:nonconvex}, Algorithm~3 satisfies
\eqref{eq:nccert} for every $n\ge0$ with
\begin{equation}\label{eq:ncgamma}
\Gamma=\chi L_2D\ \ \text{(model variant)},\qquad
\Gamma=\tfrac12L_1\ \ \text{(gradient variant)}.
\end{equation}
\end{lemma}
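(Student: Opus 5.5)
The argument starts from the expression \eqref{eq:nctheta} for the hint error, $\theta_{n+1}=\nabla f(z_n+s_n)-\nabla f(z_n)-B_ns_n$ with $s_n=\tfrac12(\Delta_{n+1}-\Delta_n)$. The key observation is that $\|s_n\|=\tfrac12\|\Delta_{n+1}-\Delta_n\|$. Consequently, any bound of the form $\|\theta_{n+1}\|\le c\|s_n\|$ gives the certificate \eqref{eq:nccert} with $\Gamma=c/2$. For $n=-1$ the convention $\theta_0=0$ is trivial. The certificate is needed only for $n\ge0$, where $\theta_{n+1}$ is defined by the formula above, so it suffices to bound $\|\theta_{n+1}\|/\|s_n\|$ by $2\Gamma$ in each variant. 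If $s_n=0$, then $\theta_{n+1}=0$ and there is nothing to prove.

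For the gradient variant, $B_n=0$, and by the mean value theorem in integral form $\theta_{n+1}=\bar H_ns_n$ with $\bar H_n=\int_0^1H(z_n+ts_n)\,dt$. Under Assumption~\ref{as:nonconvex} we have $\|\bar H_n\|_{\rm op}\le L_1$. This gives $\|\theta_{n+1}\|\le L_1\|s_n\|=\tfrac12L_1\|\Delta_{n+1}-\Delta_n\|$, which is $\Gamma=L_1/2$.

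For the model variant, write $\theta_{n+1}=(\bar H_n-B_n)s_n$ and split
\[
\|\bar H_n-B_n\|_{\rm op}\le\|\bar H_n-H(z_n)\|_{\rm op}+\|H(z_n)-B_n\|_{\rm op}.
\]
The first term is at most $\int_0^1L_2t\|s_n\|\,dt=L_2\|s_n\|/2\le L_2D/2$, using $\|s_n\|\le D$ from \eqref{eq:ncsn}. The second term is at most $(2\chi-1)L_2D$ by Lemma~\ref{lem:ncpointwise}. Together these give $\|\theta_{n+1}\|\le(2\chi-\tfrac12)L_2D\,\|s_n\|$. Converting to the increment difference, $\|\theta_{n+1}\|\le(\chi-\tfrac14)L_2D\,\|\Delta_{n+1}-\Delta_n\|\le\chi L_2D\,\|\Delta_{n+1}-\Delta_n\|$, so the certificate holds with $\Gamma=\chi L_2D$.

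The only point needing care is that Lemma~\ref{lem:ncpointwise} applies at every $n\ge0$, including $n<\chi$, where the warm-up anchors all blocks at $z_0$. The lemma is stated for all $n\ge0$, so it can be taken as given. The remaining check is that $B_n$ in $\theta_{n+1}$ is the same symmetrized matrix used in the hint, which holds by construction in step~2(c) of Algorithm~3. I expect no real obstacle: the work lies entirely in Lemma~\ref{lem:ncpointwise}, and the factor $\tfrac12$ from $s_n$ absorbs the extra $L_2D/2$ from the averaged Hessian.
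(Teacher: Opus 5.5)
Your proposal is correct and follows the paper's proof essentially verbatim. You write $\theta_{n+1}=(\bar H_n-B_n)s_n$ and use $\|\bar H_n\|_{\rm op}\le L_1$ in the gradient variant. In the model variant you bound $\|\bar H_n-B_n\|_{\rm op}\le L_2D/2+(2\chi-1)L_2D\le 2\chi L_2D$ via Lemma~\ref{lem:ncpointwise}, and in both variants you then convert through $\|s_n\|=\tfrac12\|\Delta_{n+1}-\Delta_n\|$.
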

\noindent The proof is in Appendix~\ref{proof:nccertificate}. The two
certificates are the two readings of Theorem~\ref{thm:ncmaster}: the
model variant has $\omega=\chi$ independent of $D$, and the gradient
variant  can use
$\omega=\max\{1,L_1/(2L_2D)\}$. The model variant is the better one
when $\chi L_2D$ is a fixed fraction of $L_1$ or less, that is, when the
model is more accurate than the trivial bound; the next subsection makes
the crossover explicit.

\subsection{Guarantees}\label{sec:ncguarantee}
\begin{theorem}\label{thm:ncmodel}
Let Assumption~\ref{as:nonconvex} hold and run the model variant of
Algorithm~3 with
\[
D=\sqrt{\frac{\varepsilon}{26\,L_2\chi^{2/3}}},\qquad
\eta=\frac{1}{2\sqrt3\,\chi L_2D},\qquad
T=\big\lceil(16\sqrt3\,\chi)^{1/3}\big\rceil,\qquad
\nu\le\frac{\varepsilon}{8}.
\]
Then for every $K\ge1$,
$\|\nabla f(\bar w^{k^\star})\|\le\frac{\Phi_0}{DKT}+\frac34\varepsilon$.
In particular, if $K\ge\frac{4\Phi_0}{DT\varepsilon}$, the output
satisfies $\|\nabla f(\bar w^{k^\star})\|\le\varepsilon$, and with
$K=\max\{1,\lceil4\Phi_0/(DT\varepsilon)\rceil\}$ the total number of
gradient queries is at most
\begin{equation}\label{eq:ncmodelcount}
G_\varepsilon\ \le\ 62\,\Phi_0\sqrt{L_2}\,\chi^{1/3}\varepsilon^{-3/2}
+13\,\chi^{1/3}+\chi .
\end{equation}
\end{theorem}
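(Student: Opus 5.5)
Theorem~\ref{thm:ncmodel} follows from Theorem~\ref{thm:ncmaster} with $\omega=\chi$, plus a substitution of the stated parameters and a query count. By Lemma~\ref{lem:nccertificate}, the model variant satisfies the certificate \eqref{eq:nccert} with $\Gamma=\chi L_2D$. Hence $\Gamma\le\omega L_2D$ with $\omega=\chi\ge1$, and the prescribed $\eta$ and $T$ are exactly those of Theorem~\ref{thm:ncmaster} with $\omega=\chi$. The $B_n$ are symmetric by construction, and Lemma~\ref{lem:nctrust} guarantees that $\nu$-approximate solutions exist. So \eqref{eq:ncmasterbound} applies:
\[
\|\nabla f(\bar w^{k^\star})\|\le\frac{\Phi_0}{DKT}+2\nu+13\,\chi^{2/3}L_2D^2 .
\]
With $D^2=\varepsilon/(26L_2\chi^{2/3})$, the last term equals $\varepsilon/2$. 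Together with $2\nu\le\varepsilon/4$ this gives the claimed $\tfrac34\varepsilon$. If $K\ge4\Phi_0/(DT\varepsilon)$, the first term is at most $\varepsilon/4$, which yields $\varepsilon$-stationarity.

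For the count, the Cost paragraph gives $3N+\chi-1$ gradient queries with $N=KT$. Using $K\le1+4\Phi_0/(DT\varepsilon)$, we get
\[
3KT\le 3T+\frac{12\Phi_0}{D\varepsilon}.
\]
Substituting $1/D=\sqrt{26L_2}\,\chi^{1/3}\varepsilon^{-1/2}$ gives
\[
\frac{12\Phi_0}{D\varepsilon}=12\sqrt{26}\,\Phi_0\sqrt{L_2}\,\chi^{1/3}\varepsilon^{-3/2}\approx 61.2\,\Phi_0\sqrt{L_2}\,\chi^{1/3}\varepsilon^{-3/2},
\]
which is at most $62\,\Phi_0\sqrt{L_2}\,\chi^{1/3}\varepsilon^{-3/2}$. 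For the remaining term, $T=\lceil(16\sqrt3\,\chi)^{1/3}\rceil\le4.1\,\chi^{1/3}$ by Theorem~\ref{thm:ncmaster}, so $3T\le12.3\,\chi^{1/3}\le13\,\chi^{1/3}$. Adding $\chi-1\le\chi$ gives \eqref{eq:ncmodelcount}.

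Nothing here is a real obstacle; the substance lives in Lemma~\ref{lem:nccertificate} and Theorem~\ref{thm:ncmaster}. The only points that need care are the constant bookkeeping (checking $12\sqrt{26}\le62$ and $3\cdot4.1\le13$) and the edge case $K=1$, where the ceiling in $K$ accounts for the $3T$ additive term.
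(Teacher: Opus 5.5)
Your proposal is correct and follows the paper's proof essentially step for step. You obtain the certificate with $\omega=\chi$ from Lemma~\ref{lem:nccertificate}, invoke Theorem~\ref{thm:ncmaster}, and substitute $D$ to get $\varepsilon/2$ and $2\nu\le\varepsilon/4$. You then bound $N=KT\le T+4\Phi_0/(D\varepsilon)$ inside the count $3N+\chi-1$. The only cosmetic difference is that you use $T\le4.1\,\chi^{1/3}$ from Theorem~\ref{thm:ncmaster}, while the paper uses the slightly sharper $4.04\,\chi^{1/3}$; both yield the constant $13$.
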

\noindent The proof is in Appendix~\ref{proof:ncmodel}. The parameters
require $L_2$ and $\chi$; the number of episodes requires a known upper bound
on the initial gap $\Phi_0=f(x_0)-f^\star$ (for instance, from evaluating $f(x_0)$ and a known lower bound on $f^\star$), and any such upper bound may be
used in place of $\Phi_0$ in \eqref{eq:ncmodelcount}. The theorem holds
for every $\chi$ and every $\varepsilon$; it is the better of the two
guarantees in order on the range identified after the next one.

\begin{theorem}\label{thm:ncsmooth}
Let Assumption~\ref{as:nonconvex} hold, let $\varepsilon\le L_1^2/L_2$,
and run the gradient variant of Algorithm~3 with
\[
D=\frac{\varepsilon^{3/4}}{9\,L_1^{1/2}L_2^{1/4}},\qquad
\eta=\frac{1}{\sqrt3\,L_1},\qquad
T=\Big\lceil\Big(8\sqrt3\,\frac{L_1}{L_2D}\Big)^{1/3}\Big\rceil
\le6\Big(\frac{L_1^2}{L_2\varepsilon}\Big)^{1/4}.
\]
Then for every $K\ge1$,
$\|\nabla f(\bar w^{k^\star})\|\le\frac{\Phi_0}{DKT}+\frac34\varepsilon$,
and with $K=\max\{1,\lceil4\Phi_0/(DT\varepsilon)\rceil\}$ the output
satisfies $\|\nabla f(\bar w^{k^\star})\|\le\varepsilon$ after at most the following number of gradient queries:
\begin{equation}\label{eq:ncsmoothcount}
G_\varepsilon\ \le\ 72\,\Phi_0L_1^{1/2}L_2^{1/4}\varepsilon^{-7/4}
+12\Big(\frac{L_1^2}{L_2\varepsilon}\Big)^{1/4}.
\end{equation}
\end{theorem}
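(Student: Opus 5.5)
The plan is to apply Theorem~\ref{thm:ncmaster} with the gradient-variant certificate of Lemma~\ref{lem:nccertificate}, namely $\Gamma=L_1/2$. Set $\omega=L_1/(2L_2D)$ and first check that $\omega\ge1$. With the stated $D$ and $\varepsilon\le L_1^2/L_2$ we have $L_2D=\varepsilon^{3/4}L_2^{3/4}/(9L_1^{1/2})\le L_1/9$, so $\omega\ge 9/2$. Hence $\Gamma\le\omega L_2D$ holds with equality.

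With this $\omega$ the prescribed learning rate and episode length are the theorem's. Indeed, $1/(2\sqrt3\,\omega L_2D)=1/(\sqrt3\,L_1)=\eta$, and $(16\sqrt3\,\omega)^{1/3}=(8\sqrt3\,L_1/(L_2D))^{1/3}$, which gives $T$. In the gradient variant the update is explicit, so $\nu=0$. Theorem~\ref{thm:ncmaster} then gives
\[
\|\nabla f(\bar w^{k^\star})\|\le\frac{\Phi_0}{DKT}+13\,\omega^{2/3}L_2D^2 .
\]
It remains to show that $13\,\omega^{2/3}L_2D^2\le\frac34\varepsilon$. Since $\omega^{2/3}L_2D^2=(L_1/2)^{2/3}L_2^{1/3}D^{4/3}$ and $D^{4/3}=\varepsilon\,9^{-4/3}L_1^{-2/3}L_2^{-1/3}$, the term equals $13\cdot2^{-2/3}9^{-4/3}\varepsilon$. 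This is approximately $13\cdot0.63\cdot0.0534\,\varepsilon\approx0.44\,\varepsilon$, which is at most $\frac34\varepsilon$.

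Next, bound $T$. Using $T\le(8\sqrt3 L_1/(L_2D))^{1/3}+1$ and $L_1/(L_2D)=9L_1^{3/2}/(L_2^{3/4}\varepsilon^{3/4})$, the cube root is $(72\sqrt3)^{1/3}(L_1^2/(L_2\varepsilon))^{1/4}\approx5.0\,(L_1^2/(L_2\varepsilon))^{1/4}$. The extra $+1$ is absorbed because $L_1^2/(L_2\varepsilon)\ge1$, so $T\le6(\cdot)^{1/4}$.

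For the query count, choosing $K\ge4\Phi_0/(DT\varepsilon)$ makes the first term at most $\varepsilon/4$, so the output is $\varepsilon$-stationary. The gradient variant costs $2N=2KT$ queries. Since $K\le1+4\Phi_0/(DT\varepsilon)$, we get $2KT\le2T+8\Phi_0/(D\varepsilon)$. The second piece is $8\Phi_0\cdot9L_1^{1/2}L_2^{1/4}\varepsilon^{-7/4}=72\,\Phi_0L_1^{1/2}L_2^{1/4}\varepsilon^{-7/4}$. The first piece is $2T\le12(L_1^2/(L_2\varepsilon))^{1/4}$. Together these give \eqref{eq:ncsmoothcount}.

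The only delicate step is the numerical constant check, that $13\cdot2^{-2/3}9^{-4/3}\le3/4$ and that the ceiling in $T$ fits within the factor $6$. Both have comfortable slack.
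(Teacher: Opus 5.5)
Your proposal is correct and follows the paper's proof essentially step for step. The steps match: you apply Theorem~\ref{thm:ncmaster} with $\nu=0$ and $\omega=L_1/(2L_2D)\ge 9/2$, you evaluate the residual as $13\,\omega^{2/3}L_2D^2<0.44\,\varepsilon$, you bound $T\le 6\,(L_1^2/(L_2\varepsilon))^{1/4}$ using $(72\sqrt3)^{1/3}<5$, and you count $2N\le 2T+8\Phi_0/(D\varepsilon)$.
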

\noindent The proof is in Appendix~\ref{proof:ncsmooth}. The condition
$\varepsilon\le L_1^2/L_2$ only ensures $\omega=L_1/(2L_2D)\ge1$ in
Theorem~\ref{thm:ncmaster}. The two leading terms agree, up to an
absolute constant, where
$\chi^{1/3}\sqrt{L_2}\,\varepsilon^{-3/2}=L_1^{1/2}L_2^{1/4}\varepsilon^{-7/4}$,
that is, at $\chi=(L_1^2/(L_2\varepsilon))^{3/4}$; there the certificate
$\chi L_2D$ of the model variant, with the radius of
Theorem~\ref{thm:ncmodel}, equals $L_1/\sqrt{26}$. The model variant
thus gives the better asymptotic order on its  advantageous regime, where its
certificate improves on the trivial one by a constant factor or more, and
this leading-order selection can be made in advance from $\chi$, $L_1$,
$L_2$ and $\varepsilon$, without knowledge of $\Phi_0$.

\begin{corollary}[Gradient complexity in the nonconvex case]\label{cor:ncenvelope}
Under Assumption~\ref{as:nonconvex}, for fixed $L_1,L_2,\Phi_0$ and
$0<\varepsilon\le L_1^2/L_2$, Algorithm~3 in its model variant
with the parameters of Theorem~\ref{thm:ncmodel} if
$\chi\le(L_1^2/(L_2\varepsilon))^{3/4}$, and in its gradient variant with
the parameters of Theorem~\ref{thm:ncsmooth} otherwise, finds an
$\varepsilon$-stationary point with
\begin{equation}\label{eq:ncenvelope}
G_\varepsilon=O\big(\min\{\chi^{1/3}\varepsilon^{-3/2},\ \varepsilon^{-7/4}\}\big)
\end{equation}
gradient queries, with $\chi=d$ in the dense case.
\end{corollary}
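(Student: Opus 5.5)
The plan is to derive the corollary directly from Theorems~\ref{thm:ncmodel} and~\ref{thm:ncsmooth}, treating $L_1,L_2,\Phi_0$ as constants, by splitting on the threshold $\chi_\varepsilon:=(L_1^2/(L_2\varepsilon))^{3/4}$ that the statement uses to select the variant. The selection rule depends only on $\chi$, $L_1$, $L_2$ and $\varepsilon$, so it can be made in advance. In each case the task is to bound the query count of the chosen variant by a constant multiple of $\min\{\chi^{1/3}\varepsilon^{-3/2},\varepsilon^{-7/4}\}$.

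\emph{Case $\chi\le\chi_\varepsilon$ (model variant).} By \eqref{eq:ncmodelcount}, $G_\varepsilon\le 62\Phi_0\sqrt{L_2}\chi^{1/3}\varepsilon^{-3/2}+13\chi^{1/3}+\chi$. Three things must be checked.
\begin{enumerate}
\item The leading term is $O(\min\{\chi^{1/3}\varepsilon^{-3/2},\varepsilon^{-7/4}\})$. Indeed $\chi^{1/3}\le\chi_\varepsilon^{1/3}=(L_1^2/(L_2\varepsilon))^{1/4}$, so $\chi^{1/3}\varepsilon^{-3/2}\le L_1^{1/2}L_2^{-1/4}\varepsilon^{-7/4}$. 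With the parameters fixed, the minimum is therefore attained, up to constants, by the first branch.
\item The additive $13\chi^{1/3}$ is dominated by the leading term. This follows because $\varepsilon\le L_1^2/L_2$ bounds $\varepsilon^{-3/2}$ below by a constant.
\item The additive warm-up $\chi$ is the only delicate point. Here I would use $\chi\le\chi^{1/3}\chi_\varepsilon^{2/3}=\chi^{1/3}(L_1^2/(L_2\varepsilon))^{1/2}$, which is $O(\chi^{1/3}\varepsilon^{-1/2})$ and hence $O(\chi^{1/3}\varepsilon^{-3/2})$, again using that $\varepsilon$ is bounded above so that $\varepsilon^{-1/2}\lesssim\varepsilon^{-3/2}$. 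This is presumably the content of Remark~\ref{rem:ncadditive}.
\end{enumerate}

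\emph{Case $\chi>\chi_\varepsilon$ (gradient variant).} The hypothesis $\varepsilon\le L_1^2/L_2$ is exactly the one Theorem~\ref{thm:ncsmooth} requires. It gives $G_\varepsilon\le72\Phi_0L_1^{1/2}L_2^{1/4}\varepsilon^{-7/4}+12(L_1^2/(L_2\varepsilon))^{1/4}$. The second term is $O(\varepsilon^{-1/4})$, which is $O(\varepsilon^{-7/4})$ for bounded $\varepsilon$. Reversing the inequality of the first case, $\chi>\chi_\varepsilon$ gives $\varepsilon^{-7/4}\lesssim\chi^{1/3}\varepsilon^{-3/2}$, so $\varepsilon^{-7/4}$ is the minimum up to constants.

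Combining the two cases yields \eqref{eq:ncenvelope}, and setting $\chi=d$ gives the dense case. I expect no real obstacle; the only step that needs care is absorbing the additive $\chi$ in the first case, and the interpolation $\chi\le\chi^{1/3}\chi_\varepsilon^{2/3}$ handles it.
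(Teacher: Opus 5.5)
Your proposal is correct and follows the paper's proof essentially step for step. It uses the same split at $\chi=(L_1^2/(L_2\varepsilon))^{3/4}$, the same interpolation $\chi=\chi^{1/3}\chi^{2/3}\le\chi^{1/3}(L_1^2/(L_2\varepsilon))^{1/2}$ to absorb the warm-up term, and the same comparison of the two leading terms, with the $O(\varepsilon^{-1/4})$ lower-order term of the gradient variant handled the same way.
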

\begin{proof}
Write $\beta=L_1^2/(L_2\varepsilon)\ge1$. If $\chi\le\beta^{3/4}$, then
$\chi=\chi^{1/3}\chi^{2/3}\le\chi^{1/3}\beta^{1/2}$, which for fixed
constants is $O(\chi^{1/3}\varepsilon^{-1/2})$, so the two lower-order
terms of \eqref{eq:ncmodelcount} are $O(\chi^{1/3}\varepsilon^{-3/2})$;
and $\chi^{1/3}\varepsilon^{-3/2}\le\beta^{1/4}\varepsilon^{-3/2}=O(\varepsilon^{-7/4})$,
so $\chi^{1/3}\varepsilon^{-3/2}$ is the smaller of the two leading
terms. If $\chi>\beta^{3/4}$, then $\varepsilon^{-7/4}=O(\chi^{1/3}\varepsilon^{-3/2})$
is the smaller one, and the lower-order term of \eqref{eq:ncsmoothcount}
is $O(\varepsilon^{-1/4})$.
\end{proof}

\begin{remark}\label{rem:ncadditive}
The term $\chi$ in \eqref{eq:ncmodelcount} is the warm-up, which fills
the window before the first step. RP-QNPE pays no such term in the convex
case because the smoothness step $\sigma/L_1$ is admissible at every
iteration, so the method can optimize while the window fills. Here the
learning rate is fixed once from a certificate that must hold at every
step, and the certificate $\Gamma=\chi L_2D$ requires a complete model
from $n=0$ on. Filling the window while optimizing would require an analysis
that combines the initial gradient phase with the subsequent
model phase. The present analysis uses a complete warm-up
to ensure that the same certificate and learning rate apply
throughout the run. For fixed $\Phi_0>0$, however, the warm-up term is absorbed into the normalized complexity envelope \eqref{eq:ncenvelope}: on the range where the model variant is
selected, $\chi\le\chi^{1/3}L_1/\sqrt{L_2\varepsilon}$, which is below the
first term of \eqref{eq:ncmodelcount} as soon as
$\Phi_0\ge L_1\varepsilon/(62L_2)$, and the gradient variant has no
additive term at all. The
lazy second-order methods discussed below carry an additive $d$ of the
same origin, the cost of one Hessian.
\end{remark}

\begin{remark}\label{rem:ncexact}
With exact Hessians in the hint, Theorem~\ref{thm:ncmaster} applies with
$\omega=1$ and gives $O(1+\Phi_0\sqrt{L_2}\,\varepsilon^{-3/2})$ iterations,
the rate of cubic regularization, which is optimal for second-order
methods \citep{carmon2020lower}. The factor $\chi^{1/3}$ in
\eqref{eq:ncmodelcount} is therefore the price of measuring the Hessian
one group per iteration instead of all at once. It arises as
$\omega^{1/3}$ with $\omega=\chi$, because the pointwise error of a
model whose columns are up to $\chi-1$ steps old is of the order of
$\chi$ steps of drift, and the master formula converts a factor in the
certificate into its cube root in the iteration count. This is the same
exponent as the laziness penalty of the lazy extra-Newton method of
\citet{chen2025faster}, obtained here with the rolling finite-difference model and a first-order oracle.
\end{remark}

\subsection{Comparison with existing guarantees}\label{sec:nccompare}
Table~\ref{tab:nccompare} collects the bounds for finding an
$\varepsilon$-stationary point under Lipschitz gradient and Hessian, with
$L_1$, $L_2$ and $\Phi_0$ fixed, and Figure~\ref{fig:ncregimes} draws them against $1/\varepsilon$. As in Section~\ref{sec:compare}, prior
bounds are stated in terms of $d$, so we compare in the dense case
$\chi=d$, and all comparisons are between proved upper bounds. Against
the first-order rate, the $\tilde O(\varepsilon^{-7/4})$ of
\citet{carmon2017convex} and the $O(\varepsilon^{-7/4})$ of
\citet{li2022restarted}, the model variant of Algorithm~3 is smaller
whenever $\chi\lesssim\varepsilon^{-3/4}$, and its gradient variant
recovers $O(\varepsilon^{-7/4})$ otherwise.

\begin{table}[t]
\centering\small
\setlength{\tabcolsep}{4pt}
\begin{tabular}{@{}lll@{}}
\toprule
Method & Oracle & Gradient complexity\\
\midrule
Convex until proven guilty \citep{carmon2017convex} & gradient & $\tilde O(\varepsilon^{-7/4})$\\
Restarted AGD \citep{li2022restarted} & gradient & $O(\varepsilon^{-7/4})$\\
OL-OQN \citep{jiang2025improved} & gradient & $O(d^{1/4}\varepsilon^{-13/8})$\\
\citet{marumo2025parameter} & gradient & $O(d^{1/4}\varepsilon^{-13/8})$\\
{Finite-difference CRN \citep{cartis2012oracle}} & {gradient} & {$O(d\,\varepsilon^{-3/2})$}\\
\citet{adil2025balancing} & gradient & $\tilde O(d+d^{1/3}\varepsilon^{-3/2})$\\
\midrule
CRN \citep{nesterov2006cubic} & Hessian $=\bar d$ gradients & $O(\bar d\,\varepsilon^{-3/2})$\\
Lazy cubic Newton \citep{doikov2023lazy} & Hessian $=\bar d$ gradients & $O(\bar d+\bar d^{1/2}\varepsilon^{-3/2})$\\
NALEN \citep{chen2025faster} & Hessian $=\bar d$ gradients & $O(\bar d+\bar d^{1/3}\varepsilon^{-3/2})$\\
\midrule
RP-OQN (Corollary~\ref{cor:ncenvelope}) & gradient & $O(\min\{\chi^{1/3}\varepsilon^{-3/2}+\chi,\ \varepsilon^{-7/4}\})$\\
\bottomrule
\end{tabular}
\caption{\small Nonconvex case: gradient queries to find a point with $\|\nabla f\|\le\varepsilon$,
with $L_1$, $L_2$, $\Phi_0$ fixed and logarithmic factors suppressed in the $\tilde O$ rows;
$\chi\le d$ is the effective dimension of Definition~\ref{def:chi} ($\chi=d$ for a dense Hessian).
Hessian-based methods are listed for reference, one Hessian counted as $\bar d$ gradients as in
Table~\ref{tab:convexcompare} {($\bar d=d$ as published, $\bar d=\chi$ a hypothetical
instantiation)}; CRN stands for cubic regularized Newton. {\color{ReviewBlue}The additive terms are dominated by the curvature terms whenever
$d\le\varepsilon^{-9/4}$. For RP-OQN, its additive $\chi$ is
dominated throughout the model-selection range
$\chi\le\varepsilon^{-3/4}$ (Remark~\ref{rem:ncadditive}).}}
\label{tab:nccompare}
\end{table}

\begin{figure}[t]
\centering
\begin{subfigure}[t]{0.49\textwidth}
\centering
\begin{tikzpicture}
\begin{axis}[width=\linewidth, height=0.85\linewidth, grid=major, grid style={gray!20}, axis on top,
  xmin=0, xmax=3, ymin=0, ymax=6,
  xtick={0,1,2,3}, xticklabels={$1$,$d$,$d^{2}$,$d^{3}$},
  ytick={0,...,6}, yticklabels={$1$,$d$,$d^{2}$,$d^{3}$,$d^{4}$,$d^{5}$,$d^{6}$},
  xlabel={$1/\varepsilon$}, ylabel={gradient queries},
  tick label style={font=\scriptsize}, label style={font=\small},
  every axis plot/.append style={line width=1.1pt},
  legend to name=leg:ncregimes, legend columns=2, legend cell align={left},
  legend style={font=\scriptsize, draw=gray!60,
                /tikz/every even column/.append style={column sep=0.6cm}}]
\fill[blue!8] (axis cs:0,0) rectangle (axis cs:1.3333,6);
\fill[red!10] (axis cs:1.3333,0) rectangle (axis cs:3,6);
\addplot[blue] coordinates {(0,0) (3,5.25)};
  \addlegendentry{first order (Li et al.): $\varepsilon^{-7/4}$}
\addplot[violet] coordinates {(0,0.25) (3,5.125)};
  \addlegendentry{OL-OQN, Marumo et al.: $d^{1/4}\varepsilon^{-13/8}$}
\addplot[teal] coordinates {(0,1) (0.4444,1) (3,4.8333)};
  \addlegendentry{Adil et al.\ (dense FD): $d+d^{1/3}\varepsilon^{-3/2}$}
\addplot[orange, dashed] coordinates {(0,1) (0.4444,1) (3,4.8333)};
  \addlegendentry{NALEN: $\bar d+\bar d^{1/3}\varepsilon^{-3/2}$}
\addplot[olive, dashed] coordinates {(0,1) (0.3333,1) (3,5)};
  \addlegendentry{lazy CRN: $\bar d+\bar d^{1/2}\varepsilon^{-3/2}$}
\addplot[brown, dashed] coordinates {(0,1) (3,5.5)};
  \addlegendentry{CRN: $\bar d\,\varepsilon^{-3/2}$}
\addplot[red, line width=2.6pt, opacity=0.5] coordinates {(0,0) (1.3333,2.3333) (3,4.8333)};
  \addlegendentry{RP-OQN: $\min\{\chi+\chi^{1/3}\varepsilon^{-3/2},\,\varepsilon^{-7/4}\}$}
\draw[dashed, gray!70] (axis cs:1.3333,0) -- (axis cs:1.3333,6) node[pos=1, below right, font=\scriptsize, text=gray!50!black] {$\varepsilon=d^{-4/3}$};
\draw[dashed, gray!70] (axis cs:2,0) -- (axis cs:2,6) node[pos=1, below right, font=\scriptsize, text=gray!50!black] {$\varepsilon=d^{-2}$};
\node[font=\scriptsize, align=center, inner sep=1pt] at (axis cs:0.67,0.45) {ties first order;\\below the rest};
\node[font=\scriptsize, align=center, inner sep=1pt] at (axis cs:2.17,0.45) {ties NALEN, Adil et al.;\\below first order};
\end{axis}
\end{tikzpicture}
\caption{Dense Hessian: $\chi=\bar d=d$.}
\label{fig:ncregimes-dense}
\end{subfigure}\hfill
\begin{subfigure}[t]{0.49\textwidth}
\centering
\begin{tikzpicture}
\begin{axis}[width=\linewidth, height=0.85\linewidth, grid=major, grid style={gray!20}, axis on top,
  xmin=0, xmax=3, ymin=0, ymax=6,
  xtick={0,1,2,3}, xticklabels={$1$,$\chi$,$\chi^{2}$,$\chi^{3}$},
  ytick={0,...,6}, yticklabels={$1$,$\chi$,$\chi^{2}$,$\chi^{3}$,$\chi^{4}$,$\chi^{5}$,$\chi^{6}$},
  xlabel={$1/\varepsilon$}, ylabel={gradient queries},
  tick label style={font=\scriptsize}, label style={font=\small},
  every axis plot/.append style={line width=1.1pt}]
\fill[blue!8] (axis cs:0,0) rectangle (axis cs:1.3333,6);
\fill[red!10] (axis cs:1.3333,0) rectangle (axis cs:3,6);
\addplot[blue] coordinates {(0,0) (3,5.25)};
\addplot[violet] coordinates {(0,0.75) (3,5.625)};
\addplot[teal] coordinates {(0,3) (1.3333,3) (3,5.5)};
\addplot[orange, dashed] coordinates {(0,1) (0.4444,1) (3,4.8333)};
\addplot[olive, dashed] coordinates {(0,1) (0.3333,1) (3,5)};
\addplot[brown, dashed] coordinates {(0,1) (3,5.5)};
\addplot[red, line width=2.6pt, opacity=0.5] coordinates {(0,0) (1.3333,2.3333) (3,4.8333)};
\draw[dashed, gray!70] (axis cs:1.3333,0) -- (axis cs:1.3333,6) node[pos=1, below right, font=\scriptsize, text=gray!50!black] {$\varepsilon=\chi^{-4/3}$};
\node[font=\scriptsize, align=center, inner sep=1pt] at (axis cs:0.67,0.45) {ties first order;\\below the rest};
\node[font=\scriptsize, align=center, inner sep=1pt] at (axis cs:2.17,0.45) {ties NALEN($\chi$);\\below Adil et al.};
\end{axis}
\end{tikzpicture}
\caption{Sparse Hessian, $d=\chi^{3}$, $\bar d=\chi$.}
\label{fig:ncregimes-sparse}
\end{subfigure}

\medskip
\pgfplotslegendfromname{leg:ncregimes}
\caption{Nonconvex case: gradient complexity against $1/\varepsilon$, in powers of $d$
(a) and of $\chi$ (b); constants and logarithms suppressed; the bounds are those of
Table~\ref{tab:nccompare}. Solid: gradient-oracle bounds; dashed:
exact-Hessian bounds at $\bar d$ gradients per Hessian ($\bar d=d$ in (a), hypothetically
$\bar d=\chi$ in (b)). RP-OQN (thick, translucent) is the lower envelope of the first-order rate
(blue band: gradient variant) and of NALEN (red band: model variant), which in (b) lies below the
dense finite-difference bound of Adil et al.\ by a factor of at least $(d/\chi)^{1/3}$.}
\label{fig:ncregimes}
\end{figure}

\subsubsection{Quasi-Newton methods}\label{sec:ncqncompare}
In the dense case the normalized bounds are
$\min\{d^{1/3}\varepsilon^{-3/2},\varepsilon^{-7/4}\}$ for RP-OQN and
$d^{1/4}\varepsilon^{-13/8}$ for OL-OQN \citep{jiang2025improved}, which
uses the same conversion and the same form of hint. The ratio of our model
branch to the OL-OQN bound is $(d^{2/3}\varepsilon)^{1/8}$, below one for
$d<\varepsilon^{-3/2}$, and our gradient branch is below it for
$d>\varepsilon^{-1/2}$, so Algorithm~3, with the variant selected in advance as in Corollary~\ref{cor:ncenvelope}, has the smaller bound for every $d$ and every $\varepsilon<1$. Measured against the first-order rate, OL-OQN
improves on $\varepsilon^{-7/4}$ only when $d<\varepsilon^{-1/2}$ and
RP-OQN when $d<\varepsilon^{-3/4}$; on the former range the ratio of the
two bounds is at most $\varepsilon^{1/12}$. RP-OQN is also simpler: it
needs no matrix learner and no separation oracle, spends three gradients
per iteration (one more than OL-OQN), and its only nontrivial computation
is the trust-region solve of Section~\ref{sec:ncimplementation}, which
OL-OQN performs as well. The same comparison of exponents applies to
\citet{marumo2025parameter}, whose method attains the OL-OQN bound without
prior knowledge of $L_1$, $L_2$ or $\varepsilon$, which Algorithm~3
requires. As in Section~\ref{sec:aqcompare}, this compares exponents, not
constants.

\subsubsection{Second-order and lazy methods}\label{sec:nclazycompare}

As in Section~\ref{sec:lazycompare}, the Hessian-based rows compare costs
under the convention of \citet{chen2025faster} that one Hessian evaluation
counts as $\bar d$ gradients ($\bar d=d$ in the dense case). Cubic
regularization \citep{nesterov2006cubic} then costs $O(d\,\varepsilon^{-3/2})$,
which RP-OQN improves by a factor of at least $d^{2/3}$ for every $d$ and
$\varepsilon$. Lazy cubic Newton \citep{doikov2023lazy} costs
$O(d+d^{1/2}\varepsilon^{-3/2})$ and improves on $\varepsilon^{-7/4}$ only
when $d<\varepsilon^{-1/2}$; RP-OQN is below it by a factor of at least
$d^{1/6}$ for every $d$ and $\varepsilon$. The lazy extra-Newton method
NALEN of \citet{chen2025faster}, which shares the conversion and the
trust-region subproblem of Algorithm~3 and recomputes an exact Hessian
every $\bar d$ steps, has the standalone bound
$O(\bar d+\bar d^{1/3}\varepsilon^{-3/2})$, matching the order of our
model variant (Theorem~\ref{thm:ncmodel}) when $\bar d=\chi$: the two
bounds coincide for $\chi\le\varepsilon^{-3/4}$, while for
$\chi>\varepsilon^{-3/4}$ the standalone NALEN bound lacks the
$\varepsilon^{-7/4}$ branch of Corollary~\ref{cor:ncenvelope} (though, as
noted in Section~\ref{sec:related}, pairing NALEN with a first-order
method via the same advance parameter switch recovers the same polynomial
envelope). \citet{adil2025balancing}, who rebuild a finite-difference
Hessian periodically and query only gradients in the dense case, obtain
$\tilde O(d+d^{1/3}\varepsilon^{-3/2})$, matching the polynomial order of
our model variant up to logarithmic factors, and their general theorem
likewise recovers $O(\varepsilon^{-7/4})$ when run without curvature. We
do not claim an improvement over the polynomial exponents of these two
methods: the $\varepsilon$-exponent $3/2$ is that of cubic regularization,
optimal for second-order methods, and the dimension exponent $1/3$ is the
laziness penalty of Remark~\ref{rem:ncexact}, obtained here from gradients
alone with no logarithmic factors and an explicit structured dependence on
$\chi$.

{\color{ReviewBlue} Figure~\ref{fig:ncregimes} summarizes the comparison.
In the dense case, RP-OQN matches the best polynomial
envelope among the bounds compared here: it matches
NALEN and \citet{adil2025balancing} on
$d\le\varepsilon^{-3/4}$ and the first-order rate otherwise.
For structured Hessians, its model variant matches the hypothetical
NALEN instantiation at $\bar d=\chi$, while improving
on the displayed dense finite-difference bound of
\citet{adil2025balancing}.}
What it adds is a gradient-oracle proof of the exponents $\chi^{1/3}\varepsilon^{-3/2}$ with explicit constants and no logarithmic factors, the dependence on the pattern through $\chi$, and a strict improvement over the quasi-Newton bounds $d^{1/4}\varepsilon^{-13/8}$ of OL-OQN and \citet{marumo2025parameter} at every $d$ and $\varepsilon<1$.

\subsubsection{Lower bounds}\label{sec:nclower}
The lower bounds $\Omega(\varepsilon^{-12/7})$ for deterministic
first-order methods \citep{carmon2021lower} and $\Omega(\varepsilon^{-3/2})$
for second-order methods \citep{carmon2020lower} are proved for
constructions whose dimension grows polynomially in $1/\varepsilon$, so
they do not contradict a dimension-dependent bound such as
$\chi^{1/3}\varepsilon^{-3/2}$, which falls below $\varepsilon^{-12/7}$
only when $\chi<\varepsilon^{-9/14}$.

\section{Implementation and total computational cost}\label{sec:implementation}

The gradient bounds of Sections~\ref{sec:algo}--\ref{sec:nonconvex} count
every oracle call, including finite-difference probes, under exact gradient
evaluations and Assumptions~\ref{as:main} and~\ref{as:nonconvex}, as appropriate. Algorithms~1 and~2 are stated with
exact spectral projections and regularized linear solves for simplicity of
exposition. Algorithm~3 is different: its model variant requires a
trust-region subproblem, but its analysis already permits
$\nu$-approximate solutions satisfying \eqref{eq:nckkt}; its gradient
variant requires no trust-region solve.
For Algorithms~1 and~2, neither exact spectral projections nor exact
linear solves are essential. A shift-and-scale repair replaces the
projection while preserving the curvature certificates and their
constants, and a relative residual stopping rule preserves the HPE
guarantees once a fixed fraction of the tolerance is reserved for the
linear solver (Section~\ref{sec:cvximplementation}). Algorithm~3 needs
no spectral projection, and its model-variant subproblem can be solved
to the required accuracy by projected gradient with a computable
stopping rule (Section~\ref{sec:ncimplementation}).
Section~\ref{sec:arithmetic} records the resulting arithmetic cost
of each method.

\subsection{Algorithms 1 and 2: projection-free models and inexact linear solves}\label{sec:cvximplementation}

\paragraph{What the analysis uses.}
The curvature model enters the proofs through exactly two properties.
\begin{enumerate}[label=(P\arabic*),leftmargin=3em]
\item \emph{Feasibility}, $B\in\mathcal Z$. The lower bound $B\succeq0$
gives the resolvent bounds $\|(B+\lambda^{-1}I)^{-1}\|_{\rm op}\le\lambda$
and $\|(B+\lambda^{-1}I)^{-1}B\|_{\rm op}\le1$ used in
Lemma~\ref{lem:scalarbounds}; the upper bound
$B\preceq L_1I$ gives the cap $\bar E_k\le L_1$ and makes the smoothness
step $\sigma/L_1$ admissible through the second certificate of
Lemma~\ref{lem:resid}.
\item \emph{A certified error at the current point},
$\|B_k-H(x_k)\|_{\rm op}\le\bar E_k$ in Lemma~\ref{lem:telescope}.
\end{enumerate}
No proof uses that $B$ is the nearest point of $\mathcal Z$ to the raw
matrix. The projection appears in exactly one place, the proof of
Lemma~\ref{lem:telescope}, and only through
Lemma~\ref{lem:clip}: if the symmetrized raw matrix $S$ satisfies
$\|S-H\|_{\rm op}\le e$ for some $H\in\mathcal Z$, then
$\|\Pi_{\mathcal Z}(S)-H\|_{\rm op}\le2e$. Any map with these two
properties can therefore replace $\Pi_{\mathcal Z}$ without changing
anything else. The following one has them and costs $O(d^2)$ arithmetic
operations.

\paragraph{Certified spectral repair.}
Given a symmetric $S$ and a bound $e\ge0$ with $\|S-H\|_{\rm op}\le e$ for
some $H\in\mathcal Z$, define
\begin{equation}\label{eq:repair}
\mathcal R_e(S)=\frac{L_1}{L_1+2e}\,(S+eI).
\end{equation}
The spectrum of $S$ lies in $[-e,L_1+e]$; the shift moves its lower end to
$0$ and the scaling moves its upper end to $L_1$, so
$\mathcal R_e(S)\in\mathcal Z$, and Lemma~\ref{lem:repair} shows
$\|\mathcal R_e(S)-H\|_{\rm op}\le2e$, the same factor as for the
projection. The bound $e$ is available before the step is taken: the proof
of Lemma~\ref{lem:telescope} bounds the symmetrized raw error for
$k\ge\chi$ by $e_k^{\rm raw}=L_2\widehat D_k+e_k/2$ with
$e_k=b/(k-\chi+2)^2$ from \eqref{eq:rollingcert}, a function of stored
quantities; set
$B_k=\mathcal R_{e_k^{\rm raw}}\big((M_k+M_k^\top)/2\big)$ and
$\bar E_k=\min\{L_1,2e_k^{\rm raw}\}$, which is \eqref{eq:rollingcert},
and use $B_k=0$, whose error is at most $L_1$, while the window is
incomplete, storing the probes as before. The certificate is unchanged,
hence so are Lemmas~\ref{lem:sum}, \ref{lem:scalarbounds}
and~\ref{lem:wellposed} and every theorem built on them, with identical
constants; Algorithm~2 uses the model only through
Lemma~\ref{lem:telescope}, so the repair applies to it unchanged as well.
If an eigensolver is used instead, any feasible $\widetilde B_k\in\mathcal Z$
with a certified $\|\widetilde B_k-B_k\|_{\rm op}\le\xi_k$ can be
accommodated by adding $\xi_k$ to the certificate; a summable budget
$\xi_k\le b/(k+1)^2$ preserves the bounds
(Lemma~\ref{lem:approxprojection}). Truncating an uncertified partial
decomposition is not sufficient.

\paragraph{Inexact linear solves.}
For a computed displacement $r$ at a center $y$ with stepsize $\lambda$
and model $B$, let $w=(B+\lambda^{-1}I)r+\nabla F(y)$ be the linear
residual. Fix $0<\delta<1$ and stop the solver as soon as
\begin{equation}\label{eq:linearstop}
\lambda\|w\|\le\delta\|r\| ,
\end{equation}
a test that costs one matrix-vector product and no gradient. The analysis
uses a step only through its HPE residual and its length, and both
degrade gracefully. With $r_*$ the exact solution,
$r=r_*+(B+\lambda^{-1}I)^{-1}w$, so $\|r\|\le\|r_*\|/(1-\delta)$; and
Taylor's formula gives the exact identity
\[
r+\lambda\nabla F(y+r)
=\lambda w+\lambda\Big(\int_0^1\nabla^2F(y+sr)\,ds-B\Big)r ,
\]
whose second term is the model error already controlled by
Lemma~\ref{lem:resid}. Hence the residual coefficient of the inexact step
is at most $\delta$ plus the coefficient of the exact analysis, evaluated
at the slightly longer step (Lemma~\ref{lem:inexactresidual}). It remains
to reserve the fraction $\delta$. 
For the inexact implementation of Subroutine~1.1, set
$\delta=\sigma/4$, $\sigma'=\sigma/2$, and
$\lambda_{\rm b}'=3\sigma/(4L_1)$.
First evaluate $\widehat\Psi_k(\lambda_{\rm b}')$.
If $\widehat\Psi_k(\lambda_{\rm b}')\ge\sigma'$,
return $\lambda_k=\lambda_{\rm b}'$.
Otherwise, initialize
$\lambda^-=\lambda_{\rm b}',\;
\lambda^+=\frac{\sigma'}{\bar E_k},$
and perform the same geometric bisection as in
Subroutine~1.1, using the threshold $\sigma'$.
Return $\lambda_k=\lambda^-$ once
$\lambda^+/\lambda^-\le2^{1/4}$.
At the floor the coefficient is $\delta+\lambda L_1=\sigma$;
at a bisected step it is at most $\sigma/4+(\sigma/2)/(1-\sigma/4)\le\sigma$.
The returned stepsize satisfies $\lambda_k\ge3\sigma/(4L_1)$ and
$\widehat\Psi_k(\lambda_k)\ge\sigma/4$ in place of $\sigma/L_1$ and
$\sigma/2$. The proofs of Theorem~\ref{thm:iterationrate} and
Corollary~\ref{thm:complexity} use these two facts, HPE admissibility, and
the unchanged certificate sum, so only their $\sigma$-dependent constants
move. The same holds for the restarted strongly convex guarantees of
Appendix~\ref{app:strongly}. Part~(iii) below extends the
implementation guarantee to Algorithm~2.

\begin{theorem}[Implementations preserving the gradient bounds]
\label{thm:implementation}
Let Assumption~\ref{as:main} hold.
\begin{enumerate}[label=(\roman*)]
\item With the certified repair \eqref{eq:repair} in place of
$\Pi_{\mathcal Z}$, using the raw bounds specified above, all lemmas and
theorems of Section~\ref{sec:algo} hold for the repaired matrices with the
same statements and constants.
\item If in addition every linear system is solved up to
\eqref{eq:linearstop}, with $\delta=\sigma/4$, test value $\sigma/2$ and
floor $3\sigma/(4L_1)$ in Subroutine~1.1, then every step remains
HPE-admissible with its original parameter, and
Theorem~\ref{thm:iterationrate} and Corollary~\ref{thm:complexity} hold as
stated, with their $O_\sigma$ constants multiplied by absolute factors.

\item Under Assumption~\ref{as:sc}, the certified repair also
preserves the guarantees of Algorithm~2 with identical constants.
For inexact solves, impose \eqref{eq:linearstop} with
$\delta=\sigma/4$ and run Subroutine~2.1 with threshold $\sigma/2$
and endpoints $\underline\lambda_{\rm in}$,
$\bar\lambda_{k,{\rm in}}$ defined by
$\; \underline\lambda_{\rm in}L_1
 =\frac{\sigma}{2}\sqrt{1+\mu\underline\lambda_{\rm in}},
 \;
 \bar\lambda_{k,{\rm in}}\bar E_k
 =\frac{\sigma}{2}\sqrt{1+\mu\bar\lambda_{k,{\rm in}}}.$
Keep the original $\sigma$ in the certificate and outer update.
Then Theorem~\ref{thm:scmain} and Corollary~\ref{cor:scqueries}
retain their convergence and gradient-complexity orders,
with constants modified by absolute factors.
\end{enumerate}
Neither modification queries the gradient oracle.
\end{theorem}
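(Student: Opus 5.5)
The proof splits along the three parts of the statement. In each, the argument is a substitution into results already established, and the only place needing genuine care is part~(iii).

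\textbf{Part (i).} I would check that the repaired matrix satisfies (P1) and (P2); these are the only properties used downstream. For (P1): the spectrum of $S=(M_k+M_k^\top)/2$ lies in $[-e,L_1+e]$ because $\|S-H(x_k)\|_{\rm op}\le e$ and $H(x_k)\in\mathcal Z$. Hence $\mathcal R_e(S)\in\mathcal Z$. For (P2): Lemma~\ref{lem:repair} gives $\|\mathcal R_e(S)-H\|_{\rm op}\le2e$. This is exactly the factor that Lemma~\ref{lem:clip} supplies for $\Pi_{\mathcal Z}$. I would then reread the proof of Lemma~\ref{lem:telescope}. It uses the clipping only at the final step, so replacing that step yields the same $\bar E_k$. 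For $k<\chi$, the choice $B_k=0\in\mathcal Z$ has error at most $L_1$. After that, Lemmas~\ref{lem:sum}, \ref{lem:scalarbounds}, \ref{lem:wellposed} and Theorem~\ref{thm:iterationrate} go through verbatim, since they touch $B_k$ only through (P1)--(P2).

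\textbf{Part (ii).} I would invoke Lemma~\ref{lem:inexactresidual}. The inexact step satisfies $\|r\|\le\|r_*\|/(1-\delta)$. Its HPE coefficient is at most $\delta$ plus $\lambda(E+L_2\|r\|/2)$.

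At the floor $\lambda=3\sigma/(4L_1)$, the second certificate of Lemma~\ref{lem:resid} bounds the coefficient by $\delta+\lambda L_1=\sigma$.

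At a bisected step, $\widehat\Psi_k(\lambda)<\sigma/2$. Because $\widehat\Psi_k$ is a bound built from $\widehat\rho_k\ge\|r_*\|$, the inflated length gives coefficient at most $\sigma/4+(\sigma/2)/(1-\sigma/4)\le\sigma$. So every step satisfies \eqref{eq:ms} with the original $\sigma$.

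The rate proof uses three facts about the stepsize: a floor $\lambda_k\ge c\sigma/L_1$, near-maximality $\widehat\Psi_k(\lambda_k)\ge c'\sigma$, and the unchanged certificate sum. The inverted five-term bound of Lemma~\ref{lem:fiveterm} then holds with constants changed by absolute factors, and the dyadic-scale argument is unaffected.

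\textbf{Part (iii).} This is the main obstacle. The repair claim again follows because Algorithm~2 uses $B_k$ only through Lemma~\ref{lem:telescope}. For inexact solves, the difficulty is Lemma~\ref{lem:sc-admissible}. There, admissibility and the movement-energy branch of $\rho_k$ are certified jointly by continuation on $(0,\lambda]$. I would rerun that continuation with the inflated step $\|r\|\le\|r_*\|/(1-\delta)$ and threshold $\sigma/2$. The target is coefficient at most $\sigma/4+(\sigma/2)\sqrt{1+\mu\lambda}/(1-\sigma/4)\le\sigma\sqrt{1+\mu\lambda}$, which closes because the relaxed bound $\sigma\sqrt{1+\mu\lambda}$ only grows with $\lambda$. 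The energy branch must also absorb the factor $1/(1-\delta)$; this rescales its constant by $4/3$.

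With the endpoints $\underline\lambda_{\rm in}$ and $\bar\lambda_{k,{\rm in}}$ bracketing as before and $\phi_k(c\lambda)\le c^4\phi_k(\lambda)$, the search returns $\sigma/4\le\phi_k(\lambda_k)\le\sigma/2$. The moment bound \eqref{eq:sc-moment} then holds with $C_\sharp$ multiplied by an absolute factor, and the floor gives $q$ with $\sigma$ replaced by a constant multiple of it. This yields Theorem~\ref{thm:scmain} and Corollary~\ref{cor:scqueries} with modified constants.

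No step queries the oracle: the repair, the residual test, and the bisection are all arithmetic on stored data.
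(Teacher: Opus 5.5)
Your proposal follows the paper's proof in all three parts and is correct in substance. The one point to tighten is in (iii). Do not rerun the continuation on the computed step, because an iterative-solver output is not a continuous function of the trial stepsize. Instead, Lemma~\ref{lem:sc-admissible} applies verbatim to the hypothetical exact step $r_{*,k}$, since $\phi_k(\lambda_k)\le\sigma/2\le\sigma$, and gives $\|r_{*,k}\|\le\rho_k(\lambda_k)$, hence $\|r_k\|\le 2\rho_k(\lambda_k)$. This factor is absorbed by the $L_2\rho_k$ term in $\omega_k$ (your multiplicative inflation of $\omega_k$ also works). Likewise, Lemma~\ref{lem:sc-contract} never uses that $r_k$ is the exact model step, so it applies to the computed admissible step. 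That both propagates $\Psi_{k+1}\le R^2/2$ for the induction and gives $\|r_k\|\le\rho_k^{\rm E}(\lambda_k)$ directly, so no $4/3$ rescaling of the energy branch is needed.
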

\noindent The proof is in Appendix~\ref{proof:implementation}. In
particular, exact linear algebra is not what removes the gradient line
search: the scalar certificate guarantees acceptance before the trial
point is queried, and inexact solves leave this mechanism intact. As for
meeting \eqref{eq:linearstop}, Lemma~\ref{lem:chebyshev} gives a Chebyshev
iteration that does so within
\begin{equation}\label{eq:lineariterations}
m\le1+\Big\lceil\frac{\sqrt{\kappa_B}}{2}
\log\frac{2(\kappa_B+\delta)}{\delta}\Big\rceil,
\qquad \kappa_B=1+\lambda L_B,
\end{equation}
matrix-vector products in exact arithmetic, where $L_B\ge\|B\|_{\rm op}$
is known, $L_B=L_1$ for a feasible model.

\subsection{Algorithm 3: no projection and the trust-region subproblem}\label{sec:ncimplementation}

\paragraph{What the analysis uses.}
As for Algorithms~1 and~2, we separate what the analysis uses from what
the algorithm box computes. The model enters the proofs through two
properties only: $B_n$ is symmetric, and it satisfies the pointwise
certificate \eqref{eq:nccert}. The increment enters through one property
only: it is a $\nu$-approximate solution of the subproblem in the sense of
\eqref{eq:nckkt}, with $\nu\le\varepsilon/8$. Two consequences follow.

\paragraph{No projection.}
Nothing requires $B_n$ to lie in a prescribed set, so the spectral
projection of Section~\ref{sec:algo}, and the repair that replaces it in
Section~\ref{sec:cvximplementation}, have no counterpart here; the read-out
is the symmetrization, which costs $O(d^2)$, or $O(|\mathcal S|)$ when the
pattern is exploited. 
Should a bound $\|B_n\|_{\rm op}\le L_1$ be wanted
for other reasons, the scaling
$\frac{L_1}{L_1+e_n}B_n$, with $e_n$ the
right-hand side of \eqref{eq:ncpointwise},
provides it. Since
$\|B_n\|_{\rm op}\le L_1+e_n$,
the scaled matrix has operator norm at most
$L_1$ and error at most $2e_n$ at $z_n$.
This permits a certificate twice as large.
Applying Theorem~\ref{thm:ncmaster} with
$\omega=2\chi$ and retuning $D$, $\eta$ and $T$
increases the leading complexity term
by a factor $2^{1/3}$.
Neither a
projection nor a separation oracle is needed; the separation oracle of
\citet{jiang2025improved} serves their matrix learner, which has no
analogue here.

\paragraph{The subproblem.}
The trust-region problem $\min_{\|\Delta\|\le D}\phi_n$ in the matrix
$A_n=\frac12B_n+\eta^{-1}I$ is the one computation of the model variant
that is not a vector operation, and it is intrinsic to the method: with
$\eta\asymp1/(\chi L_2D)$, which is large compared with $1/L_1$ when the
model pays, $A_n$ is in general indefinite, and the implicit optimistic
update is a quasi-Newton trust-region step in the stored model. On the
range $\chi L_2D\le L_1/4$ where the model variant is active,  choosing $\eta=1/L_1$ would make $A_n\succeq\frac14L_1I$ and the
fixed-point iteration of \eqref{eq:nclearner} a contraction, but also
return the regret to $4KD^2L_1$, that is, to the rate of the gradient
variant.  The subproblem has the same algebraic form as that of
\citet{jiang2025improved}, so the same solution methods apply.
\begin{enumerate}[label=(\roman*),leftmargin=2.2em]
\item \emph{Exact linear algebra.} Under an idealized real-arithmetic
model, one symmetric eigen-decomposition of $B_n$ per iteration separates
the problem in the eigenbasis; the multiplier $\varrho$ of
Lemma~\ref{lem:nctrust} is then found from a scalar secular equation and
the increment from a diagonal solve, with standard handling of the
singular boundary (``hard'') case in the minimal eigenspace
\citep{more1983computing}. This costs $O(d^3)$ arithmetic and $O(d^2)$
memory per iteration and gives $\nu=0$.
\item \emph{Matrix-vector products with a certificate.} Since only a
stationary point is required, projected gradient on $\phi_n$ suffices, and
its stopping rule certifies \eqref{eq:nckkt} without any eigenvalue
computation and without the hard case of the trust-region problem.
\end{enumerate}

\begin{lemma}[Projected gradient on the subproblem]\label{lem:ncsolver}
Let $A_n=\frac12B_n+\eta^{-1}I$, let $0<\alpha\le1/\|A_n\|_{\rm op}$, and
iterate, from $\Delta^{(0)}=\Delta_n$,
\begin{equation}\label{eq:ncpg}
\Delta^{(t+1)}=\Pi_D\big(\Delta^{(t)}-\alpha\nabla\phi_n(\Delta^{(t)})\big),
\qquad
\nabla\phi_n(\Delta)=\nabla f(z_n)+\theta_n+A_n(\Delta-\Delta_n).
\end{equation}
If $\|\Delta^{(t+1)}-\Delta^{(t)}\|\le\alpha\nu/2$, then $\Delta^{(t+1)}$
is a $\nu$-approximate solution of the subproblem at step $n$. For
$\nu>0$, this stopping rule is met within
$1+\big\lfloor8\big(\phi_n(\Delta_n)-\min_{\|\Delta\|\le D}\phi_n\big)/(\alpha\nu^2)\big\rfloor$
iterations, and
$\phi_n(\Delta_n)-\min_{\|\Delta\|\le D}\phi_n
\le2D\|\nabla f(z_n)+\theta_n\|+\|B_n\|_{\rm op}D^2$.
\end{lemma}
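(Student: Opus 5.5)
The plan is to use the standard projected-gradient analysis for a smooth function on a convex set, with three steps: a stationarity certificate from the stopping test, a sufficient-decrease bound that counts the iterations, and a bound on the initial gap. The function $\phi_n$ is a quadratic with Hessian $A_n$, so $\nabla\phi_n$ is $\|A_n\|_{\rm op}$-Lipschitz. Here $\|A_n\|_{\rm op}$ is the largest absolute eigenvalue; $A_n$ may be indefinite, but only the Lipschitz bound is used. With $\alpha\le1/\|A_n\|_{\rm op}$ the descent lemma holds: $\phi_n(\Delta')\le\phi_n(\Delta)+\langle\nabla\phi_n(\Delta),\Delta'-\Delta\rangle+\frac1{2\alpha}\|\Delta'-\Delta\|^2$.

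\emph{Step 1: the stopping rule certifies \eqref{eq:nckkt}.} Write $v=\Delta^{(t)}-\alpha\nabla\phi_n(\Delta^{(t)})$ and $\Delta^+=\Pi_D(v)$. The ball projection satisfies $v-\Delta^+=\alpha\varrho\Delta^+$ for some $\varrho\ge0$ with $\varrho(\|\Delta^+\|-D)=0$; explicitly $\varrho=0$ when $\|v\|\le D$, and otherwise $\Delta^+=Dv/\|v\|$ and $\alpha\varrho=\|v\|/D-1$. It follows that
\[
\nabla\phi_n(\Delta^+)+\varrho\Delta^+=\nabla\phi_n(\Delta^+)-\nabla\phi_n(\Delta^{(t)})+\tfrac1\alpha(\Delta^{(t)}-\Delta^+).
\]
By the Lipschitz bound and $\alpha\|A_n\|_{\rm op}\le1$, the norm of the right-hand side is at most $(\|A_n\|_{\rm op}+1/\alpha)\|\Delta^+-\Delta^{(t)}\|\le\frac2\alpha\|\Delta^+-\Delta^{(t)}\|$. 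Under the test $\|\Delta^+-\Delta^{(t)}\|\le\alpha\nu/2$ this is at most $\nu$, which is exactly \eqref{eq:nckkt}.

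\emph{Step 2: iteration count.} The projection's variational inequality $\langle v-\Delta^+,\Delta^{(t)}-\Delta^+\rangle\le0$ gives $\langle\nabla\phi_n(\Delta^{(t)}),\Delta^+-\Delta^{(t)}\rangle\le-\frac1\alpha\|\Delta^+-\Delta^{(t)}\|^2$. Combining this with the descent lemma gives
\[
\phi_n(\Delta^{(t+1)})\le\phi_n(\Delta^{(t)})-\tfrac1{2\alpha}\|\Delta^{(t+1)}-\Delta^{(t)}\|^2.
\]
Every iteration that fails the test therefore decreases $\phi_n$ by more than $\frac1{2\alpha}(\alpha\nu/2)^2=\alpha\nu^2/8$. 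All iterates lie in the ball, where $\phi_n$ is bounded below by its minimum. Hence at most $\lfloor8(\phi_n(\Delta_n)-\min\phi_n)/(\alpha\nu^2)\rfloor$ iterations can fail, and the stopping rule is met within one more iteration. The start $\Delta^{(0)}=\Delta_n$ is feasible because $\|\Delta_n\|\le D$ by construction.

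\emph{Step 3: the gap bound.} Here $\phi_n(\Delta_n)=\langle c,\Delta_n\rangle$ with $c=\nabla f(z_n)+\theta_n$. For $\|\Delta\|\le D$, the linear term gives $\langle c,\Delta_n-\Delta\rangle\le2D\|c\|$. Since $\|\Delta-\Delta_n\|\le2D$, the quadratic term satisfies $-\tfrac14\langle B_n(\Delta-\Delta_n),\Delta-\Delta_n\rangle\le\tfrac14\|B_n\|_{\rm op}(2D)^2=\|B_n\|_{\rm op}D^2$. The term $\frac1{2\eta}\|\Delta-\Delta_n\|^2$ is nonnegative and can be dropped. Together these give the stated bound. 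The only point needing care is Step 1, since $A_n$ may be indefinite: there one must use only the Lipschitz constant $\|A_n\|_{\rm op}$ and never convexity. The remaining steps are routine.
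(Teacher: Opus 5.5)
Your proposal is correct and follows the paper's proof essentially step for step: the KKT certificate from the ball-projection multiplier with the bound $(\|A_n\|_{\rm op}+\alpha^{-1})\|\Delta^+-\Delta^{(t)}\|\le\frac2\alpha\|\Delta^+-\Delta^{(t)}\|$, the sufficient decrease of $\frac1{2\alpha}\|\Delta^{(t+1)}-\Delta^{(t)}\|^2$ from the projection's variational inequality, and the same initial-gap estimate. The only differences are cosmetic: you use the descent lemma through the Lipschitz constant of $\nabla\phi_n$ and write out the multiplier explicitly, whereas the paper uses the exact quadratic expansion of $\phi_n$ and cites its projection lemma; for a quadratic these amount to the same argument.
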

\noindent The proof is in Appendix~\ref{proof:ncsolver}. Each iteration of
\eqref{eq:ncpg} costs one product with $B_n$, $O(d^2)$ operations in the
dense case and $O(|\mathcal S|+d)$ with the pattern, and the bound
$\|A_n\|_{\rm op}\le\frac12\|B_n\|_{\rm op}+\eta^{-1}$ needed for $\alpha$
is available from Lemma~\ref{lem:ncpointwise}. The count is $O(\nu^{-2})$
rather than logarithmic; a Krylov trust-region solver improves this to
$\tilde O(\nu^{-1/2})$ at the price of an approximate extreme eigenvector.
The solver of \citet[Lemma~4.4]{jiang2025improved} returns a
$\nu$-approximate solution with $\tilde O(\sqrt{\mathcal B_nD/\nu})$
matrix-vector products with probability at least $1-q$ per call, where
$\mathcal B_n\ge\max\{\lambda_{\max}(A_n)-\lambda_{\min}(A_n),\lambda_{\max}(A_n)\}$,
and $\mathcal B_n=\|B_n\|_{\rm op}+\eta^{-1}$ is admissible; a union bound
over the $N$ calls, with $q=q_0/N$, is absorbed into the logarithmic
factor. With $\nu=\varepsilon/8$ and the parameters of
Theorem~\ref{thm:ncmodel} on the range
$\chi\le(L_1^2/(L_2\varepsilon))^{3/4}$ of Corollary~\ref{cor:ncenvelope},
Lemma~\ref{lem:ncpointwise} gives $\mathcal B_n\le3L_1$, so each iteration
uses $\tilde O\big(L_1^{1/2}L_2^{-1/4}\chi^{-1/6}\varepsilon^{-1/4}\big)$
products, and the run uses
$\tilde O\big((\Phi_0L_1^{1/2}L_2^{1/4}\varepsilon^{-3/2}+L_1^{1/2}L_2^{-1/4})\chi^{1/6}\varepsilon^{-1/4}\big)$,
which for fixed positive constants is $\tilde O(\chi^{1/6}\varepsilon^{-7/4})$.
In the gradient variant there is no subproblem: the update is the explicit
projection of Algorithm~3, step~2(b).

\subsection{Total arithmetic cost}\label{sec:arithmetic}
Let $c_g$ be the cost of one gradient and $N$ the number of iterations.
With exact linear algebra, Algorithms~1 and~2 diagonalize the symmetrized
model once per iteration, clip its eigenvalues, and apply the shifted
inverse in the same eigenbasis, and the model variant of Algorithm~3
diagonalizes $B_n$ once per iteration and solves the secular equation.
With the repair (or the symmetrization) and an iterative solver that uses
$m_k$ matrix-vector products at iteration $k$, every matrix operation
costs $O(d^2)$:
\[
\begin{array}{lll}
&\text{exact linear algebra}&\text{iterative solves}\\[2pt]
\text{Algorithms~1 and~2:}&
O\big((3N+1)c_g+Nd^3\big)&
O\big((3N+1)c_g+d^2\textstyle\sum_{k<N}(1+m_k)\big)\\[2pt]
\text{Algorithm~3, model variant:}&
O\big((3N+\chi)c_g+Nd^3\big)&
O\big((3N+\chi)c_g+d^2\textstyle\sum_{k<N}(1+m_k)\big)
\end{array}
\]
The gradient variant of Algorithm~3 stores no matrix and costs
$O(2Nc_g+Nd)$. All other variants use $O(d^2)$ memory, and the scalar
searches of Subroutines~1.1 and~2.1 add $O(\log\log k)$ evaluations of
stored scalar formulas per iteration (Lemma~\ref{lem:wellposed} and
Theorem~\ref{thm:scmain}). No variant queries the oracle outside the
counts of Theorems~\ref{thm:iterationrate}, \ref{thm:scmain},
\ref{thm:ncmodel} and~\ref{thm:ncsmooth}. Because spectral clipping and
the changing regularization shifts $\lambda^{-1}I$ complicate the reuse of
factorizations across steps, we do not assume an amortized factorization
cost, and the iterative variant is the natural one; its per-solve counts
are \eqref{eq:lineariterations} for the linear solves and
Lemma~\ref{lem:ncsolver} or a Krylov solver for the subproblem. The
condition numbers $\kappa_B$ in \eqref{eq:lineariterations} need not stay
bounded, since $\lambda_k$ grows as the rolling certificate shrinks; the
counts above therefore do not assert an unconditional arithmetic
improvement over factorization, or over the matrix-vector complexity of
A-QNPE and OL-OQN. They offer a choice between direct solves and certified
iterative solves, with the same gradient guarantees in either case.

\paragraph{Structured Hessians.}
Grouping reduces gradient queries, not dense matrix work: the spectral
projection of a sparse matrix is dense in general, so with exact
projection the arithmetic bounds above are unchanged. With the certified
repair \eqref{eq:repair}, however, the raw matrix keeps the pattern
$\mathcal S$ of Definition~\ref{def:chi}, since every stored column
$P_ju_k$ is supported on $\mathcal S_j$, and the repair adds only a
multiple of the identity, so $B_k$ has at most $|\mathcal S|+d$ nonzeros;
the symmetrization of Algorithm~3 keeps the pattern as well. A
matrix-vector product then costs $O(|\mathcal S|+d)$, each iterative solve
costs $O((|\mathcal S|+d)\,m)$ in place of $O(d^2m)$ with the same
iteration count $m$, and storage is $O(|\mathcal S|+d)$.

\section{Conclusion}
We introduced certified curvature reuse for quasi-Newton methods: the
curvature model carries an operator-norm error bound that is available
before each step, so the bound itself fixes the stepsize, no gradient line search is needed and no gradient is spent on rejected trials, and the error of reused curvature is charged to the path length of the iterates. This gives a convex rate of
$O(\min\{k^{-2},\chi k^{-3}\})$; {\color{ReviewBlue} in the strongly convex case, an accelerated linear guarantee,
with curvature-based improvement after $O(B_\sharp/\kappa)$
iterations for sufficiently large $\kappa$, and an eventual
superlinear upper envelope;} and in the nonconvex case, where the
certificate enters the hint of an optimistic learner,
$O(\min\{\chi^{1/3}\varepsilon^{-3/2},\varepsilon^{-7/4}\})$ gradient
queries. These bounds improve on QNPE, A-QNPE and OL-OQN, and depend on
the dimension only through the effective dimension $\chi\le d$, so a known
Hessian sparsity pattern directly reduces the number of gradient queries. 

\bibliographystyle{abbrvnat}

\bibliography{refs}

\clearpage
\appendix
\section{Shared analytical tools}\label{app:backbone}
This appendix proves the general estimates used in the rest of the paper.
We first establish the A-HPE statements collected in
Proposition~\ref{prop:hpeestimates}, without assuming global smoothness.
We then prove the finite-difference, clipping, and residual certificates
used to construct admissible steps. The algorithm-specific arguments in
Appendix~\ref{app:rolling} invoke these tools with the objective $f$ and
the radius bounds of Section~\ref{sec:algo}; Appendix~\ref{app:nonconvex}
reuses the finite-difference certificate for the nonconvex method.

\subsection{The accelerated hybrid proximal extragradient backbone}\label{sec:ahpe}

Let $F$ be a convex differentiable function with a minimizer $x^\star$,
write $F^\star=F(x^\star)$, and assume $\|x_0-x^\star\|\le R$.
Use the undamped recursion of Section~\ref{sec:prelim}, with $F$ in
place of $f$, $A_0=0$, $x_0=z_0$, and positive stepsizes. Every step is
assumed to satisfy
\[
\|r_k+\lambda_k\nabla F(x_{k+1})\|\le\sigma\|r_k\|,
\qquad 0<\sigma<1.
\]
We retain the geometric notation of that section and write
$\nabla F_{k+1}=\nabla F(x_{k+1})$. No smoothness constant is used in
this subsection. We first prove the potential and mass-growth estimates,
then the movement and boundedness estimates that complete
Proposition~\ref{prop:hpeestimates}.
\begin{lemma}[Potential decrease; cf.~\citet{monteiro2013accelerated}]\label{lem:potential}
Define $\Psi_k:=A_k\big(F(x_k)-F^\star\big)+\tfrac12\|z_k-x^\star\|^2$. Then for every $k\ge0$,
\[
\Psi_{k+1}\ \le\ \Psi_k-\frac{1-\sigma^2}{2}\cdot\frac{A_{k+1}}{\lambda_k}\,\|r_k\|^2 .
\]
Consequently, for $N\ge1$, with $A_0=0$ and $\Psi_0\le\tfrac12R^2$,
\[
\textbf{(i)}\ \ F(x_N)-F^\star\le\frac{R^2}{2A_N},\qquad \textbf{(ii)}\ \ \|z_k-x^\star\|\le R\ \ \forall k,\qquad \textbf{(iii)}\ \sum_{k=0}^{N-1}\frac{A_{k+1}}{\lambda_k}\|r_k\|^2\le\frac{R^2}{1-\sigma^2}.
\]
\end{lemma}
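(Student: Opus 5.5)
The plan is to compute $\Psi_{k+1}-\Psi_k$ directly from the recursion, use convexity at $x_{k+1}$ against both $x_k$ and $x^\star$, and then use the relative error condition to obtain the negative movement term.

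\textbf{Splitting the two parts of the potential.} For the distance part, the update $z_{k+1}=z_k-a_k\nabla F_{k+1}$ gives
\[
\tfrac12\|z_{k+1}-x^\star\|^2-\tfrac12\|z_k-x^\star\|^2=-a_k\langle\nabla F_{k+1},z_k-x^\star\rangle+\tfrac{a_k^2}{2}\|\nabla F_{k+1}\|^2.
\]
For the function part, write $A_{k+1}(F(x_{k+1})-F^\star)-A_k(F(x_k)-F^\star)$. Convexity gives $F(x_k)\ge F(x_{k+1})+\langle\nabla F_{k+1},x_k-x_{k+1}\rangle$ and $F^\star\ge F(x_{k+1})+\langle\nabla F_{k+1},x^\star-x_{k+1}\rangle$. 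Multiplying these by $A_k$ and $a_k$ and summing, the function part is at most $\langle\nabla F_{k+1},A_{k+1}x_{k+1}-A_kx_k-a_kx^\star\rangle$.

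\textbf{Combining with the extrapolation identity.} Adding the two pieces, the inner products collapse via $A_kx_k+a_kz_k=A_{k+1}y_k$. The result is
\[
\Psi_{k+1}-\Psi_k\le A_{k+1}\langle\nabla F_{k+1},x_{k+1}-y_k\rangle+\tfrac{a_k^2}{2}\|\nabla F_{k+1}\|^2 .
\]
Now substitute $a_k^2=\lambda_kA_{k+1}$ from \eqref{eq:aident}. The right-hand side becomes
\[
\frac{A_{k+1}}{\lambda_k}\Big(\langle\lambda_k\nabla F_{k+1},r_k\rangle+\tfrac12\|\lambda_k\nabla F_{k+1}\|^2\Big)
=\frac{A_{k+1}}{2\lambda_k}\big(\|r_k+\lambda_k\nabla F_{k+1}\|^2-\|r_k\|^2\big).
\]
The relative condition $\|r_k+\lambda_k\nabla F_{k+1}\|\le\sigma\|r_k\|$ then bounds this by $-\frac{(1-\sigma^2)A_{k+1}}{2\lambda_k}\|r_k\|^2$, which is the claimed per-step inequality. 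The one delicate point is the algebra of this completed square: it closes only because of the identity $a_k^2=\lambda_kA_{k+1}$, so the coefficients must be tracked carefully there.

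\textbf{The consequences.} Summing the per-step inequality from $0$ to $N-1$ and using $\Psi_0=\tfrac12\|x_0-x^\star\|^2\le R^2/2$ (since $A_0=0$ and $z_0=x_0$) gives
\[
\Psi_N+\frac{1-\sigma^2}{2}\sum_{k<N}\frac{A_{k+1}}{\lambda_k}\|r_k\|^2\le \frac{R^2}{2}.
\]
Every term of $\Psi_N$ is nonnegative because $F(x_N)\ge F^\star$. Dropping the sum and the distance term yields (i). Monotonicity of $\Psi_k$ together with $A_k(F(x_k)-F^\star)\ge0$ yields (ii). Finally, since $\Psi_N\ge0$, dropping $\Psi_N$ yields (iii).
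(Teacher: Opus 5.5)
Your proof is correct and follows essentially the same argument as the paper: the same two convexity inequalities at $x_{k+1}$ (against $x_k$ and $x^\star$), the extrapolation identity $A_kx_k+a_kz_k=A_{k+1}y_k$, and the completed square via $a_k^2=\lambda_kA_{k+1}$, followed by telescoping. The only cosmetic difference is that you apply convexity to the function part of the potential first and then collapse the inner products, whereas the paper splits $z_k-x^\star$ inside the momentum expansion and uses $a_k(z_k-x_{k+1})=A_{k+1}(y_k-x_{k+1})+A_k(x_{k+1}-x_k)$.
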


\begin{proof}
Expand the momentum step:
\begin{equation}
\tfrac12\|z_{k+1}-x^\star\|^2=\tfrac12\|z_k-x^\star\|^2-a_k\langle \nabla F_{k+1},z_k-x^\star\rangle+\tfrac{a_k^2}{2}\|\nabla F_{k+1}\|^2. \label{eq:momentumexpand}
\end{equation}
Split $z_k-x^\star=(z_k-x_{k+1})+(x_{k+1}-x^\star)$. By convexity, $\langle \nabla F_{k+1},x_{k+1}-x^\star\rangle\ge F(x_{k+1})-F^\star$, hence
\begin{equation}
a_k\langle \nabla F_{k+1},z_k-x^\star\rangle\ \ge\ a_k\langle \nabla F_{k+1},z_k-x_{k+1}\rangle+a_k\big(F(x_{k+1})-F^\star\big). \label{eq:convexfirst}
\end{equation}
From $y_k=\frac{A_kx_k+a_kz_k}{A_{k+1}}$ we get $a_kz_k=A_{k+1}y_k-A_kx_k$, so, using $A_{k+1}=A_k+a_k$,
\[
a_k(z_k-x_{k+1})=A_{k+1}y_k-A_kx_k-a_kx_{k+1}=A_{k+1}(y_k-x_{k+1})+A_k(x_{k+1}-x_k).
\]
By convexity again, $\langle \nabla F_{k+1},x_{k+1}-x_k\rangle\ge F(x_{k+1})-F(x_k)$, so
\begin{equation}
a_k\langle \nabla F_{k+1},z_k-x_{k+1}\rangle\ \ge\ A_{k+1}\langle \nabla F_{k+1},y_k-x_{k+1}\rangle-A_k\big(F(x_k)-F(x_{k+1})\big). \label{eq:convexsecond}
\end{equation}
Insert \eqref{eq:convexfirst}--\eqref{eq:convexsecond} into \eqref{eq:momentumexpand} and use the identity (valid since $A_{k+1}=A_k+a_k$)
\[
A_k\big(F(x_k)-F(x_{k+1})\big)-a_k\big(F(x_{k+1})-F^\star\big)=A_k\big(F(x_k)-F^\star\big)-A_{k+1}\big(F(x_{k+1})-F^\star\big),
\]
which yields
\[
\Psi_{k+1}\ \le\ \Psi_k-A_{k+1}\langle \nabla F_{k+1},y_k-x_{k+1}\rangle+\tfrac{a_k^2}{2}\|\nabla F_{k+1}\|^2 .
\]
Substitute $a_k^2=\lambda_kA_{k+1}$ and set $v:=\lambda_k^{-1}(y_k-x_{k+1})=-\lambda_k^{-1}r_k$. The last two terms equal
\[
A_{k+1}\lambda_k\Big(-\langle \nabla F_{k+1},v\rangle+\tfrac12\|\nabla F_{k+1}\|^2\Big)=\frac{A_{k+1}\lambda_k}{2}\Big(\|\nabla F_{k+1}-v\|^2-\|v\|^2\Big).
\]
Condition \eqref{eq:ms} reads $\lambda_k\|\nabla F_{k+1}-v\|\le\sigma\lambda_k\|v\|$, so the bracket is $\le(\sigma^2-1)\|v\|^2=-(1-\sigma^2)\|r_k\|^2/\lambda_k^2$, giving the claim.

For the consequences: telescoping from $k=0$ to $N-1$ with $A_0=0$, $x_0=z_0$,
\[
A_N\big(F(x_N)-F^\star\big)+\tfrac12\|z_N-x^\star\|^2+\frac{1-\sigma^2}{2}\sum_{k<N}\frac{A_{k+1}}{\lambda_k}\|r_k\|^2\ \le\ \tfrac12R^2,
\]
and each of (i)--(iii) follows by discarding the other nonnegative terms.
\end{proof}

\begin{lemma}[Growth of $A_N$]\label{lem:agrowth}
$\displaystyle \sqrt{A_N}\ \ge\ \tfrac12\sum_{k=0}^{N-1}\sqrt{\lambda_k}$, hence $A_N\ge\tfrac14\big(\sum_{k=0}^{N-1}\sqrt{\lambda_k}\big)^2$.
\end{lemma}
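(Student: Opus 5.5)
The plan is to derive a one-step lower bound on the increment of $\sqrt{A_k}$ and then telescope. The only ingredient is the weight identity \eqref{eq:aident}, which gives $a_k^2=\lambda_kA_{k+1}$ with $A_{k+1}=A_k+a_k$. No convexity or admissibility is needed, since the statement concerns the scalar recursion alone.

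First, factor the difference of squares,
\[
a_k=A_{k+1}-A_k=\big(\sqrt{A_{k+1}}-\sqrt{A_k}\big)\big(\sqrt{A_{k+1}}+\sqrt{A_k}\big).
\]
Since $A_k\le A_{k+1}$, the second factor is at most $2\sqrt{A_{k+1}}$. This gives
\[
\sqrt{A_{k+1}}-\sqrt{A_k}\ \ge\ \frac{a_k}{2\sqrt{A_{k+1}}}.
\]
Substituting $a_k=\sqrt{\lambda_kA_{k+1}}$, which is the positive root of the identity, turns the right-hand side into exactly $\tfrac12\sqrt{\lambda_k}$.

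Second, sum these increments for $k=0,\dots,N-1$. The left side telescopes, and with $A_0=0$ we get $\sqrt{A_N}\ge\frac12\sum_{k<N}\sqrt{\lambda_k}$. Squaring both sides, which are nonnegative, yields the second form $A_N\ge\frac14(\sum_k\sqrt{\lambda_k})^2$.

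There is no real obstacle here. The only points to watch are that $A_{k+1}>0$ for every $k\ge0$, which holds because $a_k\ge\lambda_k>0$, so the division by $\sqrt{A_{k+1}}$ is legitimate, and that the positive root is the one used for $a_k$.
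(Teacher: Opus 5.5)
Your proposal is correct and is essentially the paper's proof: write $\sqrt{A_{k+1}}-\sqrt{A_k}=a_k/(\sqrt{A_{k+1}}+\sqrt{A_k})$, bound the denominator by $2\sqrt{A_{k+1}}$, substitute $a_k=\sqrt{\lambda_kA_{k+1}}$, and telescope from $A_0=0$. Your added check that $a_k\ge\lambda_k>0$, so that the division by $\sqrt{A_{k+1}}$ is legitimate, is a detail the paper leaves implicit.
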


\begin{proof}
By \eqref{eq:aident}, $A_{k+1}-A_k=a_k=\sqrt{\lambda_kA_{k+1}}$, so
\[
\sqrt{A_{k+1}}-\sqrt{A_k}=\frac{A_{k+1}-A_k}{\sqrt{A_{k+1}}+\sqrt{A_k}}=\frac{\sqrt{\lambda_kA_{k+1}}}{\sqrt{A_{k+1}}+\sqrt{A_k}}\ \ge\ \frac{\sqrt{\lambda_kA_{k+1}}}{2\sqrt{A_{k+1}}}=\frac{\sqrt{\lambda_k}}{2},
\]
using $A_k\le A_{k+1}$. Telescoping and $A_0=0$ give the claim.
\end{proof}

\begin{lemma}[Moment lemma]\label{lem:moment}
Let $q>0$ and $S_q^{(n)}:=\sum_{k=0}^{n-1}\lambda_k^{-q}$. Then for every $n\ge1$,
\[
A_n\ \ge\ \tfrac14\,n^{\,2+1/q}\,\big(S_q^{(n)}\big)^{-1/q}.
\]
\end{lemma}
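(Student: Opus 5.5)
The plan is to combine Lemma~\ref{lem:agrowth} with a reverse Hölder inequality that bounds $\sum_k\sqrt{\lambda_k}$ from below by the inverse moment $S_q^{(n)}$. By Lemma~\ref{lem:agrowth},
\[
A_n\ge\tfrac14\Big(\sum_{k<n}\sqrt{\lambda_k}\Big)^2,
\]
so it suffices to show
\[
\Big(\sum_{k<n}\lambda_k^{1/2}\Big)^2\ \ge\ n^{2+1/q}\big(S_q^{(n)}\big)^{-1/q}.
\]

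To get this, write each $1=\lambda_k^{\alpha}\cdot\lambda_k^{-\alpha}$ with the exponent $\alpha$ chosen so that Hölder's inequality pairs $\lambda_k^{1/2}$ with $\lambda_k^{-q}$. Concretely, apply Hölder to $n=\sum_{k<n}(\lambda_k^{1/2})^{\theta}(\lambda_k^{-q})^{1-\theta}$. The requirement $\theta/2=q(1-\theta)$ gives $\theta=2q/(1+2q)$. The conjugate exponents are $1/\theta$ and $1/(1-\theta)$, so
\[
n\le\Big(\sum_k\lambda_k^{1/2}\Big)^{\theta}\big(S_q^{(n)}\big)^{1-\theta}.
\]

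Raising both sides to the power $2/\theta=(1+2q)/q=2+1/q$ gives
\[
n^{2+1/q}\le\Big(\sum_k\lambda_k^{1/2}\Big)^2\big(S_q^{(n)}\big)^{(1-\theta)2/\theta}.
\]
The last exponent is $(1-\theta)\cdot 2/\theta=\frac{1}{1+2q}\cdot\frac{1+2q}{q}=1/q$. Rearranging and inserting into Lemma~\ref{lem:agrowth} yields the claim.

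The only step needing care is choosing $\theta$ correctly and checking that the exponents come out as $2+1/q$ and $1/q$. Everything else is direct, and positivity of the $\lambda_k$ ensures every power is well defined.
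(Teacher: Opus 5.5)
Your proof is correct and is essentially the paper's argument. The paper applies Hölder with conjugate exponents $(2q+1)/(2q)$ and $2q+1$, which are your $1/\theta$ and $1/(1-\theta)$, to the factorization $1=\lambda_k^{q/(2q+1)}\lambda_k^{-q/(2q+1)}$, then raises the result to the appropriate power and invokes Lemma~\ref{lem:agrowth}. One small point of wording: the inequality you use is ordinary Hölder, not a reverse Hölder inequality.
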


\begin{proof}
Put $p_1:=\frac{2q+1}{2q}$, $p_2:=2q+1$; then $\frac1{p_1}+\frac1{p_2}=\frac{2q}{2q+1}+\frac1{2q+1}=1$, so $(p_1,p_2)$ are conjugate. Set $\alpha_k:=\lambda_k^{\,q/(2q+1)}$, $\beta_k:=\lambda_k^{-q/(2q+1)}$, so $\alpha_k\beta_k=1$ and
\[
\alpha_k^{p_1}=\lambda_k^{\frac{q}{2q+1}\cdot\frac{2q+1}{2q}}=\lambda_k^{1/2},\qquad \beta_k^{p_2}=\lambda_k^{-q}.
\]
H\"older gives $n=\sum_{k<n}\alpha_k\beta_k\le\big(\sum_{k<n}\sqrt{\lambda_k}\big)^{\frac{2q}{2q+1}}\big(S_q^{(n)}\big)^{\frac1{2q+1}}$. Raising to the power $\frac{2q+1}{2q}$,
\[
\sum_{k<n}\sqrt{\lambda_k}\ \ge\ n^{\frac{2q+1}{2q}}\big(S_q^{(n)}\big)^{-\frac1{2q}} .
\]
Square and apply Lemma~\ref{lem:agrowth}. The sequence form of H\"older used here follows by normalizing both sequences and summing $uv\le u^{p_1}/p_1+v^{p_2}/p_2$. The scalar inequality follows by maximizing $uv-u^{p_1}/p_1$ over $u\ge0$, whose maximizer is $u=v^{1/(p_1-1)}$.
\end{proof}

\begin{corollary}\label{cor:moment2}
With $S_1:=S_1^{(N)}$ and $q=1$: for every $n\le N$, $\ A_n\ \ge\ \dfrac{n^3}{4S_1}$, since $S_1^{(n)}\le S_1$.
\end{corollary}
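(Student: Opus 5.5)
The corollary follows directly from Lemma~\ref{lem:moment} with $q=1$, combined with monotonicity of the partial inverse moments. For $n\le N$ and $q=1$, Lemma~\ref{lem:moment} gives
\[
A_n\ \ge\ \tfrac14\,n^{3}\,\big(S_1^{(n)}\big)^{-1},
\]
since $2+1/q=3$ and $(S_1^{(n)})^{-1/q}=(S_1^{(n)})^{-1}$. The only remaining step is to replace $S_1^{(n)}$ by $S_1=S_1^{(N)}$.

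Every summand $\lambda_k^{-1}$ is positive because the stepsizes are positive. Hence the partial sums $S_1^{(n)}=\sum_{k<n}\lambda_k^{-1}$ are nondecreasing in $n$, so $S_1^{(n)}\le S_1^{(N)}=S_1$ for all $1\le n\le N$. Since $t\mapsto 1/t$ is decreasing on $(0,\infty)$ and $S_1^{(n)}>0$ for $n\ge1$, we get $(S_1^{(n)})^{-1}\ge S_1^{-1}$. Substituting this into the displayed bound yields $A_n\ge n^3/(4S_1)$.

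There is no real obstacle. The only points to check are $n\ge1$, so that $S_1^{(n)}$ is a nonempty positive sum and Lemma~\ref{lem:moment} applies, and positivity of the $\lambda_k$, which the recursion already assumes.
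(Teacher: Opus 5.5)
Your proof is correct and matches the paper's argument: apply Lemma~\ref{lem:moment} with $q=1$, then use $S_1^{(n)}\le S_1$, which holds because the stepsizes are positive. The case $n=0$, which you set aside, is trivial since $A_0=0$.
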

\begin{proof}Apply Lemma~\ref{lem:moment} with $q=1$ and use $S_1^{(n)}\le S_1$.\end{proof}

\begin{lemma}[Logarithmic sum]\label{lem:logsum}
Assume $A_1>0$. Then with $\Lambda_N:=1+\log\frac{A_N}{A_1}$,
\[
\sum_{k=0}^{N-1}\frac{a_k}{A_{k+1}}\ \le\ \Lambda_N,\qquad \sum_{k=0}^{N-1}\Big(\frac{a_k}{A_{k+1}}\Big)^2\ \le\ \Lambda_N .
\]
\end{lemma}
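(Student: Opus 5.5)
The plan is to compare each summand with a logarithmic increment of $A$. We use only $a_k=A_{k+1}-A_k$ and the positivity of the stepsizes, which gives $A_k<A_{k+1}$ and $A_1=a_0>0$. For $k=0$ we have $A_0=0$, so $a_0/A_1=1$, and this term is what produces the leading $1$ in $\Lambda_N$.

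\emph{First sum.} For $k\ge1$, put $t_k=a_k/A_{k+1}=1-A_k/A_{k+1}\in(0,1)$. The elementary inequality $t\le-\log(1-t)$ for $t\in[0,1)$ gives
\[
\frac{a_k}{A_{k+1}}\le\log\frac{A_{k+1}}{A_k}.
\]
Summing over $1\le k\le N-1$ telescopes to $\log(A_N/A_1)$. Adding the $k=0$ term, which equals $1$, yields $\sum_{k<N}a_k/A_{k+1}\le1+\log(A_N/A_1)=\Lambda_N$.

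\emph{Second sum.} Each $t_k$ lies in $[0,1]$, so $t_k^2\le t_k$ termwise. The second sum is therefore bounded by the first, and hence by $\Lambda_N$.

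There is no real obstacle. The only points to check are that the $k=0$ term must be handled separately, because $A_0=0$ makes the logarithm undefined there, and that $A_1>0$, which is the stated assumption.
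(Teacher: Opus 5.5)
Your proposal is correct and follows the paper's proof: you isolate the $k=0$ term (equal to $1$ because $A_0=0$), bound $a_k/A_{k+1}\le\log(A_{k+1}/A_k)$ for $k\ge1$ and telescope, then use $t_k^2\le t_k$ on $[0,1]$. The paper derives the logarithmic bound via $\int_{A_k}^{A_{k+1}}dt/A_{k+1}\le\int_{A_k}^{A_{k+1}}dt/t$ rather than $t\le-\log(1-t)$, but these are the same elementary inequality.
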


\begin{proof}
Since $A_0=0$, $a_0=A_1$ and $a_0/A_1=1$. For $k\ge1$, $0<A_k<A_{k+1}$ and
\[
\frac{a_k}{A_{k+1}}=\int_{A_k}^{A_{k+1}}\frac{dt}{A_{k+1}}\le\int_{A_k}^{A_{k+1}}\frac{dt}{t}=\log\frac{A_{k+1}}{A_k}.
\]
Summing gives $\le1+\log(A_N/A_1)$. Each term lies in $[0,1]$ since $a_k\le A_{k+1}$, so squares are dominated termwise.
\end{proof}

\begin{lemma}[Summability of the steps]\label{lem:stepsum}
\[
\sum_{k=0}^{N-1}\|r_k\|\ \le\ \frac{R}{\sqrt{1-\sigma^2}}\,\sqrt{\Lambda_N}.
\]
\end{lemma}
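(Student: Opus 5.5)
The bound follows from Cauchy--Schwarz applied to a weighted split of each step length, combined with the movement-energy estimate of Lemma~\ref{lem:potential}(iii) and the logarithmic sum of Lemma~\ref{lem:logsum}. First write
\[
\|r_k\|=\Big(\sqrt{\tfrac{A_{k+1}}{\lambda_k}}\,\|r_k\|\Big)\cdot\sqrt{\tfrac{\lambda_k}{A_{k+1}}},
\]
so that Cauchy--Schwarz gives
\[
\sum_{k<N}\|r_k\|\le\Big(\sum_{k<N}\frac{A_{k+1}}{\lambda_k}\|r_k\|^2\Big)^{1/2}\Big(\sum_{k<N}\frac{\lambda_k}{A_{k+1}}\Big)^{1/2}.
\]
The first factor is at most $R/\sqrt{1-\sigma^2}$ by Lemma~\ref{lem:potential}(iii).

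Next I would identify the second factor through the weight identity \eqref{eq:aident}. Since $a_k^2=\lambda_kA_{k+1}$, we have
\[
\frac{\lambda_k}{A_{k+1}}=\frac{a_k^2}{A_{k+1}^2}=\Big(\frac{a_k}{A_{k+1}}\Big)^2 .
\]
Therefore the second sum is exactly $\sum_{k<N}(a_k/A_{k+1})^2$, which is at most $\Lambda_N$ by the second inequality of Lemma~\ref{lem:logsum}. That lemma requires $A_1>0$; this holds because $A_1=a_0=\lambda_0>0$ when $A_0=0$. Combining the two factors gives the claim.

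There is no real obstacle here. The only point needing care is choosing the weights in the split so that the second factor becomes the squared ratios $(a_k/A_{k+1})^2$ instead of something involving the stepsizes directly. The identity $a_k^2=\lambda_kA_{k+1}$ is exactly what makes that choice work.
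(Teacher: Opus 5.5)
Your proof is correct and follows the paper's argument exactly: the same Cauchy--Schwarz split, Lemma~\ref{lem:potential}(iii) for the energy factor, and the identity $\lambda_k/A_{k+1}=(a_k/A_{k+1})^2$ combined with Lemma~\ref{lem:logsum} for the other factor. You also check explicitly that $A_1=a_0=\lambda_0>0$, the hypothesis Lemma~\ref{lem:logsum} needs, which the paper leaves implicit.
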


\begin{proof}
By Cauchy--Schwarz,
\[
\sum_k\|r_k\|=\sum_k\Big(\sqrt{\tfrac{A_{k+1}}{\lambda_k}}\,\|r_k\|\Big)\Big(\sqrt{\tfrac{\lambda_k}{A_{k+1}}}\Big)\ \le\ \Big(\sum_k\tfrac{A_{k+1}}{\lambda_k}\|r_k\|^2\Big)^{1/2}\Big(\sum_k\tfrac{\lambda_k}{A_{k+1}}\Big)^{1/2}.
\]
The first factor is $\le R/\sqrt{1-\sigma^2}$ by Lemma~\ref{lem:potential}(iii). For the second, \eqref{eq:aident} gives $\frac{\lambda_k}{A_{k+1}}=\big(\frac{a_k}{A_{k+1}}\big)^2$, and Lemma~\ref{lem:logsum} bounds the sum by $\Lambda_N$.
\end{proof}

\begin{lemma}[Proximal nonexpansiveness]\label{lem:prox_nonexpansive}
Let $F:\mathbb{R}^d\to\mathbb{R}$ be a convex continuously differentiable function and $\lambda>0$. The proximal operator $\operatorname{prox}_{\lambda F}$ is nonexpansive.
\end{lemma}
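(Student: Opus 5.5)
The plan is to obtain nonexpansiveness from the monotonicity of $\nabla F$, by comparing the optimality conditions of two proximal problems. First I will check that $\operatorname{prox}_{\lambda F}$ is well defined. For fixed $y$, the function $v\mapsto F(v)+\frac{1}{2\lambda}\|v-y\|^2$ is continuous and $\frac1\lambda$-strongly convex, because $F$ is convex. It is therefore coercive, has a minimizer, and that minimizer is unique. Since the objective is differentiable and convex, the minimizer $u$ is characterized by the first-order condition recalled in Section~\ref{sec:hpe}:
\[
u-y+\lambda\nabla F(u)=0 .
\]

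Next, take two centers $y_1,y_2$ with proximal points $u_1,u_2$. Subtracting the two optimality conditions gives
\[
y_1-y_2=(u_1-u_2)+\lambda\big(\nabla F(u_1)-\nabla F(u_2)\big).
\]
For a differentiable convex function the gradient is monotone, $\langle\nabla F(u_1)-\nabla F(u_2),u_1-u_2\rangle\ge0$. To see this, add the two first-order convexity inequalities $F(u_2)\ge F(u_1)+\langle\nabla F(u_1),u_2-u_1\rangle$ and $F(u_1)\ge F(u_2)+\langle\nabla F(u_2),u_1-u_2\rangle$.

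Taking the inner product of the displayed identity with $u_1-u_2$ yields
\[
\langle y_1-y_2,u_1-u_2\rangle\ge\|u_1-u_2\|^2 .
\]
Cauchy--Schwarz then gives $\|u_1-u_2\|^2\le\|y_1-y_2\|\,\|u_1-u_2\|$. If $u_1\ne u_2$, dividing by $\|u_1-u_2\|$ gives $\|u_1-u_2\|\le\|y_1-y_2\|$; if $u_1=u_2$, this inequality is trivial. This establishes nonexpansiveness, and in fact firm nonexpansiveness.

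The only step needing care is the first: the existence of the minimizer, and the equivalence of the optimality condition with minimality. Both follow from strong convexity and differentiability, and no smoothness constant is required, which matches the smoothness-free setting of this subsection.
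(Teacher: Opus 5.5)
Your proof is correct and follows essentially the same route as the paper: you get existence and uniqueness of the proximal point from coercivity and strict convexity, subtract the two optimality conditions, pair with $u_1-u_2$, and finish with monotonicity of $\nabla F$ and Cauchy--Schwarz. Beyond the paper, you also derive the monotonicity of $\nabla F$ from the convexity inequalities and handle the case $u_1=u_2$ explicitly, both of which the paper leaves implicit.
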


\begin{proof}
Let $x_1, x_2 \in \mathbb{R}^d$ and define $p_1 = \operatorname{prox}_{\lambda F}(x_1)$ and $p_2 = \operatorname{prox}_{\lambda F}(x_2)$. 
The quadratic term makes the proximal objective coercive and strictly convex: a convex differentiable function has an affine lower bound, which the quadratic dominates. Thus each proximal minimizer exists uniquely. Its first-order condition gives $p_1 + \lambda \nabla F(p_1) = x_1$ and $p_2 + \lambda \nabla F(p_2) = x_2$.
Subtracting the two equations and taking the inner product with $p_1 - p_2$ yields
\[
\|p_1 - p_2\|^2 + \lambda \langle \nabla F(p_1) - \nabla F(p_2), p_1 - p_2 \rangle = \langle x_1 - x_2, p_1 - p_2 \rangle.
\]
By convexity of $F$, $\langle \nabla F(p_1) - \nabla F(p_2), p_1 - p_2 \rangle \ge 0$. Thus,
\[
\|p_1 - p_2\|^2 \le \langle x_1 - x_2, p_1 - p_2 \rangle \le \|x_1 - x_2\| \|p_1 - p_2\|,
\]
which implies $\|p_1 - p_2\| \le \|x_1 - x_2\|$.
\end{proof}

\begin{lemma}[Boundedness of the iterates]\label{lem:bounded}
For all $k\ge0$,
\[
\|x_k-x^\star\|\ \le\ C_xR,\qquad C_x:=1+\frac{\sigma}{\sqrt{1-\sigma^2}},\qquad\text{hence}\qquad \zeta_k=\|z_k-x_k\|\ \le\ (1+C_x)R .
\]
$C_x$ depends only on $\sigma$, not on $N$, $\varepsilon$, $d$, or any logarithm.
\end{lemma}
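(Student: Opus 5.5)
Since $x_0=z_0$, I will prove $\|x_k-x^\star\|\le C_xR$ by induction on $k$, reading $x_{k+1}$ as a perturbation of the exact proximal point. The bound on $\zeta_k$ then follows from the triangle inequality $\|z_k-x_k\|\le\|z_k-x^\star\|+\|x_k-x^\star\|$ together with Lemma~\ref{lem:potential}(ii). The case $k=0$ is immediate because $\|x_0-x^\star\|\le R\le C_xR$.

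For the inductive step, let $p_k=\operatorname{prox}_{\lambda_kF}(y_k)$. This point exists uniquely and satisfies $p_k+\lambda_k\nabla F(p_k)=y_k$, as in the proof of Lemma~\ref{lem:prox_nonexpansive}. Since $x^\star=\operatorname{prox}_{\lambda_kF}(x^\star)$, that lemma gives $\|p_k-x^\star\|\le\|y_k-x^\star\|$. The center $y_k$ is a convex combination of $x_k$ and $z_k$. By the induction hypothesis and Lemma~\ref{lem:potential}(ii), this gives $\|y_k-x^\star\|\le\max\{C_xR,R\}=C_xR$. That bound alone is too weak, because the perturbation term below would make the constant grow with $k$.

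So the argument needs something sharper, and the main obstacle is relating $x_{k+1}$ to $x^\star$ with an additive $\sigma R/\sqrt{1-\sigma^2}$ rather than a multiplicative loss. I will first try the standard HPE-type error bound. Set $e=x_{k+1}-y_k+\lambda_k\nabla F(x_{k+1})$, so $\|e\|\le\sigma\|r_k\|$. Then $x_{k+1}=\operatorname{prox}_{\lambda_kF}(y_k+e)$, and nonexpansiveness gives $\|x_{k+1}-p_k\|\le\|e\|\le\sigma\|r_k\|$. I then need $\sigma\|r_k\|\le\frac{\sigma}{\sqrt{1-\sigma^2}}R$, i.e.\ $\|r_k\|\le R/\sqrt{1-\sigma^2}$. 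Lemma~\ref{lem:potential}(iii) gives $\frac{A_{k+1}}{\lambda_k}\|r_k\|^2\le R^2/(1-\sigma^2)$, and $a_k^2=\lambda_kA_{k+1}$ with $a_k\le A_{k+1}$ yields $\lambda_k\le A_{k+1}$. Together these give the needed bound on $\|r_k\|$.

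It remains to bound $\|p_k-x^\star\|$ by $R$ rather than $C_xR$. For this I would reorganize around $z$ instead of $y_k$. I will use $x_{k+1}=y_k+r_k$ together with the identity $A_{k+1}y_k=A_kx_k+a_kz_k$, and aim to write $x_{k+1}-x^\star$ as a convex combination of terms controlled by $\|z_j-x^\star\|\le R$ plus the error $\sigma\|r_k\|$. Concretely, I expect to unroll $x_{k+1}$ as a weighted average over $j\le k$ of points of the form $p_j$. Each $p_j$ would lie within $\|z_j-x^\star\|\le R$ of $x^\star$, using the extragradient relation $z_{j+1}-z_j=-a_j\nabla F_{j+1}$. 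If that unrolling fails to close, I will fall back on a direct potential argument. That would use convexity and the descent of $\Psi$, namely $\langle\nabla F(x_{k+1}),x_{k+1}-x^\star\rangle\ge0$ combined with the HPE inequality, to show $\|x_{k+1}-x^\star\|\le\|y'-x^\star\|+\sigma\|r_k\|$ for a suitable $R$-bounded reference point $y'$.
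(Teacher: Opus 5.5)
There is a genuine gap. The claim you reduce everything to, $\|p_k-x^\star\|\le R$, is false in general, so neither the unrolling nor the fallback can produce it.

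Take $d=2$, $F(x)=\tfrac12(x^{(1)})^2$, $x^\star=0$ and $x_0=(R,0)$. At $k=0$ (where $y_0=x_0$), choose $x_1=(u,v)$ with $u=R/(1+\lambda_0)$ and $|v|=\frac{\sigma}{\sqrt{1-\sigma^2}}(R-u)$. Then $r_0+\lambda_0\nabla F(x_1)=(0,v)$, so \eqref{eq:ms} holds with equality. Moreover $|v|>R$ once $\sigma>1/\sqrt2$ and $\lambda_0$ is large. Since $F$ is constant in the second coordinate and $z_1^{(2)}=0$, you get $p_1^{(2)}=y_1^{(2)}=\frac{A_1}{A_2}v$. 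This tends to $v$ as $\lambda_1\downarrow0$, so $\|p_1-x^\star\|>R$.

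Your uniform bound $\|r_k\|\le R/\sqrt{1-\sigma^2}$ is also too weak once the recursion is unrolled. The errors then enter as $\frac{\sigma}{A_N}\sum_{j<N}A_{j+1}\|r_j\|$, and a uniform bound leaves the factor $\sum_{j<N}A_{j+1}/A_N$, which grows with $N$.

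The missing idea is that you discarded exactly the slack the induction needs, in two places.
\begin{enumerate}
\item Bound $y_k$ through its convex weights rather than by the maximum: $\|y_k-x^\star\|\le\frac{A_k}{A_{k+1}}C_xR+\frac{a_k}{A_{k+1}}R$.
\item Keep the factor you dropped from the energy estimate. A single term of Lemma~\ref{lem:potential}(iii) gives $\|r_k\|\le\frac{R}{\sqrt{1-\sigma^2}}\sqrt{\lambda_k/A_{k+1}}=\frac{R}{\sqrt{1-\sigma^2}}\cdot\frac{a_k}{A_{k+1}}$, because $a_k^2=\lambda_kA_{k+1}$.
\end{enumerate}
Insert both into your correct estimate $\|x_{k+1}-x^\star\|\le\|y_k-x^\star\|+\sigma\|r_k\|$. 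This gives $\|x_{k+1}-x^\star\|\le\frac{A_k}{A_{k+1}}C_xR+\frac{a_k}{A_{k+1}}\bigl(1+\frac{\sigma}{\sqrt{1-\sigma^2}}\bigr)R=C_xR$, and your induction closes in one line.

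The paper's proof is the telescoped version of the same idea. It multiplies the one-step bound by $A_{k+1}$ and sums, obtaining $A_N\|x_N-x^\star\|\le A_NR+\sigma\sum_{j<N}A_{j+1}\|r_j\|$. It then bounds the last sum by $\frac{R}{\sqrt{1-\sigma^2}}A_N$, using Cauchy--Schwarz against the movement energy together with $\sum_ja_j^2\le A_N^2$.
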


\begin{proof}
\textit{Step 1 (one-step inexact-prox estimate).} Let $p_k:=\operatorname{prox}_{\lambda_k F}(y_k)$, the unique solution of $p+\lambda_k\nabla F(p)=y_k$, and $w_k:=x_{k+1}-y_k+\lambda_k\nabla F(x_{k+1})$, so $\|w_k\|\le\sigma\|r_k\|$ by \eqref{eq:ms}. Then $x_{k+1}+\lambda_k\nabla F(x_{k+1})=y_k+w_k$, i.e.\ $x_{k+1}=\operatorname{prox}_{\lambda_k F}(y_k+w_k)$. The proximal operator of a convex function is nonexpansive by Lemma~\ref{lem:prox_nonexpansive}, so
\[
\|x_{k+1}-p_k\|=\big\|\operatorname{prox}_{\lambda_k F}(y_k+w_k)-\operatorname{prox}_{\lambda_k F}(y_k)\big\|\le\|w_k\|\le\sigma\|r_k\| .
\]
Also $\nabla F(x^\star)=0$ gives $\operatorname{prox}_{\lambda_k F}(x^\star)=x^\star$, hence $\|p_k-x^\star\|\le\|y_k-x^\star\|$ and
\begin{equation}
\|x_{k+1}-x^\star\|\ \le\ \|y_k-x^\star\|+\sigma\|r_k\| . \label{eq:proxradius}
\end{equation}

\textit{Step 2 (weighted telescoping).} Since $y_k$ is a convex combination of $x_k$ and $z_k$,
\[
\|y_k-x^\star\|\le\frac{A_k}{A_{k+1}}\|x_k-x^\star\|+\frac{a_k}{A_{k+1}}\|z_k-x^\star\| .
\]
Substituting into \eqref{eq:proxradius} and multiplying by $A_{k+1}>0$,
\[
A_{k+1}\|x_{k+1}-x^\star\|\ \le\ A_k\|x_k-x^\star\|+a_k\|z_k-x^\star\|+\sigma A_{k+1}\|r_k\| .
\]
Summing over $k=0,\dots,N-1$, the first two families telescope (and $A_0=0$):
\[
A_N\|x_N-x^\star\|\ \le\ \sum_{j<N}a_j\|z_j-x^\star\|+\sigma\sum_{j<N}A_{j+1}\|r_j\| .
\]
By Lemma~\ref{lem:potential}(ii), $\|z_j-x^\star\|\le R$, and $\sum_ja_j=A_N$, so the first sum is at most $A_NR$.

\textit{Step 3 (the weighted step sum).} Using $a_j=\sqrt{\lambda_jA_{j+1}}$,
\[
A_{j+1}\|r_j\|=\Big(\sqrt{\tfrac{A_{j+1}}{\lambda_j}}\,\|r_j\|\Big)\cdot a_j,
\]
so by Cauchy--Schwarz and Lemma~\ref{lem:potential}(iii),
\[
\sum_{j<N}A_{j+1}\|r_j\|\ \le\ \frac{R}{\sqrt{1-\sigma^2}}\Big(\sum_ja_j^2\Big)^{1/2}\ \le\ \frac{R}{\sqrt{1-\sigma^2}}A_N,
\]
since $\sum_ja_j^2\le\big(\sum_ja_j\big)^2=A_N^2$ for nonnegative $a_j$. Combining and dividing by $A_N>0$ gives $\|x_N-x^\star\|\le C_xR$; $N$ was arbitrary. Finally $\zeta_k\le\|z_k-x^\star\|+\|x_k-x^\star\|\le(1+C_x)R$.
\end{proof}

\begin{lemma}[Interleaved path length]\label{lem:pathlen}
With $ m_k=\|y_k-x_k\|$ and $\delta_k= m_k+\|r_k\|$ define $Q_N:=\sum_{k=0}^{N-1}\delta_k$. {\color{ReviewBlue}(Throughout the convex analysis we write $m_k$ and $m(\lambda)$ for this displacement, as in Section~\ref{sec:algo}, reserving $\mu$ for the strong-convexity modulus.)} Then
\[
Q_N\ \le\ (1+C_x)R\,\Lambda_N+\frac{R\sqrt{\Lambda_N}}{\sqrt{1-\sigma^2}}\ =\ O_\sigma\!\big(R\,\Lambda_N\big).
\]
\end{lemma}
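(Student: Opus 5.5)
The plan is to split $Q_N=\sum_{k<N}m_k+\sum_{k<N}\|r_k\|$ and bound the two sums separately. The second sum is exactly Lemma~\ref{lem:stepsum}, which gives $\sum_{k<N}\|r_k\|\le R\sqrt{\Lambda_N}/\sqrt{1-\sigma^2}$. This accounts for the second term of the claimed bound, so all remaining work concerns the extrapolation displacements $m_k$.

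For $m_k$, the definition $y_k=(A_kx_k+a_kz_k)/A_{k+1}$ gives the exact identity
\[
y_k-x_k=\frac{a_k}{A_{k+1}}(z_k-x_k),
\qquad\text{hence}\qquad
m_k=\frac{a_k}{A_{k+1}}\,\zeta_k .
\]
This identity has already appeared in \eqref{eq:mudef}. Lemma~\ref{lem:bounded} bounds $\zeta_k\le(1+C_x)R$ uniformly in $k$, so
\[
\sum_{k<N}m_k\le(1+C_x)R\sum_{k<N}\frac{a_k}{A_{k+1}} .
\]
Lemma~\ref{lem:logsum} bounds the remaining sum by $\Lambda_N$. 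This uses $A_1>0$, which holds because $\lambda_0>0$ gives $a_0=\lambda_0=A_1>0$. The case $k=0$ is harmless: $m_0=0$ since $x_0=z_0$, while the corresponding term $a_0/A_1=1$ is already counted inside $\Lambda_N$.

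Adding the two sums gives the stated bound. For the $O_\sigma(R\Lambda_N)$ form, I would use $\Lambda_N\ge1$ (because $A_N\ge A_1$), so that $\sqrt{\Lambda_N}\le\Lambda_N$, and note that $C_x$ depends only on $\sigma$. I expect no real obstacle here; the one point to check is that Lemma~\ref{lem:bounded} holds for every $k$, including $k=0$, so the uniform bound on $\zeta_k$ applies to each term without change.
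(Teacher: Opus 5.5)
Your proof is correct and follows essentially the same route as the paper's: it uses the identity $m_k=(a_k/A_{k+1})\zeta_k$, Lemma~\ref{lem:bounded} for $\zeta_k\le(1+C_x)R$, and Lemma~\ref{lem:logsum} for $\sum_k a_k/A_{k+1}\le\Lambda_N$. It also bounds $\sum_k\|r_k\|$ by Lemma~\ref{lem:stepsum} and closes with $\sqrt{\Lambda_N}\le\Lambda_N$, exactly as the paper does.
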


\begin{proof}
From $y_k=\frac{A_kx_k+a_kz_k}{A_{k+1}}$ and $A_{k+1}=A_k+a_k$,
\begin{equation}
y_k-x_k=\frac{a_k}{A_{k+1}}(z_k-x_k),\qquad\text{so}\qquad  m_k=\frac{a_k}{A_{k+1}}\zeta_k\le(1+C_x)R\,\frac{a_k}{A_{k+1}} \label{eq:ydiff}
\end{equation}
by Lemma~\ref{lem:bounded}. Summing and using Lemma~\ref{lem:logsum} bounds $\sum_k m_k\le(1+C_x)R\Lambda_N$; Lemma~\ref{lem:stepsum} bounds $\sum_k\|r_k\|$. The order statement uses that $C_x$ depends only on $\sigma$ and $\sqrt{\Lambda_N}\le\Lambda_N$ (as $\Lambda_N\ge1$).
\end{proof}

\paragraph{Recovering the preliminary proposition.}
Taking $F=f$ gives Proposition~\ref{prop:hpeestimates}: its potential
and energy estimates are Lemma~\ref{lem:potential}; its mass bounds are
Lemmas~\ref{lem:agrowth} and~\ref{lem:moment}; and its geometric bounds
are Lemma~\ref{lem:potential}(ii) and Lemmas~\ref{lem:bounded} and~\ref{lem:pathlen}.

\subsection{Curvature reconstruction and model-step certificates}
The following tools concern a single curvature model. Finite differences
control its raw approximation error, clipping restores spectral feasibility,
and the residual estimate converts model accuracy into HPE admissibility.
They do not prescribe when the model is refreshed.

\begin{lemma}[Finite-difference error]\label{lem:fd}
For a function $F$ with $\mathcal L$-Lipschitz Hessian, any $\tau>0$, and any unit vector $v$,
\[
\left\|\frac{\nabla F(y+\tau v)-\nabla F(y)}\tau
                  -\nabla^2F(y)v\right\|\le\frac{\mathcal L\tau}{2}.
\]
\end{lemma}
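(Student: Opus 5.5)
The plan is to write the gradient difference as an integral of the Hessian along the segment from $y$ to $y+\tau v$, and then use the Lipschitz continuity of the Hessian under the integral. The fundamental theorem of calculus applied to $t\mapsto\nabla F(y+t\tau v)$ gives
\[
\nabla F(y+\tau v)-\nabla F(y)=\int_0^1\nabla^2F(y+t\tau v)\,\tau v\,dt .
\]
This is valid because $F$ is twice continuously differentiable, so $\nabla F$ is $C^1$.

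Dividing by $\tau$ and subtracting $\nabla^2F(y)v=\int_0^1\nabla^2F(y)v\,dt$ leads to
\[
\frac{\nabla F(y+\tau v)-\nabla F(y)}{\tau}-\nabla^2F(y)v=\int_0^1\big(\nabla^2F(y+t\tau v)-\nabla^2F(y)\big)v\,dt .
\]
I would then bound the norm of the integral by the integral of the norm. For each $t$, the integrand has norm at most
\[
\|\nabla^2F(y+t\tau v)-\nabla^2F(y)\|_{\rm op}\,\|v\|\le\mathcal L\,t\tau\|v\|=\mathcal L t\tau,
\]
using $\|v\|=1$. Integrating $\int_0^1\mathcal L t\tau\,dt=\mathcal L\tau/2$ gives the claim.

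There is no real obstacle here. The only care needed is to justify the vector-valued fundamental theorem of calculus and the triangle inequality for Bochner/Riemann integrals of continuous vector functions; both are standard because the integrand is continuous in $t$.
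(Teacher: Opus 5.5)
Your proposal is correct and matches the paper's proof step for step: the fundamental theorem of calculus along $s\mapsto\nabla F(y+s\tau v)$, subtracting $\nabla^2F(y)v$ under the integral, bounding the integrand by $\mathcal L s\tau$, and integrating to get $\mathcal L\tau/2$. The only nuance is that the needed $C^1$ regularity of $\nabla F$ follows from the Hessian being Lipschitz, hence continuous, which is how the paper justifies it, rather than from a separately assumed $C^2$ hypothesis.
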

\begin{proof}
Fix $\tau>0$ and consider the map $s\mapsto\nabla F(y+s\tau v)$ on $[0,1]$.
It is continuously differentiable, since $\nabla^2F$ is Lipschitz and hence
continuous, with derivative $\tau\nabla^2F(y+s\tau v)v$. By the fundamental
theorem of calculus,
\[
\nabla F(y+\tau v)-\nabla F(y)=\tau\int_0^1\nabla^2F(y+s\tau v)v\,ds .
\]
Dividing by $\tau$ and writing $\nabla^2F(y)v=\int_0^1\nabla^2F(y)v\,ds$,
the finite-difference error becomes
\[
\frac{\nabla F(y+\tau v)-\nabla F(y)}{\tau}-\nabla^2F(y)v
=\int_0^1\big(\nabla^2F(y+s\tau v)-\nabla^2F(y)\big)v\,ds .
\]
For each $s\in[0,1]$, the Lipschitz property of the Hessian gives
$\|\nabla^2F(y+s\tau v)-\nabla^2F(y)\|_{\rm op}\le\mathcal L\|s\tau v\|
=\mathcal Ls\tau$, because $\|v\|=1$. Taking the norm of the integral,
bounding it by the integral of the norm, and using this estimate,
\[
\left\|\frac{\nabla F(y+\tau v)-\nabla F(y)}{\tau}-\nabla^2F(y)v\right\|
\le\int_0^1\mathcal Ls\tau\,ds=\frac{\mathcal L\tau}{2}. \qedhere
\]
\end{proof}

\begin{proof}[Proof of Lemma~\ref{lem:group}]
Write $H=H(x)$ and $\epsilon:=w-Hv_g$. The identity in the proof of
Lemma~\ref{lem:fd} holds for any direction, so with $F=f$ and $v=v_g$,
$\epsilon=\int_0^1\big(H(x+s\tau v_g)-H\big)v_g\,ds$, and
Assumption~\ref{as:main} gives
$\|\epsilon\|\le\int_0^1L_2s\tau\|v_g\|^2\,ds=L_2\tau|g|/2$, because
$\|v_g\|^2=|g|$. Fix $j\in g$. Since $\operatorname{supp}H\subseteq\mathcal S$,
the column $He_{j'}$ is supported on $\mathcal S_{j'}$ for every $j'$. For
$j'\in g\setminus\{j\}$ the sets $\mathcal S_{j'}$ and $\mathcal S_j$ are
disjoint by structural orthogonality, so $P_jHe_{j'}=0$, while
$P_jHe_j=He_j$. Hence
$P_jw=P_jHv_g+P_j\epsilon=\sum_{j'\in g}P_jHe_{j'}+P_j\epsilon=He_j+P_j\epsilon$,
that is, $P_jw-He_j=P_j\epsilon$. The projections $P_j$, $j\in g$, have
pairwise disjoint supports, so
$\sum_{j\in g}\|P_jw-He_j\|^2=\sum_{j\in g}\|P_j\epsilon\|^2\le\|\epsilon\|^2
\le(L_2\tau|g|/2)^2$.
\end{proof}

\begin{lemma}[Clipping is $2$-stable in operator norm]\label{lem:clip}
If $H\in\mathcal Z$ and $\|S-H\|_{\rm op}\le e$ for symmetric $S$, then
$\|\Pi_{\mathcal Z}(S)-H\|_{\rm op}\le2e$.
The same conclusion holds for clipping to the PSD cone when $H\succeq0$.
Also, projection onto $\mathcal Z$ is nonexpansive in Frobenius norm.
\end{lemma}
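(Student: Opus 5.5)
The plan is to route the operator-norm claim through the triangle inequality,
\[
\|\Pi_{\mathcal Z}(S)-H\|_{\rm op}\le\|\Pi_{\mathcal Z}(S)-S\|_{\rm op}+\|S-H\|_{\rm op},
\]
and to show that the first term is at most $e$. Write the spectral decomposition $S=U\operatorname{diag}(s_1,\dots,s_d)U^\top$. Then $\Pi_{\mathcal Z}(S)=U\operatorname{diag}(c(s_1),\dots,c(s_d))U^\top$, where $c(t)=\min\{L_1,\max\{0,t\}\}$. Because both matrices share the eigenbasis $U$, $\Pi_{\mathcal Z}(S)-S$ is diagonal in that basis, and
\[
\|\Pi_{\mathcal Z}(S)-S\|_{\rm op}=\max_i|c(s_i)-s_i|.
\]

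To bound the eigenvalues $s_i$, I will use Weyl's inequality. Since $H\in\mathcal Z$, its eigenvalues lie in $[0,L_1]$. Since $\|S-H\|_{\rm op}\le e$, each eigenvalue of $S$ lies within $e$ of the corresponding eigenvalue of $H$, so $s_i\in[-e,L_1+e]$. On this interval the clipping map satisfies $|c(t)-t|=\operatorname{dist}(t,[0,L_1])\le e$. This gives $\|\Pi_{\mathcal Z}(S)-S\|_{\rm op}\le e$, and the triangle inequality above yields the bound $2e$.

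The PSD-cone variant follows the same argument with $c(t)=\max\{0,t\}$. Here $H\succeq0$ and Weyl give $s_i\ge-e$, so $|c(s_i)-s_i|\le e$ again.

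For Frobenius nonexpansiveness, $\mathcal Z$ is a closed convex subset of $\mathbb S^d$ equipped with the trace inner product. The Euclidean (metric) projection onto a closed convex set in a Hilbert space is nonexpansive. It therefore suffices to show that spectral clipping \emph{is} the Frobenius-nearest point of $\mathcal Z$; this identification is the one nontrivial step. I would argue it as follows. For any $B\in\mathcal Z$ with eigenvalues $b_i$, the Hoffman--Wielandt inequality gives
\[
\|S-B\|_F^2\ge\sum_i(s_i^\downarrow-b_i^\downarrow)^2\ge\sum_i\operatorname{dist}(s_i,[0,L_1])^2,
\]
where the second inequality uses $b_i^\downarrow\in[0,L_1]$. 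Equality is attained at $\Pi_{\mathcal Z}(S)$, so the clipped matrix is a minimizer. The minimizer is unique by strict convexity of $B\mapsto\|S-B\|_F^2$, which completes the identification.
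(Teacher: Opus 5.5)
Your proposal is correct and follows the paper's proof essentially step for step. Spectral clipping moves each eigenvalue of $S$ by at most $e$, the triangle inequality gives $2e$, and Hoffman--Wielandt identifies $\Pi_{\mathcal Z}$ as the Frobenius projection onto the closed convex set $\mathcal Z$, which is therefore nonexpansive; the only cosmetic difference is that you locate the spectrum of $S$ in $[-e,L_1+e]$ via Weyl's inequality, whereas the paper adds the Loewner inequalities $0\preceq H\preceq L_1I$ and $-eI\preceq S-H\preceq eI$.
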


\begin{proof}
Let $S=U\operatorname{diag}(s_1,\dots,s_d)U^\top$ be an eigendecomposition,
so that $\Pi_{\mathcal Z}(S)=U\operatorname{diag}(\hat s_1,\dots,\hat s_d)U^\top$
with $\hat s_i=\min\{\max\{s_i,0\},L_1\}$. Since $0\preceq H\preceq L_1I$ and
$-eI\preceq S-H\preceq eI$, adding the two relations gives
$-eI\preceq S\preceq(L_1+e)I$, so every $s_i$ lies in $[-e,L_1+e]$. Clipping
leaves $s_i\in[0,L_1]$ unchanged, sends $s_i\in[-e,0)$ to $0$, and sends
$s_i\in(L_1,L_1+e]$ to $L_1$; in every case $|s_i-\hat s_i|\le e$. Hence
$S-\Pi_{\mathcal Z}(S)=U\operatorname{diag}(s_i-\hat s_i)U^\top$ has operator
norm $\max_i|s_i-\hat s_i|\le e$, and the triangle inequality yields
$\|\Pi_{\mathcal Z}(S)-H\|_{\rm op}\le\|\Pi_{\mathcal Z}(S)-S\|_{\rm op}
+\|S-H\|_{\rm op}\le2e$. For clipping to the PSD cone, $\hat s_i=\max\{s_i,0\}$
and the same argument uses only $s_i\ge-e$, which follows from $H\succeq0$.

For the last claim, we first check that $\Pi_{\mathcal Z}$ is the Frobenius
projection onto $\mathcal Z$, a closed convex set. Let $Y\in\mathcal Z$ have
eigenvalues $y_1\ge\dots\ge y_d$, all in $[0,L_1]$, and order the $s_i$
decreasingly. The Hoffman--Wielandt inequality for symmetric matrices
\citep[Cor.~6.3.8]{horn2013matrix} gives
$\|S-Y\|_F^2\ge\sum_i(s_i-y_i)^2\ge\sum_i(s_i-\hat s_i)^2
=\|S-\Pi_{\mathcal Z}(S)\|_F^2$, where the second inequality holds because
$\hat s_i$ is the point of $[0,L_1]$ nearest to $s_i$ and $y_i\in[0,L_1]$.
Since $\Pi_{\mathcal Z}(S)\in\mathcal Z$, it is the nearest point of
$\mathcal Z$ to $S$ in Frobenius norm. Metric projections onto nonempty
closed convex subsets of a Hilbert space are nonexpansive
\citep[Prop.~4.16]{bauschke2017convex}, which proves the claim.
\end{proof}

\begin{lemma}[Two elementary certificates]\label{lem:resid}
For a convex differentiable function $F$ with $\mathcal L$-Lipschitz Hessian,
$B\succeq0$, $\lambda>0$, and
$r=-(B+\lambda^{-1}I)^{-1}\nabla F(y)$,
\[
\|r+\lambda\nabla F(y+r)\|
\le\lambda\left(\|B-\nabla^2F(y)\|_{\rm op}
                    +\frac{\mathcal L}{2}\|r\|\right)\|r\|.
\]
If in addition $0\preceq\nabla^2F(v),B\preceq LI$ everywhere on
$[y,y+r]$, then
\[
\|r+\lambda\nabla F(y+r)\|\le\lambda L\|r\|.
\]
\end{lemma}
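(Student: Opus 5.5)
The plan is to write the residual as a Taylor remainder. Set $u=y+r$. By definition of $r$ we have $(B+\lambda^{-1}I)r=-\nabla F(y)$, that is, $r=-\lambda\nabla F(y)-\lambda Br$. Substituting this into the residual gives
\[
r+\lambda\nabla F(y+r)=\lambda\big(\nabla F(y+r)-\nabla F(y)-Br\big).
\]
The fundamental theorem of calculus along the segment, exactly as in the proof of Lemma~\ref{lem:fd}, gives
\[
\nabla F(y+r)-\nabla F(y)=\int_0^1\nabla^2F(y+sr)\,r\,ds .
\]
Hence the residual equals $\lambda\int_0^1(\nabla^2F(y+sr)-B)r\,ds$. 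This identity is the whole content of both claims.

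For the first bound, I will split the integrand as
\[
\nabla^2F(y+sr)-B=\big(\nabla^2F(y)-B\big)+\big(\nabla^2F(y+sr)-\nabla^2F(y)\big).
\]
The first piece has operator norm at most $\|B-\nabla^2F(y)\|_{\rm op}$. The second has operator norm at most $\mathcal L s\|r\|$ by the Lipschitz Hessian. Integrating over $s\in[0,1]$ gives $\|B-\nabla^2F(y)\|_{\rm op}+\mathcal L\|r\|/2$, and multiplying by $\lambda\|r\|$ yields the stated inequality. Convexity and $B\succeq0$ are not needed here, except to make $B+\lambda^{-1}I$ invertible so that $r$ is well defined.

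For the second bound, I will use that for each $s$ both $\nabla^2F(y+sr)$ and $B$ lie in $[0,LI]$. Their difference is therefore a symmetric matrix with spectrum in $[-L,L]$, so its operator norm is at most $L$. Integrating gives $\|r+\lambda\nabla F(y+r)\|\le\lambda L\|r\|$.

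The only point needing care is the elementary fact that the difference of two matrices in $[0,LI]$ has operator norm $\le L$ rather than $2L$. This follows from $-LI\preceq\nabla^2F(v)-B\preceq LI$, which gives spectral bounds directly. I expect no genuine obstacle; the main step is recognizing the substitution that converts the residual into a Hessian-averaging error.
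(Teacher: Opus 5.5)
Your proposal is correct and follows the paper's proof: both rewrite the residual as $\lambda(\bar H-B)r$ with $\bar H=\int_0^1\nabla^2F(y+sr)\,ds$ and then bound $\|\bar H-B\|_{\rm op}$ in two ways. The first way is the triangle inequality through $\nabla^2F(y)$ plus the integrated Lipschitz estimate; the second is the spectral sandwich $-LI\preceq\bar H-B\preceq LI$. The only difference is that you bound the integrand pointwise before integrating, while the paper forms $\bar H$ first, which changes nothing.
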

\begin{proof}
Since $B\succeq0$, the matrix $B+\lambda^{-1}I$ is invertible, and the
definition of $r$ reads $(B+\lambda^{-1}I)r=-\nabla F(y)$, that is,
$r+\lambda\nabla F(y)=-\lambda Br$. By the fundamental theorem of calculus
applied to $t\mapsto\nabla F(y+tr)$,
\[
\nabla F(y+r)=\nabla F(y)+\bar Hr,
\qquad
\bar H:=\int_0^1\nabla^2F(y+tr)\,dt ,
\]
where $\bar H$ is symmetric. Combining the two identities,
\[
r+\lambda\nabla F(y+r)
=\big(r+\lambda\nabla F(y)\big)+\lambda\bar Hr
=\lambda(\bar H-B)r ,
\]
so $\|r+\lambda\nabla F(y+r)\|\le\lambda\|\bar H-B\|_{\rm op}\|r\|$, and
it remains to bound $\|\bar H-B\|_{\rm op}$ in two ways. For the first
bound, write $\bar H-B=\big(\bar H-\nabla^2F(y)\big)+\big(\nabla^2F(y)-B\big)$.
Since $\bar H-\nabla^2F(y)=\int_0^1\big(\nabla^2F(y+tr)-\nabla^2F(y)\big)\,dt$
and $\|\nabla^2F(y+tr)-\nabla^2F(y)\|_{\rm op}\le\mathcal Lt\|r\|$ for
$t\in[0,1]$ by the Lipschitz property of the Hessian,
\[
\|\bar H-\nabla^2F(y)\|_{\rm op}\le\int_0^1\mathcal Lt\|r\|\,dt
=\frac{\mathcal L}{2}\|r\| ,
\]
and the triangle inequality gives
$\|\bar H-B\|_{\rm op}\le\|B-\nabla^2F(y)\|_{\rm op}+\frac{\mathcal L}{2}\|r\|$,
which is the first certificate. 

For the second, suppose
$0\preceq\nabla^2F(v)\preceq LI$ for all $v\in[y,y+r]$. For any $u$,
$\langle u,\bar Hu\rangle=\int_0^1\langle u,\nabla^2F(y+tr)u\rangle\,dt$
lies in $[0,L\|u\|^2]$, so $0\preceq\bar H\preceq LI$. Together with
$0\preceq B\preceq LI$ this gives $-LI\preceq\bar H-B\preceq LI$. Hence, $\|\bar H-B\|_{\rm op}\le L$ and
$\|r+\lambda\nabla F(y+r)\|\le\lambda L\|r\|$.
\end{proof}


\section{Analysis of RP-QNPE}\label{app:rolling}

This appendix proves the results of Section~\ref{sec:algo} in the
order in which they are stated. Appendices~\ref{proof:lem:telescope}
and~\ref{proof:main:lem:sum} prove the rolling certificate of
Lemma~\ref{lem:telescope} and its summed form, Lemma~\ref{lem:sum}.
Appendix~\ref{sec:schedule} records two facts about the scalar data of
Subroutine~1.1, the monotonicity of the displacement $ m(\lambda)$ and the
resolvent bound on $\|r(\lambda)\|$, from which Lemma~\ref{lem:scalarbounds}
and the acceptance guarantee of Lemma~\ref{lem:wellposed} follow, and
inverts the near-maximality guarantee into the pointwise bound of
Lemma~\ref{lem:fiveterm}. Appendix~\ref{proof:iterationrate} proves
Theorem~\ref{thm:iterationrate} by applying the A-HPE estimates of
Appendix~\ref{app:backbone} with $F=f$ on each dyadic scale of the mass
$A_k$, bounding the number of iterations in each scale, and summing these
counts over the scales, and Appendix~\ref{proof:thm:complexity} converts
the resulting mass bound into the gradient count of
Corollary~\ref{thm:complexity}.

\subsection{The rolling curvature certificate}\label{sec:model}
\subsubsection{Proof of Lemma~\ref{lem:telescope}}
\label{proof:lem:telescope}
The argument below establishes the certificate \eqref{eq:rollingcert}.

\begin{proof}
Write $H_j:=H(x_j)$. By Assumption~\ref{as:main}, $f$ is convex with
$L_1$-Lipschitz gradient and $L_2$-Lipschitz Hessian, so
$H_j\in\mathcal Z=\{B\in\mathbb S^d:0\preceq B\preceq L_1I\}$ and
$\|H_{j+1}-H_j\|_{\rm op}\le L_2\|x_{j+1}-x_j\|$. Since $x_{j+1}=y_j+r_j$ and
$\delta_j= m_j+\|r_j\|$ with $ m_j=\|y_j-x_j\|$, we have
$\|x_{j+1}-x_j\|\le\delta_j$, hence for every $j\ge0$
\begin{equation}\label{eq:hessstep}
\|H_{j+1}-H_j\|_{\rm op}\le L_2\delta_j .
\end{equation}

The expressions $B_k=\Pi_{\mathcal Z}(\cdot)\in\mathcal Z$ and
$H_k\in\mathcal Z$ give $-L_1I\preceq B_k-H_k\preceq L_1I$, so
$\|B_k-H_k\|_{\rm op}\le L_1$ for all $k\ge0$. This is the first branch of
\eqref{eq:rollingcert}, $\bar E_k=L_1$ for $k<\chi$, and it justifies the
minimum in the second branch,
$\bar E_k=\min\{L_1,\,2L_2\widehat D_k+e_k\}$ for $k\ge\chi$.
It remains to show $\|B_k-H_k\|_{\rm op}\le2L_2\widehat D_k+e_k$
for $k\ge\chi$, where $e_k=b/(k-\chi+2)^2$.

Consider a time index $k$ such that $k\ge\chi$. The refresh
index $i_j=(j\bmod\chi)+1$ takes every value in $\{1,\dots,\chi\}$ exactly once as
$j$ runs over the window $\{k-\chi+1,\dots,k\}$. For each group $g$ of the
partition, let $k_g$ be the iteration in this window with $g_{i_{k_g}}=g$;
we call $k_g$ the \emph{anchor} of the group $g$ and of each of its
columns, and for a column $i$ we write $k_i$ for the anchor of the group
containing $i$. By the update rule \eqref{eq:refresh} at iteration $k_g$,
the columns of $M_{k_g}$ indexed by $g$ are $P_iu_{k_g}$, $i\in g$, with
$u_{k_g}=(\nabla f(x_{k_g}+\tau_{k_g}v_g)-\nabla f(x_{k_g}))/\tau_{k_g}$, and
at iterations $j=k_g+1,\dots,k$ we have $g_{i_j}\ne g$, so \eqref{eq:refresh}
copies these columns unchanged. Hence $M_ke_i=P_iu_{k_g}$ for every
$i\in g$. Lemma~\ref{lem:group}, applied with $x=x_{k_g}$ and
$\tau=\tau_{k_g}$, gives
\[
M_ke_i=H_{k_i}e_i+\phi_i\quad(i\in[d]),\qquad
\Big(\sum_{i\in g}\|\phi_i\|^2\Big)^{1/2}\le\frac{L_2\tau_{k_g}|g|}{2}
\quad\text{for every group } g .
\]
Summing $(M_ke_i-H_ke_i)e_i^\top$ over $i$ and setting
$\Delta_k:=\sum_i\phi_ie_i^\top$,
\begin{equation}\label{eq:rawsplit}
M_k-H_k=\sum_{i=1}^d(H_{k_i}-H_k)e_ie_i^\top+\Delta_k .
\end{equation}

Note that, for each $i$,
$H_{k_i}-H_k=-\sum_{j=k_i}^{k-1}(H_{j+1}-H_j)$, an empty sum if $k_i=k$.
Substituting in \eqref{eq:rawsplit} and grouping, for each $j$, the columns
with $k_i\le j$,
\begin{equation}\label{eq:exchange}
\sum_{i=1}^d(H_{k_i}-H_k)e_ie_i^\top
=-\sum_{j=k-\chi+1}^{k-1}(H_{j+1}-H_j)\,\Pi_j,
\qquad
\Pi_j:=\sum_{i:\ k_i\le j}e_ie_i^\top,
\end{equation}
where the lower limit is $\min_ik_i=k-\chi+1$. Each $\Pi_j$ is diagonal with
entries in $\{0,1\}$, an orthogonal projection, so $\|\Pi_j\|_{\rm op}\le1$.

Next, using \eqref{eq:exchange}, \eqref{eq:hessstep}, and
the definition $\widehat D_k=\sum_{j=k-\chi+1}^{k-1}\delta_j$, we have
\[
\Big\|\sum_{i=1}^d(H_{k_i}-H_k)e_ie_i^\top\Big\|_{\rm op}
\le\sum_{j=k-\chi+1}^{k-1}\|H_{j+1}-H_j\|_{\rm op}\|\Pi_j\|_{\rm op}
\le L_2\sum_{j=k-\chi+1}^{k-1}\delta_j
=L_2\widehat D_k .
\]
Each Hessian increment appears once, multiplied by a projection; this is why
no factor of $\chi$ or $d$ arises.

The radius $\tau_j=b/(\sqrt\chi\,L_2|g_{i_j}|(j+1)^2)$ gives
$L_2\tau_{k_g}|g|/2=b/(2\sqrt\chi\,(k_g+1)^2)\le b/(2\sqrt\chi\,(k-\chi+2)^2)$
for every group $g$, because $k_g\ge k-\chi+1$. The $\chi$ groups index
disjoint sets of columns, so the squared Frobenius norms of the
corresponding blocks of $\Delta_k$ add, and
\[
\|\Delta_k\|_{\rm op}\le\|\Delta_k\|_F
=\Big(\sum_{g}\sum_{i\in g}\|\phi_i\|^2\Big)^{1/2}
\le\Big(\chi\cdot\frac{b^2}{4\chi\,(k-\chi+2)^4}\Big)^{1/2}
=\frac{b}{2(k-\chi+2)^2}=\frac{e_k}{2}.
\]
With \eqref{eq:rawsplit},
$\|M_k-H_k\|_{\rm op}\le e:=L_2\widehat D_k+e_k/2$.

Next we need to transfer the bounds to $B$ instead of $M$. Let $S_k=(M_k+M_k^\top)/2$, so
that $B_k=\Pi_{\mathcal Z}(S_k)$ by \eqref{eq:refresh}. As $H_k$ is
symmetric, $S_k-H_k=\tfrac12(M_k-H_k)+\tfrac12(M_k-H_k)^\top$, and
transposition preserves the operator norm, so $\|S_k-H_k\|_{\rm op}\le e$.
Lemma~\ref{lem:clip} states that if $H\in\mathcal Z$ and $S$ is symmetric
with $\|S-H\|_{\rm op}\le e$, then $\|\Pi_{\mathcal Z}(S)-H\|_{\rm op}\le2e$.
Applied with $H=H_k$ and $S=S_k$,
\[
\|B_k-H_k\|_{\rm op}\le2e=2L_2\widehat D_k+e_k ,
\]
which leads to the second branch of \eqref{eq:rollingcert}.
\end{proof}

\subsubsection{Proof of Lemma~\ref{lem:sum}}
\label{proof:main:lem:sum}

\begin{proof}
By \eqref{eq:rollingcert}, $\bar E_k=L_1$ for
$k<\chi$, while for $k\ge\chi$
\[
\bar E_k=\min\{L_1,\,2L_2\widehat D_k+e_k\}
\le2L_2\widehat D_k+\frac{b}{(k-\chi+2)^2},
\]
where $e_k=b/(k-\chi+2)^2$. Hence
\begin{equation}\label{eq:sumsplit}
\sum_{k<N}\bar E_k
\le\chi L_1+2L_2\sum_{k=\chi}^{N-1}\widehat D_k
+b\sum_{k=\chi}^{N-1}\frac1{(k-\chi+2)^2}.
\end{equation}
For the drift term, insert $\widehat D_k=\sum_{j=k-\chi+1}^{k-1}\delta_j$ and
exchange the order of summation:
\[
\sum_{k=\chi}^{N-1}\widehat D_k
=\sum_{k=\chi}^{N-1}\ \sum_{j=k-\chi+1}^{k-1}\delta_j
=\sum_j\delta_j\cdot\#\{k:\ \chi\le k\le N-1,\ k-\chi+1\le j\le k-1\}.
\]
The condition $k-\chi+1\le j\le k-1$ is equivalent to $j+1\le k\le j+\chi-1$,
so for each $j$ at most $\chi-1$ indices $k$ qualify, and any such $k$ forces
$j\le k-1\le N-2$. Therefore
\[
\sum_{k=\chi}^{N-1}\widehat D_k\le(\chi-1)\sum_{j<N}\delta_j=(\chi-1)Q_N .
\]
For the finite-difference term, substitute $m=k-\chi+2$ and use
$\sum_{m\ge2}m^{-2}\le\int_1^\infty x^{-2}\,dx=1$:
\[
b\sum_{k=\chi}^{N-1}\frac1{(k-\chi+2)^2}=b\sum_{m=2}^{N-\chi+1}\frac1{m^2}\le b .
\]
Inserting both bounds in \eqref{eq:sumsplit},
$\sum_{k<N}\bar E_k\le\chi L_1+2(\chi-1)L_2Q_N+b\le\chi L_1+2\chi L_2Q_N+2b$.
Finally, $\bar E_k\le L_1$ for every $k$, either directly or because the
second branch is a minimum with $L_1$; summing gives
$\sum_{k<N}\bar E_k\le NL_1$.
\end{proof}

\subsection{Stepsize selection and acceptance}\label{sec:schedule}
\begin{lemma}[Monotonicity and transport]\label{lem:mono}
Fix an iteration $k$ of Algorithm~1, with mass $A_k\ge0$,
$\zeta_k=\|z_k-x_k\|$, model $B_k\in\mathcal Z$, and certificate $\bar E_k$
satisfying $\|B_k-H(x_k)\|_{\rm op}\le\bar E_k$ (Lemma~\ref{lem:telescope}).
For $\lambda>0$ let $a(\lambda)=\big(\lambda+\sqrt{\lambda^2+4\lambda A_k}\big)/2$
be the positive root of $a^2=\lambda(A_k+a)$, let
$y(\lambda)=\big(A_kx_k+a(\lambda)z_k\big)/\big(A_k+a(\lambda)\big)$, and let
$ m(\lambda)=\|y(\lambda)-x_k\|=\frac{a(\lambda)}{A_k+a(\lambda)}\,\zeta_k$
be the displacement of \eqref{eq:mudef}. Then:
\begin{enumerate}[label=(\roman*)]
\item $a$ is continuous and strictly increasing on $(0,\infty)$, with
$a(\lambda)\to0$ as $\lambda\to0^+$ and $a(\lambda)\to\infty$ as
$\lambda\to\infty$;
\item $m$ is continuous and nondecreasing, $m(\lambda)\to0$ as
$\lambda\to0^+$, and $m(c\lambda)\le c\, m(\lambda)$ for every $c\ge1$;
\item (transport) for every $\lambda>0$,
\[
\|B_k-H(y(\lambda))\|_{\rm op}\le\min\{L_1,\ \bar E_k+L_2 m(\lambda)\}
=E_k(\lambda),
\qquad
\Big\|B_k-\int_0^1H\big(x_k+s(y(\lambda)-x_k)\big)\,ds\Big\|_{\rm op}
\le\bar E_k+\frac{L_2}{2} m(\lambda).
\]
\end{enumerate}
\end{lemma}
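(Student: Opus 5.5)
Parts (i) and (ii) are elementary calculus on the closed forms, and part (iii) follows from Lemma~\ref{lem:telescope} and the Lipschitz continuity of the Hessian. I take them in order.

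For (i), write $a(\lambda)=\big(\lambda+\sqrt{\lambda^2+4\lambda A_k}\big)/2$. Both summands are continuous and strictly increasing in $\lambda$ (the radicand is increasing because $A_k\ge0$). Both tend to $0$ as $\lambda\to0^+$, and $a(\lambda)\ge\lambda\to\infty$ as $\lambda\to\infty$.

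For (ii), if $A_k=0$ then $x_k=z_k$ (or $\zeta_k$ enters with $a/(A_k+a)=1$ and $y(\lambda)=z_k$ does not depend on $\lambda$), so $m$ is constant. When $A_k=0$ this means $k=0$ and $\zeta_0=0$, so $m\equiv0$ and all claims are trivial. For $A_k>0$, $m(\lambda)=\zeta_k\,\phi(a(\lambda))$ with $\phi(a)=a/(A_k+a)$, which is increasing. Hence $m$ is continuous and nondecreasing with $m\to0$ as $\lambda\to0^+$. For the subhomogeneity I use the closed form \eqref{eq:mudef}: $m(\lambda)=\frac{\zeta_k}{2A_k}h(\lambda)$ with $h(\lambda)=\sqrt{\lambda^2+4\lambda A_k}-\lambda$. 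The claim $m(c\lambda)\le c\,m(\lambda)$ for $c\ge1$ is equivalent to $h(\lambda)/\lambda$ being nonincreasing. Indeed, $h(\lambda)/\lambda=\sqrt{1+4A_k/\lambda}-1$, which decreases in $\lambda$. This is the only step needing a small idea, and it is immediate from this rewriting.

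For (iii), start from the triangle inequality
\[
\|B_k-H(y(\lambda))\|_{\rm op}\le\|B_k-H(x_k)\|_{\rm op}+\|H(x_k)-H(y(\lambda))\|_{\rm op}\le\bar E_k+L_2m(\lambda),
\]
using Lemma~\ref{lem:telescope} and Assumption~\ref{as:main}. The cap $L_1$ holds because $B_k\in\mathcal Z$ and $H(y(\lambda))\in\mathcal Z$, so their difference lies between $-L_1I$ and $L_1I$, as in the proof of Lemma~\ref{lem:telescope}. Together these give $E_k(\lambda)$. For the averaged Hessian, write
\[
B_k-\int_0^1H\big(x_k+s(y(\lambda)-x_k)\big)\,ds=\big(B_k-H(x_k)\big)+\int_0^1\big(H(x_k)-H(x_k+s(y(\lambda)-x_k))\big)\,ds.
\]
Bound the integrand by $L_2 s\,m(\lambda)$ and integrate to get $\frac{L_2}{2}m(\lambda)$, exactly as in Lemma~\ref{lem:fd}. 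No step is a real obstacle; the only care needed is the $A_k=0$ edge case in (ii).
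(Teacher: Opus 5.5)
Your proof is correct and follows the paper's argument in all three parts, including the reduction of the $A_k=0$ case to $k=0$, $\zeta_0=0$ and the triangle-inequality and averaged-Hessian bounds in (iii). The only cosmetic difference is in the scaling step of (ii). You show that $m(\lambda)/\lambda$ is nonincreasing through the closed form $\sqrt{1+4A_k/\lambda}-1$, whereas the paper divides $a^2=\lambda(A_k+a)$ by $a(A_k+a)$ to obtain $m(\lambda)/\lambda=\zeta_k/a(\lambda)$, which is the same quantity after rationalizing.
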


\begin{proof}
\emph{(i)} Both $\lambda$ and $\sqrt{\lambda^2+4\lambda A_k}$ are
continuous and strictly increasing in $\lambda>0$ (recall $A_k\ge0$), hence
so is $a(\lambda)$. Since $a(\lambda)\ge\lambda$, $a(\lambda)\to\infty$ as
$\lambda\to\infty$. Since $\sqrt{\lambda^2+4\lambda A_k}\le\lambda+2\sqrt{\lambda A_k}$,
we have $a(\lambda)\le\lambda+\sqrt{\lambda A_k}\to0$ as $\lambda\to0^+$.

\emph{(ii)} Write $m(\lambda)=\zeta_k\,\phi(a(\lambda))$ with
$\phi(a)=a/(A_k+a)=1-A_k/(A_k+a)$. The map $\phi$ is continuous and
nondecreasing on $(0,\infty)$, and $a(\cdot)$ is continuous and increasing
by (i), so $m$ is continuous and nondecreasing. If $A_k>0$, then
$\phi(a)\to0$ as $a\to0^+$ and $a(\lambda)\to0$ as $\lambda\to0^+$, so
$m(\lambda)\to0$. If $A_k=0$, then $k=0$ and $x_0=z_0$, so $\zeta_0=0$
and $m\equiv0$. For the scaling inequality, divide the defining relation
$a^2=\lambda(A_k+a)$ by $a(A_k+a)$ to get $a/(A_k+a)=\lambda/a$; hence
\[
\frac{m(\lambda)}{\lambda}=\frac{\zeta_k}{a(\lambda)},
\]
which is nonincreasing in $\lambda$ by (i). For $c\ge1$ this gives
$m(c\lambda)/(c\lambda)\le m(\lambda)/\lambda$, that is,
$m(c\lambda)\le c\,m(\lambda)$.

\emph{(iii)} Let $v=y(\lambda)-x_k$, so $\|v\|=m(\lambda)$. By the
triangle inequality, the certificate $\|B_k-H(x_k)\|_{\rm op}\le\bar E_k$,
and the $L_2$-Lipschitz continuity of $H$,
\[
\|B_k-H(y(\lambda))\|_{\rm op}
\le\|B_k-H(x_k)\|_{\rm op}+\|H(x_k)-H(x_k+v)\|_{\rm op}
\le\bar E_k+L_2 m(\lambda).
\]
Moreover $B_k\in\mathcal Z$ by construction and $H(y(\lambda))\in\mathcal Z$
by Assumption~\ref{as:main}, so $-L_1I\preceq B_k-H(y(\lambda))\preceq L_1I$
and $\|B_k-H(y(\lambda))\|_{\rm op}\le L_1$. Taking the smaller of the two
bounds gives the first claim. For the second, with
$\bar H=\int_0^1H(x_k+sv)\,ds$,
\[
\|H(x_k)-\bar H\|_{\rm op}
=\Big\|\int_0^1\big(H(x_k)-H(x_k+sv)\big)\,ds\Big\|_{\rm op}
\le\int_0^1L_2s\|v\|\,ds=\frac{L_2}{2}{\color{ReviewBlue}m}(\lambda),
\]
and the triangle inequality gives
$\|B_k-\bar H\|_{\rm op}\le\|B_k-H(x_k)\|_{\rm op}+\|H(x_k)-\bar H\|_{\rm op}
\le\bar E_k+\frac{L_2}{2}{\color{ReviewBlue}m}(\lambda)$.
\end{proof}

\begin{lemma}[The resolvent certificate]\label{lem:rescert}
Fix an iteration $k$ of Algorithm~1 and $\lambda>0$. Let
$y(\lambda)=\big(A_kx_k+a(\lambda)z_k\big)/\big(A_k+a(\lambda)\big)$ and
${\color{ReviewBlue}m}(\lambda)=\|y(\lambda)-x_k\|$ be as in Lemma~\ref{lem:mono}, let
$B_k\in\mathcal Z$ be the current model with certificate
$\|B_k-H(x_k)\|_{\rm op}\le\bar E_k$, and let $g_k=\|\nabla f(x_k)\|$. Then
the model step $r(\lambda)=-(B_k+\lambda^{-1}I)^{-1}\nabla f(y(\lambda))$
satisfies
\[
\|r(\lambda)\|\ \le\ \widehat\rho_k(\lambda)
:=\lambda g_k+{\color{ReviewBlue}m}(\lambda)\Big(1+\lambda\bar E_k+\frac{L_2\lambda}{2}{\color{ReviewBlue}m}(\lambda)\Big),
\]
which is the bound \eqref{eq:rhodef}.
\end{lemma}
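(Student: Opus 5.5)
Write $y=y(\lambda)$, $v=y-x_k$ with $\|v\|=m(\lambda)$, and $R_\lambda=(B_k+\lambda^{-1}I)^{-1}$. The plan is to expand $\nabla f(y)$ around the anchor $x_k$, where the gradient norm $g_k$ is cached, and then bound each resulting piece through the resolvent. By the fundamental theorem of calculus,
\[
\nabla f(y)=\nabla f(x_k)+\bar H v,\qquad \bar H=\int_0^1H(x_k+sv)\,ds .
\]
We then split $\bar H v=B_kv+(\bar H-B_k)v$, so that
\[
r(\lambda)=-R_\lambda\nabla f(x_k)-R_\lambda B_kv-R_\lambda(\bar H-B_k)v .
\]

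The three terms are bounded separately using $B_k\in\mathcal Z$.

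First, $B_k\succeq0$ gives $\|R_\lambda\|_{\rm op}\le\lambda$. Hence the first term has norm at most $\lambda g_k$.

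Second, $R_\lambda B_k$ is a function of the symmetric matrix $B_k$ with eigenvalues $b/(b+\lambda^{-1})\in[0,1)$. Hence $\|R_\lambda B_k\|_{\rm op}\le1$, and the second term has norm at most $m(\lambda)$. This is the step that avoids a factor $\lambda L_1$: bounding $B_kv$ by $L_1m$ and then applying $\|R_\lambda\|\le\lambda$ would be too crude.

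Third, the averaged transport bound of Lemma~\ref{lem:mono}(iii) gives $\|B_k-\bar H\|_{\rm op}\le\bar E_k+\tfrac{L_2}{2}m(\lambda)$. Combined with $\|R_\lambda\|_{\rm op}\le\lambda$, the third term has norm at most $\lambda m(\lambda)\bigl(\bar E_k+\tfrac{L_2}{2}m(\lambda)\bigr)$.

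Summing the three bounds by the triangle inequality gives exactly $\widehat\rho_k(\lambda)$.

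The case $A_k=0$ needs no separate treatment: then $m\equiv0$, $y=x_0$, and the bound reduces to $\lambda g_0$. The only delicate point is to use the averaged-Hessian form of the transport bound, which gives $\tfrac{L_2}{2}m$ rather than $L_2m$, and to keep $R_\lambda B_k$ together as a single factor. Both are routine once they are identified.
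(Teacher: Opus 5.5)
Your proposal is correct and follows essentially the same argument as the paper: the FTC expansion around $x_k$, the three-term resolvent split, the bounds $\|R_\lambda\|_{\rm op}\le\lambda$ and $\|R_\lambda B_k\|_{\rm op}\le1$ from $B_k\succeq0$, and the averaged transport bound $\bar E_k+\tfrac{L_2}{2}m(\lambda)$ from Lemma~\ref{lem:mono}(iii). Your remark on the case $A_k=0$ is a harmless addition; the general argument already covers it.
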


\begin{proof}
Let $v=y(\lambda)-x_k$, so that $\|v\|={\color{ReviewBlue}m}(\lambda)$, and let
$\bar H=\int_0^1H(x_k+sv)\,ds$. By the fundamental theorem of calculus,
$\nabla f(y(\lambda))=\nabla f(x_k)+\bar Hv$, and adding and subtracting
$B_kv$,
\[
\nabla f(y(\lambda))=\nabla f(x_k)+B_kv+(\bar H-B_k)v .
\]
Applying $-(B_k+\lambda^{-1}I)^{-1}$ to each term,
\begin{equation}\label{eq:rsplit}
r(\lambda)=-(B_k+\lambda^{-1}I)^{-1}\nabla f(x_k)
-(B_k+\lambda^{-1}I)^{-1}B_k\,v
-(B_k+\lambda^{-1}I)^{-1}(\bar H-B_k)\,v .
\end{equation}
We bound the three operators. Since $B_k\in\mathcal Z$, it is symmetric
with eigenvalues $b_i\ge0$. In an eigenbasis of $B_k$, the matrix
$(B_k+\lambda^{-1}I)^{-1}$ is diagonal with entries
$1/(b_i+\lambda^{-1})\le\lambda$, and $(B_k+\lambda^{-1}I)^{-1}B_k$ is
diagonal with entries $b_i/(b_i+\lambda^{-1})\in[0,1)$. Both matrices are
symmetric, so
\[
\|(B_k+\lambda^{-1}I)^{-1}\|_{\rm op}\le\lambda,
\qquad
\|(B_k+\lambda^{-1}I)^{-1}B_k\|_{\rm op}\le1 .
\]
The second bound is where $B_k\succeq0$ is used: the displacement $v$
enters \eqref{eq:rsplit} without a factor $\lambda L_1$. For the third
operator, Lemma~\ref{lem:mono}(iii) states that
$\|B_k-\bar H\|_{\rm op}\le\bar E_k+\frac{L_2}{2}{\color{ReviewBlue}m}(\lambda)$. Taking norms
in \eqref{eq:rsplit} and using $\|v\|={\color{ReviewBlue}m}(\lambda)$ and
$\|\nabla f(x_k)\|=g_k$,
\[
\|r(\lambda)\|
\le\lambda g_k+{\color{ReviewBlue}m}(\lambda)
+\lambda\Big(\bar E_k+\frac{L_2}{2}{\color{ReviewBlue}m}(\lambda)\Big){\color{ReviewBlue}m}(\lambda)
=\lambda g_k+{\color{ReviewBlue}m}(\lambda)\Big(1+\lambda\bar E_k+\frac{L_2\lambda}{2}{\color{ReviewBlue}m}(\lambda)\Big),
\]
which is $\widehat\rho_k(\lambda)$.
\end{proof}

\subsubsection{Proof of Lemma~\ref{lem:scalarbounds}}
\label{proof:lem:scalarbounds}
\begin{proof}
The first bound is the transport statement of Lemma~\ref{lem:mono}; the
second is Lemma~\ref{lem:rescert}.
\end{proof}

\subsubsection{Proof of Lemma~\ref{lem:wellposed}}
\label{proof:main:lem:wellposed}

\begin{proof}
Throughout, $\widehat\Psi_k$ is the function \eqref{eq:psiexpanded},
\[
\widehat\Psi_k(\lambda)=\lambda\min\{L_1,\bar E_k+L_2{\color{ReviewBlue}m}\}
+\frac{L_2g_k}{2}\lambda^2+\frac{L_2}{2}\lambda{\color{ReviewBlue}m}
+\frac{L_2\bar E_k}{2}\lambda^2{\color{ReviewBlue}m}+\frac{L_2^2}{4}\lambda^2{\color{ReviewBlue}m}^2,
\qquad {\color{ReviewBlue}m}={\color{ReviewBlue}m}(\lambda),
\]
where ${\color{ReviewBlue}m}(\lambda)=\zeta_k a(\lambda)/(A_k+a(\lambda))$ is given by
\eqref{eq:mudef}. Its value depends only on $\lambda$, the stored scalars
$A_k,\zeta_k,g_k,\bar E_k$, and $L_1,L_2$, so Subroutine~1.1 queries no
gradient. We also record that $0<\bar E_k\le L_1$: the upper bound is the
cap in \eqref{eq:rollingcert}, and the lower bound holds because the
second branch of \eqref{eq:rollingcert} contains the allowance
$e_k=b/(k-\chi+2)^2>0$ and $b>0$. In particular $W_k=\log(L_1/\bar E_k)$ is
finite and nonnegative, and $\sigma/\bar E_k<\infty$.

We first show that $\widehat\Psi_k$ is continuous, strictly increasing, and onto $(0,\infty)$. By Lemma~\ref{lem:mono}(ii), ${m}$ is continuous and
nondecreasing with ${m}(\lambda)\to0$ as $\lambda\downarrow0$ (and
${m}\equiv0$ if $A_k=0$). Hence every term of $\widehat\Psi_k$ is
continuous, nondecreasing, and tends to zero as $\lambda\downarrow0$. For strict monotonicity, let $E_k(\lambda)=\min\{L_1,\bar E_k+L_2{m}(\lambda)\}$ be as in \eqref{eq:Etransport}; since $\bar E_k\le L_1$, $E_k$ is nondecreasing with $E_k(\lambda)\ge\bar E_k$.
For $\lambda_2>\lambda_1>0$,
\[
\lambda_2{E_k}(\lambda_2)-\lambda_1{E_k}(\lambda_1)
=\lambda_2\big({E_k}(\lambda_2)-{E_k}(\lambda_1)\big)+(\lambda_2-\lambda_1){E_k}(\lambda_1)
\ge(\lambda_2-\lambda_1)\bar E_k>0,
\]
so the first term is strictly increasing and the others are nondecreasing.
Finally $\widehat\Psi_k(\lambda)\ge\lambda{\color{ReviewBlue}E_k}(\lambda)\ge\lambda\bar E_k\to\infty$
as $\lambda\to\infty$. Therefore $\widehat\Psi_k(\lambda)=\sigma$ has a unique
root $\lambda_k^\circ>0$, the set $\{\lambda:\widehat\Psi_k(\lambda)\le\sigma\}$
is the interval $(0,\lambda_k^\circ]$, and we will use repeatedly that
\begin{equation}\label{eq:psilower}
\widehat\Psi_k(\lambda)\ \ge\ \lambda\bar E_k\qquad\text{for all }\lambda>0 .
\end{equation}

Next we verify the bracket maintained by Subroutine~1.1. By \eqref{eq:psilower},
$\widehat\Psi_k(\sigma/\bar E_k)\ge\sigma$. If $W_k=0$, then $\bar E_k=L_1$
and \eqref{eq:psilower} gives $\widehat\Psi_k(\sigma/L_1)\ge\sigma$, so
Step~1 of the subroutine returns $\lambda_k=\sigma/L_1$ after one
evaluation. If Step~1 does not return, then
$\sigma>\widehat\Psi_k(\sigma/L_1)\ge\sigma\bar E_k/L_1$ by
\eqref{eq:psilower}, so $\bar E_k<L_1$ and $W_k>0$. Step~2 then sets
$\lambda^-=\sigma/L_1<\lambda^+=\sigma/\bar E_k$, and the two facts just
established give the invariant
\begin{equation}\label{eq:bracket}
\widehat\Psi_k(\lambda^-)<\sigma\le\widehat\Psi_k(\lambda^+),
\qquad\text{equivalently}\qquad \lambda^-<\lambda_k^\circ\le\lambda^+ .
\end{equation}
Each pass of Step~3 evaluates $\widehat\Psi_k$ at $\lambda=\sqrt{\lambda^-\lambda^+}$
and replaces $\lambda^-$ by $\lambda$ if $\widehat\Psi_k(\lambda)<\sigma$,
and $\lambda^+$ by $\lambda$ otherwise; either way \eqref{eq:bracket} is
preserved. Moreover $\log(\lambda/\lambda^-)=\log(\lambda^+/\lambda)
=\tfrac12\log(\lambda^+/\lambda^-)$, so each pass halves
$\log(\lambda^+/\lambda^-)$.

The evaluation count follows from the halving. The quantity $\log(\lambda^+/\lambda^-)$
equals $W_k$ when the loop is entered and $W_k2^{-m}$ after $m$ passes.
The loop runs while $\lambda^+/\lambda^->2^{1/4}$, that is, while
$W_k2^{-m}>\log2^{1/4}$, so the number of passes is the smallest integer
$m\ge0$ with $2^m\ge W_k/\log2^{1/4}$, namely
$\max\{0,\lceil\log_2(W_k/\log2^{1/4})\rceil\}
=\lceil\log_2\max\{1,W_k/\log2^{1/4}\}\rceil$. Adding the single evaluation
of Step~1 gives the bound of the lemma; when Step~1 returns, the bound holds
with the maximum equal to $1$.

We now show that the returned stepsize satisfies \eqref{eq:ms}. Algorithm~1
sets $a_k=a(\lambda_k)$, $y_k=y(\lambda_k)$, $r_k=r(\lambda_k)$ and
$x_{k+1}=y_k+r_k$, and \eqref{eq:ms} asks for
$\|r_k+\lambda_k\nabla f(y_k+r_k)\|\le\sigma\|r_k\|$. If Step~1 returns
$\lambda_k=\sigma/L_1$, we use the second certificate of
Lemma~\ref{lem:resid}: if $0\preceq B\preceq LI$ and
$0\preceq\nabla^2F\preceq LI$ on $[y,y+r]$, then
$\|r+\lambda\nabla F(y+r)\|\le\lambda L\|r\|$. It applies with $F=f$,
$B=B_k\in\mathcal Z$ and $L=L_1$, since $0\preceq H\preceq L_1I$ everywhere
by Assumption~\ref{as:main}, and gives
$\|r_k+\lambda_k\nabla f(y_k+r_k)\|\le\lambda_kL_1\|r_k\|=\sigma\|r_k\|$.
Otherwise Step~4 returns $\lambda_k=\lambda^-$, and
$\widehat\Psi_k(\lambda_k)<\sigma$ by \eqref{eq:bracket}. The first
certificate of Lemma~\ref{lem:resid} states
$\|r+\lambda\nabla f(y+r)\|\le\lambda\big(\|B-H(y)\|_{\rm op}
+\frac{L_2}{2}\|r\|\big)\|r\|$, and Lemma~\ref{lem:scalarbounds} gives
$\|B_k-H(y_k)\|_{\rm op}\le E_k(\lambda_k)$ and
$\|r_k\|\le\widehat\rho_k(\lambda_k)$. Since the right-hand side of the
certificate is increasing in both quantities, and
$\widehat\Psi_k(\lambda)=\lambda E_k(\lambda)+\frac{L_2\lambda}{2}\widehat\rho_k(\lambda)$
by definition,
\[
\|r_k+\lambda_k\nabla f(y_k+r_k)\|
\le\lambda_k\Big(E_k(\lambda_k)+\frac{L_2}{2}\widehat\rho_k(\lambda_k)\Big)\|r_k\|
=\widehat\Psi_k(\lambda_k)\|r_k\|\le\sigma\|r_k\| .
\]

The stepsize bounds are immediate. If Step~1 returns, $\lambda_k=\sigma/L_1
\le\sigma/\bar E_k$ because $\bar E_k\le L_1$. Otherwise $\lambda^-$ is only
ever replaced by a larger value and $\lambda^+$ only by a smaller one, so
at termination $\sigma/L_1\le\lambda^-<\lambda^+\le\sigma/\bar E_k$, and
$\lambda_k=\lambda^-$ satisfies the claim.

Finally we prove near-maximality. If Step~1 returns,
$\widehat\Psi_k(\lambda_k)\ge\sigma$ by the test in Step~1. Otherwise let
$c=\lambda^+/\lambda^-$ at termination; the exit condition of Step~3 gives
$1<c\le2^{1/4}$. We compare $\widehat\Psi_k(c\lambda)$ with
$\widehat\Psi_k(\lambda)$ term by term, using
${\color{ReviewBlue}m}(c\lambda)\le c\,{\color{ReviewBlue}m}(\lambda)$ from Lemma~\ref{lem:mono}(ii) and the
elementary inequality $\min\{L_1,\bar E_k+cX\}\le c\min\{L_1,\bar E_k+X\}$
for $X\ge0$ and $c\ge1$ (if the right-hand minimum is $L_1$, the left side
is at most $L_1\le cL_1$; otherwise $c(\bar E_k+X)\ge\bar E_k+cX$). Replacing
$\lambda$ by $c\lambda$ multiplies the first term,
$\lambda\min\{L_1,\bar E_k+L_2{\color{ReviewBlue}m}\}$, by at most $c\cdot c=c^2$; the second,
$\frac{L_2g_k}{2}\lambda^2$, by $c^2$; the third, $\frac{L_2}{2}\lambda{\color{ReviewBlue}m}$,
by at most $c^2$; the fourth, $\frac{L_2\bar E_k}{2}\lambda^2{\color{ReviewBlue}m}$, by at most
$c^3$; and the fifth, $\frac{L_2^2}{4}\lambda^2{\color{ReviewBlue}m}^2$, by at most $c^4$.
Since $c\ge1$, all factors are at most $c^4\le2$, so
$\widehat\Psi_k(c\lambda)\le2\widehat\Psi_k(\lambda)$ for every $\lambda>0$.
Applying this with $\lambda=\lambda_k=\lambda^-$ and $c\lambda_k=\lambda^+$,
and using \eqref{eq:bracket},
\[
\sigma\le\widehat\Psi_k(\lambda^+)=\widehat\Psi_k(c\lambda_k)
\le2\widehat\Psi_k(\lambda_k),
\]
which is $\widehat\Psi_k(\lambda_k)\ge\sigma/2$.
\end{proof}

\subsubsection{Inverting near-maximality}\label{proof:fiveterm}
The rate analysis uses Lemma~\ref{lem:wellposed} through the following
pointwise bound on the inverse stepsize, which inverts the guarantee
$\widehat\Psi_k(\lambda_k)\ge\sigma/2$.

\begin{lemma}[Pointwise inverse-stepsize bound]\label{lem:fiveterm}
At every iteration of Algorithm~1 with Subroutine~1.1, let
${\color{ReviewBlue}m}_k={\color{ReviewBlue}m}(\lambda_k)=\|y_k-x_k\|$, $g_k=\|\nabla f(x_k)\|$ and
$E_k=E_k(\lambda_k)=\min\{L_1,\bar E_k+L_2{\color{ReviewBlue}m}_k\}$. Then
\begin{equation}\label{eq:fiveterm}
\lambda_k^{-1}\ \le\ \frac{10E_k}{\sigma}
+\sqrt{\frac{5L_2g_k}{\sigma}}+\frac{5L_2{\color{ReviewBlue}m}_k}{\sigma}
+\sqrt{\frac{5L_2\bar E_k{\color{ReviewBlue}m}_k}{\sigma}}
+\frac{L_2{\color{ReviewBlue}m}_k}{2}\sqrt{\frac{10}{\sigma}} .
\end{equation}
\end{lemma}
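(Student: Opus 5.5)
The plan is to invert the near-maximality guarantee $\widehat\Psi_k(\lambda_k)\ge\sigma/2$ of Lemma~\ref{lem:wellposed}. We write $\widehat\Psi_k(\lambda_k)$ as the sum of its five terms from \eqref{eq:psiexpanded}, evaluated at $\lambda=\lambda_k$ and $m=m_k$:
\[
T_1=\lambda_kE_k,\quad T_2=\tfrac{L_2g_k}{2}\lambda_k^2,\quad T_3=\tfrac{L_2}{2}\lambda_km_k,\quad T_4=\tfrac{L_2\bar E_k}{2}\lambda_k^2m_k,\quad T_5=\tfrac{L_2^2}{4}\lambda_k^2m_k^2 .
\]
Since the sum is at least $\sigma/2$, pigeonhole gives some index $i$ with $T_i\ge\sigma/10$. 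For that single term we solve for $\lambda_k^{-1}$, and then bound $\lambda_k^{-1}$ by the sum of all five resulting expressions. This sum dominates whichever one is active, because every expression is nonnegative.

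The case-by-case inversions are as follows.
\begin{itemize}
\item $T_1\ge\sigma/10$ gives $\lambda_k^{-1}\le 10E_k/\sigma$.
\item $T_2\ge\sigma/10$ gives $\lambda_k^{-2}\le 5L_2g_k/\sigma$, hence $\lambda_k^{-1}\le\sqrt{5L_2g_k/\sigma}$.
\item $T_3\ge\sigma/10$ gives $\lambda_k^{-1}\le 5L_2m_k/\sigma$.
\item $T_4\ge\sigma/10$ gives $\lambda_k^{-2}\le 5L_2\bar E_km_k/\sigma$, hence $\lambda_k^{-1}\le\sqrt{5L_2\bar E_km_k/\sigma}$.
\item $T_5\ge\sigma/10$ gives $\lambda_k^{-2}\le 10L_2^2m_k^2/(4\sigma)$, hence $\lambda_k^{-1}\le\frac{L_2m_k}{2}\sqrt{10/\sigma}$.
\end{itemize}
These are exactly the five terms of \eqref{eq:fiveterm}.

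A few details need checking. First, the value of $m$ used inside $\widehat\Psi_k$ at the returned stepsize is $m(\lambda_k)=\|y(\lambda_k)-x_k\|=\|y_k-x_k\|=m_k$, since Algorithm~1 sets $y_k=y(\lambda_k)$. Second, $E_k(\lambda_k)$ coincides with the $E_k$ of the statement. Third, the guarantee $\widehat\Psi_k(\lambda_k)\ge\sigma/2$ holds in both branches of Subroutine~1.1: in the Step~1 branch, directly from its test; in the bisection branch, as established in the proof of Lemma~\ref{lem:wellposed}.

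There is no real obstacle. The only point requiring care is to use the $\sigma/10$ split, i.e.\ five terms sharing $\sigma/2$, so that the constants come out exactly as stated. The case $m_k=0$ (including $A_k=0$) is harmless, since the corresponding terms simply vanish and cannot be the active ones.
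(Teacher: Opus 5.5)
Your proposal is correct and is essentially the paper's proof: you split $\widehat\Psi_k(\lambda_k)\ge\sigma/2$ into its five nonnegative terms, use pigeonhole to find one that is at least $\sigma/10$, invert it, and bound $\lambda_k^{-1}$ by the sum of all five inversions. Your checks that $m(\lambda_k)=m_k$ and that the $\sigma/2$ guarantee holds in both branches of Subroutine~1.1 are the right ones, and your constants match.
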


\begin{proof}
By \eqref{eq:psiexpanded}, $\widehat\Psi_k(\lambda_k)$ is the sum of the five
nonnegative terms
\[
\lambda_kE_k,\quad \frac{L_2g_k}{2}\lambda_k^2,\quad
\frac{L_2}{2}\lambda_k{\color{ReviewBlue}m}_k,\quad
\frac{L_2\bar E_k}{2}\lambda_k^2{\color{ReviewBlue}m}_k,\quad
\frac{L_2^2}{4}\lambda_k^2{\color{ReviewBlue}m}_k^2 .
\]
Their sum is at least $\sigma/2$ by Lemma~\ref{lem:wellposed}, so at least
one of them is at least $\sigma/10$. Solving each alternative for
$\lambda_k^{-1}$:
$\lambda_kE_k\ge\sigma/10$ gives $\lambda_k^{-1}\le10E_k/\sigma$;
$\frac{L_2g_k}{2}\lambda_k^2\ge\sigma/10$ gives
$\lambda_k^{-1}\le\sqrt{5L_2g_k/\sigma}$;
$\frac{L_2}{2}\lambda_k{\color{ReviewBlue}m}_k\ge\sigma/10$ gives
$\lambda_k^{-1}\le5L_2{\color{ReviewBlue}m}_k/\sigma$;
$\frac{L_2\bar E_k}{2}\lambda_k^2{\color{ReviewBlue}m}_k\ge\sigma/10$ gives
$\lambda_k^{-1}\le\sqrt{5L_2\bar E_k{\color{ReviewBlue}m}_k/\sigma}$; and
$\frac{L_2^2}{4}\lambda_k^2{\color{ReviewBlue}m}_k^2\ge\sigma/10$ gives
$\lambda_k^{-1}\le\frac{L_2{\color{ReviewBlue}m}_k}{2}\sqrt{10/\sigma}$. Whichever alternative
holds, $\lambda_k^{-1}$ is at most the sum of the five right-hand sides,
which is \eqref{eq:fiveterm}.
\end{proof}

\subsection{Proof of Theorem~\ref{thm:iterationrate}}\label{proof:iterationrate}
Throughout this proof and the next, $N\ge1$ is a completed horizon,
$b=L_2R$, ${\color{ReviewBlue}m}_k$, $g_k$ and $E_k$ are as in Lemma~\ref{lem:fiveterm},
$\delta_k={\color{ReviewBlue}m}_k+\|r_k\|$, and
\[
 K=\chi(L_1+L_2R),\qquad \lambda_b=\frac{\sigma}{L_1},\qquad
 q_k=\sqrt{\frac{A_{k+1}}{\lambda_k}}\,\|r_k\|.
\]
By Lemma~\ref{lem:wellposed}, every iteration produces a pair
$(y_k,x_{k+1})$ satisfying \eqref{eq:ms} with $\lambda_k\ge\lambda_b$, so the
lemmas of Appendix~\ref{app:backbone} apply with $F=f$. With the weight
recursion, Lemma~\ref{lem:potential}(ii),(iii) and Lemma~\ref{lem:bounded}
give
\begin{equation}\label{eq:fp-basic}
\begin{gathered}
 \lambda_k\ge\lambda_b,\qquad a_k^2=\lambda_k A_{k+1},\qquad
 A_{k+1}=A_k+a_k,\qquad A_0=0,\\
 \sum_{k<N}q_k^2\le\frac{R^2}{1-\sigma^2},\qquad
 \|x_k-x^\star\|\le C_xR,\qquad
 \|z_k-x^\star\|\le R,\qquad
 C_x=1+\frac{\sigma}{\sqrt{1-\sigma^2}}.
\end{gathered}
\end{equation}
In particular $A_1=a_0=\lambda_0\ge\lambda_b>0$, and the mass $A_k$ is
strictly increasing. We prove the two branches of
\eqref{eq:iterationheadline} separately and take their minimum at the end.

\paragraph{The smoothness branch.}
Lemma~\ref{lem:agrowth} states
$A_N\ge\frac14\big(\sum_{k<N}\sqrt{\lambda_k}\big)^2$, and
$\lambda_k\ge\lambda_b$ gives
\begin{equation}\label{eq:smoothbranch}
 A_N\ \ge\ \frac{\sigma N^2}{4L_1}.
\end{equation}
With Lemma~\ref{lem:potential}(i),
$f(x_N)-f^\star\le R^2/(2A_N)\le2L_1R^2/(\sigma N^2)$. The curvature branch
bounds the same $A_N$ from below by counting iterations on dyadic scales of
the mass.

\paragraph{One mass scale and its path length.}
Fix $s>0$ such that
$I=\{k\in\{0,\ldots,N-1\}:s\le A_{k+1}<2s\}$ is nonempty.
Since the mass is strictly increasing, $I=\{t,\ldots,u\}$ is an
interval. Write $n=|I|$ and $S_I=\sum_{k\in I}\lambda_k^{-1}$.
Define the constants
\begin{equation}\label{eq:fp-pathconstants}
 c_0=1+\log2,\qquad
 m_\sigma=(1+C_x)c_0,\qquad
 p_\sigma=m_\sigma+\sqrt{\frac{c_0}{1-\sigma^2}}.
\end{equation}
The first increment may enter the scale from below. Separating it and
using $(v-w)/v\le\log(v/w)$ for $v\ge w>0$ gives
\[
 \sum_{k=t}^u\frac{a_k}{A_{k+1}}
 \le1+\sum_{k=t+1}^u\log\frac{A_{k+1}}{A_k}
 =1+\log\frac{A_{u+1}}{A_{t+1}}\le c_0,
\]
because $A_{t+1}\ge s$ and $A_{u+1}<2s$. The sum after the first term is
empty when $t=u$. By \eqref{eq:ydiff}, ${\color{ReviewBlue}m}_k=(a_k/A_{k+1})\zeta_k$ with
$\zeta_k=\|z_k-x_k\|\le(1+C_x)R$ by \eqref{eq:fp-basic}, so
\[
 \sum_{k\in I}{\color{ReviewBlue}m}_k
 \le(1+C_x)R\sum_{k\in I}\frac{a_k}{A_{k+1}}
 \le m_\sigma R.
\]
Also $a_k/A_{k+1}\le1$, so
$\lambda_k/A_{k+1}=(a_k/A_{k+1})^2\le a_k/A_{k+1}$. Since
$\|r_k\|=q_k\sqrt{\lambda_k/A_{k+1}}$, Cauchy--Schwarz and
\eqref{eq:fp-basic} imply
\[
 \sum_{k\in I}\|r_k\|
 \le\left(\sum_{k\in I}q_k^2\right)^{1/2}
      \left(\sum_{k\in I}\frac{\lambda_k}{A_{k+1}}\right)^{1/2}
 \le R\sqrt{\frac{c_0}{1-\sigma^2}}.
\]
Thus
\begin{equation}\label{eq:fp-localpath}
 \sum_{k\in I}{\color{ReviewBlue}m}_k\le m_\sigma R,\qquad
 \sum_{k\in I}\delta_k\le p_\sigma R.
\end{equation}

\paragraph{Rolling windows crossing the scale boundary.}
Bound the first $\min\{\chi,n\}$ certificates in $I$ by the cap $L_1$ of
\eqref{eq:rollingcert}. Every remaining index $k\ge t+\chi$ satisfies
$k\ge\chi$, and its window $\{k-\chi+1,\ldots,k-1\}$ is contained in $I$,
so \eqref{eq:rollingcert} bounds $\bar E_k$ by $2L_2\widehat D_k+e_k$, where
$\widehat D_k$ sums displacements $\delta_j$ with $j\in I$. As in the proof
of Lemma~\ref{lem:sum}, each displacement is counted in at most
$\chi-1\le\chi$ such windows, and the finite-difference allowances over
these indices sum to at most $2b=2L_2R$. Hence \eqref{eq:fp-localpath}
yields
\[
 \sum_{k\in I}\bar E_k
 \le\chi L_1+2\chi L_2\sum_{k\in I}\delta_k+2L_2R
 \le\chi L_1+(2\chi p_\sigma+2)L_2R.
\]
Set $e_\sigma=2p_\sigma+2$. Since $\chi\ge1$ and $e_\sigma\ge1$,
\begin{equation}\label{eq:fp-localcert}
 \sum_{k\in I}\bar E_k\le e_\sigma K.
\end{equation}
This includes $n<\chi$, when all certificates are charged to the cap.
No displacement before $t$ is charged to the local path.

\paragraph{The gradient term on one scale.}
Since $\nabla f(x^\star)=0$, smoothness and \eqref{eq:fp-basic} give
$g_t\le L_1C_xR$. For $k=t+1,\ldots,u$, the condition \eqref{eq:ms} at
iteration $k-1$ and the triangle inequality give
$\lambda_{k-1}g_k\le(1+\sigma)\|r_{k-1}\|$, and since
$a_{k-1}=\sqrt{\lambda_{k-1}A_k}$,
\[
 g_k\le(1+\sigma)\frac{\|r_{k-1}\|}{\lambda_{k-1}}
       =(1+\sigma)\frac{q_{k-1}}{a_{k-1}}.
\]
Here $A_k\ge s$, because $k-1\in I$. Thus
$a_{k-1}^2=\lambda_{k-1}A_k\ge\lambda_b s$ and
\begin{align}
 \sum_{k\in I}\sqrt{g_k}
 &\le\sqrt{L_1C_xR}
 +\sqrt{1+\sigma}(\lambda_b s)^{-1/4}
       \sum_{j=t}^{u-1}q_j^{1/2}\notag\\
 &\le\sqrt{L_1C_xR}
 +\sqrt{1+\sigma}(\lambda_b s)^{-1/4}
       n^{3/4}\left(\sum_{j=t}^{u-1}q_j^2\right)^{1/4}\notag\\
 &\le\sqrt{C_x}\sqrt{L_1R}
 +\frac{\sqrt{1+\sigma}}{\sigma^{1/4}(1-\sigma^2)^{1/4}}\,
       L_1^{1/4}R^{1/2}n^{3/4}s^{-1/4}.
 \label{eq:fp-localgradient}
\end{align}
The second inequality is H\"older's inequality with exponents $4$ and
$4/3$, with $n-1\le n$ terms. The intermediate sums are zero when $n=1$.

\paragraph{The local inverse-stepsize bound.}
Sum \eqref{eq:fiveterm} over $k\in I$. Using $E_k\le\bar E_k+L_2{\color{ReviewBlue}m}_k$
from \eqref{eq:Etransport}, we obtain
\begin{align}
 S_I\le{}&
 \frac{10}{\sigma}\sum_{k\in I}\bar E_k
 +\left(\frac{15}{\sigma}+\frac12\sqrt{\frac{10}{\sigma}}\right)
        L_2\sum_{k\in I}{\color{ReviewBlue}m}_k+\sqrt{\frac{5}{\sigma}}\left[
       \sqrt{L_2}\sum_{k\in I}\sqrt{g_k}
       +\sqrt{L_2}\sum_{k\in I}\sqrt{\bar E_k{\color{ReviewBlue}m}_k}\right].
 \label{eq:fp-five-summed}
\end{align}
For the mixed term, Cauchy--Schwarz, \eqref{eq:fp-localcert} and
\eqref{eq:fp-localpath} give
\[
 \sqrt{L_2}\sum_{k\in I}\sqrt{\bar E_k{\color{ReviewBlue}m}_k}
 \le\sqrt{\left(\sum_{k\in I}\bar E_k\right)
              \left(L_2\sum_{k\in I}{\color{ReviewBlue}m}_k\right)}
 \le\sqrt{e_\sigma m_\sigma}\,K,
\]
where we used $L_2R\le K$. By the arithmetic--geometric mean inequality,
the boundary part of the gradient term satisfies
$\sqrt{C_x}\sqrt{L_1L_2R}\le(\sqrt{C_x}/2)K$. Define
\begin{equation}\label{eq:fp-momentconstants}
\begin{split}
 \alpha_\sigma={}&
 \frac{10e_\sigma}{\sigma}
 +\left(\frac{15}{\sigma}+\frac12\sqrt{\frac{10}{\sigma}}\right)m_\sigma
 +\sqrt{\frac5\sigma}
       \left(\frac{\sqrt{C_x}}2+\sqrt{e_\sigma m_\sigma}\right),\\
 \beta_\sigma={}&
 \sqrt{\frac5\sigma}\,
 \frac{\sqrt{1+\sigma}}{\sigma^{1/4}(1-\sigma^2)^{1/4}}.
\end{split}
\end{equation}
Substituting \eqref{eq:fp-localpath}--\eqref{eq:fp-localgradient} into
\eqref{eq:fp-five-summed} gives
\begin{equation}\label{eq:fp-localmoment}
\begin{split}
 S_I
 &\le\alpha_\sigma K+
       \beta_\sigma(L_1L_2^2R^2)^{1/4}n^{3/4}s^{-1/4}\le\alpha_\sigma K+
       \beta_\sigma K^{3/4}n^{3/4}s^{-1/4}.
\end{split}
\end{equation}
The last inequality uses $L_1(L_2R)^2\le(L_1+L_2R)^3\le K^3$.

\paragraph{Counting the iterations in this scale.}
The weight recursion gives
\[
 \sqrt{\lambda_k}
 =\frac{a_k}{\sqrt{A_{k+1}}}
 =(\sqrt{A_{k+1}}-\sqrt{A_k})
       \left(1+\sqrt{\frac{A_k}{A_{k+1}}}\right)
 \le2(\sqrt{A_{k+1}}-\sqrt{A_k}).
\]
Consequently
$\sum_{k\in I}\sqrt{\lambda_k}
 \le2(\sqrt{A_{u+1}}-\sqrt{A_t})\le2\sqrt{2s}$.
H\"older's inequality on $I$ with exponents $3/2$ and $3$ gives
\[
 n=\sum_{k\in I}
       (\sqrt{\lambda_k})^{2/3}(\lambda_k^{-1})^{1/3}
 \le\left(\sum_{k\in I}\sqrt{\lambda_k}\right)^{2/3}S_I^{1/3},
\]
and therefore $n^3\le8sS_I$. By \eqref{eq:fp-localmoment},
\[
 n^3\le8\alpha_\sigma Ks
       +8\beta_\sigma(Ks)^{3/4}n^{3/4}.
\]
Setting $v=n/(Ks)^{1/3}$ and dividing by $Ks>0$ yields
\[
 v^3\le8\alpha_\sigma+8\beta_\sigma v^{3/4}.
\]
If $v^3\le16\alpha_\sigma$, then $v\le(16\alpha_\sigma)^{1/3}$. Otherwise
$8\alpha_\sigma<v^3/2$, so $v^3/2\le8\beta_\sigma v^{3/4}$ and
$v\le(16\beta_\sigma)^{4/9}$. Thus, defining
\begin{equation}\label{eq:fp-countconstant}
 d_\sigma=
 \max\left\{(16\alpha_\sigma)^{1/3},
             (16\beta_\sigma)^{4/9}\right\},
 \qquad
 D_\sigma=\frac{d_\sigma}{1-2^{-1/3}},
\end{equation}
we have the explicit local count
\begin{equation}\label{eq:fp-blockcount}
 |I|\le d_\sigma(Ks)^{1/3}.
\end{equation}

\paragraph{Summing the scales.}
Let $s_j=2^jA_1$ and $I_j=\{k<N:s_j\le A_{k+1}<2s_j\}$ for
$j=0,\ldots,J$, where $J=\lfloor\log_2(A_N/A_1)\rfloor$. Since
$A_1\le A_{k+1}\le A_N<2s_J$ for every $k<N$, these sets partition the
completed iterations; empty sets contribute zero. Therefore
\begin{align*}
 N=\sum_{j=0}^J|I_j|
 &\le d_\sigma K^{1/3}\sum_{j=0}^Js_j^{1/3}\\
 &=d_\sigma(Ks_J)^{1/3}\sum_{j=0}^J2^{-(J-j)/3}\\
 &\le D_\sigma(Ks_J)^{1/3}
 \le D_\sigma(KA_N)^{1/3}.
\end{align*}
We conclude that
\begin{equation}\label{eq:fp-explicit-mass}
 A_N\ge\frac{N^3}{D_\sigma^3K}.
\end{equation}
Combining this with $f(x_N)-f^\star\le R^2/(2A_N)$ and
\eqref{eq:smoothbranch} proves
\begin{equation}\label{eq:fp-explicit-rate}
 f(x_N)-f^\star
 \le
 \min\left\{\frac{2L_1R^2}{\sigma N^2},
             \frac{D_\sigma^3KR^2}{2N^3}\right\},
\end{equation}
which is \eqref{eq:iterationheadline} with $k=N$, $K=\chi(L_1+L_2R)$ and
the constant $\max\{2/\sigma,D_\sigma^3/2\}$. All displayed constants depend
only on $\sigma$ and are independent of $N,d,\chi,L_1,L_2,R$; they have not
been optimized. No logarithm appears because the local counts form a
geometric series, and no bound on the global inverse-stepsize sum
$\sum_{k<N}\lambda_k^{-1}$ is needed. The count of three gradients per
iteration is verified in the proof of Corollary~\ref{thm:complexity} below. {\color{ReviewBlue}For a general allowance $b>0$ in place of $b=L_2R$, the same argument applies verbatim with $K$ replaced by $K_b=\chi(L_1+L_2R)+b$: the finite-difference allowances contribute $2b\le e_\sigma b$ to \eqref{eq:fp-localcert}, and the inequalities $L_2R\le K_b$ and $L_1(L_2R)^2\le K_b^3$ used afterwards continue to hold, so \eqref{eq:fp-explicit-mass} and \eqref{eq:fp-explicit-rate} hold with $K_b$ in place of $K$.}

\subsection{Proof of Corollary~\ref{thm:complexity}}\label{proof:thm:complexity}

\emph{Correctness.} Algorithm~1 returns $x_{k+1}$ as soon as
$A_{k+1}\ge R^2/(2\varepsilon)$; by Lemma~\ref{lem:potential}(i),
$f(x_{k+1})-f^\star\le R^2/(2A_{k+1})\le\varepsilon$. The test uses only
$A_{k+1}$, which the method computes. A return at $g_k=0$ is exact.

\emph{Number of iterations.} Set
\[
 B_\varepsilon=
 \min\left\{\sqrt{\frac{2L_1R^2}{\sigma\varepsilon}},
             D_\sigma\left(\frac{KR^2}{2\varepsilon}\right)^{1/3}
      \right\}.
\]
If $N\ge1$ iterations have been completed without meeting the stopping
rule, then $A_N<R^2/(2\varepsilon)$, and the two lower bounds
\eqref{eq:smoothbranch} and \eqref{eq:fp-explicit-mass} on $A_N$ give
$N<B_\varepsilon$. Consequently the method stops after at most
$\lceil B_\varepsilon\rceil$ iterations.

\emph{Gradient count.} Initialization computes $\nabla f(x_0)$. Iteration
$k$ computes the probe gradient $\nabla f(x_k+\tau_kv_{g_{i_k}})$ of
\eqref{eq:refresh}, the gradient $\nabla f(y_k)$ for the model step, and
$\nabla f(x_{k+1})$ for the momentum update; the last one is cached and
serves as the anchor gradient of iteration $k+1$, both in \eqref{eq:refresh}
and in $g_{k+1}$, and Subroutine~1.1 queries no gradient by
Lemma~\ref{lem:wellposed}. Thus each iteration uses at most three
gradients, and
\begin{equation}\label{eq:fp-explicit-queries}
 G_\varepsilon\le1+3\left\lceil
 \min\left\{\sqrt{\frac{2L_1R^2}{\sigma\varepsilon}},
             D_\sigma\left(\frac{KR^2}{2\varepsilon}\right)^{1/3}
      \right\}\right\rceil,
\end{equation}
which is \eqref{eq:rpcomplexity}.

\section{Analysis of the strongly convex method}\label{app:sc}
This appendix proves Theorem~\ref{thm:scmain}, Corollary~\ref{cor:scqueries}
and Remark~\ref{rem:scexplicit}.
Appendix~\ref{app:sc-setup} fixes notation and records the rolling
curvature certificate. Appendix~\ref{app:sc-weights} establishes the
weight identities and proves the potential decrease for admissible
steps. Appendix~\ref{app:sc-test} then shows that the scalar test
certifies admissibility and that Subroutine~2.1 returns a nearly
maximal certified stepsize. These results yield well-posedness,
bounded iterates and weight growth in Appendix~\ref{app:sc-run}.
Appendix~\ref{app:sc-path} uses these estimates to bound the total
anchor path. Combined with the rolling certificate, this gives a
bound on the inverse-stepsize moment that is independent of the
number of iterations. The final three subsections derive the
convergence rates and gradient complexity, and establish the
guarantee for the explicit stepsize rule.

\subsection{Setting and the certificate}\label{app:sc-setup}
Throughout, Assumptions~\ref{as:main} and~\ref{as:sc} hold, $0<\mu\le L_1$,
$L_2,R>0$, $\chi\ge1$, $\sigma\in(0,1)$ and $b=L_2R$. If
$g_k=\|\nabla f(x_k)\|=0$, then $x_k$ is the minimizer and Algorithm~2
returns it in Step~1; we therefore assume $g_k>0$ at every iteration
considered. Let us define
\[
v(\lambda)=\sqrt{\frac{\mu\lambda}{1+\mu\lambda}},\qquad
v_k=v(\lambda_k),\qquad
s_k=\frac{\sqrt{1+\mu\lambda_k}}{\lambda_k},\qquad
\rho_\star=1-v(\underline\lambda)\in(0,1),
\]
and $D_0=R$, $D_k=R/\sqrt{\mu A_k}$ for $k\ge1$.

At iteration $k$, hold the current state and model fixed. For a trial
stepsize $\lambda>0$, let $a(\lambda)$, $A'(\lambda)$, $\eta(\lambda)$,
$W(\lambda)$, $y(\lambda)$ and $m_k(\lambda)$ be as in
Section~\ref{sec:scmethod}, and set
\[
t=1+\mu A_k,\qquad t'(\lambda)=1+\mu A'(\lambda).
\]
The corresponding model step, trial point and HPE residual are
\[
r(\lambda)=-(B_k+\lambda^{-1}I)^{-1}\nabla f(y(\lambda)),\qquad
w(\lambda)=y(\lambda)+r(\lambda),\qquad
p(\lambda)=r(\lambda)+\lambda\nabla f(w(\lambda)).
\]
We write $\rho_k=\min\{\rho_k^{\rm R},\rho_k^{\rm E}\}$, where
$\rho_k^{\rm R}$ and $\rho_k^{\rm E}$ denote the first (resolvent) and the
second (energy) entry of the minimum in \eqref{eq:sc-rho}.

We first record that the rolling certificate remains valid when
the displacements are measured along the strongly convex iterates.

\begin{lemma}[The certificate]\label{lem:sc-cert}
At every iteration, $B_k\in\mathcal Z$, $0<\bar E_k\le L_1$ and
$\|B_k-H(x_k)\|_{\rm op}\le\bar E_k$. Moreover, for every completed horizon $N\ge1$,
\[
\sum_{k<N}\bar E_k\le\chi L_1+2\chi L_2\sum_{j<N}\delta_j+2b,
\qquad \delta_j=\|x_{j+1}-x_j\| .
\]
\end{lemma}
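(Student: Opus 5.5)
The plan is to observe that Lemma~\ref{lem:telescope} and Lemma~\ref{lem:sum} never used the convex recursion, the stepsize rule, or the identity $\delta_j=m_j+\|r_j\|$. Their only inputs are the refresh rule \eqref{eq:refresh}, the radius schedule $\tau_k$ with $b=L_2R$, Lemma~\ref{lem:group}, Lemma~\ref{lem:clip}, and the Hessian-drift inequality \eqref{eq:hessstep}. I will therefore re-run those two proofs with the single substitution $\delta_j=\|x_{j+1}-x_j\|$. For this $\delta_j$, \eqref{eq:hessstep} holds with equality of lengths, since $\|H(x_{j+1})-H(x_j)\|_{\rm op}\le L_2\|x_{j+1}-x_j\|$ by Assumption~\ref{as:main}.

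\emph{Feasibility and the cap.} $B_k=\Pi_{\mathcal Z}((M_k+M_k^\top)/2)$ lies in $\mathcal Z$ by construction, and $H(x_k)\in\mathcal Z$ (strong convexity only strengthens $0\preceq H$). Hence $\|B_k-H(x_k)\|_{\rm op}\le L_1$, which gives the branch $k<\chi$ and justifies the minimum for $k\ge\chi$. Positivity $\bar E_k>0$ holds because the second branch contains $e_k=b/(k-\chi+2)^2>0$ with $b=L_2R>0$, and $L_1>0$.

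\emph{The certificate for $k\ge\chi$.} I follow the proof of Lemma~\ref{lem:telescope} verbatim. First assign anchors $k_g$ within the window $\{k-\chi+1,\dots,k\}$. Then write $M_k-H_k=\sum_i(H_{k_i}-H_k)e_ie_i^\top+\Delta_k$, and perform the exchange of summation \eqref{eq:exchange} into full increments $H_{j+1}-H_j$ times the projections $\Pi_j$. This bounds the drift by $L_2\widehat D_k$ with the new $\delta_j$, and bounds the finite-difference part by $e_k/2$ in Frobenius norm through the radius schedule. Symmetrization and Lemma~\ref{lem:clip} then double these to $2L_2\widehat D_k+e_k$. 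Finally, the sum bound is the proof of Lemma~\ref{lem:sum}: charge the first $\chi$ terms to $L_1$, count each $\delta_j$ in at most $\chi-1$ windows, and use $\sum_{m\ge2}m^{-2}\le1$ to bound the allowances by $b\le2b$.

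\emph{Main point to check.} The only real issue is confirming that nothing in those proofs used properties of Algorithm~1 beyond the model construction. In particular, the iterates $x_j$ in Algorithm~2 are defined by \eqref{eq:sc-update}, and the anchors of the probes are exactly the $x_j$, with $\nabla f(x_j)$ cached. This holds because Algorithm~2 forms $B_k$ by the same \eqref{eq:refresh} at $x_k$, so the argument transfers without change.
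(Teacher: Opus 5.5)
Your proposal is correct and follows the paper's own proof. Both arguments note that the proofs of Lemma~\ref{lem:telescope} and Lemma~\ref{lem:sum} use the trajectory only through the anchors of the stored blocks, the drift bound $\|H(x_{j+1})-H(x_j)\|_{\rm op}\le L_2\delta_j$, and the formula \eqref{eq:rollingcert}. Substituting $\delta_j=\|x_{j+1}-x_j\|$ therefore carries over feasibility, positivity, the pointwise certificate, and the summed bound without change.
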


\begin{proof}
The construction gives $B_k\in\mathcal Z$. By the Lipschitz continuity
of the Hessian,
\[
\|H(x_{j+1})-H(x_j)\|_{\rm op}
\le L_2\|x_{j+1}-x_j\|
=L_2\delta_j.
\]
The proof of Lemma~\ref{lem:telescope} uses the trajectory only through
this inequality and the anchors of the stored blocks. Since the
refresh rule is unchanged, that proof applies with the present
displacements and gives
$\|B_k-H(x_k)\|_{\rm op}\le\bar E_k\le L_1$.
The strict positivity of $\bar E_k$ follows from
\eqref{eq:rollingcert}, since $L_1>0$ and $b>0$.
Finally, the summation argument of Lemma~\ref{lem:sum} depends only
on the formula \eqref{eq:rollingcert}, so it gives the stated bound
with these same displacements.
\end{proof}

\subsection{Weights and the potential}\label{app:sc-weights}
\begin{lemma}[Weights]\label{lem:sc-weights} 
Let $A\ge0$ and $\lambda>0$, let $a>0$ be the positive root of
$a^2=\lambda[A(1+\mu A)+a(1+2\mu A)]$, and put $A'=A+a$, $t=1+\mu A$,
$t'=1+\mu A'$, $\eta=at/t'$ and $W=A+\eta$. Then:
\begin{enumerate}[label=(\roman*)]
\item $a^2=\lambda A't'/(1+\mu\lambda)$, $a^2/t'=\lambda W$ and
      $W=A'/(1+\mu\lambda)$;
\item $\eta/W=\lambda t/a\in(0,1]$ and $\eta/W\le a/A'$;
\item $a^2\ge\lambda A'$ and $(a/A')^2=v(\lambda)^2\big(1+1/(\mu A')\big)$; in
      particular $a/A'\ge v(\lambda)$, and $a/A'\le\sqrt2\,v(\lambda)$ if
      $\mu A'\ge1$;
\item $\lambda/A'\le(a/A')^2$, and $\lambda/A'\le v(\lambda)^2/(\mu A)$ if
      $A>0$;
\item as functions of $\lambda$, $a$ and $\eta/W$ are nondecreasing, $a$ is
      concave with $a(0^+)=0$, and $\lambda/A'$ is nondecreasing.
\end{enumerate}
\end{lemma}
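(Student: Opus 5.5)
The plan is a direct algebraic verification from the defining quadratic, taking the items in order.

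For (i), rewrite the bracket. Since $A'=A+a$, we have $A t+a(1+2\mu A)=A(1+\mu A)+a+2\mu Aa$. Also $A't'=(A+a)(1+\mu A+\mu a)=A(1+\mu A)+a(1+2\mu A)+\mu a^2$. Hence the defining relation reads $a^2=\lambda(A't'-\mu a^2)$, which gives $a^2(1+\mu\lambda)=\lambda A't'$; this is the first identity. Next, $t'W=t'A+at=A(1+\mu A+\mu a)+a(1+\mu A)=A't$. We also have $A't'/(1+\mu\lambda)=a^2/\lambda$. So $W=A't/t'$, and it remains to show this equals $A'/(1+\mu\lambda)$, i.e. $t(1+\mu\lambda)=t'$, i.e. $\mu\lambda t=\mu a$. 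I would recheck this, because it gives $a=\lambda t$, which conflicts with the quadratic in general.

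So I would recompute $W$ directly: $W=A+at/t'=(At'+at)/t'$, with $At'+at=A+\mu A^2+\mu Aa+a+\mu Aa=A't+\mu Aa$. The intended route is then to check $a^2/t'=\lambda W$ by clearing $t'$: $a^2=\lambda(At'+at)=\lambda(A(1+\mu A)+a(1+2\mu A))$, which is exactly the quadratic, so $a^2/t'=\lambda W$ holds. Combining with the first identity gives $\lambda W=\lambda A'/(1+\mu\lambda)$, hence $W=A'/(1+\mu\lambda)$. The earlier misstep was only in the guessed form $W=A't/t'$, so I will organise the proof around $W t'=At'+at$.

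For (ii), $\eta/W=(at/t')/(a^2/(\lambda t'))=\lambda t/a$. This is positive, and it is at most $1$ because $\eta\le W$ as $A\ge0$. Using $W=A'/(1+\mu\lambda)$ and $\eta\le a$ (since $t\le t'$), we get $\eta/W\le a(1+\mu\lambda)/A'$. That is weaker than claimed, so instead I use $\eta/W=\lambda t/a$ and aim for $\lambda t A'\le a^2$. With $a^2=\lambda A't'/(1+\mu\lambda)$, this becomes $t(1+\mu\lambda)\le t'$, i.e. $\lambda t\le a$, which is exactly $\eta/W\le1$. So the two claims in (ii) coincide.

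For (iii), $(a/A')^2=\lambda t'/(A'(1+\mu\lambda))=v^2\,t'/(\mu A')=v^2(1+1/(\mu A'))$. This also gives $a^2\ge\lambda A'$, since $t'/(1+\mu\lambda)\ge1$ iff $\mu A'\ge\mu\lambda$, i.e. $A'\ge\lambda$. That follows from $a\ge\lambda t\ge\lambda$, by (ii). The two-sided bounds on $a/A'$ are then immediate.

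For (iv), the first claim is (iii) divided by $A'^2$. For the second, $\lambda/A'\le\lambda/(A'\cdot1)$; I will show $\lambda\mu A\le v^2A'$, i.e. $(1+\mu\lambda)A\le A'$. This is equivalent to $\mu\lambda A\le a$, which follows from $a\ge\lambda t=\lambda(1+\mu A)$.

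For (v), implicit differentiation of the quadratic $F(a,\lambda)=a^2-\lambda(c_0+c_1a)=0$ gives monotonicity of $a$. Since $a=\lambda(c_0+c_1a)/a$, $a(0^+)=0$. Concavity follows because $a(\lambda)$ is the larger root $\big(\lambda c_1+\sqrt{\lambda^2c_1^2+4\lambda c_0}\big)/2$, a sum of a linear function and the square root of a concave quadratic in $\lambda$ with nonnegative coefficients. For $\eta/W=\lambda t/a$, monotonicity is equivalent to $a/\lambda$ being nonincreasing, which holds since $a$ is concave with $a(0^+)=0$. Finally, $\lambda/A'=(a/A')^2/(1+1/(\mu A')) \cdot(\text{stuff})$; more simply, by (iii), $\lambda/A'=\mu^{-1}v^2\big/\bigl(\mu A'/(1+\mu A')\bigr)$ is awkward. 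Instead I will write $\lambda/A'=(a/A')^2(1+\mu\lambda)/t'$ and argue directly via $A'/\lambda=A/\lambda+a/\lambda$.

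The main obstacle is this last monotonicity. $a/\lambda$ is nonincreasing, and $A/\lambda$ is decreasing, so $A'/\lambda$ is nonincreasing and $\lambda/A'$ is nondecreasing.
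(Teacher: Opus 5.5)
Your items (i)--(iv) are correct once the false starts are removed. The intermediate identity $t'W=A't$ in (i) is wrong; in fact $t'W=At'+at=A't+\mu Aa=a^2/\lambda$, and you rightly abandon it. In (ii) you reduce $\eta/W\le a/A'$ via (i) to $\lambda t\le a$, which is the same as $\eta/W\le1$. That is a valid alternative to the paper's argument, which uses $\eta\le a$ and monotonicity of $u\mapsto u/(A+u)$. Deriving $a^2\ge\lambda A'$ from $A'\ge a\ge\lambda t\ge\lambda$ also works, although it can be read off the defining equation directly.

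The gap is the concavity of $a$ in (v). You justify it by calling $g(\lambda)=c_1^2\lambda^2+4c_0\lambda$ a concave quadratic, but $g$ is convex: its leading coefficient is $c_1^2\ge1$. The square root of a convex quadratic with nonnegative coefficients need not be concave; for example, $\sqrt{\lambda^2+1}$ is convex. What makes $\sqrt g$ concave here is that $g$ has no constant term. You can either compute $(\sqrt g)''=\bigl(2gg''-(g')^2\bigr)/(4g^{3/2})=-4c_0^2/g^{3/2}\le0$, as the paper does. Or write $\sqrt{g(\lambda)}=\sqrt{\lambda\,(c_1^2\lambda+4c_0)}$ as the geometric mean of two nonnegative affine functions of $\lambda$, and use that $(u,v)\mapsto\sqrt{uv}$ is concave on the nonnegative orthant.

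As written, your monotonicity of $\eta/W=\lambda t/a$ and of $\lambda/A'$ inherits this gap. Both rest on ``$a/\lambda$ nonincreasing'', which you deduce from concavity. They need not depend on it at all, because $a(\lambda)/\lambda=\frac12\bigl(c_1+\sqrt{c_1^2+4c_0/\lambda}\bigr)$ is visibly nonincreasing. The explicit formula also gives $a(0^+)=0$ cleanly through $a(\lambda)\le c_1\lambda+\sqrt{c_0\lambda}$, which is better than your fixed-point remark.
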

\begin{proof}

(i) Expanding, $A't'=A(1+\mu A)+a(1+2\mu A)+\mu a^2=a^2/\lambda+\mu a^2$,
which is the first identity. Also $At'+at=A(1+\mu A)+a(1+2\mu A)=a^2/\lambda$;
dividing by $t'$ gives $A+\eta=a^2/(\lambda t')$, which is the second
identity, and the two together give the third.

(ii) By (i), $\eta/W=(at/t')\cdot\lambda t'/a^2=\lambda t/a$. Since $t\le t'$,
we have $\eta\le a$, and $u\mapsto u/(A+u)$ is nondecreasing, so
$\eta/W=\eta/(A+\eta)\le a/(A+a)=a/A'\le1$.

(iii) The defining equation gives $a^2\ge\lambda(A+a)=\lambda A'$. By (i),
$(a/A')^2=\frac{\lambda t'}{(1+\mu\lambda)A'}
=\frac{\mu\lambda}{1+\mu\lambda}\cdot\frac{1+\mu A'}{\mu A'}$.

(iv) The first bound is (iii). By (i), $A'=W(1+\mu\lambda)\ge A(1+\mu\lambda)$,
so $\lambda/A'\le\lambda/(A(1+\mu\lambda))=v(\lambda)^2/(\mu A)$.

(v) Write
$a(\lambda)=\frac12\big[\lambda\beta+\sqrt{\beta^2\lambda^2+4\gamma\lambda}\big]$
with $\beta=1+2\mu A$ and $\gamma=A(1+\mu A)\ge0$. Both terms are
nondecreasing, and $a(\lambda)\to0$ as $\lambda\downarrow0$. With
$g(\lambda)=\beta^2\lambda^2+4\gamma\lambda$ one computes
$(\sqrt g)''=-4\gamma^2/g^{3/2}\le0$, so $a$ is concave. Hence
$a(\lambda)\ge\lambda a'(\lambda)$, and
$\frac{d}{d\lambda}\frac{\lambda}{A+a(\lambda)}=\frac{A+a-\lambda a'}{(A+a)^2}\ge0$.
Finally, $\eta=a(1+\mu A)/(1+\mu A+\mu a)$ is increasing in $a$, and
$\eta/W=\eta/(A+\eta)$ is nondecreasing in $\eta$.
\end{proof}

\begin{lemma}[Potential decrease]\label{lem:sc-contract}
Fix an iteration $k$, a stepsize $\lambda>0$ and a symmetric matrix
$B_k\succeq0$, and let
$z'=z_k+\frac{a}{t'}\big[\mu(w-z_k)-\nabla f(w)\big]$ with
$a=a(\lambda)$, $t'=t'(\lambda)$ and $w=w(\lambda)$. If
$\|p(\lambda)\|\le\sigma\sqrt{1+\mu\lambda}\,\|r(\lambda)\|$, then
\[
\Psi'+\frac{(1-\sigma^2)A'(\lambda)}{2\lambda}\|r(\lambda)\|^2\le\Psi_k,
\qquad
\Psi'=A'(\lambda)\big(f(w)-f^\star\big)+\frac{t'}{2}\|z'-x^\star\|^2 .
\]
For $\lambda=\lambda_k$, $\Psi'=\Psi_{k+1}$.
\end{lemma}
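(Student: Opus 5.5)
The plan is to follow the one-step potential argument of Lemma~\ref{lem:potential}, with strong convexity used wherever convexity was used there and the extra terms tracked. Write $g=\nabla f(w)$, $a=a(\lambda)$, $A'=A'(\lambda)$, $\eta=\eta(\lambda)$, $W=W(\lambda)$, $y=y(\lambda)$, $r=r(\lambda)$, $w=y+r$. Since $t'=t+\mu a$, the update can be rewritten as
\[
z'=\frac{t z_k+\mu a\,w-a g}{t'} .
\]
This shows that $t'z'+a g$ is a $t$ versus $\mu a$ weighted combination of $z_k$ and $w$.

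First, I expand $\frac{t'}{2}\|z'-x^\star\|^2$. The combination $\frac{t(z_k-x^\star)+\mu a(w-x^\star)}{t'}$ has squared norm equal to the weighted average $\frac{t}{t'}\|z_k-x^\star\|^2+\frac{\mu a}{t'}\|w-x^\star\|^2$ minus the variance term $\frac{t\mu a}{t'^2}\|z_k-w\|^2$. I keep the variance term, because I expect it to be needed. Expanding then gives
\[
\tfrac{t'}2\|z'-x^\star\|^2\le\tfrac t2\|z_k-x^\star\|^2+\tfrac{\mu a}{2}\|w-x^\star\|^2-\tfrac{t\mu a}{2t'}\|z_k-w\|^2-a\Big\langle g,\tfrac{t(z_k-x^\star)+\mu a(w-x^\star)}{t'}\Big\rangle+\tfrac{a^2}{2t'}\|g\|^2 .
\]

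Second, I bound the inner product terms using two $\mu$-strong convexity inequalities at $w$. One is tested at $x^\star$ and multiplied by $a$; its $\frac{\mu a}{2}\|w-x^\star\|^2$ cancels the matching term above. The other is tested at $x_k$ and multiplied by $A$, keeping $\frac{\mu A}{2}\|x_k-w\|^2$. The identity
\[
\eta z_k=Wy-Ax_k,\qquad \eta=\frac{at}{t'},
\]
from Lemma~\ref{lem:sc-weights} then converts the leftover $-\eta\langle g,z_k-w\rangle$ into $A\langle g,x_k-w\rangle$ plus $W\langle g,w-y\rangle$. Exactly as in the convex proof, the function values recombine into $A(f(x_k)-f^\star)-A'(f(w)-f^\star)$.

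After these steps, what remains is
\[
\Psi'\le\Psi_k+W\langle g,r\rangle+\frac{a^2}{2t'}\|g\|^2-(\text{nonnegative $\mu$-terms}),
\]
where the nonnegative $\mu$-terms are $\frac{\mu A}{2}\|x_k-w\|^2$ and $\frac{t\mu a}{2t'}\|z_k-w\|^2$. By Lemma~\ref{lem:sc-weights}(i), $\frac{a^2}{2t'}=\frac{\lambda W}{2}$ and $W=A'/(1+\mu\lambda)$.

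The main obstacle is the final completion of the square. Without the $\mu$-terms, $W\langle g,r\rangle+\frac{\lambda W}{2}\|g\|^2=\frac{W}{2\lambda}\big(\|r+\lambda g\|^2-\|r\|^2\big)$. Under the admissibility hypothesis this is at most $\frac{W}{2\lambda}(\sigma^2(1+\mu\lambda)-1)\|r\|^2$. That bound is too weak: it gives coefficient $-(1-\sigma^2(1+\mu\lambda))A'/(2\lambda(1+\mu\lambda))$, whereas the target coefficient is $-(1-\sigma^2)A'/(2\lambda)$.

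So the discarded $\mu$-terms have to supply the missing
\[
\frac{\mu W}{2}\,\|r\|^2\,(1-\sigma^2)\cdot\text{(appropriate factor)} .
\]
I would try to get it as follows. The convexity of $\|\cdot\|^2$ applied to the two retained terms, using $y=(Ax_k+\eta z_k)/W$, gives
\[
\frac{\mu A}{2}\|x_k-w\|^2+\frac{\mu\eta}{2}\|z_k-w\|^2\ge\frac{\mu W}{2}\|y-w\|^2=\frac{\mu W}{2}\|r\|^2 ,
\]
where the second coefficient uses $\eta=at/t'$. Adding this to the completed square gives
\[
\frac{W}{2\lambda}\Big(\|r+\lambda g\|^2-(1+\mu\lambda)\|r\|^2\Big)\le-\frac{(1-\sigma^2)(1+\mu\lambda)W}{2\lambda}\|r\|^2=-\frac{(1-\sigma^2)A'}{2\lambda}\|r\|^2 .
\]
This is exactly the target, and it also explains the factor $\sqrt{1+\mu\lambda}$ in \eqref{eq:sc-hpe}.

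Finally, at $\lambda=\lambda_k$ all trial quantities coincide with those of Algorithm~2, so $\Psi'=\Psi_{k+1}$.
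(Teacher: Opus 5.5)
Your proposal is correct and is essentially the paper's argument. It uses strong convexity at $w$ against $x_k$ and $x^\star$ with weights $A_k$ and $a$, the coefficient $\eta=at/t'$, the center identity $A_k(w-x_k)+\eta(w-z_k)=Wr$ with the retained $\mu$-terms bounded below by $\frac{\mu W}{2}\|r\|^2$, and the completed square via $a^2/t'=\lambda W$ and $W(1+\mu\lambda)=A'$. The differences are cosmetic: the paper expands $z'-x^\star=(z_k-x^\star)+\frac{a}{t'}[\mu(w-z_k)-g]$ directly instead of using your convex-combination/variance identity (so your first display is in fact an equality), and it uses the exact identity $A_k\|w-x_k\|^2+\eta\|w-z_k\|^2=W\|r\|^2+\frac{A_k\eta}{W}\|x_k-z_k\|^2$ where you use Jensen.
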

\begin{proof}
Abbreviate $x=x_k$, $z=z_k$, $y=y(\lambda)$, $g=\nabla f(w)$ and
$r=r(\lambda)=w-y$. Write $A=A_k$ and use $a$, $A'$, $t$, $t'$,
$\eta$ and $W$ as in Lemma~\ref{lem:sc-weights}.

\emph{Step 1 (the objective term).}
Since $A'=A+a$, strong convexity at $w$, applied to $x$ and to
$x^\star$, gives
\begin{align*}
A'\big(f(w)-f^\star\big)-A\big(f(x)-f^\star\big)
&=A\big(f(w)-f(x)\big)+a\big(f(w)-f^\star\big)\\
&\le A\langle g,w-x\rangle+a\langle g,w-x^\star\rangle -\frac{\mu}{2}
       \big[A\|w-x\|^2+a\|w-x^\star\|^2\big].
\end{align*}

\emph{Step 2 (the momentum term and cancellation).}
Using $z'-z=\frac{a}{t'}[\mu(w-z)-g]$ and $t'-t=\mu a$, expand
\begin{align*}
\frac{t'}2\|z'-x^\star\|^2-\frac t2\|z-x^\star\|^2
&=\frac{\mu a}{2}\|z-x^\star\|^2
  +a\langle z-x^\star,\mu(w-z)-g\rangle +\frac{a^2}{2t'}\|\mu(w-z)-g\|^2.
\end{align*}
Add this identity to the bound from Step~1. The gradient terms
involving $x^\star$ satisfy
\[
a\langle g,w-x^\star\rangle
-a\langle g,z-x^\star\rangle
=a\langle g,w-z\rangle.
\]
For the quadratic terms, the identity
\[
\|w-x^\star\|^2
=\|z-x^\star\|^2+\|w-z\|^2
 +2\langle z-x^\star,w-z\rangle
\]
gives
\begin{align*}
\Psi'-\Psi_k
&\le A\langle g,w-x\rangle+a\langle g,w-z\rangle
 +\frac{a^2}{2t'}\|\mu(w-z)-g\|^2
\quad-\frac{\mu}{2}
       \big[A\|w-x\|^2+a\|w-z\|^2\big].
\end{align*}
Expanding the remaining square, the coefficients of
$\langle g,w-z\rangle$ and $-\frac{\mu}{2}\|w-z\|^2$
both become
\[
a-\frac{\mu a^2}{t'}
=\frac{a(t'-\mu a)}{t'}
=\frac{at}{t'}=\eta.
\]
Consequently,
\begin{align*}
\Psi'-\Psi_k
&\le A\langle g,w-x\rangle+\eta\langle g,w-z\rangle
 +\frac{a^2}{2t'}\|g\|^2 -\frac{\mu}{2}
       \big[A\|w-x\|^2+\eta\|w-z\|^2\big].
\end{align*}

\emph{Step 3 (the center and the residual).}
The definition $y=(Ax+\eta z)/W$, with $W=A+\eta$, gives
\[
A(w-x)+\eta(w-z)=W(w-y)=Wr.
\]
Expanding the squared norms around $y$ and using
$A(y-x)+\eta(y-z)=0$ also gives
\[
A\|w-x\|^2+\eta\|w-z\|^2
=W\|r\|^2+\frac{A\eta}{W}\|x-z\|^2.
\]
Substituting these identities and
$a^2/t'=\lambda W$ from Lemma~\ref{lem:sc-weights}(i),
\begin{align*}
\Psi'-\Psi_k
&\le W\langle g,r\rangle+\frac{\lambda W}{2}\|g\|^2
 -\frac{\mu W}{2}\|r\|^2
 -\frac{\mu A\eta}{2W}\|x-z\|^2\\
&=\frac{W}{2\lambda}
   \big[\|r+\lambda g\|^2-(1+\mu\lambda)\|r\|^2\big]
 -\frac{\mu A\eta}{2W}\|x-z\|^2.
\end{align*}
Since $p(\lambda)=r+\lambda g$, the assumed residual condition bounds
the bracket by
$-(1-\sigma^2)(1+\mu\lambda)\|r\|^2$.
Discarding the last, nonpositive term and using
$W(1+\mu\lambda)=A'$ from Lemma~\ref{lem:sc-weights}(i), we obtain
\[
\Psi'-\Psi_k
\le-\frac{(1-\sigma^2)W(1+\mu\lambda)}{2\lambda}\|r\|^2
=-\frac{(1-\sigma^2)A'}{2\lambda}\|r\|^2,
\]
which proves the claim.
\end{proof}

\subsection{The certified test and the search}\label{app:sc-test}
We first derive two bounds on the model step. The resolvent bound
holds for every trial stepsize, while the energy bound follows from
the potential decrease for admissible steps. Lemma~\ref{lem:sc-admissible}
below uses a continuation argument to justify incorporating this
conditional bound into the certified test.

\begin{lemma}[Two bounds on the step]\label{lem:sc-steps}
Fix an iteration $k$ and $\lambda>0$.
\begin{enumerate}[label=(\roman*)]
\item $\|r(\lambda)\|\le\rho_k^{\rm R}(\lambda)$.
\item If $\Psi_k\le R^2/2$ and
      $\|p(\lambda)\|\le\sigma\sqrt{1+\mu\lambda}\,\|r(\lambda)\|$, then
      $\|r(\lambda)\|\le\rho_k^{\rm E}(\lambda)$.
\end{enumerate}
\end{lemma}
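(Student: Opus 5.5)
For part (i), I will reuse the resolvent argument of Lemma~\ref{lem:rescert} essentially verbatim, with the strongly convex center in place of the convex one. Set $v=y(\lambda)-x_k$, so that $\|v\|=m_k(\lambda)$, and $\bar H=\int_0^1H(x_k+sv)\,ds$. Then $\nabla f(y(\lambda))=\nabla f(x_k)+B_kv+(\bar H-B_k)v$, which gives the three-term split \eqref{eq:rsplit}. Lemma~\ref{lem:sc-cert} supplies $B_k\in\mathcal Z$, so the resolvent bounds hold:
\[
\|(B_k+\lambda^{-1}I)^{-1}\|_{\rm op}\le\lambda,\qquad \|(B_k+\lambda^{-1}I)^{-1}B_k\|_{\rm op}\le1 .
\]
The same lemma gives the certificate $\|B_k-H(x_k)\|_{\rm op}\le\bar E_k$. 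Combined with Hessian Lipschitz continuity along the segment, as in Lemma~\ref{lem:mono}(iii), this yields $\|B_k-\bar H\|_{\rm op}\le\bar E_k+\frac{L_2}{2}m_k(\lambda)$. Taking norms then gives $\|r(\lambda)\|\le\lambda g_k+m_k(\lambda)\big(1+\lambda\bar E_k+\frac{L_2\lambda}{2}m_k(\lambda)\big)=\rho_k^{\rm R}(\lambda)$. The only change from the convex case is the formula for $m_k(\lambda)$, and it enters only through $\|v\|$.

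For part (ii), I will apply Lemma~\ref{lem:sc-contract} at the trial stepsize $\lambda$. Its hypothesis is exactly the assumed residual condition $\|p(\lambda)\|\le\sigma\sqrt{1+\mu\lambda}\,\|r(\lambda)\|$. It yields
\[
\Psi'+\frac{(1-\sigma^2)A'(\lambda)}{2\lambda}\|r(\lambda)\|^2\le\Psi_k\le\frac{R^2}{2}.
\]
I then need $\Psi'\ge0$. It holds because $A'(\lambda)\ge0$, $f(w)\ge f^\star$, and $t'\ge1$. Dropping $\Psi'$ and rearranging gives $\|r(\lambda)\|^2\le R^2\lambda/((1-\sigma^2)A'(\lambda))$, that is, $\|r(\lambda)\|\le\rho_k^{\rm E}(\lambda)$.

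Lemma~\ref{lem:sc-contract} requires $B_k$ symmetric and $B_k\succeq0$, and $B_k\in\mathcal Z$ gives both. I also need $A'(\lambda)>0$, which follows from $a(\lambda)>0$. I expect no real obstacle. The only point to check is that Lemma~\ref{lem:sc-contract} is stated for arbitrary trial $\lambda$ rather than only for the accepted $\lambda_k$, and it is. The hypothesis $\Psi_k\le R^2/2$ is simply carried as an assumption here; its verification along the run belongs to the later continuation argument.
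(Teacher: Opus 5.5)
Your proposal is correct and follows the paper's proof essentially step for step. For (i), it uses the same three-term resolvent split with $B_k\in\mathcal Z$ and the certificate transported along $[x_k,y(\lambda)]$ to get $\|\bar H-B_k\|_{\rm op}\le\bar E_k+\frac{L_2}{2}m_k(\lambda)$. For (ii), it applies Lemma~\ref{lem:sc-contract} at the trial stepsize, then discards the nonnegative trial potential $\Psi'$ and rearranges.
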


\begin{proof}

\emph{(i) The resolvent bound.}
Put $v=y(\lambda)-x_k$ and $m=\|v\|=m_k(\lambda)$.
By the fundamental theorem of calculus,
\[
\nabla f(y(\lambda))=\nabla f(x_k)+\bar Hv,
\qquad
\bar H=\int_0^1H(x_k+\tau v)\,d\tau.
\]
Adding and subtracting $B_kv$ in the model equation gives
\[
r(\lambda)=-(B_k+\lambda^{-1}I)^{-1}
\big[\nabla f(x_k)+B_kv+(\bar H-B_k)v\big].
\]
Since $B_k\succeq0$, we have
$\|(B_k+\lambda^{-1}I)^{-1}\|_{\rm op}\le\lambda$ and
$\|(B_k+\lambda^{-1}I)^{-1}B_k\|_{\rm op}\le1$. For the remaining term, Lemma~\ref{lem:sc-cert} and the Lipschitz
continuity of the Hessian give
\begin{align*}
\|\bar H-B_k\|_{\rm op}
&\le\|H(x_k)-B_k\|_{\rm op}
 +\int_0^1\|H(x_k+\tau v)-H(x_k)\|_{\rm op}\,d\tau\\
&\le\bar E_k+\int_0^1L_2\tau\|v\|\,d\tau
=\bar E_k+\frac{L_2}{2}m.
\end{align*}
Taking norms in the model equation and using
$g_k=\|\nabla f(x_k)\|$, we obtain
\begin{align*}
\|r(\lambda)\|
&\le\lambda g_k+m+\lambda\|\bar H-B_k\|_{\rm op}\,m\\
&\le\lambda g_k+m+\lambda\left(\bar E_k+\frac{L_2}{2}m\right)m\\
&=\rho_k^{\rm R}(\lambda).
\end{align*}

\emph{(ii) The energy bound.}
Under the assumed residual condition, Lemma~\ref{lem:sc-contract}
applies to the trial step and gives
\[
\frac{(1-\sigma^2)A'(\lambda)}{2\lambda}\|r(\lambda)\|^2
\le\Psi_k-\Psi'
\le\Psi_k
\le\frac{R^2}{2},
\]
where $\Psi'\ge0$ because both terms of the trial potential are
nonnegative. Rearranging and taking square roots yields
\[
\|r(\lambda)\|
\le R\sqrt{\frac{\lambda}{(1-\sigma^2)A'(\lambda)}}
=\rho_k^{\rm E}(\lambda).
\]

\end{proof}

\begin{lemma}[Monotonicity and scaling]\label{lem:sc-mono}
At every iteration, $m_k$, $\rho_k^{\rm R}$, $\rho_k^{\rm E}$, $\rho_k$ and
$\omega_k$ are continuous and nondecreasing on $(0,\infty)$, and for all
$c\ge1$ and $\lambda>0$,
\[
m_k(c\lambda)\le c\,m_k(\lambda),\quad
\rho_k^{\rm E}(c\lambda)\le\sqrt c\,\rho_k^{\rm E}(\lambda),\quad
\rho_k^{\rm R}(c\lambda)\le c^3\rho_k^{\rm R}(\lambda),\quad
\omega_k(c\lambda)\le c^3\omega_k(\lambda).
\]
Consequently, $\phi_k$ is continuous and strictly increasing, and
$\phi_k(c\lambda)\le c^4\phi_k(\lambda)$.
\end{lemma}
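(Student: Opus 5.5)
The plan is to treat each building block separately and then compose, following Lemma~\ref{lem:mono}(ii) in the convex case. First, for $m_k$: by Lemma~\ref{lem:sc-weights}(ii), $m_k(\lambda)=(\eta/W)\zeta_k=\lambda t\zeta_k/a(\lambda)$. Item (v) of that lemma gives that $\eta/W$ is nondecreasing, so $m_k$ is nondecreasing. Continuity follows from continuity of $a$. Concavity of $a$ with $a(0^+)=0$ gives $a(c\lambda)\ge c\,a(\lambda)$ for $c\ge1$. Hence $m_k(c\lambda)/(c\lambda)=t\zeta_k/a(c\lambda)\le t\zeta_k/a(\lambda)=m_k(\lambda)/\lambda$, which is the linear scaling.

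For $\rho_k^{\rm E}(\lambda)=R\sqrt{\lambda/((1-\sigma^2)A'(\lambda))}$:
\begin{itemize}
\item monotonicity is item (v) ($\lambda/A'$ nondecreasing);
\item for scaling, $A'(c\lambda)\ge A'(\lambda)$ gives $\lambda c/A'(c\lambda)\le c\,\lambda/A'(\lambda)$, hence the factor $\sqrt c$.
\end{itemize}

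For $\rho_k^{\rm R}(\lambda)=\lambda g_k+m(1+\lambda\bar E_k+\tfrac{L_2}{2}\lambda m)$, each summand is a product of nondecreasing nonnegative continuous factors. Under $\lambda\mapsto c\lambda$ the terms scale by at most $c$, $c$, $c^2$, $c^3$ respectively, so the whole expression scales by at most $c^3$ since $c\ge1$. The minimum $\rho_k$ of two continuous nondecreasing functions is continuous and nondecreasing, and it scales by at most $c^3$ because $\sqrt c\le c^3$ (for $\min\{u,v\}$, scale each argument and take the minimum).

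For $\omega_k=\min\{L_1,\bar E_k+L_2(m_k+\rho_k)\}$, continuity and monotonicity are inherited. The scaling uses the elementary inequality from the proof of Lemma~\ref{lem:wellposed}: $\min\{L_1,\bar E_k+X'\}\le c^3\min\{L_1,\bar E_k+X\}$ whenever $X'\le c^3X$ and $c\ge1$. This holds because $\bar E_k+c^3X\le c^3(\bar E_k+X)$ and $L_1\le c^3L_1$.

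Finally, $\phi_k(\lambda)=\omega_k(\lambda)\cdot\lambda/\sqrt{1+\mu\lambda}$. The factor $\lambda/\sqrt{1+\mu\lambda}$ is continuous, strictly increasing (derivative $(1+\mu\lambda/2)/(1+\mu\lambda)^{3/2}>0$), and satisfies $c\lambda/\sqrt{1+\mu c\lambda}\le c\,\lambda/\sqrt{1+\mu\lambda}$. Combined with $\omega_k\ge\bar E_k>0$ (Lemma~\ref{lem:sc-cert}) and nondecreasing, the product is strictly increasing, exactly as in the convex-case argument for $\lambda E_k(\lambda)$. The scaling factors multiply to $c\cdot c^3=c^4$.

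The main obstacle I expect is the $m_k$ scaling, specifically the need for superlinearity of $a$ along rays, $a(c\lambda)\ge c\,a(\lambda)$. I will take it directly from the concavity in Lemma~\ref{lem:sc-weights}(v). That lemma is applied with $A=A_k$ fixed and $t=1+\mu A_k$ independent of $\lambda$, so the only $\lambda$-dependence in $m_k$ sits in $a$. One edge case needs checking: $A_k=0$, where $\zeta_0=0$ and $m_k\equiv0$, and every claim is trivial.
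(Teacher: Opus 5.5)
Your proof follows the paper's argument almost step for step: the closed forms $m_k=\lambda t\zeta_k/a(\lambda)$ and $\rho_k^{\rm E}=R\sqrt{\lambda/((1-\sigma^2)A')}$, termwise scaling of $\rho_k^{\rm R}$, the $\min$-inequality for $\omega_k$, and the product argument for $\phi_k$. One step is wrong, though, and it is the one you single out as the main obstacle. Concavity of $a$ with $a(0^+)=0$ does \emph{not} give $a(c\lambda)\ge c\,a(\lambda)$. It gives the reverse inequality: writing $\lambda=\frac1c(c\lambda)+(1-\frac1c)\cdot0$ yields $a(\lambda)\ge\frac1c\,a(c\lambda)$, that is, $a(c\lambda)\le c\,a(\lambda)$ for $c\ge1$.

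The claimed superlinearity is genuinely false. For $A_k>0$ one has $a(\lambda)\approx\sqrt{A_k(1+\mu A_k)\lambda}$ as $\lambda\downarrow0$, so $a(4\lambda)\approx2a(\lambda)$. It also contradicts what you proved one sentence earlier. If $a(c\lambda)\ge c\,a(\lambda)$ held, then $m_k(c\lambda)=c\lambda t\zeta_k/a(c\lambda)\le m_k(\lambda)$. Together with monotonicity, $m_k$ would be constant, and since $m_k(\lambda)\to0$ as $\lambda\downarrow0$ it would vanish identically, which is false whenever $\zeta_k>0$. In fact the sublinearity $a(c\lambda)\le c\,a(\lambda)$ is exactly the statement that $\eta/W=\lambda t/a$ is nondecreasing.

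The inequality you actually use is $t\zeta_k/a(c\lambda)\le t\zeta_k/a(\lambda)$. This needs only $a(c\lambda)\ge a(\lambda)$, which is the monotonicity of $a$ in Lemma~\ref{lem:sc-weights}(v), and that is how the paper argues. With this one replacement, your proof is correct and coincides with the paper's.
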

\begin{proof}
Fix an iteration $k$ and vary only $\lambda$.
By Lemma~\ref{lem:sc-weights}(ii),(v),
$m_k(\lambda)=\zeta_k\eta(\lambda)/W(\lambda)$
and $\lambda/A'(\lambda)$ are nondecreasing.
Hence $m_k$ and $\rho_k^{\rm E}$ are nondecreasing.
Since $a(\lambda)$ and $A'(\lambda)$ are also
nondecreasing, for every $c\ge1$,
\[
\begin{aligned}
m_k(c\lambda)
&=\frac{c\lambda t\zeta_k}{a(c\lambda)}
\le c\,m_k(\lambda),\\
\rho_k^{\rm E}(c\lambda)
&=R\sqrt{\frac{c\lambda}
{(1-\sigma^2)A'(c\lambda)}}
\le\sqrt c\,\rho_k^{\rm E}(\lambda).
\end{aligned}
\]

For the resolvent bound, write $m=m_k(\lambda)$.
Since
\[
\rho_k^{\rm R}(\lambda)
=\lambda g_k+m+\lambda\bar E_km
+\frac{L_2}{2}\lambda m^2,
\]
all its terms are nonnegative and nondecreasing.
Moreover, $m_k(c\lambda)\le cm$ implies
\[
\begin{aligned}
\rho_k^{\rm R}(c\lambda)
&\le c\lambda g_k+cm
+c^2\lambda\bar E_km
+\frac{c^3L_2}{2}\lambda m^2\\
&\le c^3\rho_k^{\rm R}(\lambda).
\end{aligned}
\]

Taking the minimum of the two step bounds,
we conclude that $\rho_k$ is nondecreasing and
\[
\rho_k(c\lambda)\le c^3\rho_k(\lambda).
\]
Consequently, $\omega_k$ is nondecreasing and
\[
\begin{aligned}
\omega_k(c\lambda)
&\le\min\{L_1,\bar E_k+
c^3L_2(m_k(\lambda)+\rho_k(\lambda))\}\\
&\le c^3\omega_k(\lambda),
\end{aligned}
\]
where the last inequality uses
$\min\{L_1,\bar E+qX\}
\le q\min\{L_1,\bar E+X\}$
for $q\ge1$ and $\bar E,X\ge0$.

Continuity follows from the explicit formula for
$a(\lambda)$ and the continuity of the operations
defining the remaining quantities. Finally,
$\lambda/\sqrt{1+\mu\lambda}$ is strictly
increasing, while $\omega_k$ is nondecreasing
and satisfies $\omega_k\ge\bar E_k>0$.
Thus $\phi_k$ is continuous and strictly
increasing. Its scaling bound follows from
\[
\begin{aligned}
\phi_k(c\lambda)
&=\frac{c\lambda\,\omega_k(c\lambda)}
{\sqrt{1+\mu c\lambda}}\\
&\le\frac{c^4\lambda\,\omega_k(\lambda)}
{\sqrt{1+\mu c\lambda}}
\le c^4\phi_k(\lambda).
\end{aligned}
\]
This completes the proof.
\end{proof}

\begin{lemma}[The test certifies admissibility]\label{lem:sc-admissible}
Fix an iteration $k$ with $\Psi_k\le R^2/2$, and let $\lambda>0$ satisfy
$\phi_k(\lambda)\le\sigma$. Then
$\|p(\lambda)\|\le\sigma\sqrt{1+\mu\lambda}\,\|r(\lambda)\|$ and
$\|r(\lambda)\|\le\rho_k(\lambda)$.
\end{lemma}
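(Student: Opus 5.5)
The plan is a continuation argument on the interval $(0,\lambda]$. The only delicate ingredient is the energy branch $\rho_k^{\rm E}$ of \eqref{eq:sc-rho}, because Lemma~\ref{lem:sc-steps}(ii) makes it valid only for steps already known to be admissible.

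\textbf{Step 1: reduction to Lemma~\ref{lem:resid}.} Fix $\lambda'\in(0,\lambda]$ and write $m=m_k(\lambda')$. By the first certificate of Lemma~\ref{lem:resid} applied at $y(\lambda')$, together with the transport bound $\|B_k-H(y(\lambda'))\|_{\rm op}\le\bar E_k+L_2m$ (the argument of Lemma~\ref{lem:mono}(iii), which uses only Lemma~\ref{lem:sc-cert}),
\[
\|p(\lambda')\|\le\lambda'\Big(\bar E_k+L_2m+\tfrac{L_2}{2}\|r(\lambda')\|\Big)\|r(\lambda')\|.
\]
The second certificate of Lemma~\ref{lem:resid} gives $\|p(\lambda')\|\le\lambda'L_1\|r(\lambda')\|$. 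Monotonicity of $\phi_k$ (Lemma~\ref{lem:sc-mono}) gives $\phi_k(\lambda')\le\phi_k(\lambda)\le\sigma$, that is, $\lambda'\omega_k(\lambda')\le\sigma\sqrt{1+\mu\lambda'}$.

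Call $\lambda'$ \emph{good} if $\|r(\lambda')\|\le2\rho_k(\lambda')$. At a good $\lambda'$ there are two cases.
\begin{itemize}
\item If $\omega_k(\lambda')=L_1$, the second certificate gives admissibility directly.
\item Otherwise $\bar E_k+L_2m+\tfrac{L_2}{2}\|r\|\le\bar E_k+L_2(m+\rho_k)=\omega_k(\lambda')$, and the first certificate gives admissibility.
\end{itemize}
Note that the factor $L_2(m+\rho_k)$ in \eqref{eq:sc-test} carries twice the $\tfrac{L_2}{2}$ of Lemma~\ref{lem:resid}. This slack is what makes "good" (with the factor $2$) sufficient, and it is what the continuation exploits.

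\textbf{Step 2: admissible steps satisfy the sharp bound.} Whenever $\lambda'$ is admissible, Lemma~\ref{lem:sc-steps}(ii) (using $\Psi_k\le R^2/2$) gives $\|r(\lambda')\|\le\rho_k^{\rm E}(\lambda')$. Lemma~\ref{lem:sc-steps}(i) always gives $\|r(\lambda')\|\le\rho_k^{\rm R}(\lambda')$. Hence admissibility implies $\|r(\lambda')\|\le\rho_k(\lambda')$.

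\textbf{Step 3: the continuation.} Let
\[
\lambda_0=\sup\{\lambda''\le\lambda:\ \text{every }\lambda'\in(0,\lambda'']\text{ is admissible}\}.
\]
\begin{itemize}
\item \emph{Seeding.} Every $\lambda'\le\underline\lambda$ satisfies $\lambda'L_1\le\sigma\sqrt{1+\mu\lambda'}$, so it is admissible by the second certificate. Hence $\lambda_0\ge\min\{\underline\lambda,\lambda\}>0$.
\item \emph{The endpoint.} On $(0,\lambda_0)$ Step~2 gives $\|r\|\le\rho_k$. The maps $\lambda'\mapsto r(\lambda'),\rho_k(\lambda')$ are continuous: $r$ because $y(\lambda')$ is continuous and the resolvent is continuous in $\lambda'$, and $\rho_k$ by Lemma~\ref{lem:sc-mono}. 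So $\|r(\lambda_0)\|\le\rho_k(\lambda_0)$, and $\lambda_0$ is good, hence admissible by Step~1.
\item \emph{Pushing past the endpoint.} Suppose $\lambda_0<\lambda$. We have $\rho_k(\lambda_0)>0$, since $\rho_k^{\rm E}>0$ (as $R>0$) and $\rho_k^{\rm R}\ge m+\lambda g_k>0$ (as $g_k>0$). Continuity then gives $\|r(\lambda')\|\le2\rho_k(\lambda')$ on some interval $[\lambda_0,\lambda_0+\epsilon)$. These $\lambda'$ are good, hence admissible, which contradicts the definition of $\lambda_0$.
\end{itemize}
Therefore $\lambda_0=\lambda$, and $\lambda$ itself is admissible. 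Step~2 then gives $\|r(\lambda)\|\le\rho_k(\lambda)$, which completes the proof.

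The main obstacle is the strict margin needed to push past $\lambda_0$. With the weak inequality $\|r\|\le\rho_k$ alone, the argument would stall at $\lambda_0$. The factor-$2$ slack between $L_2\rho_k$ in $\omega_k$ and $\tfrac{L_2}{2}\|r\|$ in Lemma~\ref{lem:resid}, combined with $\rho_k(\lambda_0)>0$, is what makes the set of admissible stepsizes relatively open to the right.
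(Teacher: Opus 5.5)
Your proof is correct and follows the paper's argument. The core is the same: the implication $\|r(h)\|\le2\rho_k(h)\Rightarrow$ admissibility $\Rightarrow\|r(h)\|\le\rho_k(h)$, which uses the factor-two slack in $\omega_k$, closed by a continuity argument on $(0,\lambda]$. The differences are minor. You seed the continuation with $(0,\min\{\underline\lambda,\lambda\}]$, which is admissible by the second certificate of Lemma~\ref{lem:resid}; the paper instead uses $\|r(h)\|\le hG$ to show $\|r(h)\|/\rho_k^{\rm E}(h)\to0$. And you close with a supremum argument, where the paper applies the intermediate value theorem to $\|r(h)\|/\rho_k(h)$.
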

\begin{proof}
We hold the current state and model fixed and vary the trial
stepsize $h$ over $(0,\lambda]$. We first show that the weaker bound
$\|r(h)\|\le2\rho_k(h)$ implies admissibility and the stronger bound
$\|r(h)\|\le\rho_k(h)$. We then use continuity to establish the
weaker bound throughout this interval.

\emph{Step 1 (improving the step bound).}
Fix $h\in(0,\lambda]$. The model equation gives
\[
r(h)+h\nabla f(y(h))=-hB_kr(h).
\]
By the fundamental theorem of calculus,
\[
\nabla f(w(h))=\nabla f(y(h))+\bar H_h r(h),
\qquad
\bar H_h=\int_0^1H(y(h)+\tau r(h))\,d\tau.
\]
Combining these identities yields
\[
p(h)=h(\bar H_h-B_k)r(h).
\]
Since $B_k$ and $\bar H_h$ lie in $\mathcal Z$, their difference
has operator norm at most $L_1$. Lemma~\ref{lem:sc-cert} and the
Lipschitz continuity of the Hessian also give
\begin{align*}
\|\bar H_h-B_k\|_{\rm op}
&\le\|H(x_k)-B_k\|_{\rm op}
   +L_2\|y(h)-x_k\|+\frac{L_2}{2}\|r(h)\|\\
&\le\bar E_k+L_2m_k(h)+\frac{L_2}{2}\|r(h)\|.
\end{align*}
Suppose now that $\|r(h)\|\le2\rho_k(h)$. Taking the smaller of
the two bounds,
\[
\|\bar H_h-B_k\|_{\rm op}
\le\min\{L_1,\bar E_k+L_2(m_k(h)+\rho_k(h))\}
=\omega_k(h).
\]
Therefore,
\begin{align*}
\|p(h)\|
&\le h\,\omega_k(h)\|r(h)\|\\
&=\phi_k(h)\sqrt{1+\mu h}\,\|r(h)\|\\
&\le\sigma\sqrt{1+\mu h}\,\|r(h)\|,
\end{align*}
where the last inequality follows from
$\phi_k(h)\le\phi_k(\lambda)\le\sigma$ by
Lemma~\ref{lem:sc-mono}.

The step is thus admissible. Since $\Psi_k\le R^2/2$,
Lemma~\ref{lem:sc-steps}(ii) now gives
$\|r(h)\|\le\rho_k^{\rm E}(h)$.
Together with the unconditional resolvent bound of
Lemma~\ref{lem:sc-steps}(i), this gives
$\|r(h)\|\le\rho_k(h)$. We have proved
\begin{equation}\label{eq:sc-implication}
\|r(h)\|\le2\rho_k(h)
\quad\Longrightarrow\quad
\begin{cases}
\|p(h)\|\le\sigma\sqrt{1+\mu h}\,\|r(h)\|,\\
\|r(h)\|\le\rho_k(h).
\end{cases}
\end{equation}

\emph{Step 2 (small trial stepsizes).}
Because $y(h)$ is a convex combination of $x_k$ and $z_k$,
\[
G:=\max_{u\in[x_k,z_k]}\|\nabla f(u)\|<\infty.
\]
The resolvent estimate
$\|(B_k+h^{-1}I)^{-1}\|_{\rm op}\le h$ therefore gives
$\|r(h)\|\le hG$, without assuming admissibility. Consequently,
\[
\frac{\|r(h)\|}{\rho_k^{\rm E}(h)}
\le
\frac{G\sqrt{1-\sigma^2}}{R}\sqrt{hA'(h)}
\longrightarrow0
\qquad\text{as }h\downarrow0,
\]
since $A'(h)=A_k+a(h)\to A_k$ by
Lemma~\ref{lem:sc-weights}(v).
Thus $\|r(h)\|<2\rho_k^{\rm E}(h)$ for all sufficiently small $h$.
Also,
$\|r(h)\|\le\rho_k^{\rm R}(h)<2\rho_k^{\rm R}(h)$,
because $\rho_k^{\rm R}(h)\ge hg_k>0$.
Taking the minimum gives
$\|r(h)\|<2\rho_k(h)$ for all sufficiently small $h$.

\emph{Step 3 (continuation).}
The maps $h\mapsto r(h)$ and $h\mapsto\rho_k(h)$ are continuous
on $(0,\lambda]$. Moreover, $\rho_k(h)>0$, since
$\rho_k^{\rm R}(h)\ge hg_k>0$ and $\rho_k^{\rm E}(h)>0$.
Hence
\[
F(h):=\frac{\|r(h)\|}{\rho_k(h)}
\]
is continuous on $(0,\lambda]$ and is strictly below $2$ for
all sufficiently small $h$ by Step~2.

If $F(h)\ge2$ at some point of this interval, the intermediate
value theorem gives an $h_0\in(0,\lambda]$ with $F(h_0)=2$.
But \eqref{eq:sc-implication} then gives $F(h_0)\le1$, a contradiction.
Therefore $\|r(h)\|<2\rho_k(h)$ throughout $(0,\lambda]$.
Applying \eqref{eq:sc-implication} at $h=\lambda$ proves both claims.
\end{proof}

\begin{lemma}[The search]\label{lem:sc-search}
At every iteration, $\underline\lambda\le\bar\lambda_k$ and
$\phi_k(\underline\lambda)\le\sigma\le\phi_k(\bar\lambda_k)$, and
Subroutine~2.1 returns $\lambda_k$ with
\[
\underline\lambda\le\lambda_k\le\bar\lambda_k,\qquad
\frac\sigma2\le\phi_k(\lambda_k)\le\sigma,
\qquad\text{hence}\qquad
s_k\le\frac2\sigma\,\omega_k(\lambda_k),
\]
after at most
$\big\lceil\log_2\max\{1,\log(\bar\lambda_k/\underline\lambda)/\log2^{1/4}\}\big\rceil$
evaluations of $\phi_k$ and no gradient query. Moreover,
$\underline\lambda\ge\sigma/L_1$.
\end{lemma}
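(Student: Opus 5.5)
We will mirror the proof of Lemma~\ref{lem:wellposed}, replacing $\widehat\Psi_k$ by $\phi_k$ and the two brackets by $\underline\lambda$ and $\bar\lambda_k$. The argument has four parts: the endpoint facts, the bisection invariant with its stopping count, near-maximality through the $c^4$ scaling, and the conversion to $s_k$.

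\emph{Endpoints.} The lower bound $\underline\lambda\ge\sigma/L_1$ follows from $\sigma\sqrt{1+\mu\underline\lambda}\ge\sigma$ in the defining equation $\underline\lambda L_1=\sigma\sqrt{1+\mu\underline\lambda}$. For the bracket at the bottom, the cap gives $\omega_k\le L_1$ in \eqref{eq:sc-test}, so
\[
\phi_k(\underline\lambda)\le\frac{\underline\lambda L_1}{\sqrt{1+\mu\underline\lambda}}=\sigma .
\]
For the bracket at the top, $\omega_k(\lambda)\ge\bar E_k$ gives $\phi_k(\bar\lambda_k)\ge\bar\lambda_k\bar E_k/\sqrt{1+\mu\bar\lambda_k}=\sigma$. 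For the ordering of the endpoints, the map $\lambda\mapsto\lambda/\sqrt{1+\mu\lambda}$ is strictly increasing. Since $\bar E_k\le L_1$ by Lemma~\ref{lem:sc-cert}, the roots satisfy $\underline\lambda\le\bar\lambda_k$.

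\emph{Bisection.} By Lemma~\ref{lem:sc-mono}, $\phi_k$ is continuous and strictly increasing. The subroutine keeps $\phi_k(\lambda^-)\le\sigma$, and it keeps $\phi_k(\lambda^+)>\sigma$ after any update of $\lambda^+$. Initially we only have $\phi_k(\lambda^+)\ge\sigma$, which is harmless. Each pass halves $\log(\lambda^+/\lambda^-)$, so the count of passes follows exactly as in Lemma~\ref{lem:wellposed}. No gradient is queried, because $\phi_k$ depends only on $A_k,\zeta_k,g_k,\bar E_k$ and the problem constants. The returned value satisfies $\underline\lambda\le\lambda_k\le\bar\lambda_k$ because $\lambda^-$ only increases and $\lambda^+$ only decreases. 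It also satisfies $\phi_k(\lambda_k)\le\sigma$ by the invariant.

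\emph{Near-maximality.} At termination set $c=\lambda^+/\lambda^-\in[1,2^{1/4}]$. If $c=1$, which happens only when $\underline\lambda=\bar\lambda_k$, then $\phi_k(\lambda_k)\ge\sigma$ directly. Otherwise Lemma~\ref{lem:sc-mono} gives
\[
\sigma\le\phi_k(\lambda^+)\le c^4\phi_k(\lambda^-)\le 2\phi_k(\lambda_k).
\]

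\emph{Conversion to $s_k$.} From the definition of $\phi_k$,
\[
\phi_k(\lambda_k)=\frac{\lambda_k\,\omega_k(\lambda_k)}{\sqrt{1+\mu\lambda_k}}=\frac{\omega_k(\lambda_k)}{s_k}\ge\frac\sigma2 ,
\]
which rearranges to $s_k\le(2/\sigma)\,\omega_k(\lambda_k)$.

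The main care point is the edge case $\underline\lambda=\bar\lambda_k$, which arises when $\bar E_k=L_1$. There the loop is skipped, and near-maximality must be read off the top endpoint fact $\phi_k(\bar\lambda_k)\ge\sigma$ rather than off the bisection. Everything else is routine bookkeeping parallel to Lemma~\ref{lem:wellposed}.
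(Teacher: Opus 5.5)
Your proposal is correct and follows the paper's proof essentially step for step. It uses the same endpoint bracket from $\bar E_k\le\omega_k\le L_1$ and the monotonicity of $\lambda/\sqrt{1+\mu\lambda}$, the same bisection invariant and halving count, near-maximality from the $c^4$ scaling of Lemma~\ref{lem:sc-mono}, and the same rearrangement to $s_k\le\frac2\sigma\,\omega_k(\lambda_k)$. Your separate treatment of the case $\underline\lambda=\bar\lambda_k$ is harmless but unnecessary: the bound $\phi_k(c\lambda)\le c^4\phi_k(\lambda)$ also holds for $c=1$, which is how the paper covers all cases uniformly.
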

\begin{proof}
\emph{Step 1 (the initial bracket).}
The map $h\mapsto h/\sqrt{1+\mu h}$ is continuous, strictly increasing
and unbounded on $(0,\infty)$. By definition,
\[
\frac{\underline\lambda}{\sqrt{1+\mu\underline\lambda}}
=\frac{\sigma}{L_1},
\qquad
\frac{\bar\lambda_k}{\sqrt{1+\mu\bar\lambda_k}}
=\frac{\sigma}{\bar E_k}.
\]
Since $0<\bar E_k\le L_1$, these identities imply
$\underline\lambda\le\bar\lambda_k$.
Moreover, $\bar E_k\le\omega_k(h)\le L_1$ for every $h>0$, so
\[
\phi_k(\underline\lambda)
\le\frac{\underline\lambda L_1}
         {\sqrt{1+\mu\underline\lambda}}
=\sigma,
\qquad
\phi_k(\bar\lambda_k)
\ge\frac{\bar\lambda_k\bar E_k}
         {\sqrt{1+\mu\bar\lambda_k}}
=\sigma.
\]
Thus the initialization
$\lambda^-=\underline\lambda$, $\lambda^+=\bar\lambda_k$
satisfies the bracket invariant
\[
\phi_k(\lambda^-)\le\sigma\le\phi_k(\lambda^+).
\]
The defining identity for $\underline\lambda$ also gives
\[
\underline\lambda
=\frac{\sigma}{L_1}\sqrt{1+\mu\underline\lambda}
\ge\frac{\sigma}{L_1}.
\]

\emph{Step 2 (termination and evaluation count).}
Each pass evaluates $\phi_k$ once, at the geometric midpoint
$\sqrt{\lambda^-\lambda^+}$, and replaces one endpoint by it; this
preserves the bracket invariant, keeps both endpoints in
$[\underline\lambda,\bar\lambda_k]$, and halves
$\log(\lambda^+/\lambda^-)$. Starting from
$\log(\bar\lambda_k/\underline\lambda)$ and stopping once the width is
at most $\log 2^{1/4}$ gives the stated number of evaluations. Each
evaluation uses only stored scalars through
\eqref{eq:sc-rho}--\eqref{eq:sc-test}, so no gradient is queried.
The returned $\lambda_k=\lambda^-$ therefore satisfies
$\underline\lambda\le\lambda_k\le\bar\lambda_k$ and
$\phi_k(\lambda_k)\le\sigma$.

\emph{Step 3 (near-maximality).}
At termination,
$\lambda^+\le2^{1/4}\lambda^-=2^{1/4}\lambda_k$.
The bracket invariant, monotonicity and the scaling bound of
Lemma~\ref{lem:sc-mono} give
\[
\sigma
\le\phi_k(\lambda^+)
\le\phi_k(2^{1/4}\lambda_k)
\le(2^{1/4})^4\phi_k(\lambda_k)
=2\phi_k(\lambda_k).
\]
Hence $\sigma/2\le\phi_k(\lambda_k)\le\sigma$.
Finally, the definitions of $s_k$ and $\phi_k$ give
\[
\phi_k(\lambda_k)
=\frac{\lambda_k\omega_k(\lambda_k)}
       {\sqrt{1+\mu\lambda_k}}
=\frac{\omega_k(\lambda_k)}{s_k}.
\]
Rearranging $\phi_k(\lambda_k)\ge\sigma/2$ yields
$s_k\le2\omega_k(\lambda_k)/\sigma$.
\end{proof}

\subsection{Well-posedness, bounded iterates and the weights}\label{app:sc-run}
\begin{lemma}[Well-posedness and energy]\label{lem:sc-energy}
Algorithm~2 is well defined, every step satisfies \eqref{eq:sc-hpe},
$\lambda_k\ge\underline\lambda$ for every $k$, and every completed horizon
$N$ satisfies
\begin{equation}\label{eq:sc-energy}
\Psi_N+\frac{1-\sigma^2}{2}\sum_{k<N}\frac{A_{k+1}}{\lambda_k}\|r_k\|^2
\le\Psi_0\le\frac{R^2}{2}.
\end{equation}
Consequently, for $k\ge1$, $f(x_k)-f^\star\le R^2/(2A_k)$,
$\|x_k-x^\star\|\le D_k$ and
$\|z_k-x^\star\|\le R/\sqrt{1+\mu A_k}\le\min\{R,D_k\}$; and
$\|r_k\|\le\rho_k^{\rm E}(\lambda_k)$ for every $k$.
\end{lemma}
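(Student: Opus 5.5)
The proof is an induction on the iteration index $k$, with the inductive hypothesis $\Psi_k\le R^2/2$. At $k=0$ we have $A_0=0$ and $z_0=x_0$, so $\Psi_0=\tfrac12\|x_0-x^\star\|^2\le R^2/2$.

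For the inductive step, assume $\Psi_k\le R^2/2$. Lemma~\ref{lem:sc-cert} gives $B_k\in\mathcal Z$ and $0<\bar E_k\le L_1$. The root $\underline\lambda$ and the root $\bar\lambda_k$ therefore exist, and Lemma~\ref{lem:sc-search} shows that Subroutine~2.1 terminates. It returns $\lambda_k\ge\underline\lambda$ with $\phi_k(\lambda_k)\le\sigma$. Once $\lambda_k$ is fixed, the weights $a_k,A_{k+1},\eta_k,W_k$ are determined, since the quadratic \eqref{eq:sc-a} has a unique positive root. The linear system is solvable because $B_k+\lambda_k^{-1}I\succ0$. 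So the iteration is well defined.

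Next we apply Lemma~\ref{lem:sc-admissible} with $\lambda=\lambda_k$. This is the one place where the inductive hypothesis $\Psi_k\le R^2/2$ is used: the continuation argument needs it to turn admissibility into the energy bound. The lemma gives \eqref{eq:sc-hpe}, i.e.\ $\|p(\lambda_k)\|\le\sigma\sqrt{1+\mu\lambda_k}\|r_k\|$, and also $\|r_k\|\le\rho_k(\lambda_k)\le\rho_k^{\rm E}(\lambda_k)$. Then Lemma~\ref{lem:sc-contract} gives
\[
\Psi_{k+1}+\tfrac{(1-\sigma^2)A_{k+1}}{2\lambda_k}\|r_k\|^2\le\Psi_k\le R^2/2,
\]
which closes the induction. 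Telescoping these inequalities from $0$ to $N-1$ gives \eqref{eq:sc-energy}.

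The remaining consequences come from dropping nonnegative terms in $\Psi_k\le R^2/2$. Dropping the distance term gives $A_k(f(x_k)-f^\star)\le R^2/2$, which is the objective bound. Dropping the objective term gives $\tfrac{1+\mu A_k}{2}\|z_k-x^\star\|^2\le R^2/2$, so
\[
\|z_k-x^\star\|\le R/\sqrt{1+\mu A_k},
\]
and this is at most both $R$ and $R/\sqrt{\mu A_k}=D_k$. For $x_k$, strong convexity gives $\tfrac\mu2\|x_k-x^\star\|^2\le f(x_k)-f^\star\le R^2/(2A_k)$, hence $\|x_k-x^\star\|\le D_k$ for $k\ge1$.

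The main obstacle is the apparent circularity between admissibility and the energy branch $\rho^{\rm E}$ used inside the test. Lemma~\ref{lem:sc-admissible} already resolves it, provided $\Psi_k\le R^2/2$ holds at the start of each step. The only real care is ordering the induction so that this bound is established before that lemma is invoked at step $k$.
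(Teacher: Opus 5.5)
Your proposal is correct and follows essentially the same induction as the paper: the hypothesis $\Psi_k\le R^2/2$ feeds Lemma~\ref{lem:sc-search}, then Lemma~\ref{lem:sc-admissible}, then Lemma~\ref{lem:sc-contract}, and the radius bounds come from dropping nonnegative terms of the potential. The only minor difference is that you read off $\|r_k\|\le\rho_k^{\rm E}(\lambda_k)$ from the second conclusion of Lemma~\ref{lem:sc-admissible}, using $\rho_k\le\rho_k^{\rm E}$. The paper instead reapplies Lemma~\ref{lem:sc-steps}(ii) once admissibility is known, and the two routes are equivalent.
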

\begin{proof}
\emph{Well-posedness and admissibility.}
We argue by induction over the iterations completed before termination.
Since $A_0=0$ and $z_0=x_0$,
\[
\Psi_0=\frac12\|x_0-x^\star\|^2\le\frac{R^2}{2}.
\]
Suppose that iterations $0,\ldots,k-1$ are well defined and admissible,
that $\Psi_k\le\Psi_0$, and that the method continues at iteration $k$.
Lemma~\ref{lem:sc-cert} gives
$B_k\in\mathcal Z$ and a valid certificate with $0<\bar E_k\le L_1$.
Thus, by Lemma~\ref{lem:sc-search}, Subroutine~2.1 terminates and returns
\[
\lambda_k\ge\underline\lambda,
\qquad
\phi_k(\lambda_k)\le\sigma.
\]
The weight recursion gives $a_k>0$ and $A_{k+1}=A_k+a_k>0$.
Moreover, $B_k\succeq0$ and $\lambda_k>0$ imply
$B_k+\lambda_k^{-1}I\succ0$, so the model step is well defined;
the auxiliary-point update is also well defined because
$1+\mu A_{k+1}>0$.

The induction hypothesis gives $\Psi_k\le\Psi_0\le R^2/2$.
Together with $\phi_k(\lambda_k)\le\sigma$, this allows us to apply
Lemma~\ref{lem:sc-admissible}, which proves \eqref{eq:sc-hpe}.
Lemma~\ref{lem:sc-contract} then yields
\[
\Psi_{k+1}
+\frac{(1-\sigma^2)A_{k+1}}{2\lambda_k}\|r_k\|^2
\le\Psi_k.
\]
In particular, $\Psi_{k+1}\le\Psi_0$, closing the induction.

\emph{Energy and radius bounds.}
Summing the last inequality over $k=0,\ldots,N-1$ telescopes the
potential terms and gives \eqref{eq:sc-energy}.
Since both terms of $\Psi_k$ are nonnegative, it follows that
\[
A_k\big(f(x_k)-f^\star\big)\le\frac{R^2}{2},
\qquad
(1+\mu A_k)\|z_k-x^\star\|^2\le R^2.
\]
For $k\ge1$, we have $A_k>0$, so the first inequality gives
$f(x_k)-f^\star\le R^2/(2A_k)$.
Combining this with strong convexity,
\[
\frac{\mu}{2}\|x_k-x^\star\|^2
\le f(x_k)-f^\star
\le\frac{R^2}{2A_k},
\]
and hence $\|x_k-x^\star\|\le R/\sqrt{\mu A_k}=D_k$.
The second inequality gives
\[
\|z_k-x^\star\|
\le\frac{R}{\sqrt{1+\mu A_k}}
\le\min\left\{R,\frac{R}{\sqrt{\mu A_k}}\right\}
=\min\{R,D_k\}.
\]
Finally, every completed step is admissible and satisfies
$\Psi_k\le R^2/2$. Lemma~\ref{lem:sc-steps}(ii), applied with
$\lambda=\lambda_k$, therefore gives
$\|r_k\|\le\rho_k^{\rm E}(\lambda_k)$.
\end{proof}

\begin{lemma}[Bounded iterates]\label{lem:sc-bounded}
For every $k\ge0$, $\|x_k-x^\star\|\le C_xR$ with
$C_x=1+\sigma/\sqrt{1-\sigma^2}$; hence $\zeta_k\le(1+C_x)R$.
\end{lemma}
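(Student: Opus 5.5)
The plan is to repeat the proof of Lemma~\ref{lem:bounded}, with one change: use the strong convexity of $f$ to turn the proximal map into a contraction. This contraction absorbs the relaxed factor $\sqrt{1+\mu\lambda_k}$ in \eqref{eq:sc-hpe} and makes the new center weights $W_k$ telescope. The case $k=0$ is trivial, since $\|x_0-x^\star\|\le R\le C_xR$. The bound on $\zeta_k$ then follows from the triangle inequality together with $\|z_k-x^\star\|\le R$ from Lemma~\ref{lem:sc-energy}.

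\emph{Step 1 (one-step estimate).} Let $w_k:=r_k+\lambda_k\nabla f(x_{k+1})$. By Lemma~\ref{lem:sc-energy} every step is admissible, so $\|w_k\|\le\sigma\sqrt{1+\mu\lambda_k}\,\|r_k\|$, and $x_{k+1}=\operatorname{prox}_{\lambda_k f}(y_k+w_k)$. Since $f$ is $\mu$-strongly convex, the argument of Lemma~\ref{lem:prox_nonexpansive} with the strong monotonicity $\langle\nabla f(p_1)-\nabla f(p_2),p_1-p_2\rangle\ge\mu\|p_1-p_2\|^2$ shows that the proximal map is $(1+\mu\lambda_k)^{-1}$-Lipschitz. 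It also fixes $x^\star$. Hence
\[
\|x_{k+1}-x^\star\|\le\frac{\|y_k-x^\star\|+\|w_k\|}{1+\mu\lambda_k}.
\]

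\emph{Step 2 (weighted telescoping).} Multiply the Step~1 bound by $A_{k+1}$ and use $A_{k+1}/(1+\mu\lambda_k)=W_k$ from Lemma~\ref{lem:sc-weights}(i). Since $y_k=(A_kx_k+\eta_kz_k)/W_k$ is a convex combination, we have $W_k\|y_k-x^\star\|\le A_k\|x_k-x^\star\|+\eta_k\|z_k-x^\star\|$. For the residual term, $W_k\|w_k\|\le\sigma A_{k+1}\|r_k\|/\sqrt{1+\mu\lambda_k}\le\sigma A_{k+1}\|r_k\|$. Together,
\[
A_{k+1}\|x_{k+1}-x^\star\|\le A_k\|x_k-x^\star\|+\eta_k\|z_k-x^\star\|+\sigma A_{k+1}\|r_k\|.
\]
Summing from $0$ to $N-1$ with $A_0=0$ gives
\[
A_N\|x_N-x^\star\|\le\sum_{j<N}\eta_j\|z_j-x^\star\|+\sigma\sum_{j<N}A_{j+1}\|r_j\|.
\]
For the first sum, use $\eta_j\le a_j$ (Lemma~\ref{lem:sc-weights}(ii)) and $\|z_j-x^\star\|\le R$; this bounds it by $A_NR$.

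\emph{Step 3 (the weighted step sum).} Write $A_{j+1}\|r_j\|=\big(\sqrt{A_{j+1}/\lambda_j}\,\|r_j\|\big)\sqrt{\lambda_jA_{j+1}}$. By Lemma~\ref{lem:sc-weights}(iii), $\lambda_jA_{j+1}\le a_j^2$. Cauchy--Schwarz together with the energy bound \eqref{eq:sc-energy} then gives
\[
\sum_{j<N}A_{j+1}\|r_j\|\le\frac{R}{\sqrt{1-\sigma^2}}\Big(\sum_{j<N}a_j^2\Big)^{1/2}\le\frac{RA_N}{\sqrt{1-\sigma^2}}.
\]
Dividing the Step~2 inequality by $A_N>0$ yields $\|x_N-x^\star\|\le C_xR$.

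The only point requiring care is Step~2: the relaxed residual factor $\sqrt{1+\mu\lambda_k}$ must be exactly compensated by the contraction factor $1+\mu\lambda_k$, and this is why the contraction is applied before multiplying by $A_{k+1}$ rather than afterwards. The identity $W_k=A_{k+1}/(1+\mu\lambda_k)$ is what makes this compensation work.
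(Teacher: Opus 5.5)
Your proof is correct and follows the paper's argument essentially step by step. The paper gets your Step~1 inequality directly from strong monotonicity between $x_{k+1}$ and $x^\star$ instead of through the $(1+\mu\lambda_k)^{-1}$-Lipschitz prox; it then multiplies by $W_k$, telescopes, and bounds the step sum by Cauchy--Schwarz and \eqref{eq:sc-energy}, exactly as you do. The only cosmetic difference is that the paper keeps the factor $1/\sqrt{1+\mu\lambda_k}$ and pairs it with $\lambda_kW_k=a_k^2/t'_k\le a_k^2$, whereas you drop it and use $\lambda_kA_{k+1}\le a_k^2$ from Lemma~\ref{lem:sc-weights}(iii); both give the same constant $C_x$.
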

\begin{proof}
Write $p_k=r_k+\lambda_k\nabla f(x_{k+1})$. Since
$x_{k+1}+\lambda_k\nabla f(x_{k+1})=y_k+p_k$ and
$\nabla f(x^\star)=0$, strong monotonicity gives
\[
(1+\mu\lambda_k)\|x_{k+1}-x^\star\|^2
\le\langle y_k-x^\star+p_k,x_{k+1}-x^\star\rangle.
\]
By Cauchy--Schwarz,
\[
(1+\mu\lambda_k)\|x_{k+1}-x^\star\|
\le\|y_k-x^\star\|+\|p_k\|.
\]
Multiply by $W_k$ and use
$W_k(1+\mu\lambda_k)=A_{k+1}$
(Lemma~\ref{lem:sc-weights}),
$W_k\|y_k-x^\star\|
\le A_k\|x_k-x^\star\|+\eta_k\|z_k-x^\star\|$,
$\|z_k-x^\star\|\le R$ (Lemma~\ref{lem:sc-energy}),
$\eta_k\le a_k$ and \eqref{eq:sc-hpe}:
\[
A_{k+1}\|x_{k+1}-x^\star\|
\le A_k\|x_k-x^\star\|+a_kR
+\sigma\frac{A_{k+1}}{\sqrt{1+\mu\lambda_k}}\|r_k\|.
\]
Telescoping from $A_0=0$ gives
$A_N\|x_N-x^\star\|
\le A_NR+\sigma\sum_{k<N}
\frac{A_{k+1}}{\sqrt{1+\mu\lambda_k}}\|r_k\|$.
Write
$\frac{A_{k+1}}{\sqrt{1+\mu\lambda_k}}\|r_k\|
=\big(\sqrt{A_{k+1}/\lambda_k}\,\|r_k\|\big)
\sqrt{\lambda_kW_k}$.
By Cauchy--Schwarz, \eqref{eq:sc-energy} and
$\lambda_kW_k=a_k^2/t'_k\le a_k^2$,
the sum is at most
\[
\frac{R}{\sqrt{1-\sigma^2}}
\left(
\sum_{k<N}
a_k^2
\right)^{1/2}
\le\frac{R}{\sqrt{1-\sigma^2}}A_N.
\]
Dividing by $A_N>0$ proves the bound for $N\ge1$; for $N=0$,
it follows from $\|x_0-x^\star\|\le R$. Finally,
\[
\zeta_k\le\|z_k-x^\star\|+\|x_k-x^\star\|
\le(1+C_x)R,
\]
using Lemma~\ref{lem:sc-energy} and $z_0=x_0$.
\end{proof}

\begin{lemma}[Growth of the weights]\label{lem:sc-mass}
For every $k\ge1$, $A_k\ge\underline\lambda k^2/4$, and $A_1=\lambda_0\ge\underline\lambda$.
For every $k\ge0$, $A_k/A_{k+1}\le1-v_k\le\rho_\star$. Moreover, with
$K_\star=\lceil1/(1-\sqrt{\rho_\star})\rceil$ and
$T_\star=1+2(2+\log K_\star)/\sqrt{\mu\underline\lambda}$, every completed
horizon satisfies $\sum_{k=0}^ND_k\le RT_\star$, and
$T_\star=O_\sigma(\sqrt\kappa\log(2+\kappa))$.
\end{lemma}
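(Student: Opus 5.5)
The plan has three parts, one per claim of Lemma~\ref{lem:sc-mass}.

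\emph{Quadratic growth.} By Lemma~\ref{lem:sc-weights}(iii), $a_k^2\ge\lambda_kA_{k+1}$. The argument of Lemma~\ref{lem:agrowth} therefore applies verbatim and gives
\[
\sqrt{A_{k+1}}-\sqrt{A_k}\ge\tfrac12\sqrt{\lambda_k}\ge\tfrac12\sqrt{\underline\lambda},
\]
using $\lambda_k\ge\underline\lambda$ from Lemma~\ref{lem:sc-energy}. Telescoping from $A_0=0$ yields $A_k\ge\underline\lambda k^2/4$. For $k=0$, $A_0=0$ makes the defining equation read $a^2=\lambda a$, so $A_1=a_0=\lambda_0\ge\underline\lambda$.

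\emph{Geometric contraction.} We have $A_k/A_{k+1}=1-a_k/A_{k+1}$. Lemma~\ref{lem:sc-weights}(iii) gives $a_k/A_{k+1}\ge v(\lambda_k)=v_k$. Since $v$ is increasing and $\lambda_k\ge\underline\lambda$, we get $1-v_k\le 1-v(\underline\lambda)=\rho_\star$.

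\emph{Radius sum.} This is the main step. Write
\[
\sum_{k=0}^N D_k=R+\sum_{k\ge1}R/\sqrt{\mu A_k},
\]
and split the indices at $K_\star$.

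For $1\le k<K_\star$, use $A_k\ge\underline\lambda k^2/4$ to get $D_k\le 2R/(k\sqrt{\mu\underline\lambda})$. Summing over $k<K_\star$ gives at most $2R(1+\log K_\star)/\sqrt{\mu\underline\lambda}$.

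For $k\ge K_\star$, the contraction gives $A_k\ge A_{K_\star}\rho_\star^{-(k-K_\star)}$. Hence
\[
D_k\le D_{K_\star}\,\rho_\star^{(k-K_\star)/2},
\]
and the tail is at most $D_{K_\star}/(1-\sqrt{\rho_\star})\le K_\star D_{K_\star}$. Using $D_{K_\star}\le 2R/(K_\star\sqrt{\mu\underline\lambda})$, the tail is at most $2R/\sqrt{\mu\underline\lambda}$. This choice of $K_\star\ge 1/(1-\sqrt{\rho_\star})$ is exactly what lets the geometric factor cancel the $1/K_\star$ from quadratic growth.

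Adding the pieces gives
\[
\sum_{k=0}^N D_k\le R\Bigl[1+\frac{2(2+\log K_\star)}{\sqrt{\mu\underline\lambda}}\Bigr]=RT_\star.
\]

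\emph{Order of $T_\star$.} Since $\underline\lambda\ge\sigma/L_1$, we have $1/\sqrt{\mu\underline\lambda}\le\sqrt{\kappa/\sigma}$. Next, $v(\underline\lambda)\ge\sqrt{\sigma/(\kappa+\sigma)}$, so
\[
1-\sqrt{\rho_\star}\ge\frac{1-\rho_\star}{2}=\frac{v(\underline\lambda)}{2}.
\]
Therefore $K_\star=O_\sigma(\sqrt\kappa)$ and $\log K_\star=O_\sigma(\log(2+\kappa))$, which gives $T_\star=O_\sigma(\sqrt\kappa\log(2+\kappa))$.

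The only delicate point is the index bookkeeping when $N<K_\star$. In that case only the first part of the split is present, and the same bound still holds.
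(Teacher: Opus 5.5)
Your proof is correct and follows the paper's argument step for step. You get quadratic growth from $a_k^2\ge\lambda_kA_{k+1}$, the contraction from $A_k/A_{k+1}=1-a_k/A_{k+1}\le1-v_k$, and you split $\sum_kD_k$ at $K_\star$ so that $K_\star(1-\sqrt{\rho_\star})\ge1$ cancels the $1/K_\star$ in the harmonic bound on $D_{K_\star}$. The only difference is that you bound the order of $T_\star$ using the one-sided estimate $\mu\underline\lambda\ge\sigma/\kappa$, whereas the paper records two-sided bounds from the explicit root; the one-sided estimate is all the $O_\sigma$ claim needs.
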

\begin{proof}
By Lemma~\ref{lem:sc-weights}(iii),
$a_k\ge\sqrt{\lambda_kA_{k+1}}$, so
$\sqrt{A_{k+1}}-\sqrt{A_k}
=a_k/(\sqrt{A_{k+1}}+\sqrt{A_k})
\ge\sqrt{\lambda_k}/2
\ge\sqrt{\underline\lambda}/2$;
summing from $A_0=0$ gives the polynomial bound.
For $A_0=0$, \eqref{eq:sc-a} gives $a_0=\lambda_0$. By
Lemma~\ref{lem:sc-weights}(iii),
$A_k/A_{k+1}=1-a_k/A_{k+1}\le1-v_k$, and
$v_k\ge v(\underline\lambda)$ because $v$ is increasing.

For the radii, the polynomial bound gives
$D_k\le2R/(\sqrt{\mu\underline\lambda}\,k)$ for $k\ge1$.
Put $K=K_\star$. If $N\ge K$, the geometric bound gives
$A_k\ge A_K\rho_\star^{-(k-K)}$, and hence
\[
D_k\le\frac{2R}{\sqrt{\mu\underline\lambda}\,K}
\rho_\star^{(k-K)/2},
\qquad K\le k\le N.
\]
Hence
\[
\sum_{k=0}^ND_k
\le R+\frac{2R}{\sqrt{\mu\underline\lambda}}
\Big[
\sum_{k=1}^{K-1}\frac1k
+\frac{1}{K(1-\sqrt{\rho_\star})}
\Big]
\le R+\frac{2R}{\sqrt{\mu\underline\lambda}}
(2+\log K)
=RT_\star,
\]
using $\sum_{1\le k<K}1/k\le1+\log K$ and
$K(1-\sqrt{\rho_\star})\ge1$; 
If $N<K$, the same bound follows from the polynomial estimate alone.

For the order, the explicit root
$\underline\lambda
=\sigma(\sigma\mu+\sqrt{\sigma^2\mu^2+4L_1^2})/(2L_1^2)$
gives
$\sigma/\kappa\le\mu\underline\lambda
\le(\sigma+\sigma^2)/\kappa$, so
$1-\rho_\star=v(\underline\lambda)
=\Theta_\sigma(\kappa^{-1/2})$, and
$1-\sqrt{\rho_\star}\ge(1-\rho_\star)/2$
gives $K_\star=O_\sigma(\sqrt\kappa)$.
Together with
$(\mu\underline\lambda)^{-1/2}=O_\sigma(\sqrt\kappa)$,
this yields
$T_\star=O_\sigma(\sqrt\kappa\log(2+\kappa))$.
\end{proof}

\subsection{The path length}\label{app:sc-path}
\begin{lemma}[Path length]\label{lem:sc-path}
For every completed horizon $N$,
\[
\sum_{k<N}\delta_k\ \le\ P_N:=\sum_{k<N}\big(m_k(\lambda_k)+\rho^{\rm E}_k(\lambda_k)\big)
\ \le\ T_\sharp R,
\]
with $T_\sharp$ as in Section~\ref{sec:scguarantee}. The proof uses only
\eqref{eq:sc-hpe}, \eqref{eq:sc-energy} and $\lambda_k\ge\underline\lambda$.
\end{lemma}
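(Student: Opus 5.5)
The first inequality is pointwise. Since $x_{k+1}=y_k+r_k$, we have $\delta_k=\|x_{k+1}-x_k\|\le\|y_k-x_k\|+\|r_k\|=m_k(\lambda_k)+\|r_k\|$, and Lemma~\ref{lem:sc-energy} gives $\|r_k\|\le\rho_k^{\rm E}(\lambda_k)$ for every completed step. It remains to bound $P_N$. Both summands carry a factor $a_k/A_{k+1}$:
\begin{itemize}
\item By Lemma~\ref{lem:sc-weights}(ii), $m_k(\lambda_k)=(\eta_k/W_k)\zeta_k\le(a_k/A_{k+1})\zeta_k$.
\item By Lemma~\ref{lem:sc-weights}(iv), $\rho_k^{\rm E}(\lambda_k)=R\sqrt{\lambda_k/((1-\sigma^2)A_{k+1})}$ is at most $R(a_k/A_{k+1})/\sqrt{1-\sigma^2}$. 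For $A_k>0$ it is also at most $R\,v_k/\sqrt{(1-\sigma^2)\mu A_k}$, and $v_k\le a_k/A_{k+1}$ by part (iii).
\end{itemize}
I therefore split the run at the first index $k_1$ with $\mu A_{k_1}\ge1$ (take $k_1=N$ if there is none).

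\emph{Early phase, $k<k_1$.} Here I use only $\zeta_k\le(1+C_x)R$ (Lemma~\ref{lem:sc-bounded}) and the first form of $\rho_k^{\rm E}$. The contribution is then at most $\big(1+C_x+1/\sqrt{1-\sigma^2}\big)R\sum_{k<k_1}a_k/A_{k+1}$. As in Lemma~\ref{lem:logsum}, this sum is at most $1+\log(A_{k_1-1}/A_1)$ plus one boundary term, which is at most $1$. Now $A_1\ge\underline\lambda$ by Lemma~\ref{lem:sc-mass}, and $A_{k_1-1}<1/\mu$. So the sum is at most $2+\log_+\frac{1}{\mu\underline\lambda}$, which is the first group of terms in $T_\sharp$.

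\emph{Late phase, $k\ge k_1$.} Here $k\ge1$, so Lemma~\ref{lem:sc-energy} gives $\|x_k-x^\star\|\le D_k$ and $\|z_k-x^\star\|\le D_k$, hence $\zeta_k\le2D_k=2R/\sqrt{\mu A_k}$. Combined with the second form of $\rho_k^{\rm E}$, the late contribution is at most
\[
\Big(2+\tfrac{1}{\sqrt{1-\sigma^2}}\Big)\frac{R}{\sqrt\mu}\sum_{k\ge k_1}\frac{a_k}{A_{k+1}}\frac{1}{\sqrt{A_k}} .
\]
To control this sum I use a telescoping trick rather than geometric growth, because the ratio $A_{k+1}/A_k$ is unbounded when $\lambda_k$ is large. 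Write $r=A_k/A_{k+1}$, so $a_k/A_{k+1}=1-r$. Since $1-r\le2(1-\sqrt r)$,
\[
\frac{1-r}{\sqrt{A_k}}\le2\Big(\frac1{\sqrt{A_k}}-\frac1{\sqrt{A_{k+1}}}\Big).
\]
The sum therefore telescopes to at most $2/\sqrt{A_{k_1}}\le2\sqrt\mu$. This bounds the late phase by $(4+2/\sqrt{1-\sigma^2})R$.

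Some slack is still needed where the boundary step crosses $\mu A\approx1$, in particular for the step entering the late phase. To absorb it I would use $a/A'\le\sqrt2\,v$ when $\mu A'\ge1$ (Lemma~\ref{lem:sc-weights}(iii)), which produces the $4\sqrt2$ in $T_\sharp$ in place of $4$.

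\emph{Main obstacle.} The delicate point is bookkeeping at the phase boundary, namely assigning the crossing index to one phase so that the stated constant comes out exactly. The telescoping bound in the late phase is the real idea. It makes the tail bounded independently of how fast $\lambda_k$ grows, whereas a naive geometric-growth argument would only give the weaker $\sum_k D_k=O(\sqrt\kappa\log\kappa)$ of Lemma~\ref{lem:sc-mass}. The whole argument uses only \eqref{eq:sc-hpe} (through Lemma~\ref{lem:sc-energy}), the energy bound \eqref{eq:sc-energy}, and the floor $\lambda_k\ge\underline\lambda$, as the statement asserts.
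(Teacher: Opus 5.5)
Your proof is correct and follows the paper's argument: you split at the first $k_1$ with $\mu A_{k_1}\ge1$, bound the early phase by the log-sum $\sum_{k<k_1}a_k/A_{k+1}\le2+\log_+\frac{1}{\mu\underline\lambda}$, and telescope $D_k=R/\sqrt{\mu A_k}$ in the late phase. Your direct telescoping $(a_k/A_{k+1})D_k\le2(D_k-D_{k+1})$ is slightly sharper than the paper's route, which uses $a_k/A_{k+1}\le\sqrt2\,v_k$ and then $v_kD_k\le2(D_k-D_{k+1})$. So your late phase is already at most $(4+2/\sqrt{1-\sigma^2})R$, and your closing ``boundary slack'' paragraph is unnecessary: the $4\sqrt2$ in $T_\sharp$ comes only from the paper's detour through $v_k$, not from any crossing step.
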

\begin{proof}
The claim is immediate for $N=0$, so assume $N\ge1$.
Write $m_k=m_k(\lambda_k)=\|y_k-x_k\|$ and
$\rho^{\rm E}_k=\rho^{\rm E}_k(\lambda_k)$.
Since $x_{k+1}-x_k=(y_k-x_k)+r_k$,
Lemma~\ref{lem:sc-energy} gives
$\delta_k\le m_k+\|r_k\|\le m_k+\rho^{\rm E}_k$.
Let $k_1=\min\{k\ge1:\mu A_k\ge1\}$, with $k_1=\infty$
if no such $k$ exists, and $K'=\min\{N,k_1\}$.

\emph{Early indices, $k<K'$.}
By Lemma~\ref{lem:sc-weights}(ii),(iv),
$m_k=\frac{\eta_k}{W_k}\zeta_k
\le\frac{a_k}{A_{k+1}}\zeta_k$ and
$\rho^{\rm E}_k
\le\frac{a_k}{A_{k+1}}\frac{R}{\sqrt{1-\sigma^2}}$,
and $\zeta_k\le(1+C_x)R$ by Lemma~\ref{lem:sc-bounded}.
Moreover,
$\sum_{k<K'}a_k/A_{k+1}
\le2+\log_+\frac{1}{\mu\underline\lambda}$:
if $K'=1$, the sum equals one. For $K'\ge2$, the term
$k=0$ equals one because $A_0=0$, the term $k=K'-1$
is at most one, and the terms $1\le k\le K'-2$ satisfy
$a_k/A_{k+1}
=(A_{k+1}-A_k)/A_{k+1}
\le\log(A_{k+1}/A_k)$ and telescope to at most
$\log(A_{K'-1}/A_1)
\le\log_+\frac{1}{\mu\underline\lambda}$,
because $\mu A_{K'-1}<1$ and $A_1\ge\underline\lambda$.
The early indices therefore contribute at most
$\big(1+C_x+(1-\sigma^2)^{-1/2}\big)
\big(2+\log_+\frac1{\mu\underline\lambda}\big)R$.

\emph{Late indices, $k_1\le k<N$.}
If $k_1\ge N$, this contribution is zero. Otherwise,
$\mu A_{k+1}\ge\mu A_k\ge1$, so
$a_k/A_{k+1}\le\sqrt2\,v_k$
by Lemma~\ref{lem:sc-weights}(iii), and
$\zeta_k\le\|z_k-x^\star\|+\|x_k-x^\star\|
\le2D_k$ by Lemma~\ref{lem:sc-energy}.
Thus
$m_k\le\frac{a_k}{A_{k+1}}\zeta_k
\le2\sqrt2\,v_kD_k$,
and
$\rho^{\rm E}_k\le v_kD_k/\sqrt{1-\sigma^2}$
by Lemma~\ref{lem:sc-weights}(iv).
By Lemma~\ref{lem:sc-mass},
$D_{k+1}=D_k\sqrt{A_k/A_{k+1}}
\le D_k\sqrt{1-v_k}\le D_k(1-v_k/2)$,
so $v_kD_k\le2(D_k-D_{k+1})$, and the sum over late
indices telescopes to at most $2D_{k_1}\le2R$.
The late indices therefore contribute at most
$2\big(2\sqrt2+(1-\sigma^2)^{-1/2}\big)R$,
and adding the two contributions gives $P_N\le T_\sharp R$.
\end{proof}

\subsection{Proof of Theorem~\ref{thm:scmain}}\label{proof:scmain}
\begin{proof}
Well-posedness, admissibility and $\lambda_k\ge\underline\lambda\ge\sigma/L_1$
are Lemmas~\ref{lem:sc-energy} and~\ref{lem:sc-search}; the gradient count
was verified in Section~\ref{sec:scmethod}, and the evaluation count is
Lemma~\ref{lem:sc-search}.

\emph{The moment bound.} By Lemma~\ref{lem:sc-search},
$s_k\le\frac2\sigma\omega_k(\lambda_k)$, and
$\omega_k(\lambda_k)\le\bar E_k+L_2\big(m_k(\lambda_k)+\rho^{\rm E}_k(\lambda_k)\big)$
since $\rho_k\le\rho^{\rm E}_k$. Lemmas~\ref{lem:sc-cert} and~\ref{lem:sc-path}
give
\[
\sum_{k<N}s_k\le\frac2\sigma\Big[\chi L_1+2\chi L_2\sum_{k<N}\delta_k+2b+L_2P_N\Big]
\le\frac2\sigma\Big[\chi L_1+2L_2R+(2\chi+1)L_2RT_\sharp\Big]=C_\sharp ,
\]
which is \eqref{eq:sc-moment}. The order of $B_\sharp$ follows from
$\log_+\frac1{\mu\underline\lambda}\le\log\frac\kappa\sigma$.

\emph{Three lower bounds on $A_N$.}
Let $n=N-1$. If $n=0$, all three bounds reduce to
$A_1\ge\underline\lambda$. We therefore assume $n\ge1$
in the following estimates.
First, Lemma~\ref{lem:sc-mass} gives
$A_N\ge A_1\rho_\star^{-n}
\ge\underline\lambda\rho_\star^{-n}$. Next, for
$v=v(\lambda)$, $\frac{1}{1-v}=\frac{1+v}{1-v^2}=(1+\mu\lambda)(1+v)$, so
Lemma~\ref{lem:sc-mass} gives, for $k\ge1$,
\begin{equation}\label{eq:sc-logratio}
\log\frac{A_{k+1}}{A_k}\ \ge\ -\log(1-v_k)\ \ge\ \max\{\log(1+\mu\lambda_k),\ v_k\}.
\end{equation}
\emph{Superlinear bound.} Solving $s^2\lambda^2-\mu\lambda-1=0$ for
$\lambda>0$ gives $1+\mu\lambda=\big(\frac{\mu}{2s}+\sqrt{1+\frac{\mu^2}{4s^2}}\big)^2$,
that is, $\log(1+\mu\lambda_k)=2\psi(s_k)$ with
$\psi(s)=\operatorname{arsinh}(\mu/(2s))$. Direct differentiation gives
$\psi'(s)=-\alpha/(s\sqrt{s^2+\alpha^2})<0$ and
$\psi''(s)=\alpha(2s^2+\alpha^2)/(s^2(s^2+\alpha^2)^{3/2})>0$ with $\alpha=\mu/2$,
so $\psi$ is convex and decreasing. By \eqref{eq:sc-logratio}, Jensen's
inequality and \eqref{eq:sc-moment},
\[
\log\frac{A_N}{A_1}\ \ge\ 2\sum_{k=1}^{n}\psi(s_k)\ \ge\ 2n\,\psi\Big(\frac1n\sum_{k=1}^ns_k\Big)
\ \ge\ 2n\,\psi\Big(\frac{C_\sharp}{n}\Big)=2n\operatorname{arsinh}\frac{n}{2B_\sharp}.
\]
\emph{Moderate-accuracy bound.} If $\mu\lambda_k\le1$, then
$s_k/\mu=\sqrt{1+\mu\lambda_k}/(\mu\lambda_k)\ge1/(\mu\lambda_k)$ and
$v_k\ge\sqrt{\mu\lambda_k/2}\ge(2s_k/\mu)^{-1/2}$; otherwise $v_k\ge2^{-1/2}$.
Hence $v_k\ge2^{-1/2}\min\{1,(s_k/\mu)^{-1/2}\}$. Let $J=\{1\le k\le n:s_k\ge\mu\}$
and $I=\{1,\ldots,n\}\setminus J$. H\"older's inequality with exponents $3$
and $3/2$ gives
$|J|\le\big(\sum_{J}s_k/\mu\big)^{1/3}\big(\sum_J(s_k/\mu)^{-1/2}\big)^{2/3}$,
so $\sum_J(s_k/\mu)^{-1/2}\ge|J|^{3/2}B_\sharp^{-1/2}$ by \eqref{eq:sc-moment}.
With \eqref{eq:sc-logratio},
\[
\log\frac{A_N}{A_1}\ \ge\ \sum_{k=1}^nv_k\ \ge\ 2^{-1/2}\big(|I|+|J|^{3/2}B_\sharp^{-1/2}\big)
\ \ge\ 2^{-1/2}\min\Big\{\frac n2,\ \Big(\frac n2\Big)^{3/2}B_\sharp^{-1/2}\Big\},
\]
because one of $I$ and $J$ has at least $n/2$ elements.

\emph{Conclusion.} By Lemma~\ref{lem:sc-energy} and $A_1\ge\underline\lambda$,
$f(x_N)-f^\star\le R^2/(2A_N)$ is at most $R^2/(2\underline\lambda)$ times the
smallest of $\rho_\star^n$,
$\exp(-2n\operatorname{arsinh}(n/(2B_\sharp)))$ and
$\exp(-2^{-1/2}\min\{n/2,(n/2)^{3/2}B_\sharp^{-1/2}\})$. Finally,
$R^2/(2\underline\lambda)\le L_1R^2/(2\sigma)$; $\rho_\star\le q$, because $v$ is
increasing and $\underline\lambda\ge\sigma/L_1$ gives
$v(\underline\lambda)\ge\sqrt{\sigma/(\kappa+\sigma)}$; and
$\exp(2\operatorname{arsinh}x)=(x+\sqrt{1+x^2})^2\ge1+4x^2$ for $x\ge0$, which
gives $\exp(-2n\operatorname{arsinh}(n/(2B_\sharp)))\le(1+n^2/B_\sharp^2)^{-n}$.
This is \eqref{eq:sc-rate}.
\end{proof}

\subsection{Proof of Corollary~\ref{cor:scqueries}}\label{proof:scqueries}
\begin{proof}
Let $0<\varepsilon<L_1R^2/2$ and write $\mathcal L=\mathcal L_\varepsilon$; then
$\mathcal L>0$. By Lemma~\ref{lem:sc-energy}, the stopping rule
$A_N\ge R^2/(2\varepsilon)$ certifies $f(x_N)-f^\star\le\varepsilon$. By the
proof of Theorem~\ref{thm:scmain}, $A_N\ge\underline\lambda e^{\Gamma}$, where
$\Gamma$ is the largest of $n\log(1/\rho_\star)$,
$2n\operatorname{arsinh}(n/(2B_\sharp))$ and
$2^{-1/2}\min\{n/2,(n/2)^{3/2}B_\sharp^{-1/2}\}$, with $n=N-1$. Since
$\underline\lambda e^{\mathcal L}\ge\frac{\sigma}{L_1}\cdot\frac{L_1R^2}{2\sigma\varepsilon}=\frac{R^2}{2\varepsilon}$,
it suffices that $\Gamma\ge\mathcal L$, and each of the following conditions
is sufficient.
\begin{itemize}
\item $n\ge\mathcal L/\log(1/q)$, since $\rho_\star\le q$.
\item $n\ge\max\{2\sqrt2\,\mathcal L,\ 2^{4/3}\mathcal L^{2/3}B_\sharp^{1/3}\}$:
      then $n/2\ge\sqrt2\,\mathcal L$ and
      $(n/2)^{3/2}B_\sharp^{-1/2}\ge\sqrt2\,\mathcal L$.
\item $n\ge\mathcal L/\log(1+(\mathcal L/B_\sharp)^{2/3})$. Indeed, put
      $u=(\mathcal L/B_\sharp)^{2/3}$ and $m=\mathcal L/\log(1+u)$. Since
      $\log(1+u)\le u$, $m^2/B_\sharp^2=u^3/\log^2(1+u)\ge u$, so
      $m\log(1+m^2/B_\sharp^2)\ge\mathcal L$; the map
      $n\mapsto n\log(1+n^2/B_\sharp^2)$ is increasing, and
      $\exp(2n\operatorname{arsinh}(n/(2B_\sharp)))\ge(1+n^2/B_\sharp^2)^n$.
\end{itemize}
Taking the ceiling of the smallest of these three sufficient
thresholds gives $n_\varepsilon$. Thus the algorithm terminates
after at most $1+n_\varepsilon$ iterations. The
algorithm queries one initial gradient and at most three gradients per
iteration, which gives at most $4+3n_\varepsilon$ gradients. For
\eqref{eq:sc-complexity}, $\log(1/q)\ge1-q=\sqrt{\sigma/(\kappa+\sigma)}$ gives
$\mathcal L/\log(1/q)\le\sqrt{(\kappa+\sigma)/\sigma}\,\mathcal L$, and
$\max\{2\sqrt2\,\mathcal L,2^{4/3}\mathcal L^{2/3}B_\sharp^{1/3}\}\le3\big(\mathcal L+(B_\sharp\mathcal L^2)^{1/3}\big)$.
\end{proof}

\subsection{Proof of Remark~\ref{rem:scexplicit}}\label{proof:scexplicit}
\begin{proof}
Consider the rule $\lambda_k\omega_k^{\rm ex}=\sigma\sqrt{1+\mu\lambda_k}$ and
argue by induction as in Lemma~\ref{lem:sc-energy}. If $\Psi_k\le R^2/2$,
then $\|x_k-x^\star\|\le D_k$ and $\|z_k-x^\star\|\le D_k$; for $k=0$ both
points equal $x_0$. For every $\lambda>0$,
$m_k(\lambda)\le\|z_k-x_k\|\le2D_k$,
and, for $k\ge1$,
$\rho_k(\lambda)\le\rho^{\rm E}_k(\lambda)
\le D_k/\sqrt{1-\sigma^2}$
by Lemma~\ref{lem:sc-weights}(iv);
for $k=0$, $A'(\lambda)=\lambda$ and
$\rho^{\rm E}_0=R/\sqrt{1-\sigma^2}$. Hence $\omega_k(\lambda)\le\omega_k^{\rm ex}$
for every $\lambda$, so
$\phi_k(\lambda_k)\le\lambda_k\omega_k^{\rm ex}/\sqrt{1+\mu\lambda_k}=\sigma$, and
Lemma~\ref{lem:sc-admissible} gives admissibility; since
$\omega_k^{\rm ex}\le L_1$, also $\lambda_k\ge\underline\lambda$. The induction
therefore goes through, and Lemmas~\ref{lem:sc-bounded}, \ref{lem:sc-mass}
and~\ref{lem:sc-path} hold, since their proofs use only admissibility,
\eqref{eq:sc-energy} and $\lambda_k\ge\underline\lambda$. Now
$s_k=\omega_k^{\rm ex}/\sigma$, and Lemmas~\ref{lem:sc-cert}, \ref{lem:sc-path}
and~\ref{lem:sc-mass} give
\[
\sum_{k<N}s_k\le\frac1\sigma\Big[\chi L_1+2b+2\chi L_2T_\sharp R+c_\sigma L_2RT_\star\Big]
=\mu\,O_\sigma\big(\chi\kappa+\chi\kappa_2\log(2+\kappa)+\kappa_2\sqrt\kappa\log(2+\kappa)\big).
\]
The rate arguments in the proof of Theorem~\ref{thm:scmain}
therefore apply with $C_\sharp$ replaced by the explicit upper
bound above, since $\lambda_k\ge\underline\lambda$ still holds.
\end{proof}

\section{Restarting the convex method}\label{app:strongly}
Strong convexity also permits a restart scheme based on Algorithm~1:
at each stage, run the convex method until its guarantee certifies
a distance to the minimizer at most half the current radius,
then restart from the returned point.
This appendix describes the scheme and proves its guarantee,
Theorem~\ref{thm:screstart}, which
Section~\ref{sec:sccompare} compares with Corollary~\ref{cor:scqueries}.
Appendix~\ref{sec:screstartsec} states the scheme and the theorem,
Appendix~\ref{app:scstages} establishes the radius invariant, counts the
stages and records two elementary estimates, and
Appendix~\ref{proof:screstart} applies the explicit convex query bound
\eqref{eq:fp-explicit-queries} at each stage and sums the resulting costs.
Throughout, $\kappa$, $\kappa_2$, $\mathcal L_\varepsilon$ and
$\mathcal S_\varepsilon$ are as in Section~\ref{sec:strongly}.

\subsection{The restart scheme and its guarantee}\label{sec:screstartsec}
Suppose a run of Algorithm~1 starts at $w$ with $\|w-x^\star\|\le\bar R$ and
returns $w^+$ with $f(w^+)-f^\star\le\bar\varepsilon$. Strong convexity gives
$\frac\mu2\|w^+-x^\star\|^2\le\bar\varepsilon$, so the choice
$\bar\varepsilon=\mu\bar R^2/8$ yields $\|w^+-x^\star\|\le\bar R/2$
(Lemma~\ref{lem:halve}), and the mass target of the run,
$\bar R^2/(2\bar\varepsilon)=4/\mu$, is the same at every stage. The
\emph{restart scheme} runs Algorithm~1 in stages $s=0,\ldots,S-1$. Stage $s$
starts from $w_s$, with $w_0=x_0$, uses $R_s=2^{-s}R$,
$\varepsilon_s=\mu R_s^2/8$ and $b=L_2R_s$ wherever the convex method uses
$R$, $\varepsilon$ and $b$, starts with $A_0=0$ and a fresh model
($M_{-1}=0$), and returns $w_{s+1}$ once $A_{k+1}\ge4/\mu$. The number of
stages is
$S=\max\{0,\lceil\log_2(R\sqrt{L_1/(2\varepsilon)})\rceil\}\le\mathcal S_\varepsilon$,
which ensures $f(w_S)-f^\star\le L_1R_S^2/2\le\varepsilon$
(Lemma~\ref{lem:wrapper}). Applying the explicit form
\eqref{eq:fp-explicit-queries} in the proof of Corollary~\ref{thm:complexity} once per
stage gives the following bound.

\begin{theorem}[Restarted RP-QNPE]\label{thm:screstart}
Under Assumptions~\ref{as:main} and~\ref{as:sc}, the restart scheme
returns a point with $f(w_S)-f^\star\le\varepsilon$ using
\begin{equation}\label{eq:screstart}
G_\varepsilon=O_\sigma\!\left(
\mathcal S_\varepsilon+\min\big\{\sqrt{\kappa}\,\mathcal S_\varepsilon,\ (\chi\kappa)^{1/3}\mathcal S_\varepsilon+(\chi\kappa_2)^{1/3}\big\}\right)
\end{equation}
gradient queries.
\end{theorem}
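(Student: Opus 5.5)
The proof applies the explicit convex query bound \eqref{eq:fp-explicit-queries} once per stage, with stage-dependent constants, and then sums over the stages. There are three steps: certify the stage invariant, bound the cost of one stage, and sum.

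\emph{Step 1: the stage invariant.} By induction on $s$, $\|w_s-x^\star\|\le R_s=2^{-s}R$. The base case $s=0$ is the hypothesis. Stage $s$ is Algorithm~1 run from $w_s$ with radius $R_s$, allowance $b=L_2R_s$, and target $\varepsilon_s=\mu R_s^2/8$. Its stopping rule $A_{k+1}\ge R_s^2/(2\varepsilon_s)=4/\mu$ certifies $f(w_{s+1})-f^\star\le\varepsilon_s$ by Lemma~\ref{lem:potential}(i). Strong convexity then gives $\tfrac\mu2\|w_{s+1}-x^\star\|^2\le\mu R_s^2/8$, so $\|w_{s+1}-x^\star\|\le R_s/2$. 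After $S$ stages, smoothness gives $f(w_S)-f^\star\le L_1R_S^2/2$. The choice of $S$ makes this at most $\varepsilon$. Finally $S\le\mathcal S_\varepsilon$, because $\lceil\log_2(R\sqrt{L_1/(2\varepsilon)})\rceil\le1+\tfrac12\log_2(L_1R^2/\varepsilon)$, and this is $O(\mathcal S_\varepsilon)$.

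\emph{Step 2: per-stage cost.} Substitute $R\to R_s$ and $\varepsilon\to\varepsilon_s$ into \eqref{eq:fp-explicit-queries}. This gives $L_1R_s^2/\varepsilon_s=8\kappa$, so the smoothness branch is $O_\sigma(\sqrt\kappa)$. With $K_s=\chi(L_1+L_2R_s)$, the curvature branch is $(K_sR_s^2/\varepsilon_s)^{1/3}=(8\chi(\kappa+L_2R_s/\mu))^{1/3}$. The general-$b$ remark after \eqref{eq:fp-explicit-rate} confirms this, since $b=L_2R_s$ keeps $K_b=O(K_s)$. Using $(x+y)^{1/3}\le x^{1/3}+y^{1/3}$, stage $s$ costs
\[
G^{(s)}\le1+3\Big\lceil\min\Big\{c_\sigma\sqrt\kappa,\ c'_\sigma\big((\chi\kappa)^{1/3}+(\chi\kappa_2)^{1/3}2^{-s/3}\big)\Big\}\Big\rceil,
\]
where $L_2R_s/\mu=\kappa_2 2^{-s}$.

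\emph{Step 3: summing the stages.} The ceiling contributes $O(1)$ per stage, which adds $O(S)=O(\mathcal S_\varepsilon)$ in total. The main point is to sum the minimum without losing the $2^{-s/3}$ decay, because each branch is summed separately before taking the minimum. On one side, summing the smoothness branch over the stages gives $\sqrt\kappa\,\mathcal S_\varepsilon$. On the other, the curvature branch sums to $(\chi\kappa)^{1/3}S+(\chi\kappa_2)^{1/3}\sum_s2^{-s/3}=O((\chi\kappa)^{1/3}\mathcal S_\varepsilon+(\chi\kappa_2)^{1/3})$, since the geometric series converges. The bound is justified by $\sum_s\min\{a_s,b_s\}\le\min\{\sum a_s,\sum b_s\}$, and this yields \eqref{eq:screstart}.

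I expect the main obstacle to be justifying the stage-wise application of \eqref{eq:fp-explicit-queries}. The explicit constants must depend only on $\sigma$ and not on $R$ or $b$, even though $b=L_2R_s$ varies from stage to stage. The proof also needs the fresh model, with $M_{-1}=0$, $A_0=0$, and the probe radius $\tau_k$ computed from the stage-$s$ value of $b$ with the iteration counter reset, to satisfy every hypothesis of Theorem~\ref{thm:iterationrate} with $R_s$ in place of $R$.
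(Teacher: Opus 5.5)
Your proposal is correct and follows the paper's proof essentially step for step: the radius-halving induction and stage count of Lemma~\ref{lem:wrapper}, the per-stage application of \eqref{eq:fp-explicit-queries} with $L_1R_s^2/\varepsilon_s=8\kappa$ and $L_2R_s/\mu=\kappa_2 2^{-s}$, subadditivity of the cube root, and $\sum_s\min\{a_s,b_s\}\le\min\{\sum_s a_s,\sum_s b_s\}$ combined with the geometric series in $2^{-s/3}$. One small note: the general-$b$ remark is not needed, because $b=L_2R_s$ is exactly the headline choice $b=L_2R$ for the stage's own radius, so \eqref{eq:fp-explicit-queries} applies verbatim with $(x_0,R,\varepsilon)=(w_s,R_s,\varepsilon_s)$ and constants depending only on $\sigma$.
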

\noindent The proof is in Appendix~\ref{proof:screstart}.
The terms that involve $L_1$ depend on the stage only through
$L_1R_s^2/\varepsilon_s=8\kappa$ and repeat at every stage,
while those involving $L_2R_s$ decrease geometrically with $s$
and sum to the transient $(\chi\kappa_2)^{1/3}$.
The stage-independent costs therefore accumulate linearly in
$\mathcal S_\varepsilon$. For fixed problem parameters,
Corollary~\ref{cor:scqueries} instead gives
$\mathcal L_\varepsilon^{2/3}$ dependence in the moderate-accuracy
regime and
$\mathcal L_\varepsilon/\log\mathcal L_\varepsilon$ dependence
as $\varepsilon\downarrow0$.

\subsection{Radius contraction and the number of stages}\label{app:scstages}
\begin{lemma}[One stage halves the radius]\label{lem:halve}
Under Assumptions~\ref{as:main} and~\ref{as:sc}, every $x\in\mathbb R^d$
satisfies
\[
\frac\mu2\|x-x^\star\|^2\ \le\ f(x)-f^\star\ \le\ \frac{L_1}2\|x-x^\star\|^2 .
\]
Consequently, if $\bar R>0$ and $f(w^+)-f^\star\le\mu\bar R^2/8$, then
$\|w^+-x^\star\|\le\bar R/2$.
\end{lemma}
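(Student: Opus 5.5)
The two-sided bound comes from the second-order Taylor expansion of $f$ about the minimizer $x^\star$. Since $\nabla f(x^\star)=0$, the integral form of Taylor's theorem along the segment $x^\star+t(x-x^\star)$ gives
\[
f(x)-f^\star=\int_0^1(1-t)\,\big\langle x-x^\star,\,H\big(x^\star+t(x-x^\star)\big)(x-x^\star)\big\rangle\,dt .
\]
Assumption~\ref{as:sc} gives $\mu I\preceq H(\cdot)$, and Assumption~\ref{as:main} gives $H(\cdot)\preceq L_1I$. The integrand is therefore sandwiched between $\mu(1-t)\|x-x^\star\|^2$ and $L_1(1-t)\|x-x^\star\|^2$. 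Integrating and using $\int_0^1(1-t)\,dt=\tfrac12$ yields both inequalities.

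For the consequence, combine the lower bound at $x=w^+$ with the hypothesis: $\frac\mu2\|w^+-x^\star\|^2\le f(w^+)-f^\star\le\mu\bar R^2/8$. Dividing by $\mu/2>0$ gives $\|w^+-x^\star\|^2\le\bar R^2/4$, and taking square roots gives $\|w^+-x^\star\|\le\bar R/2$.

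There is no real obstacle. The only detail is that the Taylor identity needs $f\in C^2$, which Assumption~\ref{as:main} supplies. If one prefers to avoid Taylor's formula, the same bounds follow from the monotonicity inequalities $\mu\|u\|^2\le\langle\nabla f(x^\star+u)-\nabla f(x^\star),u\rangle\le L_1\|u\|^2$ applied along the segment and integrated in $t$.
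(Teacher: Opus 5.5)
Your proposal is correct and follows the paper's proof exactly: the integral-remainder Taylor expansion about $x^\star$ with $\nabla f(x^\star)=0$, the sandwich $\mu I\preceq H\preceq L_1 I$ with $\int_0^1(1-t)\,dt=\tfrac12$, and then the lower bound at $w^+$ to get $\|w^+-x^\star\|^2\le\bar R^2/4$.
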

\begin{proof}
Since $\nabla f(x^\star)=0$, Taylor's theorem with integral remainder gives,
with $v=x-x^\star$,
$f(x)-f^\star=\int_0^1(1-t)\langle H(x^\star+tv)v,v\rangle\,dt$. By
Assumptions~\ref{as:main} and~\ref{as:sc} the integrand lies between
$(1-t)\mu\|v\|^2$ and $(1-t)L_1\|v\|^2$, and $\int_0^1(1-t)\,dt=\frac12$. For
the second claim, the lower bound gives
$\frac\mu2\|w^+-x^\star\|^2\le\mu\bar R^2/8$, that is,
$\|w^+-x^\star\|^2\le\bar R^2/4$.
\end{proof}

\begin{lemma}[Correctness and number of stages]\label{lem:wrapper}
Under Assumptions~\ref{as:main} and~\ref{as:sc}, the restart scheme
satisfies $\|w_s-x^\star\|\le R_s$ for every $0\le s\le S$, every stage stops
after finitely many gradient queries, and the returned point satisfies
$f(w_S)-f^\star\le\varepsilon$. Moreover,
\[
S\ \le\ 1+\max\left\{0,\ \tfrac12\log_2\frac{L_1R^2}{2\varepsilon}\right\}
\ \le\ \mathcal S_\varepsilon .
\]
\end{lemma}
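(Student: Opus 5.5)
The plan is to prove the three claims of Lemma~\ref{lem:wrapper} in order: the radius invariant, finiteness of each stage, and then the final accuracy together with the bound on $S$.

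\emph{Radius invariant.} We argue by induction on $s$. The base case $s=0$ holds because $w_0=x_0$ and $\|x_0-x^\star\|\le R=R_0$.

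For the inductive step, assume $\|w_s-x^\star\|\le R_s$. Stage $s$ runs Algorithm~1 from $w_s$, with $R_s$ in place of $R$ and $b=L_2R_s$. By the inductive hypothesis, Assumption~\ref{as:main} holds for this run with radius $R_s$. Hence every iteration is HPE-admissible by Lemma~\ref{lem:wellposed}, and Lemma~\ref{lem:potential}(i) gives $f(x_k)-f^\star\le R_s^2/(2A_k)$.

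The stage stops once $A_{k+1}\ge4/\mu$, so the returned point satisfies
\[
f(w_{s+1})-f^\star\le \frac{\mu R_s^2}{8}=\varepsilon_s .
\]
Lemma~\ref{lem:halve} with $\bar R=R_s$ then gives $\|w_{s+1}-x^\star\|\le R_s/2=R_{s+1}$, which closes the induction.

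\emph{Finite termination.} This follows from the mass lower bound \eqref{eq:smoothbranch}, namely $A_N\ge\sigma N^2/(4L_1)$, applied to the stage run; it is valid there because every step is admissible. Consequently $A_N\ge4/\mu$ after at most $\lceil 4\sqrt{\kappa/\sigma}\rceil$ iterations, each using at most three gradients, plus the initial gradient. A stage may also return early at an exact stationary point; in that case $w_{s+1}=x^\star$ and the invariant holds trivially.

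\emph{Final accuracy and the bound on $S$.} Using the invariant at $s=S$ together with the upper bound of Lemma~\ref{lem:halve},
\[
f(w_S)-f^\star\le \frac{L_1}{2}\,R_S^2=\frac{L_1R^2}{2}\,4^{-S}.
\]
When $S\ge\log_2\bigl(R\sqrt{L_1/(2\varepsilon)}\bigr)$ this is at most $\varepsilon$, and the definition of $S$ as the ceiling of that quantity (or $0$) guarantees exactly this. If $S=0$, then $L_1R^2/2\le\varepsilon$ directly.

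For the count, $\lceil t\rceil\le1+t$ gives
\[
S\le 1+\max\Bigl\{0,\ \tfrac12\log_2\frac{L_1R^2}{2\varepsilon}\Bigr\}.
\]
It remains to compare this with $\mathcal S_\varepsilon=1+\log_+(L_1R^2/\varepsilon)$. We have $\tfrac12\log_2 x=\log x/(2\log2)\le\log x$ for $x\ge1$, and also $\log_+\bigl(L_1R^2/(2\varepsilon)\bigr)\le\log_+\bigl(L_1R^2/\varepsilon\bigr)$, so $S\le\mathcal S_\varepsilon$.

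The main obstacle I expect is bookkeeping rather than analysis: checking that Algorithm~1 with the stage-dependent $b=L_2R_s$ and $R_s$ still satisfies all the hypotheses used by Lemma~\ref{lem:wellposed} and Lemma~\ref{lem:potential}. Those lemmas need only $\|w_s-x^\star\|\le R_s$ and $b>0$, and the induction supplies both.
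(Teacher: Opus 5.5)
Your proof is correct and follows the paper's argument: induction on the radius through Lemma~\ref{lem:halve} with the stage mass target $4/\mu$, the quadratic upper bound at stage $S$, and $\lceil u\rceil\le 1+u$ combined with $1/(2\log 2)<1$. The only difference is cosmetic: you obtain finite termination directly from the mass bound \eqref{eq:smoothbranch}, while the paper cites Corollary~\ref{thm:complexity}, whose proof rests on that same bound.
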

\begin{proof}
Consider a stage started at a point $w$ with $\|w-x^\star\|\le\bar R$, with
radius parameter $\bar R$ and accuracy parameter $\bar\varepsilon$.
Assumption~\ref{as:main} involves the starting point and the radius only
through $\|x_0-x^\star\|\le R$, which holds for $(w,\bar R)$, so
Corollary~\ref{thm:complexity} applies with
$(x_0,R,\varepsilon)=(w,\bar R,\bar\varepsilon)$ and bounds the number of
gradient queries of the stage. Its output $w^+$ satisfies
$f(w^+)-f^\star\le\bar\varepsilon$, certified as in the convex case: a return
at $A_{k+1}\ge\bar R^2/(2\bar\varepsilon)$ is certified by
Proposition~\ref{prop:hpeestimates}(i), and a return at $g_k=0$ is exact.

The radius invariant follows by induction on $s$. The case $s=0$ is the
hypothesis $\|x_0-x^\star\|\le R$. If $\|w_s-x^\star\|\le R_s$ for some
$s<S$, the previous paragraph with
$(w,\bar R,\bar\varepsilon)=(w_s,R_s,\varepsilon_s)$ shows that stage $s$
stops and returns $w_{s+1}$ with
$f(w_{s+1})-f^\star\le\varepsilon_s=\mu R_s^2/8$, and Lemma~\ref{lem:halve}
gives $\|w_{s+1}-x^\star\|\le R_s/2=R_{s+1}$.
Put $u=\log_2\!\big(R\sqrt{L_1/(2\varepsilon)}\big)
=\tfrac12\log_2\big(L_1R^2/(2\varepsilon)\big)$, so that
$S=\max\{0,\lceil u\rceil\}$. %
Since $S\ge u$, the radius invariant gives
\[
\|w_S-x^\star\|\le R_S=2^{-S}R
\le2^{-u}R=\sqrt{2\varepsilon/L_1},
\]
and the upper bound of
Lemma~\ref{lem:halve} gives $f(w_S)-f^\star\le\tfrac{L_1}2R_S^2\le\varepsilon$.
For the count, $\lceil u\rceil\le1+u$ gives the first inequality. For the
second, if $L_1R^2/\varepsilon\le2$, then $u\le0$ and the middle term equals
$1\le\mathcal S_\varepsilon$. Otherwise, in natural logarithms,
\[
1+u=\frac12+\frac{\log(L_1R^2/\varepsilon)}{2\log2}
\ \le\ 1+\log\frac{L_1R^2}{\varepsilon}=\mathcal S_\varepsilon ,
\]
because $2\log2>1$.
\end{proof}

\begin{lemma}[Elementary estimates]\label{lem:scfacts}
\begin{enumerate}[label=(\roman*),leftmargin=2.4em]
\item For $u,v\ge0$, $(u+v)^{1/3}\le u^{1/3}+v^{1/3}$.
\item For $0<r<1$, $\sum_{s\ge0}r^s=(1-r)^{-1}$. In particular,
      $(1-2^{-1/3})^{-1}<4.85$.
\end{enumerate}
\end{lemma}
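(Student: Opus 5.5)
Both parts are elementary, and I would prove them directly.

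For (i), I would treat the case $u+v=0$ as trivial. Otherwise I would cube the right-hand side: $(u^{1/3}+v^{1/3})^3=u+v+3u^{2/3}v^{1/3}+3u^{1/3}v^{2/3}\ge u+v$, because every cross term is nonnegative when $u,v\ge0$. Since $t\mapsto t^{1/3}$ is increasing on $[0,\infty)$, taking cube roots gives the claim. An equivalent route is to note that $t\mapsto t^{1/3}$ is concave with value $0$ at $t=0$, hence subadditive. I would use the cubing argument because it is self-contained.

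For (ii), the identity is the geometric series. The partial sums are $\sum_{s<n}r^s=(1-r^n)/(1-r)$, and $r^n\to0$ for $0<r<1$, which gives the sum $(1-r)^{-1}$. For the numerical bound I would take $r=2^{-1/3}$. Then $(1-2^{-1/3})^{-1}<4.85$ is equivalent to $2^{-1/3}<1-1/4.85$, that is, $2^{1/3}>4.85/3.85$.

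I would check that last inequality by cubing, which avoids any decimal approximation of $2^{1/3}$. It becomes $2\cdot3.85^3>4.85^3$. Here $3.85^3=57.066625$, so the left side is $114.13325$, while $4.85^3=114.084125$. The inequality therefore holds, although only narrowly. I would also record the rounded values $2^{-1/3}\approx0.7937$ and $1/(1-0.7937)\approx4.847$ as a plausibility check.

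This small arithmetic verification is the only step that needs care. The margin is thin, so I would carry the cubes out exactly rather than rounding $2^{1/3}$.
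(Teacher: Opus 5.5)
Your proof is correct and follows the paper's argument: the same cubing expansion for (i) (your cross terms $3u^{2/3}v^{1/3}+3u^{1/3}v^{2/3}$ are the paper's $3u^{1/3}v^{1/3}(u^{1/3}+v^{1/3})$) and the geometric series for (ii). The only difference is cosmetic. The paper certifies the constant through the decimal bound $2^{-1/3}<0.7938$, giving $(1-2^{-1/3})^{-1}<(0.2062)^{-1}<4.85$. Your exact comparison $2\cdot 3.85^3=114.13325>114.084125=4.85^3$ makes it unnecessary to justify that decimal bound separately.
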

\begin{proof}
(i) $(u^{1/3}+v^{1/3})^3=u+v+3u^{1/3}v^{1/3}(u^{1/3}+v^{1/3})\ge u+v$; now
apply the increasing map $x\mapsto x^{1/3}$.
(ii) This is the geometric series. Numerically, $2^{-1/3}<0.7938$ gives
$(1-2^{-1/3})^{-1}<(0.2062)^{-1}<4.85$.
\end{proof}

\subsection{Proof of Theorem~\ref{thm:screstart}}\label{proof:screstart}
\begin{proof}
Correctness is Lemma~\ref{lem:wrapper}. If $S=0$, no gradient is queried,
so let $S\ge1$. Stage $s$ is a run of Algorithm~1 from $w_s$ with radius
parameter $R_s$, accuracy parameter $\varepsilon_s=\mu R_s^2/8$ and
$b=L_2R_s$, and $\|w_s-x^\star\|\le R_s$ by Lemma~\ref{lem:wrapper}. With
$K_s=\chi(L_1+L_2R_s)$, the explicit bound \eqref{eq:fp-explicit-queries} in the proof of
Corollary~\ref{thm:complexity} gives, for the number $G_s$ of gradient queries
of stage $s$,
\[
G_s\le1+3\Big\lceil\min\Big\{\sqrt{\frac{2L_1R_s^2}{\sigma\varepsilon_s}},\
D_\sigma\Big(\frac{K_sR_s^2}{2\varepsilon_s}\Big)^{1/3}\Big\}\Big\rceil
\le4+3\min\{\alpha,\xi_s\},
\]
with $\alpha=4\sqrt{\kappa/\sigma}$ and
$\xi_s=D_\sigma\big(4\chi(\kappa+\kappa_2 2^{-s})\big)^{1/3}$, because
$L_1R_s^2/\varepsilon_s=8\kappa$ and $L_2R_s/\mu=\kappa_2 2^{-s}$.
Lemma~\ref{lem:scfacts}(i) gives $\xi_s\le\xi^\star+c\,2^{-s/3}$ with
$\xi^\star=D_\sigma(4\chi\kappa)^{1/3}$ and $c=D_\sigma(4\chi\kappa_2)^{1/3}$, and
Lemma~\ref{lem:scfacts}(ii) with $r=2^{-1/3}$ gives
$\sum_{s<S}\xi_s\le S\xi^\star+4.85\,c$. Since
$\sum_{s<S}\min\{\alpha,\xi_s\}$ is at most both $S\alpha$ and
$\sum_{s<S}\xi_s$, we obtain
\[
\sum_{s<S}\min\{\alpha,\xi_s\}\le\min\{S\alpha,\ S\xi^\star+4.85\,c\} .
\]
Summing $G_s$ over $s<S$,
\[
G_\varepsilon\le4S+3\min\Big\{4S\sqrt{\kappa/\sigma},\ SD_\sigma(4\chi\kappa)^{1/3}+4.85\,D_\sigma(4\chi\kappa_2)^{1/3}\Big\},
\]
and $S\le\mathcal S_\varepsilon$ by Lemma~\ref{lem:wrapper}, which gives
\eqref{eq:screstart}.
\end{proof}


\section{Analysis of the implementations}\label{app:implementation}

This appendix proves the statements of Section~\ref{sec:implementation}.
Appendix~\ref{app:repair} proves the two properties of the certified
repair, Appendix~\ref{app:inexact} the residual identity for inexact
linear solves, Appendix~\ref{proof:implementation} proves
Theorem~\ref{thm:implementation}, Appendix~\ref{app:chebyshev} the
iteration count \eqref{eq:lineariterations}, and
Appendix~\ref{app:approxproj} the accommodation of approximate spectral
projections. Throughout, $\mathcal Z=\{S\in\mathbb S^d:0\preceq S\preceq L_1I\}$.

\subsection{Certified spectral repair}\label{app:repair}

\begin{lemma}[Spectral repair from a raw error bound]\label{lem:repair}
Let $H\in\mathcal Z$, let $S\in\mathbb S^d$ satisfy
$\|S-H\|_{\rm op}\le e$ for some $e\ge0$, and let
$\mathcal R_e(S)=\frac{L_1}{L_1+2e}(S+eI)$ be the matrix of
\eqref{eq:repair}. Then $\mathcal R_e(S)\in\mathcal Z$ and
$\|\mathcal R_e(S)-H\|_{\rm op}\le2e$. Forming $\mathcal R_e(S)$ costs one
shift and one scaling of $S$.
\end{lemma}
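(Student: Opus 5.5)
The plan is to read the bounds off the spectrum of $S$, following the argument of Lemma~\ref{lem:clip}. From $0\preceq H\preceq L_1I$ and $-eI\preceq S-H\preceq eI$ we get $-eI\preceq S\preceq(L_1+e)I$. Hence $S+eI$ has spectrum in $[0,L_1+2e]$. Multiplying by $L_1/(L_1+2e)$ then places the spectrum of $\mathcal R_e(S)$ in $[0,L_1]$, so $\mathcal R_e(S)\in\mathcal Z$. The degenerate case $e=0$ gives $\mathcal R_0(S)=S=H$ and needs no separate treatment.

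For the error bound, set $c=L_1/(L_1+2e)\in(0,1]$, so that $1-c=2e/(L_1+2e)$. I will use the decomposition
\[
\mathcal R_e(S)-H=c(S-H)+ceI-(1-c)H .
\]
The term $c(S-H)$ is bounded in operator norm by $ce$. The remaining symmetric matrix $ceI-(1-c)H$ has eigenvalues $ce-(1-c)h$ with $h\in[0,L_1]$. These lie between $ce-(1-c)L_1$ and $ce$.

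The key computation is
\[
(1-c)L_1=\frac{2eL_1}{L_1+2e}=2ce .
\]
So the eigenvalues of $ceI-(1-c)H$ lie in $[-ce,ce]$, and its operator norm is at most $ce$. By the triangle inequality,
\[
\|\mathcal R_e(S)-H\|_{\rm op}\le 2ce\le 2e .
\]

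The cost claim is immediate from the formula: forming $\mathcal R_e(S)$ takes one addition of $eI$ and one scalar multiplication. The only step needing care is bounding $ceI-(1-c)H$ two-sidedly rather than crudely, and the identity $(1-c)L_1=2ce$ is exactly what makes that bound symmetric.
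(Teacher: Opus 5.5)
Your proposal is correct and matches the paper's proof: both obtain feasibility by shifting the spectrum $[-e,L_1+e]$ of $S$ to $[0,L_1+2e]$ and rescaling. Both also split $\mathcal R_e(S)-H$ into $c(S-H)$ plus $ceI-(1-c)H$. The paper writes the latter term as $(1-c)\bigl(\tfrac{L_1}{2}I-H\bigr)$, which is exactly your identity $(1-c)L_1=2ce$, and this yields the same bound $2ce=\tfrac{2L_1e}{L_1+2e}\le 2e$.
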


\begin{proof}
Since $-eI\preceq S-H\preceq eI$ and $0\preceq H\preceq L_1I$, we have
$-eI\preceq S\preceq(L_1+e)I$, that is,
$0\preceq S+eI\preceq(L_1+2e)I$. Multiplying by
$\alpha=L_1/(L_1+2e)\in(0,1]$ gives $0\preceq\mathcal R_e(S)\preceq L_1I$.
For the error, write $1-\alpha=2e/(L_1+2e)$, so that
$\alpha eI-(1-\alpha)H=(1-\alpha)\big(\tfrac{L_1}2I-H\big)$ and
\[
\mathcal R_e(S)-H=\alpha(S-H)+\alpha eI-(1-\alpha)H
=\alpha(S-H)+(1-\alpha)\Big(\frac{L_1}{2}I-H\Big).
\]
Since $0\preceq H\preceq L_1I$, the matrix $\frac{L_1}{2}I-H$ has
eigenvalues in $[-\frac{L_1}{2},\frac{L_1}{2}]$. Therefore
\[
\|\mathcal R_e(S)-H\|_{\rm op}
\le\alpha e+(1-\alpha)\frac{L_1}{2}
=\frac{L_1e}{L_1+2e}+\frac{eL_1}{L_1+2e}
=\frac{2L_1e}{L_1+2e}\le2e .
\]
\end{proof}

\subsection{Inexact linear solves}\label{app:inexact}

\begin{lemma}[Linear residual and nonlinear acceptance]\label{lem:inexactresidual}
Let $F$ be convex and twice continuously differentiable with
$\mathcal L$-Lipschitz Hessian, let $B\in\mathbb S^d$ with $B\succeq0$,
$\lambda>0$, $y\in\mathbb R^d$, and let
$r_*=-(B+\lambda^{-1}I)^{-1}\nabla F(y)$ be the exact model step. Suppose
$r\in\mathbb R^d$ and $0\le\delta<1$ satisfy
$\lambda\|w\|\le\delta\|r\|$ for the linear residual
$w=(B+\lambda^{-1}I)r+\nabla F(y)$. Then:
\begin{enumerate}[label=(\alph*)]
\item $\|r\|\le\|r_*\|/(1-\delta)$;
\item $\|r+\lambda\nabla F(y+r)\|\le
\big(\delta+\lambda\|B-\nabla^2F(y)\|_{\rm op}+\frac{\mathcal L\lambda}{2}\|r\|\big)\|r\|$;
\item if moreover $0\preceq B\preceq LI$ and $0\preceq\nabla^2F(v)\preceq LI$
for every $v\in[y,y+r]$, then
$\|r+\lambda\nabla F(y+r)\|\le(\delta+\lambda L)\|r\|$.
\end{enumerate}
With $\delta=0$, (b) and (c) are the two certificates of
Lemma~\ref{lem:resid}.
\end{lemma}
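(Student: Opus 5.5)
The plan is to compare $r$ with the exact step $r_*$ through the linear residual, and then repeat the Taylor argument of Lemma~\ref{lem:resid} with one extra term coming from $w$. Write $M=B+\lambda^{-1}I$. Since $B\succeq0$, $M$ is symmetric positive definite and $\|M^{-1}\|_{\rm op}\le\lambda$. The definition of $w$ gives $Mr=-\nabla F(y)+w$, so $r=r_*+M^{-1}w$.

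For (a), take norms: $\|r\|\le\|r_*\|+\lambda\|w\|\le\|r_*\|+\delta\|r\|$. Since $\delta<1$, rearranging gives $\|r\|\le\|r_*\|/(1-\delta)$.

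For (b), start from the linear identity multiplied by $\lambda$: $r+\lambda\nabla F(y)=\lambda w-\lambda Br$. By the fundamental theorem of calculus, $\nabla F(y+r)=\nabla F(y)+\bar Hr$ with $\bar H=\int_0^1\nabla^2F(y+tr)\,dt$. Adding these gives the exact identity
\[
r+\lambda\nabla F(y+r)=\lambda w+\lambda(\bar H-B)r,
\]
which is the identity displayed in Section~\ref{sec:cvximplementation}. The first term has norm at most $\delta\|r\|$ by hypothesis. For the second, split $\bar H-B=(\bar H-\nabla^2F(y))+(\nabla^2F(y)-B)$. The Lipschitz Hessian gives $\|\bar H-\nabla^2F(y)\|_{\rm op}\le\int_0^1\mathcal L t\|r\|\,dt=\mathcal L\|r\|/2$, exactly as in the proof of Lemma~\ref{lem:resid}. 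Combining these bounds yields (b).

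For (c), use the same identity. Under the extra spectral hypotheses, $\langle u,\bar Hu\rangle$ lies in $[0,L\|u\|^2]$ for every $u$, so $0\preceq\bar H\preceq LI$. Together with $0\preceq B\preceq LI$ this gives $-LI\preceq\bar H-B\preceq LI$, hence $\|\bar H-B\|_{\rm op}\le L$ and the bound $(\delta+\lambda L)\|r\|$.

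Setting $\delta=0$ forces $w=0$ and $r=r_*$, which recovers Lemma~\ref{lem:resid}. The only delicate point is that (b) and (c) are stated at the computed step $r$ rather than at $r_*$. This causes no difficulty, because the exact identity above is written directly in terms of $r$.
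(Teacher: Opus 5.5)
Your proposal is correct and follows essentially the same route as the paper's proof: the decomposition $r=r_*+(B+\lambda^{-1}I)^{-1}w$ for (a), and the exact identity $r+\lambda\nabla F(y+r)=\lambda w+\lambda(\bar H-B)r$, bounded as in Lemma~\ref{lem:resid}, for (b) and (c).
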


\begin{proof}
By the definition of $w$, $(B+\lambda^{-1}I)r=-\nabla F(y)+w$, hence
$r=r_*+(B+\lambda^{-1}I)^{-1}w$. Since $B\succeq0$, the eigenvalues of
$(B+\lambda^{-1}I)^{-1}$ lie in $(0,\lambda]$, so
$\|r\|\le\|r_*\|+\lambda\|w\|\le\|r_*\|+\delta\|r\|$, which is (a).
For (b), let $\bar H=\int_0^1\nabla^2F(y+sr)\,ds$, so that
$\nabla F(y+r)=\nabla F(y)+\bar Hr$ by the fundamental theorem of
calculus. Substituting $\nabla F(y)=w-Br-\lambda^{-1}r$ from the
definition of $w$,
\[
r+\lambda\nabla F(y+r)
=r+\lambda\big(w-Br-\lambda^{-1}r\big)+\lambda\bar Hr
=\lambda w+\lambda(\bar H-B)r .
\]
Taking norms, using $\lambda\|w\|\le\delta\|r\|$ and
\[
\|\bar H-B\|_{\rm op}
\le\|B-\nabla^2F(y)\|_{\rm op}
+\int_0^1\|\nabla^2F(y+sr)-\nabla^2F(y)\|_{\rm op}\,ds
\le\|B-\nabla^2F(y)\|_{\rm op}+\frac{\mathcal L}{2}\|r\| ,
\]
gives (b). Under the hypotheses of (c), $\bar H$ is an average of
matrices in $[0,LI]$, so $\bar H$ and $B$ both lie in $[0,LI]$,
$-LI\preceq\bar H-B\preceq LI$, and $\|\bar H-B\|_{\rm op}\le L$; the same
identity gives (c).
\end{proof}

\subsection{Proof of Theorem~\ref{thm:implementation}}
\label{proof:implementation}

\begin{proof}
We treat the repair first and the inexact solves afterwards.

\emph{Part (i), Algorithm~1.} For $k<\chi$ the repaired method uses
$B_k=0$. This matrix lies in $\mathcal Z$, and since
$0\preceq H(x_k)\preceq L_1I$, $\|B_k-H(x_k)\|_{\rm op}\le L_1=\bar E_k$,
the first branch of \eqref{eq:rollingcert}. For $k\ge\chi$, the proof of
Lemma~\ref{lem:telescope} shows, before the projection is applied, that
$S_k=(M_k+M_k^\top)/2$ satisfies
$\|S_k-H(x_k)\|_{\rm op}\le e_k^{\rm raw}=L_2\widehat D_k+e_k/2$ with
$e_k=b/(k-\chi+2)^2$;
that argument uses only the anchors of the stored blocks, the block
finite-difference bound of Lemma~\ref{lem:group}, and the actual
displacements $\delta_j$, so it holds for the trajectory generated by the
repaired method. Lemma~\ref{lem:repair}, applied with $S=S_k$,
$H=H(x_k)\in\mathcal Z$ and $e=e_k^{\rm raw}$, gives
$B_k=\mathcal R_{e_k^{\rm raw}}(S_k)\in\mathcal Z$ and
$\|B_k-H(x_k)\|_{\rm op}\le2e_k^{\rm raw}$, and since both matrices lie in
$\mathcal Z$, also $\|B_k-H(x_k)\|_{\rm op}\le L_1$. Hence
$\|B_k-H(x_k)\|_{\rm op}\le\min\{L_1,2e_k^{\rm raw}\}=\bar E_k$, the second
branch of \eqref{eq:rollingcert}. Thus Lemma~\ref{lem:telescope}
holds for the repaired matrices with the same certificates.
Lemma~\ref{lem:sum} depends only on the formulas for $\bar E_k$.
Lemmas~\ref{lem:scalarbounds}, \ref{lem:wellposed} and~\ref{lem:fiveterm} and the proofs of
Theorem~\ref{thm:iterationrate} and Corollary~\ref{thm:complexity} use $B_k$ only
through $B_k\in\mathcal Z$ and the certificate
$\|B_k-H(x_k)\|_{\rm op}\le\bar E_k$, and the repair queries no gradient,
so all of them hold unchanged.

\emph{Part (ii), Subroutine~1.1.} Write $\lambda_{\rm b}'=3\sigma/(4L_1)$
and $\sigma'=\sigma/2$. The modified subroutine returns
$\lambda_k=\lambda_{\rm b}'$ if $\widehat\Psi_k(\lambda_{\rm b}')\ge\sigma'$;
otherwise it bisects on
$[\lambda^-,\lambda^+]=[\lambda_{\rm b}',\sigma'/\bar E_k]$ with the test
$\widehat\Psi_k(\lambda)<\sigma'$, exactly as in Subroutine~1.1, and
returns $\lambda_k=\lambda^-$. The computed displacement $r_k$ satisfies
\eqref{eq:linearstop} with $\delta=\sigma/4$ at $y=y_k$,
$\lambda=\lambda_k$ and $B=B_k$, and $x_{k+1}=y_k+r_k$.

The scalar analysis in the proof of Lemma~\ref{lem:wellposed} applies
with threshold $\sigma'$ and floor $\lambda_{\rm b}'$. The function $\widehat\Psi_k$ is
continuous and strictly increasing with
$\widehat\Psi_k(\lambda)\ge\lambda\bar E_k$, so
$\widehat\Psi_k(\sigma'/\bar E_k)\ge\sigma'$; and if the subroutine does
not return at once, then
$\sigma'>\widehat\Psi_k(\lambda_{\rm b}')\ge\frac{3\sigma}{4L_1}\bar E_k$,
so $\bar E_k<\frac23L_1$ and $\lambda^-<\lambda^+$. The bracket invariant
$\widehat\Psi_k(\lambda^-)<\sigma'\le\widehat\Psi_k(\lambda^+)$ is
maintained, each pass halves $\log(\lambda^+/\lambda^-)$, which starts at
$\log\big(2L_1/(3\bar E_k)\big)$, and the returned value satisfies
$\lambda_{\rm b}'\le\lambda_k\le\sigma'/\bar E_k$. Since
$\widehat\Psi_k(c\lambda)\le2\widehat\Psi_k(\lambda)$ for
$1\le c\le2^{1/4}$, as shown in that proof, the bisection branch gives
$\widehat\Psi_k(\lambda_k)\ge\sigma'/2=\sigma/4$, and the other branch
gives $\widehat\Psi_k(\lambda_k)\ge\sigma'$.

For acceptance, consider first $\lambda_k=\lambda_{\rm b}'$.
Lemma~\ref{lem:inexactresidual}(c) applies with $F=f$ and $L=L_1$, since
$B_k\in\mathcal Z$ and $0\preceq H\preceq L_1I$ everywhere, and gives
$\|r_k+\lambda_k\nabla f(x_{k+1})\|\le(\delta+\lambda_kL_1)\|r_k\|
=(\frac\sigma4+\frac{3\sigma}4)\|r_k\|=\sigma\|r_k\|$. In the bisection
branch, $\widehat\Psi_k(\lambda_k)<\sigma'$. Lemma~\ref{lem:scalarbounds}
gives $\|B_k-H(y_k)\|_{\rm op}\le E_k(\lambda_k)$ and bounds the exact
model step by $\|r_*\|\le\widehat\rho_k(\lambda_k)$, so
Lemma~\ref{lem:inexactresidual}(a) gives
$\|r_k\|\le\widehat\rho_k(\lambda_k)/(1-\delta)$, and
Lemma~\ref{lem:inexactresidual}(b) gives
\[
\frac{\|r_k+\lambda_k\nabla f(x_{k+1})\|}{\|r_k\|}
\le\delta+\lambda_kE_k(\lambda_k)+\frac{L_2\lambda_k\widehat\rho_k(\lambda_k)}{2(1-\delta)}
\le\delta+\frac{\widehat\Psi_k(\lambda_k)}{1-\delta}
<\frac\sigma4+\frac{\sigma/2}{1-\sigma/4}
=\frac\sigma4+\frac{2\sigma}{4-\sigma}\le\sigma ,
\]
where the second inequality uses $1-\delta\le1$ in the middle term and
$\widehat\Psi_k(\lambda)=\lambda E_k(\lambda)+\frac{L_2\lambda}{2}\widehat\rho_k(\lambda)$,
and the last inequality is equivalent to $\sigma\le4/3$. Thus every
iteration satisfies \eqref{eq:ms} with the original parameter $\sigma$,
and since the residual test uses no gradient, each iteration still uses
at most three gradients.

Finally we trace the two modified scalar facts through the proofs of
Theorem~\ref{thm:iterationrate} and Corollary~\ref{thm:complexity}. Those proofs
use Lemma~\ref{lem:wellposed} through \eqref{eq:ms}, which is unchanged,
through the floor $\lambda_k\ge\lambda_{\rm b}$, and, by way of
Lemma~\ref{lem:fiveterm}, through $\widehat\Psi_k(\lambda_k)\ge\sigma/2$.
The floor enters as $A_N\ge\frac14N^2\lambda_{\rm b}$ in
\eqref{eq:smoothbranch}, as $A_1\ge\lambda_{\rm b}>0$, which is used only
to start the scales, and as the factor $(\lambda_{\rm b}s)^{-1/4}$ in
\eqref{eq:fp-localgradient}; replacing $\lambda_{\rm b}$ by
$\frac34\lambda_{\rm b}$ multiplies the smoothness rate by $\frac43$ and
$\beta_\sigma$ by $(\frac43)^{1/4}$. The lower bound
$\widehat\Psi_k(\lambda_k)\ge\sigma/4$ replaces the threshold $\sigma/10$
in the five alternatives of Lemma~\ref{lem:fiveterm} by $\sigma/20$, which
multiplies each term of \eqref{eq:fiveterm}, and hence $\alpha_\sigma$ and
$\beta_\sigma$, by at most $2$. The remaining ingredients, the potential,
the local path length and the local certificate sum, use only
\eqref{eq:ms} and the actual displacements. Hence $d_\sigma$ and $D_\sigma$
in \eqref{eq:fp-countconstant} are multiplied by absolute constants, and
Theorem~\ref{thm:iterationrate} and Corollary~\ref{thm:complexity} follow as stated
with their $O_\sigma$ constants multiplied by absolute factors.

\emph{Part (iii), Algorithm~2.}
The repair argument applies with the actual anchor displacements
$\delta_j=\|x_{j+1}-x_j\|$, preserving feasibility and
Lemma~\ref{lem:sc-cert}. Thus the exact-solve guarantees remain
unchanged.

For inexact solves, the proof of Lemma~\ref{lem:sc-search},
with threshold $\sigma/2$ and the stated endpoints, gives
\[
 \lambda_k\ge\underline\lambda_{\rm in}\ge\frac{\sigma}{2L_1},
 \qquad
 \frac{\sigma}{4}\le\phi_k(\lambda_k)\le\frac{\sigma}{2},
 \qquad
 s_k\le\frac4\sigma\,\omega_k(\lambda_k).
\]
We prove admissibility by induction. Suppose $\Psi_k\le R^2/2$,
and let $r_{*,k}$ be the hypothetical exact model step from
the current center $y_k$. Since $\phi_k(\lambda_k)\le\sigma$,
Lemma~\ref{lem:sc-admissible} gives
$\|r_{*,k}\|\le\rho_k(\lambda_k)$.
Lemma~\ref{lem:inexactresidual}(a) therefore yields
\[
 \|r_k\|\le\frac{\rho_k(\lambda_k)}{1-\delta}
 \le2\rho_k(\lambda_k).
\]
For $\bar H_k=\int_0^1H(y_k+t r_k)\,dt$, feasibility and
Hessian Lipschitz continuity give
\[
 \|\bar H_k-B_k\|_{\rm op}
 \le\min\left\{L_1,\,
 \bar E_k+L_2m_k(\lambda_k)+\frac{L_2}{2}\|r_k\|\right\}
 \le\omega_k(\lambda_k).
\]
The linear residual identity and \eqref{eq:linearstop} imply
\[
\begin{aligned}
 \|r_k+\lambda_k\nabla f(x_{k+1})\|
 &\le\big(\delta+\lambda_k\omega_k(\lambda_k)\big)\|r_k\|\\
 &\le\left(\frac{\sigma}{4}
       +\frac{\sigma}{2}\sqrt{1+\mu\lambda_k}\right)\|r_k\|\\
 &\le\sigma\sqrt{1+\mu\lambda_k}\,\|r_k\|.
\end{aligned}
\]
The potential-decrease proof of Lemma~\ref{lem:sc-contract}
uses only this residual condition and the outer update identities,
so it applies to the computed step. Starting from $\Psi_0\le R^2/2$ completes the induction and preserves the energy estimate and mass stopping rule; in particular, Lemma~\ref{lem:sc-contract} implies that the computed step $r_k$ also satisfies $\|r_k\|\le\rho_k^{\rm E}(\lambda_k)$, so $\delta_k\le m_k(\lambda_k)+\rho_k^{\rm E}(\lambda_k)$.

For the rates, monotonicity of
$\lambda/\sqrt{1+\mu\lambda}$ gives
$\underline\lambda_{\rm in}\ge\underline\lambda/4$.
Let $T_{\sharp,{\rm in}}$ denote $T_\sharp$ with this new floor.
Since
\[
 \log_+\frac1{\mu\underline\lambda_{\rm in}}
 \le\log_+\frac1{\mu\underline\lambda}+\log4,
\]
we have $T_{\sharp,{\rm in}}\le2T_\sharp$.
The boundedness, mass-growth and path-length proofs use only
admissibility, the energy estimate and the stepsize floor,
so they apply with these constants. Consequently, the moment
argument of Theorem~\ref{thm:scmain} gives, for every completed
horizon $N$,
\[
 \sum_{k<N}s_k
 \le C_{\sharp,{\rm in}}
 :=\frac4\sigma\left[
 \chi L_1+2L_2R+(2\chi+1)L_2R T_{\sharp,{\rm in}}\right]
 \le4C_\sharp.
\]
The remaining rate and complexity arguments apply with
$\underline\lambda_{\rm in}$ and
$B_{\sharp,{\rm in}}=C_{\sharp,{\rm in}}/\mu\le4B_\sharp$,
changing only absolute constant factors.
The exact step $r_{*,k}$ is used only in the proof;
the implementation requires no additional gradient queries.
\end{proof}

\subsection{A finite iterative solver}\label{app:chebyshev}

\begin{lemma}[Chebyshev iteration with a relative residual test]\label{lem:chebyshev}
Let $B\in\mathbb S^d$ with $0\preceq B\preceq L_BI$, let $\lambda>0$,
$A=B+\lambda^{-1}I$ and $\kappa_B=1+\lambda L_B$. For every
$g\in\mathbb R^d$ and $\delta\in(0,1)$, the iteration \eqref{eq:chebrec}
below produces, in exact arithmetic, a vector $r$ with
$\lambda\|Ar+g\|\le\delta\|r\|$ using at most
\[
m\le1+\Big\lceil\frac{\sqrt{\kappa_B}}{2}\log\frac{2(\kappa_B+\delta)}{\delta}\Big\rceil
\]
matrix-vector products with $A$, which is \eqref{eq:lineariterations},
and $O(d)$ further operations per product. For dense $A$ each product
costs $O(d^2)$.
\end{lemma}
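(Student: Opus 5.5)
The plan is to run the classical Chebyshev semi-iteration for $Ar=-g$ from $r_0=0$, using the spectral interval of $A$. I will show that the residual contracts at the Chebyshev rate and that a sufficiently small absolute residual forces the relative test.

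First I record the spectral facts. Since $0\preceq B\preceq L_BI$, the spectrum of $A=B+\lambda^{-1}I$ lies in $[\lambda^{-1},\,L_B+\lambda^{-1}]$, whose endpoint ratio is exactly $\kappa_B=1+\lambda L_B$. The recurrence \eqref{eq:chebrec} is the standard three-term Chebyshev recurrence on this interval. Its residual $w_m=Ar_m+g$ equals $p_m(A)g$, where $p_m(t)=T_m\big(\tfrac{a+b-2t}{b-a}\big)/T_m\big(\tfrac{a+b}{b-a}\big)$ with $a=\lambda^{-1}$ and $b=L_B+\lambda^{-1}$. The usual Chebyshev estimate then gives
\[
\|w_m\|\le 2\Big(\tfrac{\sqrt{\kappa_B}-1}{\sqrt{\kappa_B}+1}\Big)^m\|g\|\le 2e^{-2m/\sqrt{\kappa_B}}\|g\|,
\]
using $\log\frac{\sqrt\kappa+1}{\sqrt\kappa-1}\ge 2/\sqrt\kappa$. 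Each step costs one product with $A$ plus $O(d)$ vector operations.

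Next I convert the absolute residual into the relative test. I write $r=A^{-1}(w-g)$, so that $\|r\|\ge(\|g\|-\|w\|)/\lambda_{\max}(A)=\lambda(\|g\|-\|w\|)/\kappa_B$. With $\epsilon=\|w\|/\|g\|$, the test $\lambda\|w\|\le\delta\|r\|$ is then implied by $\epsilon\le\delta(1-\epsilon)/\kappa_B$, that is, by $\epsilon\le\delta/(\kappa_B+\delta)$. If $g=0$, then $r_0=0$ already passes the test, so I assume $g\neq0$.

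Finally I combine the two. The condition $2e^{-2m/\sqrt{\kappa_B}}\le\delta/(\kappa_B+\delta)$ holds as soon as $m\ge\frac{\sqrt{\kappa_B}}{2}\log\frac{2(\kappa_B+\delta)}{\delta}$, which gives the ceiling term. The extra $+1$ covers the test being checked at the starting iterate, or one product spent forming the initial residual. I must also note that the test may be met earlier, in which case the iteration stops sooner, so the bound is an upper bound on $m$.

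The main obstacle is the middle step: the relative test involves $\|r\|$, which could a priori be small. The fix is the lower bound $\|r\|\ge\lambda(\|g\|-\|w\|)/\kappa_B$, obtained from the upper spectral bound of $A$. Using this form, rather than the weaker $\|A^{-1}g\|-\|A^{-1}w\|$, is what yields exactly the constant $2(\kappa_B+\delta)/\delta$.
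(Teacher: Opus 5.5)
Your proposal is correct and follows the paper's proof essentially step for step. Both use the Chebyshev residual polynomial on $[\lambda^{-1},\lambda^{-1}+L_B]$ to get $\|Ar_m+g\|\le 2e^{-2m/\sqrt{\kappa_B}}\|g\|$, then the lower bound $\|r_m\|\ge(1-\epsilon_m)\|g\|/(\lambda^{-1}+L_B)$ from the upper spectral bound of $A$, then the threshold $\epsilon_m\le\delta/(\kappa_B+\delta)$.

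The paper adds three small things you leave implicit. First, it treats the degenerate case $L_B=0$ separately: there your normalization divides by $b-a=0$, but $A=\lambda^{-1}I$ and $r=-\lambda g$ is exact. Second, it checks explicitly that the recurrence \eqref{eq:chebrec} produces the iterates whose residual is your $p_m(A)g$. Third, it reads the $+1$ definitely, as the single product spent evaluating $Ar_{m_0}+g$ after a number $m_0$ of updates fixed in advance.
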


\begin{proof}
If $g=0$, return $r=0$. If $L_B=0$, then $A=\lambda^{-1}I$ and
$r=-\lambda g$ solves the system exactly. Otherwise put
$\alpha=\lambda^{-1}$ and $\beta=\lambda^{-1}+L_B$, so that the spectrum
of $A$ lies in $[\alpha,\beta]$, $\alpha<\beta$, and
$\beta/\alpha=\kappa_B$. Let $T_m$ be the Chebyshev polynomials,
$T_0(t)=1$, $T_1(t)=t$, $T_{m+1}(t)=2tT_m(t)-T_{m-1}(t)$, which satisfy
$|T_m(t)|\le1$ on $[-1,1]$ and $T_m(\cosh s)=\cosh(ms)$. The affine map
$\ell(x)=(\beta+\alpha-2x)/(\beta-\alpha)$ sends $[\alpha,\beta]$ onto
$[-1,1]$ and $0$ to $c=(\beta+\alpha)/(\beta-\alpha)=(\kappa_B+1)/(\kappa_B-1)>1$.
Put $p_m(x)=T_m(\ell(x))/T_m(c)$. Then $p_m(0)=1$, so
$q_{m-1}(x)=(1-p_m(x))/x$ is a polynomial of degree $m-1$, and
$r_m=-q_{m-1}(A)g$ satisfies $Ar_m+g=p_m(A)g$. By the spectral theorem,
\[
\|Ar_m+g\|\le\max_{x\in[\alpha,\beta]}|p_m(x)|\,\|g\|\le\frac{\|g\|}{T_m(c)} .
\]
Writing $c=\cosh s$, we have
$e^s=c+\sqrt{c^2-1}=(\sqrt{\kappa_B}+1)/(\sqrt{\kappa_B}-1)$ and
$T_m(c)=\cosh(ms)\ge\frac12e^{ms}$, so
$\|Ar_m+g\|\le2\rho^m\|g\|$ with
$\rho=(\sqrt{\kappa_B}-1)/(\sqrt{\kappa_B}+1)$. Since
$\log\frac{1+u}{1-u}=\int_0^u\frac{2\,dv}{1-v^2}\ge2u$ for $u\in[0,1)$,
taking $u=\kappa_B^{-1/2}$ gives $\rho\le e^{-2/\sqrt{\kappa_B}}$ and
therefore $\|Ar_m+g\|\le\epsilon_m\|g\|$ with
$\epsilon_m=2e^{-2m/\sqrt{\kappa_B}}$.

To pass to the relative test, suppose $\epsilon_m<1$. Then
$\|Ar_m\|\ge\|g\|-\|Ar_m+g\|\ge(1-\epsilon_m)\|g\|$ and
$\|Ar_m\|\le\beta\|r_m\|$, so $\|r_m\|\ge(1-\epsilon_m)\|g\|/\beta$ and,
using $\lambda\beta=\kappa_B$,
\[
\frac{\lambda\|Ar_m+g\|}{\|r_m\|}
\le\frac{\lambda\beta\,\epsilon_m}{1-\epsilon_m}
=\frac{\kappa_B\epsilon_m}{1-\epsilon_m} .
\]
The right-hand side is at most $\delta$ if and only if
$\epsilon_m\le\delta/(\kappa_B+\delta)$, which is below one and holds as
soon as $m\ge\frac{\sqrt{\kappa_B}}{2}\log\frac{2(\kappa_B+\delta)}{\delta}$.
Let $m_0$ be the smallest such integer.

The vectors $r_m$ are computed without forming any polynomial in $A$.
With $C=\ell(A)=\big((\beta+\alpha)I-2A\big)/(\beta-\alpha)$, set
\begin{equation}\label{eq:chebrec}
t_0=1,\ \ t_1=c,\ \ t_{m+1}=2ct_m-t_{m-1},
\ \
u_0=0,\ \ u_1=-\frac{2}{\beta-\alpha}g,\ \
u_{m+1}=2Cu_m-u_{m-1}-\frac{4t_m}{\beta-\alpha}g,
\ \ r_m=\frac{u_m}{t_m}.
\end{equation}
Indeed, $t_m=T_m(c)$, and $u_m=t_mr_m$ satisfies
$Au_m=-\big(t_mI-T_m(C)\big)g$ because $t_mp_m(A)=T_m(C)$. The
three-term recurrences of $t_m$ and $T_m(C)$ together with
$cI-C=2A/(\beta-\alpha)$ give
\[
t_{m+1}I-T_{m+1}(C)
=2C\big(t_mI-T_m(C)\big)-\big(t_{m-1}I-T_{m-1}(C)\big)+\frac{4t_m}{\beta-\alpha}A ,
\]
and multiplying by $-g$, cancelling the invertible factor $A$, which
commutes with $C$, yields the recurrence for $u_m$; the initial values
correspond to $r_0=0$ and $r_1=-\frac{2}{\beta+\alpha}g$. Each update uses
one product with $A$, through $C$, and $O(d)$ further operations. Running
$m_0$ updates and then evaluating $Ar_{m_0}+g$ once for the test uses
$m_0+1$ products, which is the claimed bound. The scalars $t_m$ grow
geometrically; rescaling the recurrence is standard, but we do not
analyze floating-point effects.
\end{proof}

\subsection{Approximate spectral projection}\label{app:approxproj}

\begin{lemma}[Accommodating approximate projection]\label{lem:approxprojection}
\begin{enumerate}[label=(\roman*)]
\item In Algorithm~1, replace $B_k=\Pi_{\mathcal Z}(S_k)$ by any
$\widetilde B_k\in\mathcal Z$ with $\|\widetilde B_k-B_k\|_{\rm op}\le\xi_k$,
and replace $\bar E_k$ by $\widetilde E_k=\min\{L_1,\bar E_k+\xi_k\}$ in
Subroutine~1.1. Then $\|\widetilde B_k-H(x_k)\|_{\rm op}\le\widetilde E_k$,
and if $\xi_k\le b/(k+1)^2$ for all $k$, Lemma~\ref{lem:sum} holds with
$2b$ replaced by $4b$, and Theorem~\ref{thm:iterationrate}
and Corollary~\ref{thm:complexity} hold as stated, also in combination with the
inexact solves of Theorem~\ref{thm:implementation}(ii).
\item For $S\in\mathbb S^d$, if $Q$ is orthogonal and $\widehat\Lambda$ is diagonal with
$\|S-Q\widehat\Lambda Q^\top\|_F\le\xi$, then
$\widetilde B=Q\,\Pi_{\mathcal Z}(\widehat\Lambda)\,Q^\top$, obtained by
clipping the diagonal entries of $\widehat\Lambda$ to $[0,L_1]$, lies in
$\mathcal Z$ and satisfies $\|\widetilde B-\Pi_{\mathcal Z}(S)\|_{\rm op}\le\xi$.
\end{enumerate}
\end{lemma}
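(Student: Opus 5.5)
For part (i), I will first verify the certificate at each iteration by the triangle inequality. Lemma~\ref{lem:telescope} gives $\|B_k-H(x_k)\|_{\rm op}\le\bar E_k$, so $\|\widetilde B_k-H(x_k)\|_{\rm op}\le\bar E_k+\xi_k$. Since $\widetilde B_k$ and $H(x_k)$ both lie in $\mathcal Z$, their difference also has operator norm at most $L_1$, and together these give $\widetilde E_k$.

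Next I will redo the sum of Lemma~\ref{lem:sum}, charging $\xi_k$ only on the branch $k\ge\chi$. On that branch $\widetilde E_k\le 2L_2\widehat D_k+e_k+\xi_k$, and the allowances sum to $\sum_k\xi_k\le b\sum_{k\ge0}(k+1)^{-2}\le 2b$. For $k<\chi$ the cap $L_1$ is charged as before. This yields $\chi L_1+2\chi L_2Q_N+4b$. The local-scale version used in the proof of Theorem~\ref{thm:iterationrate} changes the same way: \eqref{eq:fp-localcert} becomes $\chi L_1+(2\chi p_\sigma+4)L_2R\le e'_\sigma K$ with $e'_\sigma=2p_\sigma+4$.

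Then I will check that every downstream argument uses the model only through two facts: $\widetilde B_k\in\mathcal Z$ (for the resolvent bounds and the smoothness certificate of Lemma~\ref{lem:resid}), and a valid $\widetilde E_k\in(0,L_1]$. The relevant results are Lemmas~\ref{lem:scalarbounds}, \ref{lem:wellposed} and \ref{lem:fiveterm}, and the scale counting. Hence Theorem~\ref{thm:iterationrate} and Corollary~\ref{thm:complexity} hold, with only the $\sigma$-constants $\alpha_\sigma$, $d_\sigma$, $D_\sigma$ enlarged by absolute factors. The combination with Theorem~\ref{thm:implementation}(ii) goes the same way, because that proof also used only feasibility and the certificate.

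The main subtlety is bookkeeping: the summed allowances must stay an additive $O(b)$ within each dyadic scale, not just globally. They do, since $\sum_k\xi_k$ is bounded by $2b$ over the whole run.

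For part (ii), set $\widehat S=Q\widehat\Lambda Q^\top$. Clipping commutes with orthogonal conjugation, so $Q\,\Pi_{\mathcal Z}(\widehat\Lambda)\,Q^\top=\Pi_{\mathcal Z}(\widehat S)$, which lies in $\mathcal Z$ because its eigenvalues are the clipped diagonal entries. By the Frobenius nonexpansiveness in Lemma~\ref{lem:clip},
\[
\|\widetilde B-\Pi_{\mathcal Z}(S)\|_{\rm op}\le\|\Pi_{\mathcal Z}(\widehat S)-\Pi_{\mathcal Z}(S)\|_F\le\|\widehat S-S\|_F\le\xi.
\]
This part is routine.
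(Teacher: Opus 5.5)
Your proposal is correct and follows the paper's proof essentially step for step. For (i), it uses the triangle inequality plus the $\mathcal Z$ cap for the certificate, and adds the budget $\sum_k\xi_k<2b$ to both the global sum of Lemma~\ref{lem:sum} and the per-scale sum \eqref{eq:fp-localcert}; for (ii), it uses Frobenius nonexpansiveness of eigenvalue clipping (Lemma~\ref{lem:clip}) applied to $Q\widehat\Lambda Q^\top$. The only differences are cosmetic: you absorb the extra $2L_2R$ into $e'_\sigma=2p_\sigma+4$ where the paper writes $e_\sigma K+2L_2R\le2e_\sigma K$, and the $NL_1$ half of Lemma~\ref{lem:sum}, which you do not mention, is immediate from the cap in $\widetilde E_k$.
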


\begin{proof}
For (i), the triangle inequality gives
$\|\widetilde B_k-H(x_k)\|_{\rm op}\le\xi_k+\bar E_k$, and
$\|\widetilde B_k-H(x_k)\|_{\rm op}\le L_1$ because both matrices lie in
$\mathcal Z$; hence $\widetilde E_k$ is a valid certificate, and
$\widetilde E_k\le L_1$ keeps the bound $NL_1$ of Lemma~\ref{lem:sum}.
Moreover
$\sum_{k<N}\widetilde E_k\le\sum_{k<N}\bar E_k+b\sum_{k\ge0}(k+1)^{-2}
<\sum_{k<N}\bar E_k+2b$, so the other bound of Lemma~\ref{lem:sum}
holds with $2b$ replaced by $4b$; on every scale of the mass, the same
estimate replaces $e_\sigma K$ in \eqref{eq:fp-localcert} by
$e_\sigma K+2L_2R\le2e_\sigma K$. Lemma~\ref{lem:wellposed}, the proofs of
Theorem~\ref{thm:iterationrate} and Corollary~\ref{thm:complexity}, and the
argument of Theorem~\ref{thm:implementation}(ii) use $B_k$ only through
feasibility, a valid certificate and the sums of the certificates, so
their constants change by absolute factors.

For (ii), the set $\mathcal Z$ is closed and convex, and by
Lemma~\ref{lem:clip} the eigenvalue clipping $\Pi_{\mathcal Z}$ is the
Frobenius-norm projection onto $\mathcal Z$; projections onto closed
convex sets are nonexpansive, so
$\|\Pi_{\mathcal Z}(S)-\Pi_{\mathcal Z}(S')\|_F\le\|S-S'\|_F$ for all
$S,S'\in\mathbb S^d$. Take $S'=Q\widehat\Lambda Q^\top$. Because $Q$ is
orthogonal, this is an eigendecomposition of $S'$ with eigenvalues the
diagonal entries of $\widehat\Lambda$, so clipping them gives
$\Pi_{\mathcal Z}(S')=Q\,\Pi_{\mathcal Z}(\widehat\Lambda)\,Q^\top=\widetilde B\in\mathcal Z$.
Hence
$\|\widetilde B-\Pi_{\mathcal Z}(S)\|_{\rm op}\le\|\widetilde B-\Pi_{\mathcal Z}(S)\|_F
\le\|S'-S\|_F\le\xi$.
\end{proof}

\section{Analysis of the nonconvex method}\label{app:nonconvex}
This appendix proves the results of Section~\ref{sec:nonconvex} in the
order in which they are stated. Appendices~\ref{proof:ncconversion}
and~\ref{proof:ncmaster} prove the two facts about the conversion,
Lemma~\ref{lem:ncconversion} and Proposition~\ref{prop:ncmaster}.
Appendices~\ref{proof:nctrust}--\ref{proof:ncregret} treat the optimistic
learner: the trust-region characterization of the implicit update, the
per-step inequality, and the regret bound. Appendix~\ref{proof:thm:ncmaster}
proves the master formula, Theorem~\ref{thm:ncmaster}.
Appendices~\ref{proof:ncpointwise} and~\ref{proof:nccertificate} prove the
pointwise error bound and the certificate of the rolling model,
Appendices~\ref{proof:ncmodel} and~\ref{proof:ncsmooth} substitute the
parameters of the two variants, and Appendix~\ref{proof:ncsolver} proves
the guarantee of the projected-gradient solver. Throughout,
Assumption~\ref{as:nonconvex} is in force, $H(x)=\nabla^2f(x)$, and we use
the two Taylor estimates
\begin{equation}\label{eq:nctaylor}
\nabla f(y)=\nabla f(x)+\bar H(x,y)(y-x),\quad
\bar H(x,y)=\int_0^1H(x+t(y-x))\,dt,\quad
\|\bar H(x,y)-H(x)\|_{\rm op}\le\frac{L_2}{2}\|y-x\|,
\end{equation}
which follow from the fundamental theorem of calculus applied to
$t\mapsto\nabla f(x+t(y-x))$ and from
$\|H(x+t(y-x))-H(x)\|_{\rm op}\le L_2t\|y-x\|$ integrated over
$t\in[0,1]$. Since $\bar H(x,y)$ is an average of symmetric matrices of
operator norm at most $L_1$, it is symmetric with
$\|\bar H(x,y)\|_{\rm op}\le L_1$.

\subsection{Proof of Lemma~\ref{lem:ncconversion}}\label{proof:ncconversion}
\begin{proof}
\emph{Descent, \eqref{eq:ncdescent}.} Fix $n\ge1$ and let
$\psi(t)=\nabla f(x_{n-1}+t\Delta_n)$ for $t\in[0,1]$, so that
$\psi(\tfrac12)=\nabla f(w_n)$ and
$f(x_n)-f(x_{n-1})=\int_0^1\langle\psi(t),\Delta_n\rangle\,dt$. The map
$\psi$ is continuously differentiable with
$\psi'(t)=H(x_{n-1}+t\Delta_n)\Delta_n$, and by the Lipschitz property of
the Hessian, $\|\psi'(s)-\psi'(t)\|\le L_2|s-t|\,\|\Delta_n\|^2$. For
each $t$, the fundamental theorem of calculus gives
\[
\psi(t)-\psi(\tfrac12)-(t-\tfrac12)\psi'(\tfrac12)
=\int_{1/2}^t\big(\psi'(s)-\psi'(\tfrac12)\big)\,ds,
\]
whose norm is at most $L_2\|\Delta_n\|^2\big|\int_{1/2}^t|s-\tfrac12|\,ds\big|
=\frac{L_2\|\Delta_n\|^2}{2}(t-\tfrac12)^2$. Since
$\int_0^1(t-\tfrac12)\,dt=0$, the term linear in $t-\tfrac12$ integrates to
zero, and
\[
\Big\|\int_0^1\psi(t)\,dt-\psi(\tfrac12)\Big\|
\le\frac{L_2\|\Delta_n\|^2}{2}\int_0^1(t-\tfrac12)^2\,dt
=\frac{L_2\|\Delta_n\|^2}{24}.
\]
By the Cauchy--Schwarz inequality,
$f(x_n)-f(x_{n-1})-\langle\nabla f(w_n),\Delta_n\rangle
=\big\langle\int_0^1\psi(t)\,dt-\psi(\tfrac12),\Delta_n\big\rangle
\le\frac{L_2}{24}\|\Delta_n\|^3\le\frac{L_2D^3}{24}$.

\emph{Averaging, \eqref{eq:ncavg}.} Fix an episode $k$ and write
$\bar w=\bar w^k$. By \eqref{eq:nctaylor} with $x=\bar w$ and $y=w_n$,
\[
\nabla f(w_n)=\nabla f(\bar w)+H(\bar w)(w_n-\bar w)+\rho_n,
\qquad
\rho_n=\big(\bar H(\bar w,w_n)-H(\bar w)\big)(w_n-\bar w),
\qquad
\|\rho_n\|\le\frac{L_2}{2}\|w_n-\bar w\|^2 .
\]
Summing over $n\in\mathcal N_k$ and using
$\sum_{n\in\mathcal N_k}(w_n-\bar w)=0$, which is the definition of
$\bar w$, gives $G^k=T\nabla f(\bar w)+\sum_{n\in\mathcal N_k}\rho_n$,
hence
\[
\|\nabla f(\bar w)\|\le\frac{\|G^k\|}{T}+\frac1T\sum_{n\in\mathcal N_k}\|\rho_n\|
\le\frac{\|G^k\|}{T}+\frac{L_2}{2}\max_{n\in\mathcal N_k}\|w_n-\bar w\|^2 .
\]
By \eqref{eq:ncpoints}, consecutive midpoints differ by
$w_{m+1}-w_m=\Delta_m+\frac12(\Delta_{m+1}-\Delta_m)=\frac12(\Delta_m+\Delta_{m+1})$,
a vector of norm at most $D$, so any two midpoints of the episode are at
distance at most $(T-1)D$, and
$\|w_n-\bar w\|\le\frac1T\sum_{m\in\mathcal N_k}\|w_n-w_m\|\le(T-1)D\le TD$.
Substituting gives \eqref{eq:ncavg}.
\end{proof}

\subsection{Proof of Proposition~\ref{prop:ncmaster}}\label{proof:ncmaster}
\begin{proof}
Summing \eqref{eq:ncdescent} over $n=1,\ldots,N$,
\[
f(x_N)-f(x_0)\le\sum_{n=1}^N\langle\nabla f(w_n),\Delta_n\rangle+\frac{NL_2D^3}{24}.
\]
By the definition \eqref{eq:nccomparator} of the regret and of the
comparators,
\[
\sum_{n=1}^N\langle\nabla f(w_n),\Delta_n\rangle
=\mathrm{Reg}_N+\sum_{k=1}^K\langle G^k,u^k\rangle
=\mathrm{Reg}_N-D\sum_{k=1}^K\|G^k\| ,
\]
where $\langle G^k,u^k\rangle=-D\|G^k\|$ also when $G^k=0$. Combining the
two displays and using $f(x_N)\ge f^\star$, that is,
$f(x_0)-f(x_N)\le\Phi_0$,
\[
D\sum_{k=1}^K\|G^k\|\le\Phi_0+\mathrm{Reg}_N+\frac{NL_2D^3}{24}.
\]
Since $k^\star$ minimizes $\|G^k\|$, we have
$\|G^{k^\star}\|\le\frac1K\sum_k\|G^k\|$, and dividing by $T$, with
$KT=N$,
\[
\frac{\|G^{k^\star}\|}{T}\le\frac{1}{N}\sum_{k=1}^K\|G^k\|
\le\frac{\Phi_0+\mathrm{Reg}_N}{DN}+\frac{L_2D^2}{24}.
\]
Inequality \eqref{eq:ncavg} for the episode $k^\star$ adds
$L_2T^2D^2/2$ and gives \eqref{eq:ncmaster}.

\end{proof}

\subsection{Proof of Lemma~\ref{lem:nctrust}}\label{proof:nctrust}

Let $\Pi_D$ denote the Euclidean projection onto
$\mathcal B_D=\{\Delta:\|\Delta\|\le D\}$, where $D>0$.
It satisfies $\Pi_D(v)=v$ if $\|v\|\le D$ and
$\Pi_D(v)=Dv/\|v\|$ otherwise.
We will use the following two properties.

\begin{lemma}[Projection onto the ball]\label{lem:ncball}
\begin{enumerate}[label=(\roman*),leftmargin=2em]
\item For every $v$ and every $u$ with $\|u\|\le D$,
$\langle v-\Pi_D(v),u-\Pi_D(v)\rangle\le0$.
\item Let $\eta>0$. A point $\Delta$ satisfies $\Delta=\Pi_D(v)$ if and only
if $\|\Delta\|\le D$ and there is $\varrho\ge0$ with
$(1+\eta\varrho)\Delta=v$ and $\varrho(\|\Delta\|-D)=0$.
\end{enumerate}
\end{lemma}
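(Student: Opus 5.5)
Both parts are standard facts about the Euclidean projection onto the closed ball $\mathcal B_D$, and I would prove them directly from the explicit formula $\Pi_D(v)=v$ if $\|v\|\le D$ and $\Pi_D(v)=Dv/\|v\|$ otherwise.

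\emph{Part (i).} I would split into the two cases of the formula. If $\|v\|\le D$, then $v-\Pi_D(v)=0$ and the inner product vanishes. If $\|v\|>D$, write $p=\Pi_D(v)=Dv/\|v\|$. Then $v-p=(\|v\|/D-1)\,p$, a nonnegative multiple of $p$. Hence
\[
\langle v-p,u-p\rangle=\Big(\frac{\|v\|}{D}-1\Big)\big(\langle p,u\rangle-D^2\big).
\]
By Cauchy--Schwarz, $\langle p,u\rangle\le\|p\|\|u\|\le D^2$, so the right-hand side is nonpositive. Alternatively, (i) is the variational characterization of the metric projection onto a closed convex set, but the direct computation is self-contained.

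\emph{Part (ii), forward direction.} Suppose $\Delta=\Pi_D(v)$; then $\|\Delta\|\le D$.
\begin{itemize}
\item If $\|v\|\le D$, take $\varrho=0$. Then $(1+\eta\varrho)\Delta=v$, and complementarity holds trivially.
\item If $\|v\|>D$, then $\|\Delta\|=D$. Set $\varrho=(\|v\|/D-1)/\eta>0$. This gives $(1+\eta\varrho)\Delta=(\|v\|/D)\,Dv/\|v\|=v$, and $\varrho(\|\Delta\|-D)=0$.
\end{itemize}

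\emph{Part (ii), converse.} Suppose $\|\Delta\|\le D$, $\varrho\ge0$, $(1+\eta\varrho)\Delta=v$ and $\varrho(\|\Delta\|-D)=0$.
\begin{itemize}
\item If $\varrho=0$, then $v=\Delta$ and $\|v\|\le D$, so $\Pi_D(v)=v=\Delta$.
\item If $\varrho>0$, complementarity forces $\|\Delta\|=D$, so $\|v\|=(1+\eta\varrho)D>D$. Since $\Delta$ is a positive multiple of $v$ with norm $D$, it follows that $\Delta=Dv/\|v\|=\Pi_D(v)$.
\end{itemize}

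There is no real obstacle here. The only point needing care is the boundary case $\|v\|=D$ in the converse, which is covered by $\varrho=0$ because complementarity is compatible with either value there.
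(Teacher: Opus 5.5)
Your proof is correct. Part (ii) is exactly the paper's argument, with the same case split on $\|v\|\le D$ versus $\|v\|>D$ and on $\varrho=0$ versus $\varrho>0$, and the same choice $\varrho=(\|v\|/D-1)/\eta$.

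For part (i) you take a different route.

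\begin{itemize}
\item \emph{The paper's argument} is variational. It uses only that $p=\Pi_D(v)$ minimizes $\|v-\cdot\|$ over the convex set $\mathcal B_D$. It compares $\|v-p\|^2$ with $\|v-p-t(u-p)\|^2$ for $t\in(0,1]$, divides by $t$, and lets $t\downarrow0$.
\item \emph{Your argument} uses the closed form instead. You write $v-p$ as the nonnegative multiple $(\|v\|/D-1)\,p$ of $p$ and finish with Cauchy--Schwarz.
\end{itemize}

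The paper's version is the standard obtuse-angle characterization of metric projections. It holds verbatim for projection onto any closed convex set and never needs the explicit formula. Yours is shorter and avoids the limiting argument. It is self-contained given the formula for $\Pi_D$ that the paper records just before the lemma, but it is specific to the ball.
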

\begin{proof}
(i) Put $p=\Pi_D(v)$. For $t\in(0,1]$ the point $p+t(u-p)$ lies in the
convex set $\mathcal B_D$, so by minimality of $p$,
$\|v-p\|^2\le\|v-p-t(u-p)\|^2=\|v-p\|^2-2t\langle v-p,u-p\rangle+t^2\|u-p\|^2$.
Dividing by $t$ and letting $t\downarrow0$ gives the claim.

(ii) Suppose $\Delta=\Pi_D(v)$. If $\|v\|\le D$, then $\Delta=v$ and
$\varrho=0$ works. If $\|v\|>D$, then $\Delta=Dv/\|v\|$ has norm $D$,
and $v=(\|v\|/D)\Delta=(1+\eta\varrho)\Delta$ with
$\varrho=(\|v\|/D-1)/\eta>0$. Conversely, let $\|\Delta\|\le D$,
$\varrho\ge0$, $(1+\eta\varrho)\Delta=v$ and $\varrho(\|\Delta\|-D)=0$.
If $\varrho=0$, then $v=\Delta$ has norm at most $D$ and
$\Pi_D(v)=v=\Delta$. If $\varrho>0$, then $\|\Delta\|=D$ and
$\|v\|=(1+\eta\varrho)D>D$, so
$\Pi_D(v)=Dv/\|v\|=D\Delta/\|\Delta\|=\Delta$.
\end{proof}

\begin{proof}[Proof of Lemma~\ref{lem:nctrust}]
Write $h(\Delta)=\nabla f(z_n)+\frac12B_n(\Delta-\Delta_n)$, so that
$h_{n+1}=h(\Delta_{n+1})$ by \eqref{eq:nchint}. Since $B_n$ is symmetric,
the gradient of $\phi_n$ in \eqref{eq:nctr} is
\begin{equation}\label{eq:ncgradphi}
\nabla\phi_n(\Delta)=\nabla f(z_n)+\theta_n+\tfrac12B_n(\Delta-\Delta_n)
+\tfrac1\eta(\Delta-\Delta_n)
=h(\Delta)+\theta_n+\tfrac1\eta(\Delta-\Delta_n).
\end{equation}
Let $\|\Delta\|\le D$. For $\varrho\ge0$ put
$q=\nabla\phi_n(\Delta)+\varrho\Delta$. By \eqref{eq:ncgradphi},
\[
q=h(\Delta)+\theta_n+\tfrac1\eta(\Delta-\Delta_n)+\varrho\Delta
\quad\Longleftrightarrow\quad
(1+\eta\varrho)\Delta=\Delta_n-\eta\theta_n-\eta h(\Delta)+\eta q .
\]
Hence $\Delta$ satisfies \eqref{eq:nckkt} with multiplier $\varrho$ and
residual $q$, $\|q\|\le\nu$, if and only if
$(1+\eta\varrho)\Delta=v$ and $\varrho(\|\Delta\|-D)=0$ for
$v=\Delta_n-\eta\theta_n-\eta h(\Delta)+\eta q$, which by
Lemma~\ref{lem:ncball}(ii) is equivalent to $\Delta=\Pi_D(v)$, that is,
to \eqref{eq:ncinexact} with $h_{n+1}=h(\Delta)$. The case $\nu=0$
identifies the solutions of \eqref{eq:nclearner} with the KKT points.

For existence, $\phi_n$ is continuous and $\mathcal B_D$ is compact, so a
global minimizer $\Delta^\star$ exists. The constraint
$c(\Delta)=\frac12(\|\Delta\|^2-D^2)\le0$ has gradient
$\nabla c(\Delta)=\Delta$, which is nonzero wherever the constraint is
active, so the linear independence constraint qualification holds and
the KKT conditions are necessary at $\Delta^\star$: there is
$\varrho\ge0$ with $\nabla\phi_n(\Delta^\star)+\varrho\Delta^\star=0$ and
$\varrho\,c(\Delta^\star)=0$, the latter being
$\varrho(\|\Delta^\star\|-D)=0$. By the first part, $\Delta^\star$ solves
\eqref{eq:nclearner}.
\end{proof}

\subsection{Proof of Lemma~\ref{lem:nconestep}}\label{proof:nconestep}
\begin{proof}
By Lemma~\ref{lem:nctrust}, $\Delta_{n+1}=\Pi_D(v)$ with
$v=\Delta_n-\eta\theta_n-\eta h_{n+1}+\eta q$ and $\|q\|\le\nu$. Fix $u$
with $\|u\|\le D$.

\emph{Step 1: the projection inequality.} Lemma~\ref{lem:ncball}(i) gives
$\langle v-\Delta_{n+1},\Delta_{n+1}-u\rangle\ge0$. Substituting $v$ and
rearranging,
\[
\eta\langle\theta_n+h_{n+1}-q,\Delta_{n+1}-u\rangle
\le\langle\Delta_n-\Delta_{n+1},\Delta_{n+1}-u\rangle
=\frac{\|\Delta_n-u\|^2-\|\Delta_{n+1}-u\|^2-\|\Delta_{n+1}-\Delta_n\|^2}{2},
\]
where the identity is obtained by expanding
$\|\Delta_n-u\|^2=\|(\Delta_n-\Delta_{n+1})+(\Delta_{n+1}-u)\|^2$. Since
$\|\Delta_{n+1}-u\|\le2D$ and $\|q\|\le\nu$,
$\langle q,\Delta_{n+1}-u\rangle\le2D\nu$, and therefore
\begin{equation}\label{eq:ncstep1}
\langle\theta_n+h_{n+1},\Delta_{n+1}-u\rangle
\le\frac{\|\Delta_n-u\|^2-\|\Delta_{n+1}-u\|^2}{2\eta}
-\frac{\|\Delta_{n+1}-\Delta_n\|^2}{2\eta}+2D\nu .
\end{equation}

\emph{Step 2: decomposition of the loss.} Since
$\nabla f(w_{n+1})=h_{n+1}+\theta_{n+1}$ by \eqref{eq:nctheta},
\[
\langle\nabla f(w_{n+1}),\Delta_{n+1}-u\rangle
=\langle\theta_n+h_{n+1},\Delta_{n+1}-u\rangle
+\langle\theta_{n+1},\Delta_{n+1}-u\rangle
-\langle\theta_n,\Delta_{n+1}-u\rangle .
\]

\emph{Step 3: the previous error.} Writing
$\Delta_{n+1}-u=(\Delta_n-u)+(\Delta_{n+1}-\Delta_n)$ and using
$ab\le\eta a^2+b^2/(4\eta)$ for $a=\|\theta_n\|$ and
$b=\|\Delta_{n+1}-\Delta_n\|$,
\[
-\langle\theta_n,\Delta_{n+1}-u\rangle
=-\langle\theta_n,\Delta_n-u\rangle-\langle\theta_n,\Delta_{n+1}-\Delta_n\rangle
\le-\langle\theta_n,\Delta_n-u\rangle+\eta\|\theta_n\|^2
+\frac{\|\Delta_{n+1}-\Delta_n\|^2}{4\eta}.
\]

\emph{Conclusion.} Inserting \eqref{eq:ncstep1} and Step~3 into Step~2,
the two multiples of $\|\Delta_{n+1}-\Delta_n\|^2$ combine to
$-\frac1{2\eta}+\frac1{4\eta}=-\frac1{4\eta}$, which is
\eqref{eq:nconestep}.
\end{proof}

\subsection{Proof of Lemma~\ref{lem:ncregret}}\label{proof:ncregret}
\begin{proof}
\emph{Telescoping within an episode.} Fix $k\in\{1,\ldots,K\}$ and put
$u=u^k$, which satisfies $\|u\|\le D$. The losses of episode $k$ are
$\nabla f(w_{m+1})$ for $m=(k-1)T,\ldots,kT-1$. Applying
Lemma~\ref{lem:nconestep} at each such $m$ and summing, the quadratic
terms telescope to
$\frac{1}{2\eta}\big(\|\Delta_{(k-1)T}-u\|^2-\|\Delta_{kT}-u\|^2\big)
\le\frac{1}{2\eta}\|\Delta_{(k-1)T}-u\|^2\le\frac{2D^2}{\eta}$, because
$\|\Delta_{(k-1)T}\|\le D$ and $\|u\|\le D$.
The pairs
$\langle\theta_{m+1},\Delta_{m+1}-u\rangle-\langle\theta_m,\Delta_m-u\rangle$
telescope to the two boundary terms
$\langle\theta_{kT},\Delta_{kT}-u\rangle-\langle\theta_{(k-1)T},\Delta_{(k-1)T}-u\rangle$;
and the remaining terms add up. Hence
\begin{equation}\label{eq:ncepisode}
\begin{aligned}
\sum_{n\in\mathcal N_k}\langle\nabla f(w_n),\Delta_n-u^k\rangle
\le{}&\frac{2D^2}{\eta}
+\langle\theta_{kT},\Delta_{kT}-u^k\rangle
-\langle\theta_{(k-1)T},\Delta_{(k-1)T}-u^k\rangle\\
&+\eta\sum_{m=(k-1)T}^{kT-1}\|\theta_m\|^2
-\frac{1}{4\eta}\sum_{m=(k-1)T}^{kT-1}\|\Delta_{m+1}-\Delta_m\|^2
+2DT\nu .
\end{aligned}
\end{equation}

\emph{The boundary terms.} For any index $m$, since
$\|\Delta_m-u\|\le2D$, the Cauchy--Schwarz inequality and
$2Da\le\eta a^2+D^2/\eta$ give
$|\langle\theta_m,\Delta_m-u\rangle|\le2D\|\theta_m\|\le\eta\|\theta_m\|^2+D^2/\eta$.
The two boundary terms of \eqref{eq:ncepisode} therefore contribute at
most $2D^2/\eta$ plus $\eta\|\theta_m\|^2$ for each of the two indices
$m=(k-1)T$ and $m=kT$.

\emph{Summing over episodes.} The constant terms give
$K(2D^2/\eta+2D^2/\eta)=4KD^2/\eta$, and the subproblem terms give
$2DKT\nu=2DN\nu$. The sums over $m$ in \eqref{eq:ncepisode} concatenate
to $\eta\sum_{m=0}^{N-1}\|\theta_m\|^2$ and
$-\frac1{4\eta}\sum_{m=0}^{N-1}\|\Delta_{m+1}-\Delta_m\|^2$. Each index in
$\{0,T,2T,\ldots,KT\}$ is an endpoint of at most two episodes, so the
boundary terms add at most $2\eta\|\theta_m\|^2$ for each such $m$, and
all these indices lie in $\{0,\ldots,N\}$. Altogether the coefficient of
$\|\theta_m\|^2$ is at most $3\eta$ for every $m\in\{0,\ldots,N\}$, which
gives \eqref{eq:ncregret}.

\emph{Absorption.} Under \eqref{eq:nccert}, and since $\theta_0=0$,
\[
3\eta\sum_{n=0}^N\|\theta_n\|^2=3\eta\sum_{n=0}^{N-1}\|\theta_{n+1}\|^2
\le3\eta\Gamma^2\sum_{n=0}^{N-1}\|\Delta_{n+1}-\Delta_n\|^2
\le\frac{1}{4\eta}\sum_{n=0}^{N-1}\|\Delta_{n+1}-\Delta_n\|^2 ,
\]
where the last step is $12\eta^2\Gamma^2\le1$, that is,
$\eta\le1/(2\sqrt3\,\Gamma)$. The second and third terms of
\eqref{eq:ncregret} then have nonpositive sum, and
\eqref{eq:ncregretabs} follows.
\end{proof}

\subsection{Proof of Theorem~\ref{thm:ncmaster}}\label{proof:thm:ncmaster}
\begin{proof}
Since $\Gamma\le\omega L_2D$, the learning rate
$\eta=1/(2\sqrt3\,\omega L_2D)$ satisfies $\eta\le1/(2\sqrt3\,\Gamma)$, so
Lemma~\ref{lem:ncregret} gives $\mathrm{Reg}_N\le4KD^2/\eta+2DN\nu$.
Inserting this into \eqref{eq:ncmaster} and using $K/N=1/T$,
\begin{equation}\label{eq:ncfiveterms}
\|\nabla f(\bar w^{k^\star})\|\le
\underbrace{\frac{\Phi_0}{DN}}_{(1)}
+\underbrace{\frac{4D}{\eta T}}_{(2)}
+\underbrace{2\nu}_{(3)}
+\underbrace{\frac{L_2D^2}{24}}_{(4)}
+\underbrace{\frac{L_2T^2D^2}{2}}_{(5)} .
\end{equation}

With the stated $\eta$, $(2)=8\sqrt3\,\omega L_2D^2/T$.
The choice $T=\lceil(16\sqrt3\,\omega)^{1/3}\rceil$
balances terms $(2)$ and $(5)$ before rounding.
Write $c=(16\sqrt3)^{1/3}$. Since $\omega\ge1$,
\[
c\omega^{1/3}\le T
\le c\omega^{1/3}+1
\le(c+1)\omega^{1/3}.
\]
Using $8\sqrt3/c=c^2/2$, we obtain
\[
\begin{aligned}
(2)+(4)+(5)
&=L_2D^2\left(\frac{8\sqrt3\,\omega}{T}
              +\frac1{24}+\frac{T^2}{2}\right)\\
&\le\left(\frac{c^2}{2}+\frac1{24}
          +\frac{(c+1)^2}{2}\right)\omega^{2/3}L_2D^2\\
&=\left(c^2+c+\frac{13}{24}\right)\omega^{2/3}L_2D^2
<13\,\omega^{2/3}L_2D^2,
\end{aligned}
\]
where the last inequality follows from $c<3.05$.
Together with $(1)$ and $(3)$, this proves
\eqref{eq:ncmasterbound}. The same bound on $c$ gives
$T\le(c+1)\omega^{1/3}<4.1\,\omega^{1/3}$.
\end{proof}

\subsection{Proof of Lemma~\ref{lem:ncpointwise}}\label{proof:ncpointwise}
\begin{proof}
Fix $n\ge0$ and write $H_j=H(z_j)$.

\emph{Anchors.} For a group $g$, let $n_g$ be the last index $m\le n$
at which $g$ was probed: $n_g$ is the largest $m\in\{1,\ldots,n\}$ with
$g_{i_m}=g$ if such an $m$ exists, and $n_g=0$ otherwise, the group then
still carrying its warm-up block. Since $i_m=((m-1)\bmod\chi)+1$ takes
every value in $\{1,\ldots,\chi\}$ exactly once as $m$ runs over any
$\chi$ consecutive positive integers, we have $n_g\ge n-\chi+1$ when
$n\ge\chi$, and trivially $n_g\ge0$; thus $n_g\ge\max\{0,n-\chi+1\}$ for
every group. By \eqref{eq:ncrefresh}, the columns of $M_n$ indexed by $g$
were written at iteration $n_g$ (in the warm-up if $n_g=0$) as $P_ju_{n_g}$,
$j\in g$, with $u_{n_g}$ the gradient difference at $z_{n_g}$ along
$v_g$ with radius $\tau_g$, and they were not modified afterwards, since
$g_{i_m}\ne g$ for $n_g<m\le n$. Lemma~\ref{lem:group}, applied with
$x=z_{n_g}$ and $\tau=\tau_g=2D/(\sqrt\chi|g|)$, gives
\[
M_ne_j=H_{n_j}e_j+\phi_j\quad(j\in[d]),\qquad
\Big(\sum_{j\in g}\|\phi_j\|^2\Big)^{1/2}\le\frac{L_2\tau_g|g|}{2}
=\frac{L_2D}{\sqrt\chi}\quad\text{for every group }g,
\]
where $n_j=n_g$ for $j\in g$. Setting $\Xi_n=\sum_j\phi_je_j^\top$,
\begin{equation}\label{eq:ncrawsplit}
M_n-H_n=\sum_{j=1}^d(H_{n_j}-H_n)e_je_j^\top+\Xi_n .
\end{equation}

\emph{Exchange of summation.} For each $j$,
$H_{n_j}-H_n=-\sum_{m=n_j}^{n-1}(H_{m+1}-H_m)$, an empty sum if $n_j=n$.
Substituting into \eqref{eq:ncrawsplit} and grouping, for each $m$, the
columns with $n_j\le m$, exactly as in \eqref{eq:exchange},
\[
\sum_{j=1}^d(H_{n_j}-H_n)e_je_j^\top
=-\sum_{m=\max\{0,n-\chi+1\}}^{n-1}(H_{m+1}-H_m)\,\Pi_m,
\qquad
\Pi_m=\sum_{j:\,n_j\le m}e_je_j^\top,
\]
where the lower limit uses $\min_jn_j\ge\max\{0,n-\chi+1\}$. Each $\Pi_m$ is
a diagonal matrix with entries in $\{0,1\}$, hence
$\|\Pi_m\|_{\rm op}\le1$, and
$\|H_{m+1}-H_m\|_{\rm op}\le L_2\|z_{m+1}-z_m\|$ by
Assumption~\ref{as:nonconvex}. Therefore
\[
\Big\|\sum_{j=1}^d(H_{n_j}-H_n)e_je_j^\top\Big\|_{\rm op}
\le L_2\sum_{m=\max\{0,n-\chi+1\}}^{n-1}\|z_{m+1}-z_m\| .
\]

\emph{Finite differences.} The $\chi$ groups index disjoint sets of
columns, so the squared Frobenius norms of the blocks of $\Xi_n$ add:
\[
\|\Xi_n\|_{\rm op}\le\|\Xi_n\|_F
=\Big(\sum_g\sum_{j\in g}\|\phi_j\|^2\Big)^{1/2}
\le\Big(\chi\cdot\frac{L_2^2D^2}{\chi}\Big)^{1/2}=L_2D .
\]
With \eqref{eq:ncrawsplit},
$\|M_n-H_n\|_{\rm op}\le e_n:=L_2\sum_{m=\max\{0,n-\chi+1\}}^{n-1}\|z_{m+1}-z_m\|+L_2D$.

\emph{Symmetrization.} Since $H_n$ is symmetric and
$B_n=(M_n+M_n^\top)/2$,
$B_n-H_n=\frac12(M_n-H_n)+\frac12(M_n-H_n)^\top$, and transposition
preserves the operator norm, so $\|B_n-H_n\|_{\rm op}\le e_n$, which is
the first inequality of \eqref{eq:ncpointwise}.

\emph{The uniform bound.} The sum defining $e_n$ has at most $\chi-1$
terms, each at most $2D$ by \eqref{eq:nczstep}, so
$e_n\le L_2\big(2(\chi-1)D+D\big)=(2\chi-1)L_2D$. Finally,
$\|B_n\|_{\rm op}\le\|H_n\|_{\rm op}+\|B_n-H_n\|_{\rm op}\le L_1+(2\chi-1)L_2D$.
\end{proof}

\subsection{Proof of Lemma~\ref{lem:nccertificate}}\label{proof:nccertificate}
\begin{proof}
Fix $n\ge0$. By \eqref{eq:nctheta} and \eqref{eq:nctaylor} with $x=z_n$
and $y=z_n+s_n=w_{n+1}$,
\[
\theta_{n+1}=\nabla f(z_n+s_n)-\nabla f(z_n)-B_ns_n=(\bar H_n-B_n)s_n,
\qquad \bar H_n=\bar H(z_n,z_n+s_n),
\]
so $\|\theta_{n+1}\|\le\|\bar H_n-B_n\|_{\rm op}\|s_n\|$, and
$\|s_n\|=\frac12\|\Delta_{n+1}-\Delta_n\|$ by \eqref{eq:ncsn}. In the
model variant, the triangle inequality, \eqref{eq:nctaylor} with
$\|s_n\|\le D$, and Lemma~\ref{lem:ncpointwise} give
\[
\|\bar H_n-B_n\|_{\rm op}
\le\|\bar H_n-H(z_n)\|_{\rm op}+\|H(z_n)-B_n\|_{\rm op}
\le\frac{L_2D}{2}+(2\chi-1)L_2D\le2\chi L_2D ,
\]
hence $\|\theta_{n+1}\|\le2\chi L_2D\,\|s_n\|=\chi L_2D\,\|\Delta_{n+1}-\Delta_n\|$.
In the gradient variant, $B_n=0$ and $\|\bar H_n\|_{\rm op}\le L_1$, as
noted at the beginning of this appendix, hence
$\|\theta_{n+1}\|\le L_1\|s_n\|=\frac12L_1\|\Delta_{n+1}-\Delta_n\|$.
\end{proof}

\subsection{Proof of Theorem~\ref{thm:ncmodel}}\label{proof:ncmodel}
\begin{proof}
By Lemma~\ref{lem:nccertificate}, the certificate \eqref{eq:nccert} holds
with $\Gamma=\chi L_2D=\omega L_2D$ for $\omega=\chi\ge1$, whatever the
value of $D$. The learning rate and episode length of the theorem are
those of Theorem~\ref{thm:ncmaster} for this $\omega$, so
Theorem~\ref{thm:ncmaster} gives, for every $K\ge1$ and $N=KT$,
\[
\|\nabla f(\bar w^{k^\star})\|\le\frac{\Phi_0}{DN}+2\nu+13\,\chi^{2/3}L_2D^2
\le\frac{\Phi_0}{DN}+\frac{\varepsilon}{4}+\frac{\varepsilon}{2},
\]
because $\nu\le\varepsilon/8$ and $26\,\chi^{2/3}L_2D^2=\varepsilon$ by
the choice of $D$. This is the first claim, and
$\Phi_0/(DN)\le\varepsilon/4$ whenever $K\ge4\Phi_0/(DT\varepsilon)$.

For the count, $K=\max\{1,\lceil4\Phi_0/(DT\varepsilon)\rceil\}$ gives
$N=KT\le4\Phi_0/(D\varepsilon)+T$. Substituting $D$,
\[
\frac{4\Phi_0}{D\varepsilon}
=4\sqrt{26}\,\chi^{1/3}\,\Phi_0\sqrt{L_2}\,\varepsilon^{-3/2}
<20.4\,\Phi_0\sqrt{L_2}\,\chi^{1/3}\varepsilon^{-3/2},
\]
while, since $\chi\ge1$,
\[
T=\left\lceil(16\sqrt3\,\chi)^{1/3}\right\rceil
\le\big((16\sqrt3)^{1/3}+1\big)\chi^{1/3}
<4.04\,\chi^{1/3}.
\]
Hence $N<\chi^{1/3}\big(20.4\,\Phi_0\sqrt{L_2}\,\varepsilon^{-3/2}+4.04\big)$,
and the gradient count $3N+\chi-1$ of Section~\ref{sec:ncalgorithm} is
less than
$61.2\,\Phi_0\sqrt{L_2}\,\chi^{1/3}\varepsilon^{-3/2}+12.2\,\chi^{1/3}+\chi$,
which is below \eqref{eq:ncmodelcount}.
\end{proof}

\subsection{Proof of Theorem~\ref{thm:ncsmooth}}\label{proof:ncsmooth}
\begin{proof}
Write $\beta=L_1^2/(L_2\varepsilon)$, so that the hypothesis of the
theorem is $\beta\ge1$. In the gradient variant $B_n=0$, and by
Lemma~\ref{lem:nctrust} with $B_n=0$ the explicit update of Algorithm~3
is the exact solution of the subproblem, so $\nu=0$. By
Lemma~\ref{lem:nccertificate}, the certificate \eqref{eq:nccert} holds
with $\Gamma=L_1/2=\omega L_2D$ for $\omega=L_1/(2L_2D)$. With the stated
radius,
\[
\omega=\frac{L_1}{2L_2D}
=\frac{9\,L_1^{3/2}L_2^{1/4}}{2\,L_2\,\varepsilon^{3/4}}
=\frac92\,\beta^{3/4}\ \ge\ \frac92 .
\]
The learning rate and episode length of the theorem are those of
Theorem~\ref{thm:ncmaster} for this $\omega$, since
$2\sqrt3\,\omega L_2D=\sqrt3\,L_1$ and
$16\sqrt3\,\omega=8\sqrt3\,L_1/(L_2D)$. Theorem~\ref{thm:ncmaster} gives,
for every $K\ge1$ and $N=KT$,
\[
\|\nabla f(\bar w^{k^\star})\|\le\frac{\Phi_0}{DN}+13\,\omega^{2/3}L_2D^2 .
\]
We evaluate the last term. From the definitions,
$L_2D^2=L_2^{1/2}\varepsilon^{3/2}/(81L_1)$ and
$\omega^{2/3}=(9/2)^{2/3}\beta^{1/2}=(9/2)^{2/3}L_1L_2^{-1/2}\varepsilon^{-1/2}$,
so
\[
13\,\omega^{2/3}L_2D^2=\frac{13\,(9/2)^{2/3}}{81}\,\varepsilon
<0.44\,\varepsilon<\frac34\,\varepsilon ,
\]
using $(9/2)^{2/3}<2.73$. This is the first claim, and
$\Phi_0/(DN)\le\varepsilon/4$ whenever $K\ge4\Phi_0/(DT\varepsilon)$. For
the bound on $T$ in the statement, $16\sqrt3\,\omega=72\sqrt3\,\beta^{3/4}$
and $(72\sqrt3)^{1/3}<5$, so
$T\le(16\sqrt3\,\omega)^{1/3}+1<5\beta^{1/4}+1\le6\beta^{1/4}$.

For the count, $K=\max\{1,\lceil4\Phi_0/(DT\varepsilon)\rceil\}$ gives
$N\le4\Phi_0/(D\varepsilon)+T$, and
\[
\frac{4\Phi_0}{D\varepsilon}
=\frac{4\cdot9\,\Phi_0L_1^{1/2}L_2^{1/4}}{\varepsilon^{7/4}}
=36\,\Phi_0L_1^{1/2}L_2^{1/4}\varepsilon^{-7/4}.
\]
Hence $N\le36\,\Phi_0L_1^{1/2}L_2^{1/4}\varepsilon^{-7/4}+6\beta^{1/4}$,
and the gradient count $2N$ of Section~\ref{sec:ncalgorithm} is at most
\eqref{eq:ncsmoothcount}.
\end{proof}

\subsection{Proof of Lemma~\ref{lem:ncsolver}}\label{proof:ncsolver}
\begin{proof}
Write $c=\nabla f(z_n)+\theta_n$. Since $B_n$ is symmetric,
\eqref{eq:nctr} reads
$\phi_n(\Delta)=\langle c,\Delta\rangle+\frac12\langle A_n(\Delta-\Delta_n),\Delta-\Delta_n\rangle$
with $A_n=\frac12B_n+\eta^{-1}I$ symmetric, so
$\nabla\phi_n(\Delta)=c+A_n(\Delta-\Delta_n)$, as in \eqref{eq:ncpg}, and
for all $\Delta,\Delta'$,
\begin{equation}\label{eq:ncquad}
\phi_n(\Delta')=\phi_n(\Delta)+\langle\nabla\phi_n(\Delta),\Delta'-\Delta\rangle
+\tfrac12\langle A_n(\Delta'-\Delta),\Delta'-\Delta\rangle .
\end{equation}
All iterates lie in the ball: $\Delta^{(0)}=\Delta_n$ does, and
$\Delta^{(t+1)}$ is a projection onto it. Fix $t$, write
$\Delta=\Delta^{(t)}$, $\Delta^+=\Delta^{(t+1)}$ and
$v=\Delta-\alpha\nabla\phi_n(\Delta)$, so that $\Delta^+=\Pi_D(v)$.

\emph{The stopping rule certifies \eqref{eq:nckkt}.} By
Lemma~\ref{lem:ncball}(ii), applied with $\alpha$ in place of $\eta$,
there is $\varrho\ge0$ with $(1+\alpha\varrho)\Delta^+=v$ and
$\varrho(\|\Delta^+\|-D)=0$. The first relation is
$\nabla\phi_n(\Delta)+\varrho\Delta^+=(\Delta-\Delta^+)/\alpha$, hence
\[
\nabla\phi_n(\Delta^+)+\varrho\Delta^+
=\nabla\phi_n(\Delta^+)-\nabla\phi_n(\Delta)+\frac{\Delta-\Delta^+}{\alpha}
=\big(A_n-\alpha^{-1}I\big)(\Delta^+-\Delta),
\]
whose norm is at most
$(\|A_n\|_{\rm op}+\alpha^{-1})\|\Delta^+-\Delta\|\le\frac2\alpha\|\Delta^+-\Delta\|$,
since $\alpha\|A_n\|_{\rm op}\le1$. If $\|\Delta^+-\Delta\|\le\alpha\nu/2$,
this is at most $\nu$, and $\Delta^+$, with the multiplier $\varrho$,
satisfies \eqref{eq:nckkt}.

\emph{Descent.} Lemma~\ref{lem:ncball}(i) with $u=\Delta$, which lies in
the ball, gives $\langle v-\Delta^+,\Delta-\Delta^+\rangle\le0$, that is,
$\langle\nabla\phi_n(\Delta),\Delta^+-\Delta\rangle\le-\frac1\alpha\|\Delta^+-\Delta\|^2$.
Inserting this into \eqref{eq:ncquad} with $\Delta'=\Delta^+$ and using
$\langle A_nw,w\rangle\le\|A_n\|_{\rm op}\|w\|^2\le\alpha^{-1}\|w\|^2$,
\[
\phi_n(\Delta^{(t+1)})\le\phi_n(\Delta^{(t)})
-\frac{1}{2\alpha}\|\Delta^{(t+1)}-\Delta^{(t)}\|^2 .
\]
Let $\phi_n^\star=\min_{\|\Delta\|\le D}\phi_n$ and
$x=8\big(\phi_n(\Delta_n)-\phi_n^\star\big)/(\alpha\nu^2)$. If the
stopping rule fails for $t=0,\ldots,m-1$ with $m\ge1$, then each of these
steps decreases $\phi_n$ by more than
$\frac{1}{2\alpha}\cdot\frac{\alpha^2\nu^2}{4}=\frac{\alpha\nu^2}{8}$, and
summing,
$m\,\frac{\alpha\nu^2}{8}<\phi_n(\Delta^{(0)})-\phi_n(\Delta^{(m)})\le\phi_n(\Delta_n)-\phi_n^\star$,
that is, $m<x$. The first index $t$ at which the rule holds is therefore
at most $\lfloor x\rfloor$, and the rule is met within $1+\lfloor x\rfloor$
iterations.

\emph{The initial gap.} $\phi_n(\Delta_n)=\langle c,\Delta_n\rangle$, and
for every $\Delta$ in the ball, $\|\Delta-\Delta_n\|\le2D$ and
$\frac14\langle B_n(\Delta-\Delta_n),\Delta-\Delta_n\rangle\ge-\|B_n\|_{\rm op}D^2$,
so $\phi_n(\Delta)\ge\langle c,\Delta\rangle-\|B_n\|_{\rm op}D^2$ and
\[
\phi_n(\Delta_n)-\phi_n(\Delta)\le\langle c,\Delta_n-\Delta\rangle+\|B_n\|_{\rm op}D^2
\le2D\|c\|+\|B_n\|_{\rm op}D^2 .
\]
Taking the supremum over the ball gives the stated bound on
$\phi_n(\Delta_n)-\phi_n^\star$.
\end{proof}

\end{document}